\documentclass[11pt]{article}
\usepackage[margin=1in]{geometry}
\usepackage{amsmath,amssymb,amsthm,mathtools,array,booktabs,enumitem}
\usepackage[T1]{fontenc}
\usepackage{lmodern}
\usepackage{microtype}
\usepackage{needspace}
\usepackage{xcolor}
\definecolor{RevisionColor}{RGB}{150,0,150}
\definecolor{ReaderGuideColor}{RGB}{0,70,160}

\usepackage{authblk} % 新增这个包！
\usepackage[hidelinks]{hyperref}
\numberwithin{equation}{section}
\setlist[enumerate,1]{label=\textup{(\arabic*)},ref=\arabic*}
\hypersetup{
  pdftitle={Abelian composition factors of finite rational groups},
  pdfsubject={A proof of the prime bound for abelian composition factors of finite rational groups},
  pdfkeywords={Finite rational group, composition factor, Brauer character,
finite linear group}
}
\newtheorem{theorem}{Theorem}[section]
\newtheorem{lemma}[theorem]{Lemma}
\newtheorem{proposition}[theorem]{Proposition}
\newtheorem{corollary}[theorem]{Corollary}

\theoremstyle{definition}
\newtheorem{definition}[theorem]{Definition}
\theoremstyle{remark}
\newtheorem{remark}[theorem]{Remark}
\newcommand{\Q}{\mathbb Q}
\newcommand{\F}{\mathbb F}
\newcommand{\Fix}{\operatorname{Fix}}
\newcommand{\End}{\operatorname{End}}
\newcommand{\Hom}{\operatorname{Hom}}
\newcommand{\Aut}{\operatorname{Aut}}
\newcommand{\Out}{\operatorname{Out}}
\newcommand{\Sp}{\operatorname{Sp}}
\newcommand{\GL}{\operatorname{GL}}
\newcommand{\PGL}{\operatorname{PGL}}
\newcommand{\SL}{\operatorname{SL}}
\newcommand{\PSL}{\operatorname{PSL}}
\newcommand{\PSU}{\operatorname{PSU}}
\newcommand{\PSp}{\operatorname{PSp}}

\newcommand{\BrTr}{\operatorname{BrTr}}

\newcommand{\Irr}{\operatorname{Irr}}

\newcommand{\Sym}{\operatorname{Sym}}

\title{Abelian composition factors of finite rational groups}

\author{Junying Guo}
\affil{School of Mathematics and Statistics, Jiangxi Normal University, Nanchang, 330022, China}
\affil{\href{mailto:guojunying@jxnu.edu.cn}{guojunying@jxnu.edu.cn}}
\author{Yanjun Liu}
\affil{School of Mathematics and Statistics, Jiangxi Normal University, Nanchang, 330022, China}
\affil{\href{mailto:liuyanjun@pku.edu.cn}{liuyanjun@pku.edu.cn}}
\author{Huan Xiong}
\affil{Institute for Advanced Study in Mathematics, Harbin Institute of Technology, Heilongjiang 150001, China}
\affil{\href{mailto:huan.xiong@hit.edu.cn}{huan.xiong@hit.edu.cn}}

\date{}
\begin{document}
\maketitle

\begin{abstract}
Motivated by an observation concerning the $S_3$-conjecture,
we prove that every abelian composition factor of a finite
rational group has order $2$, $3$, or $5$.
Thompson previously restricted these orders to
$\{2,3,5,7,11\}$; we exclude $7$ and $11$, establishing
the bound that he proposed.
Together with the work of Feit and Seitz on nonabelian
composition factors, this completes the determination
of the possible composition factors of finite rational groups.
\end{abstract}

\Needspace{6\baselineskip}
\medskip
\noindent\textbf{2020 Mathematics Subject Classification.}
Primary 20C15, 20D05; Secondary 20C20.

\smallskip
\noindent\textbf{Keywords.} Finite rational group, composition factor, Brauer character,
finite linear group.

\section{Introduction}

A finite group \(G\) is called \emph{rational} if
 every ordinary irreducible character of \(G\) is rational-valued.
 Equivalently, \(g^m\) is conjugate to \(g\) in \(G\) for every \(g\in G\) and every integer \(m\) satisfying \((m,o(g))=1\).
Using the classification of finite simple groups, Feit and Seitz \cite{FeitSeitz1989} showed that
 every nonabelian simple composition factor of a finite rational group
  is isomorphic to an alternating group $A_n$ with $n\geq 5$, or
 to one of the groups in \begin{equation}\label{eq:exceptional-list}
\mathcal E=\{\PSp_4(3),\Sp_6(2), {\rm O}_8^+(2),\PSL_3(4),\PSU_4(3)\}.
\end{equation}
For abelian composition factors, Gow \cite{Gow1976} proved that they have order $2$, $3$, or $5$
in the solvable case.
Heged\H us \cite{Hegedus2005} further showed that
the Sylow $5$-subgroup of a solvable rational group is normal
and elementary abelian, and gave a detailed description
of the structure of rational $\{2,5\}$-groups.
In general, Thompson \cite{Thompson2008} proved that
if $C_p$ occurs as a composition factor of a finite rational group, then $p\le 11$,
and suggested that the bound could be improved to $p\leq5$.
Motivated by an observation concerning the $S_3$-conjecture,
we establish this bound and complete the determination of the
possible composition factors of finite rational groups.

\begin{theorem}\label{thm:main}
Let $G$ be a finite rational group. If $G$ has an abelian
composition factor of prime order $p$, then $p\in\{2,3,5\}$.
Furthermore, the bound is sharp.
\end{theorem}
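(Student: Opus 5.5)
Sharpness is immediate from examples: $S_3$ (composition factors $C_2,C_3$) is rational, and $C_5\rtimes C_4$, the Frobenius group of order $20$ with faithful action of $\Aut(C_5)$, is rational, so $C_5$ occurs. The substance is the exclusion of $p=7$ and $p=11$. Thompson's theorem \cite{Thompson2008} already gives $p\le 11$, so I argue by a minimal counterexample: $G$ rational of minimal order with a composition factor $C_p$, $p\in\{7,11\}$. Since quotients of rational groups are rational, minimality forces a unique minimal normal subgroup, and by a standard reduction we may take $p$ to divide $|G/N|$ for no proper quotient carrying the factor. So the factor $C_p$ sits in $G/N$ but not in $G/N$ for larger $N$.

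\textbf{Case 1: $N$ elementary abelian, $N\cong \F_q^n$.} If $q=p$ then $N$ is the factor; otherwise $N$ is a faithful irreducible $\F_q[H]$-module for $H=G/N$. Rationality of $G$ transfers to $H$ (rational) together with an arithmetic condition on the module. For $x\in N$ of order $q$, all generators of $\langle x\rangle$ must be $G$-conjugate, so the stabilizer $H_x$ normalises $\langle x\rangle$ and $N_H(\langle x\rangle)/C_H(x)$ surjects onto $\F_q^\times$. Orbit-counting via Brauer characters then bounds the possible $H$. Applying the theorem to $H$ (minimality) shows $H$ has no $C_p$ factor unless the factor is on top. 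So $H$ has a normal subgroup $K$ of index $p$, or more generally a $C_p$ in its top layer; equivalently $H$ has a rational quotient with a $C_p$ section. Rational groups have abelianization an elementary $2$-group, so $C_p$ cannot be a quotient; it must arise inside $O^{2}(H)$ acted upon. I will then take a $p$-element $y$ of $H$ mapping nontrivially: rationality of $H$ forces $N_H(\langle y\rangle)/C_H(y)\cong \Aut(C_p)$, of order $p-1\in\{6,10\}$, and these elements must come from composition factors of $H$ other than $C_p$. Combining this with Feit--Seitz \cite{FeitSeitz1989} (the nonabelian factors are $A_n$ or in $\mathcal E$) and Gow's solvable result \cite{Gow1976}, I expect to show that any $C_p$ section whose normaliser induces the full $\Aut(C_p)$ lies inside a nonabelian factor $A_n$ ($n\ge p$) or is a genuine quotient, contradicting minimality.

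\textbf{Case 2: $N$ nonabelian, $N=S^k$.} Here $S\cong A_n$ or $S\in\mathcal E$, and $G/N$ embeds in $\Out(S)\wr S_k$ modulo the action. Since $\Out(S)$ is a $\{2,3\}$-group for all these $S$, a $C_p$ factor above $N$ must come from the permutation part in $S_k$. Take $g$ whose image is a $p$-cycle on the $k$ factors. Rationality forces $g$ conjugate to $g^m$ for all $m$ prime to $o(g)$, and then the conjugating elements' images in $S_k$ normalise the permutation. I will then show that the quotient by the kernel of the action on the factors is again rational of smaller order. This kernel $N C_G(N)$-type quotient is proper, and by minimality it cannot carry the $C_p$ — contradiction — unless that quotient is itself the whole group with no $N$, which is impossible.

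\textbf{Main obstacle.} The main obstacle is Case 1: controlling a faithful $\F_q H$-module where the $C_p$ lives inside $H$ in a nontop position, e.g.\ $q=2$ or $3$ with $p=7$. Here Thompson's argument stops. I would try a Brauer-character computation with the $\BrTr$ macro hinted at, on the restriction of $N$ to a $p$-element $y$. Rationality of $G$ makes the fixed-point counts $|C_N(y^j)|$ constant in $j$. It also forces eigenvalue multiplicities on $N$ to be Galois-stable, so $\dim N\ge p-1$. Combining this with the requirement that $N_G(\langle y\rangle)$ induce all of $\Aut(\langle y\rangle)$ while $y$'s $C_p$ image is a composition factor not covered by any $A_n$ should yield a contradiction.
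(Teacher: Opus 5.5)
\textbf{There is a genuine gap: your reduction points in the wrong direction, so the proposal never addresses the only case that needs proof.}

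Take a minimal counterexample $G$ with a minimal normal subgroup $N$. The quotient $G/N$ is rational and smaller, so by minimality it has \emph{no} composition factor $C_p$ at all, not merely none ``on top''. Hence $C_p$ is a composition factor of $N$, which forces $N\cong\F_p^d$.

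This has three consequences:
\begin{itemize}
\item Your Case~2 ($N=S^k$) and the subcase $q\neq p$ of Case~1 end in an immediate contradiction.
\item The scenario you call the main obstacle ($C_p$ inside $H=G/N$ with $q\in\{2,3\}$) never arises.
\item The case $q=p$, which you dismiss with ``$N$ is the factor'', is the entire problem.
\end{itemize}

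In that case you must show the following. No rational group $H=G/C_G(N)$ can act faithfully and irreducibly on $\F_p^d$, $p\in\{7,11\}$, so that every nonzero $v$ is $H$-conjugate to $av$ for all $a\in\F_p^\times$. This fusion condition is exactly what rationality of $G$ gives, since $v$ and its power $av$ (written additively) must be $G$-conjugate.

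This is precisely where Thompson's argument stops at $p\le 11$, and nothing in your outline rules it out. The configuration genuinely occurs for $p=5$ ($Q_8$ acting on $\F_5^2$), so any argument must use features specific to $7$ and $11$.

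The paper devotes essentially all its work to this linear problem (Theorem~\ref{thm:linear}). The main steps are:
\begin{itemize}
\item It proves the module is absolutely irreducible and symplectic; fusion with $2$ rules out a symmetric form because $3\neq 0$ in $\F_p$.
\item It uses that scalar fusion forbids nonzero invariant quartics, since $2^4\not\equiv 1 \pmod p$. This excludes abelian normal subgroups of exponent greater than $2$, dual-pair decompositions and tensor decompositions.
\item It refines to a $2$-terminal pair.
\item It excludes an extraspecial $O_2$: by an invariant octic when $p=11$, and by counting together with the condition $\boldsymbol{(R^-)}$ when $p=7$.
\item It excludes every possible component, using counting bounds, Feit--Seitz, and Frobenius--Schur indicators.
\end{itemize}
Neither these ideas nor any substitute for them appear in your proposal.

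Separately, your sharpness example is wrong. The group $C_5\rtimes C_4$ is not rational: its abelianization is $C_4$, whose faithful linear characters take the values $\pm i$. You note yourself that a rational group has elementary abelian $2$-group abelianization. A correct witness for $p=5$ is the rational Frobenius group $\F_5^2\rtimes Q_8$, as in the paper.
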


Each of the three primes in Theorem~\ref{thm:main} occurs. For instance,
the groups $C_2$ and $S_3$ account for $2$ and $3$, respectively,
while the rational Frobenius group $\F_5^2\rtimes Q_8$
accounts for $5$.

Our proof proceeds through a stronger result concerning linear
actions of rational groups. We say that a linear action of a
finite group $H$ on an $\F_p$-space $V$ has the
\emph{eigenvector property} if
\begin{equation}\label{eq:intro-eigenvector}
 \text{for every $0\ne v\in V$ and every $a\in\F_p^\times$,
 there exists $h\in H$ such that $hv=av$.}
\end{equation}

\begin{theorem}\label{thm:linear}
Suppose that a finite rational group acts linearly on a nonzero
finite-dimensional vector space over $\F_p$ with the
eigenvector property, where $p\leq 11$. Then $p\in\{2,3,5\}$.
\end{theorem}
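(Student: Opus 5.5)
We must rule out $p=7$ and $p=11$. Suppose a rational group $H$ acts on $V\neq 0$ over $\F_p$ with the eigenvector property, $p\in\{7,11\}$. First we reduce to a minimal counterexample. Replacing $H$ by its image in $\GL(V)$ keeps rationality, since quotients of rational groups are rational, and the eigenvector property involves only the image. Passing to an $H$-invariant subspace preserves the eigenvector property, and so does passing to a quotient by an invariant subspace: given $v+W$ and $a$, take $h$ with $hv=av$. So we may assume $V$ is irreducible and $H\le\GL(V)$ is rational. Taking $v$ arbitrary and $a$ a generator of $\F_p^\times$, some $h$ acts on $v$ with eigenvalue $a$. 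Replacing $h$ by its $p'$-part, which still has eigenvalue $a$ on the corresponding generalized eigenvector, we get a semisimple $p'$-element $g\in H$ whose Brauer character involves the eigenvalue $a$ of multiplicative order $p-1$. Thus $(p-1)\mid o(g)$.

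The key step is to exploit rationality through Brauer characters. Since $g$ is conjugate to $g^m$ for all $m$ prime to $o(g)$, the multiset of eigenvalues of $g$ on $V\otimes\overline{\F}_p$ is stable under all Galois automorphisms $\zeta\mapsto\zeta^m$ of $\Q(\zeta_{o(g)})$. This holds not only for the $\F_p$-rational eigenvalues but for the lifted roots of unity, which are stable under the full group $(\mathbb Z/o(g))^\times$, not just the Frobenius powers. I would then apply this to a $p'$-element $x$ of order $r$, a prime divisor of $p-1$ (so $r\in\{2,3\}$ for $p=7$ and $r\in\{2,5\}$ for $p=11$), acting diagonalizably over $\F_p$. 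Rationality forces every nontrivial $r$-th root of unity to occur as an eigenvalue of $x$ with equal multiplicity. For $p=11$ and $r=5$ this gives a rational element of order $5$ whose eigenspaces for the four primitive fifth roots in $\F_{11}$ are equidimensional.

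Now I combine this with the eigenvector property on the \emph{vectors}, not just on elements. Count pairs $(v,h)$ with $hv=av$, or more cleanly use the orbit structure. The set $\F_p^\times v$ must be contained in the $H$-orbit of $v$ for every $v$, so $H$ acts on $V\setminus\{0\}$ with every orbit a union of lines on which the stabilizer of the line induces all of $\F_p^\times$. I would therefore aim to show that $H_{\langle v\rangle}/H_v$ is cyclic of order $p-1$ for all $v$, yielding many elements of order divisible by $p-1$ in $H$. For $p=7$ these have order divisible by $6$, and for $p=11$ by $10$. Rationality of $H$ then forces $H$ to contain rational elements of order $6$ or $10$ respectively. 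Feit–Seitz-type and Gow-type information, combined with Thompson's theorem, restricts the structure, but Thompson only gives $p\le 11$. The new ingredient must be a Brauer-character argument: the Brauer character $\varphi$ of $V$ is rational-valued on $p$-regular classes. I would compute $\sum_{h}$ of a suitable count, using the number of fixed vectors $|C_V(h)|=p^{\dim\ker(h-1)}$ and the formula that the number of orbits of $H\times\F_p^\times$ on $V$ equals the number of orbits of $H$ on lines. I would then derive a congruence modulo $p-1$ (or modulo $5$ and $3$) that fails.

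The main obstacle is this final contradiction. Rationality of $\F_p$-eigenvalue multisets alone is consistent, since $\F_{11}^\times$ already contains all tenth roots of unity. So one must use that rational elements of $p'$-order $m$ force eigenvalues of order $m$ to appear in full Galois orbits of size $\varphi(m)$, even when $m\nmid p-1$, inflating $\dim V$. My first attempt would be to show that some element in $H_{\langle v\rangle}$ mapping to a generator of $\F_p^\times$ can be chosen to be a $p'$-element $g$ of order exactly $p-1$ times a power of $2$ and $3$. Rationality then puts the primitive $(p-1)$-th roots in one orbit. I would then compare the induced action on the fixed line with $g$'s centralizer, and show that the centralizer cannot normalize $\langle v\rangle$ with full scalar action unless $p-1\mid |\text{rational }\{2,3,5\}\text{-structure}|$ in a way compatible only with $p\le5$. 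I am least sure about this step.
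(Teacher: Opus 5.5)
Your preliminary steps are correct: passing to a faithful irreducible constituent, replacing a scalar-fusing element by its $p'$-part, Galois stability of the lifted eigenvalue multisets, and $H_{\langle v\rangle}/H_v\cong\F_p^\times$. They amount to Lemmas~\ref{lem:scalar-fusion} and~\ref{lem:rational-brauer} of the paper. But there is a genuine gap: the proposal stops where the theorem begins, since you never derive the contradiction. The ingredients you plan to combine cannot produce one on their own.

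Any argument that uses only the eigenvector property, integrality of the Brauer character of $V$, and Burnside-type orbit or fixed-point counts on $V$ must fail. These data are realized by a non-rational group in characteristic $7$. $\SL_2(3)$ acting faithfully on $\F_7^2$ has an integer-valued Brauer character, and its four subgroups of order $3$ have eight distinct eigenlines (eigenvalues $2,4$), which are all eight lines of $\F_7^2$. Together with $-I$ and the elements of order $6$, this gives the eigenvector property (Remark~\ref{rem:one-character}). In addition, $Q_8$ on $\F_5^2$ is a genuine rational example, so any mechanism must separate $p=5$ from $p=7,11$. Your proposed ``congruence modulo $p-1$'' gives no hint of what would detect either distinction. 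Your claim that a fusing element can be chosen of order $p-1$ times a product of powers of $2$ and $3$ is unjustified. The final step about ``rational $\{2,3,5\}$-structure'' is not a mathematical statement.

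Here is what the paper supplies and your sketch neither contains nor replaces.
\begin{enumerate}
\item An invariant alternating form, forced by self-duality and scalar fusion via $B(v,v)=B(2v,2v)=4B(v,v)$.
\item Lemma~\ref{lem:quartic-scalar-principle}: scalar fusion kills every nonzero invariant form of degree $r<p$ with $2^r\not\equiv1\pmod p$. At $r=4$ this is exactly where $15=2^4-1$ separates $7,11$ from $5$. Combined with explicit invariant quartics from dual pairs and tensor decompositions, it forces every abelian normal subgroup to be an elementary abelian $2$-group. This drives the reduction to $2$-terminal pairs.
\item Exclusion of an extraspecial $O_2$. For $p=11$ this uses an octic invariant, and for $p=7$ with $m\ge3$ a counting argument. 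For $m\le2$ it uses condition $\boldsymbol{(R^-)}$ on ordinary characters of the group lying over $-I$. This is where rationality of the group itself, not just of the Brauer character of $V$, is used, and it is what kills the $\SL_2(3)$ example.
\item For the layer, a block-counting recognition argument resting on the Feit--Seitz theorem (hence CFSG), in Proposition~\ref{prop:local-simple-recognition}. This is followed by case-by-case exclusion of alternating and exceptional components using $\boldsymbol{(R^-)}$ and Frobenius--Schur indicators.
\end{enumerate}
As it stands, the proposal is a correct setup but not a proof.
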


For rational $2$-groups, Deb\'on \cite[Theorem~A]{Debon2026}
proved that a faithful action on a nonzero finite-dimensional $\F_p$-space
with the eigenvector property, where $p\geq5$, forces $p=5$, and determined the resulting
actions. Theorem~\ref{thm:linear} treats arbitrary finite rational
groups for characteristics $7$ and $11$.

To relate the two theorems, we use the least-counterexample
reduction through an elementary abelian normal subgroup, as in
Thompson's work \cite{Thompson2008}.
Suppose that
Theorem~\ref{thm:main} fails. By Thompson's theorem, it suffices
to consider $p\in\{7,11\}$. Fix such a prime $p$, and choose a
rational group $G$ of least order having $C_p$ as a composition
factor.

Let $V$ be a minimal normal subgroup of $G$. Since quotients
of rational groups are rational, the minimality of $G$ implies that
$G/V$ has no composition factor isomorphic to $C_p$.
Consequently, $C_p$ occurs as a composition factor of $V$, so
\[
 V\cong C_p^d
\]
for some $d\geq1$. We therefore identify $V$ with the additive
group of an $\F_p$-vector space of dimension $d$.

The conjugation action of $G$ on $V$ induces a faithful action
of
\[
 H=G/C_G(V).
\]
This action is irreducible because $V$ is minimal normal in $G$.
Moreover, $H$ is rational, being a quotient of $G$.
Rationality of $G$ also implies that $v$ and $av$ are conjugate
in $G$ for every $0\ne v\in V$ and every $a\in\F_p^\times$.
Thus the action of $H$ on $V$ has the eigenvector property.
Theorem~\ref{thm:linear} would therefore yield a contradiction,
so it implies Theorem~\ref{thm:main}.

%We record one further feature of this setup. Since $V$ is
%abelian, we have $V\leq C_G(V)$, and hence $H$ is also a quotient
%of $G/V$. In particular, $H$ has no composition factor
%isomorphic to $C_p$, although $p$ may still divide $|H|$.

The following sections develop the argument in this
minimal-counterexample setting. %At the end of the paper, we
%prove Theorem~\ref{thm:linear} in its full generality by applying
%the same arguments to a nonzero irreducible submodule and
%passing to the faithful image of the acting group.

\medskip
\noindent\textbf{Organization of the paper.}
Section~2 introduces admissible pairs through the scalar-fusion,
Brauer-rationality, and symplectic properties of the initial action.
Section~\ref{sec:quartics} constructs low-degree invariant polynomials
that obstruct certain module decompositions. Section~4 applies these
obstructions to abelian normal subgroups and reduces an admissible
pair to a $2$-terminal pair, preserving $\boldsymbol{(R^-)}$ when assumed.
Section~\ref{sec:generalized-fitting} determines the possible
structures of the Fitting subgroup of a $2$-terminal pair $(L,W)$.
Section~\ref{sec:extraspecial} shows that, in the extraspecial case,
this subgroup acts absolutely irreducibly on $W$, and excludes
this case under the additional condition $\boldsymbol{(R^-)}$,
which holds for the pairs arising in the proof of the main theorem.

In the remaining case, $E(L)\ne1$.
Sections~\ref{sec:layer} and~\ref{sec:local-actions}
decompose the module into component subspaces and construct the
associated almost simple local actions.
Section~\ref{sec:recognition} combines counting restrictions on block
actions with a realization of the local actions inside the original
rational action to determine the possible simple quotients of the
components.
Section~\ref{sec:component-structure} excludes the alternating and
exceptional possibilities.
Finally, Section~\ref{sec:terminal-contradiction} combines these
results to prove the main theorems.

\medskip
\noindent\textbf{Notation.}
Unless otherwise stated, we retain the above notation and
regard $V$ as a faithful irreducible $\F_p H$-module, written
additively. For $h\in H$, we denote its commuting $p$- and
$p'$-parts by $h_p$ and $h_{p'}$, respectively, so that
\[
 h=h_p h_{p'}=h_{p'}h_p.
\]
We shall also use the following elementary observation:
if $L\unlhd K\leq H$, then every $p'$-element of $K/L$
has a $p'$-lift in $K$. Indeed, the $p'$-part of any lift
is again a lift of the given element.

\section{Admissible pairs}
In this section, we establish several properties of the pair $(H,V)$,
labelled (SF), (CF), (R), and (I). These properties are then used to introduce the notion of an \textit{admissible pair}.
Under this definition, \((H,V)\) is automatically an admissible pair.

The first property, called \emph{scalar fusion} {\rm(SF)}, concerns
the fusion of each nonzero vector with all its nonzero scalar multiples.

\begin{lemma}\label{lem:scalar-fusion}
The $\F_p H$--module $V$ satisfies \\
 {\rm(SF)}: \ \ \ \ \ \ \ \ for $0\ne v\in V$
and $a\in\F_p^\times$, there is a $p'$-element $h\in H$
such that $hv=av$.
\end{lemma}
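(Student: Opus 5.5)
We prove (SF) in two steps: first we produce some element of $H$ that moves $v$ to $av$, and then we replace it by its $p'$-part.

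For the first step, fix $0\ne v\in V$ and $a\in\F_p^\times$, and view $v$ as an element of order $p$ in the normal subgroup $V\unlhd G$. Choose an integer $m$ with $m\equiv a \pmod p$; then $(m,p)=1$. Since $G$ is rational, $v^m$ is conjugate to $v$ in $G$, so there is $g\in G$ with $gvg^{-1}=v^m$. In the additive notation this reads $g\cdot v=av$. Let $h_0$ be the image of $g$ in $H=G/C_G(V)$. The action of $H$ on $V$ is induced by conjugation, so $h_0v=av$. This is exactly the eigenvector property already observed in the introduction.

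For the second step, write $h_0=h_pu$, where $h_p$ is the $p$-part and $u=h_{p'}$ is the $p'$-part; both are powers of $h_0$ and they commute. The line $\F_p v$ is $\langle h_0\rangle$-invariant, so it is invariant under every power of $h_0$, and in particular under $h_p$ and $u$. Hence $h_p v=bv$ and $uv=cv$ for some $b,c\in\F_p^\times$ with $bc=a$. Since $h_p$ has $p$-power order, $b^{p^k}=1$ for some $k$. The group $\F_p^\times$ has order $p-1$, which is prime to $p$, so $b=1$. Therefore $uv=av$, and $h=u$ is the required $p'$-element.

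There is no serious obstacle here. The only point that needs care is the second step: we must check that the $p$-part of $h_0$ acts trivially on the line $\F_p v$. This holds because $\F_p^\times$ contains no nontrivial $p$-elements.
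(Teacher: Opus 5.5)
Your proposal is correct and follows essentially the same route as the paper: rationality of $G$ gives some element of $H$ sending $v$ to $av$, and since $\F_p^\times$ has no nontrivial $p$-elements, its $p$-part acts trivially on the invariant line $\F_p v$, so the $p'$-part already does the job. Your version simply spells out the integer $m\equiv a\pmod p$ and the identity $bc=a$ that the paper leaves implicit.
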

\begin{proof} Since $G$ is rational, $v$ and $av$ are conjugate in $G$.
Hence there is $t\in H$ such that $tv=av$.
The line $\F_p v$ is $t$-invariant, and the induced action of
$t$ on this line has order dividing $p-1$. Hence $t_p$ acts
trivially on $\F_p v$.
Thus,
\[av=tv=t_{p'}v,
\]
as desired.
\end{proof}

The second property is \emph{centralizer fusion (CF)},
meaning that scalar fusion on the fixed space of a \(p'\)-element can be realized within its centralizer.

\begin{lemma}\label{lem:centralizer-fusion}
The $\F_p H$--module $V$ satisfies\\
 {\rm(CF)}: \ \ for any $p'$-element $x\in H$, $w\in\Fix_V(x)$ and $a\in\F_p^\times$,
there is a $p'$-element $h\in C_H(x)$ such that $hw=aw$.
\end{lemma}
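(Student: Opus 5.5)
Here $x$ is a $p'$-element of $H$ and $w\in\Fix_V(x)$; if $w=0$ we may take $h=1$, so assume $w\ne0$. The plan is to lift everything to $G$ and use rationality of $G$ on a suitable element combining $x$ and $w$. Concretely, choose a $p'$-lift $\tilde x\in G$ of $x$ (possible by the elementary observation in the Notation paragraph, applied with $K=G$, $L=C_G(V)$), and regard $w$ as an element of the normal subgroup $V\le G$. Since $x$ fixes $w$, $\tilde x$ commutes with $w$ in $G$. Put $g=\tilde x w$. Because $\tilde x$ has order $m$ prime to $p$ and $w$ has order $p$, and they commute, $g$ has order $mp$ with $p'$-part $\tilde x$ and $p$-part $w$.

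Next choose an integer $k$ with $k\equiv 1\pmod m$ and $k\equiv a\pmod p$ (interpreting $a$ as an integer representative); by the Chinese remainder theorem such $k$ exists, and $(k,mp)=1$. Then $g^k=\tilde x^k w^k=\tilde x\cdot(aw)$. Rationality of $G$ gives $y\in G$ with $yg y^{-1}=g^k$. Conjugation preserves the decomposition into commuting $p$- and $p'$-parts, so $y\tilde x y^{-1}=\tilde x$ and $ywy^{-1}=aw$. Letting $t$ be the image of $y$ in $H$, we get $t\in C_H(x)$ and $tw=aw$.

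Finally, pass to a $p'$-element as in Lemma~\ref{lem:scalar-fusion}: $t=t_pt_{p'}$ with both parts powers of $t$, hence both in $C_H(x)$. The line $\F_p w$ is $t$-invariant and $t$ acts on it by the scalar $a$, of order dividing $p-1$; thus $t_p$ acts trivially on $\F_pw$ and $t_{p'}w=aw$. Taking $h=t_{p'}$ gives the required $p'$-element of $C_H(x)$.

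The only point needing care is the uniqueness of the $p$-/$p'$-decomposition of $g$, which is what forces the conjugating element to centralize $\tilde x$ (and hence its image to centralize $x$); everything else is routine. I expect no real obstacle beyond checking that the lift $\tilde x$ is chosen to be a $p'$-element, so that $\tilde x$ is exactly the $p'$-part of $g$.
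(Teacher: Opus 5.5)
Your proposal is correct and follows the paper's proof essentially verbatim: take a $p'$-lift of $x$ commuting with $w$, use the Chinese remainder theorem to pick an exponent that is $1$ modulo the order of the lift and $a$ modulo $p$, and compare $p$- and $p'$-parts under the conjugation supplied by rationality. As in the paper, you finish by replacing $t$ with its $p'$-part, which lies in $C_H(x)$ because it is a power of $t$.
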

\begin{proof}
For $w=0$, take $h=1$. Otherwise let $\pi:G\to H$ be the
natural map and choose a $p'$-lift $y$ of $x$.
%The \(p\)-part of \(\widetilde x\) maps trivially because \(x\) is a \(p'\)-element;
%hence the commuting \(p'\)-part, say $y$,  of \(\widetilde x\) still maps to \(x\).
Since \(xw=w\), the elements \(w\) and \(y\) commute.
By the Chinese remainder theorem, we may choose an integer $m$ with
$m\equiv a\pmod p$ and $m\equiv1\pmod{o(y)}$.
Since $o(w)=p$ and $(p,o(y))=1$, we have $(m,o(yw))=1$ and
\[(yw)^m=y(aw).\]
Since $G$ is rational, there is  $k\in G$ with $k(yw)k^{-1}=y(aw)$.
Comparing the commuting $p$- and $p'$-parts of both sides yields
\[
kyk^{-1}=y,\qquad kwk^{-1}=aw.
\]
Hence $t=\pi(k)$ lies in $C_H(x)$ and sends $w$ to $aw$.
The proof of Lemma~\ref{lem:scalar-fusion} shows that $h=t_{p'}$ still
sends $w$ to $aw$. Clearly, $h\in C_H(x)$ (since $h$ is a power of $t$).
This proves the lemma.
\end{proof}

The third property concerns the Brauer character afforded by $V$.
For a $p'$-element $x\in H$, let
$\lambda_1,\ldots,\lambda_d\in\overline{\F}_p^\times$ be the
eigenvalues of the action of $x$ on $V$,  counted with multiplicity, and let
$\widehat\lambda_1,\ldots,\widehat\lambda_d$ be their respective
Teichm\"uller lifts to complex roots of unity. We call
\begin{equation}\label{eq:brauer-characteristic-polynomial}
P_x(T)=\prod_{i=1}^d(T-\widehat\lambda_i)
\end{equation}
the \emph{Brauer characteristic polynomial} of $x$, and write $\BrTr(x)=\sum_{i=1}^d \widehat{\lambda}_i$.
For every integer \(a\geq 1\), the \(a\)-th power sum of the roots of \(P_x(T)\) is
\[
\sum_{i=1}^d \widehat{\lambda}_i^{\,a}=\sum_{i=1}^d \widehat{\lambda_i^a}
=\BrTr(x^a),
\]
where the first equality follows from the compatibility of
Teichm\"uller lifts with taking powers.

\begin{lemma}\label{lem:rational-brauer}
Every irreducible Brauer character of $H$ in characteristic $p$
is integer-valued. In particular, $V$ satisfies \\
{\rm(R)}:\ \ \ \
for every $p'$-element $x\in H$, $P_x(T)\in\Q[T]$.
For each such $x$, this is equivalent to
$\BrTr(x^a)\in\mathbb Z$ for every integer $a\ge1$.
Galois-conjugate roots of $P_x(T)$ occur with equal multiplicity.
\end{lemma}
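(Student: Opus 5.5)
The plan is to show that every irreducible Brauer character of $H$ is fixed by the full Galois group of $\Q(\zeta_n)/\Q$, where $n$ is the $p'$-part of the exponent of $H$. Since it is also an algebraic integer, this forces it to lie in $\mathbb Z$.

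\textbf{Galois action via rationality of $H$.} Let $\sigma\in\operatorname{Gal}(\Q(\zeta_n)/\Q)$, with $\sigma(\zeta_n)=\zeta_n^m$ and $(m,n)=1$. Choose $m$ coprime to $|H|$ as well, which is possible by the Chinese remainder theorem. Let $\varphi$ be an irreducible Brauer character and $x$ a $p'$-element. Brauer characters are sums of Teichm\"uller lifts of eigenvalues, and lifting commutes with powers. Hence $\varphi^\sigma(x)=\varphi(x^m)$. Since $H$ is rational (a quotient of $G$) and $(m,o(x))=1$, the element $x^m$ is conjugate to $x$. So $\varphi(x^m)=\varphi(x)$, and therefore $\varphi(x)$ is fixed by every $\sigma$. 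It is therefore a rational algebraic integer, i.e.\ lies in $\mathbb Z$.

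\textbf{Applying this to $V$.} Apply the previous step to the Brauer character of $V$; this is a sum of irreducible Brauer characters of its composition factors over $\overline{\F}_p$, hence integer-valued. In fact it suffices that $\BrTr$ is a Brauer character and the same argument applies directly. This gives $\BrTr(x^a)\in\mathbb Z$ for every $a\ge1$, since $x^a$ is again a $p'$-element. By Newton's identities, integrality of all power sums $\sum_i\widehat\lambda_i^{\,a}$ for $a=1,\dots,d$ determines the elementary symmetric functions as rationals. Thus $P_x(T)\in\Q[T]$. Conversely, if $P_x\in\Q[T]$, then the power sums are rational algebraic integers, hence integers. This gives the claimed equivalence.

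\textbf{Multiplicities.} Since $P_x$ has rational coefficients, it factors into irreducibles over $\Q$. Its roots are roots of unity, so these irreducible factors are cyclotomic polynomials $\Phi_k$, appearing with some multiplicities. All roots of a given $\Phi_k$ therefore occur in $P_x$ with the same multiplicity, which is the statement about Galois-conjugate roots.

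\textbf{Main obstacle.} The only delicate point is the identity $\varphi^\sigma(x)=\varphi(x^m)$. It requires the Teichm\"uller lift to be a multiplicative isomorphism from the $p'$-roots of unity in $\overline{\F}_p$ to those in $\mathbb C$, so that raising eigenvalues to the $m$-th power commutes with lifting. This was already noted in the displayed identity before the lemma, so the remaining steps are routine.
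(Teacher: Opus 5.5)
Your proposal is correct and follows essentially the same route as the paper: each irreducible Brauer character is Galois-fixed via $\varphi^\sigma(x)=\varphi(x^m)$ and the rationality of $H$, then Newton's identities give (R), and your cyclotomic-factor argument for multiplicities is just a rephrasing of the paper's Galois invariance of the multiset of roots. The extra step choosing $m$ coprime to $|H|$ is harmless but unnecessary, since $o(x)$ already divides $n$ for any $p'$-element $x$.
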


\begin{proof}
Let $n$ be the $p'$-part of $|H|$ and let $\varphi$ be an irreducible Brauer character of $H$ in characteristic $p$.
Then the values of $\varphi$ belong to $\mathbb{Q}_n=\Q(\zeta)$, where $\zeta$ is a primitive $n$th root of unity.
Recall that an automorphism $\sigma$ of $\mathbb{Q}_n$ has the form $\sigma(\zeta)=\zeta^b$ with $(b,n)=1$.

For every $p'$-element $h\in H$, we have
\[
\sigma(\varphi(h))=\varphi(h^b).
\]
Since $o(h)$ divides $n$, it follows that $(b,o(h))=1$.
By the rationality of $H$, $h^b$ is conjugate to $h$ in $H$.
Hence $\varphi(h)$ is fixed by every $\sigma$, and is rational.
Since $\varphi(h)$ is an algebraic integer, we deduce that $\varphi(h)\in\mathbb Z$.

The Brauer character of $V$ is a sum of irreducible Brauer
characters of $H$,  so it is integer-valued on $p'$-elements of $H$.
Since every power of $x$ is
again a $p'$-element, all power sums $s_a=\BrTr(x^a)$ are
integers. Write
\[
P_x(T)=T^d+c_1T^{d-1}+\cdots+c_d.
\]
Newton's identities yield, for $1\le j\le d$,
\[
j c_j+s_j+c_1s_{j-1}+\cdots+c_{j-1}s_1=0,
\]
so $c_j\in\Q$ by induction. Conversely, if $P_x(T)\in\Q[T]$,
its multiset of roots is Galois-invariant, and every power
sum is rational. Each is also an algebraic integer, hence
an integer. Finally, Galois invariance of the multiset of roots of  $P_x(T)$ leads to the
assertion about multiplicities.
\end{proof}

The final property concerns the absolute irreducibility of $V$
and the existence of an invariant symplectic form.
To show that the invariant form is alternating, we use an
eigenvector-property argument attributed to Gow in
\cite[Remark~2.1]{Debon2026}.

\begin{lemma}
\label{lem:three}\label{lem:symplectic}
For $p\in \{7,11\}$, the $\F_p H$-module $V$ satisfies \\
{\rm(I)}: it is absolutely
irreducible and carries an $H$-invariant nondegenerate
alternating bilinear form over $\F_p$.
\end{lemma}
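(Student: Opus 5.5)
We first establish absolute irreducibility, then self-duality, and finally show that the invariant form is alternating rather than symmetric.

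\emph{Absolute irreducibility.} Put $k=\End_{\F_pH}(V)$. Since $V$ is irreducible, Schur's lemma shows that $k$ is a finite field $\F_{p^e}$, and $V$ is absolutely irreducible exactly when $e=1$. Suppose $e>1$. Then $V\otimes_{\F_p}\overline{\F}_p$ is the direct sum of the $e$ Galois twists $W,W^{(p)},\dots,W^{(p^{e-1})}$ of an absolutely irreducible module $W$, and these twists are pairwise nonisomorphic. Consequently the Brauer character of $V$ is $\varphi+\varphi^{(p)}+\cdots$, where $\varphi$ is the Brauer character of $W$.

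By Lemma~\ref{lem:rational-brauer}, $\varphi$ is integer-valued, and therefore equals its Frobenius twist $\varphi^{(p)}$, since twisting raises eigenvalues to their $p$-th powers. Irreducible Brauer characters determine the module up to isomorphism, so $W\cong W^{(p)}$. This contradicts the pairwise nonisomorphism of the twists, hence $e=1$.

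\emph{Self-duality.} The Brauer character of $V^*$ is $x\mapsto\varphi(x^{-1})$. By rationality of $H$, the element $x^{-1}$ is conjugate to $x$, so $V$ and $V^*$ have the same Brauer character. Both are irreducible, so $V\cong V^*$. This yields a nonzero, hence nondegenerate, $H$-invariant bilinear form $B$ on $V$. By absolute irreducibility $B$ is unique up to scalar. The form $B^t(u,v)=B(v,u)$ is also invariant, so $B^t=\epsilon B$ with $\epsilon=\pm1$. Since $p$ is odd, $B$ is either symmetric or alternating.

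\emph{Excluding the symmetric case.} This is the main step, and it is where the eigenvector-property argument attributed to Gow enters. Suppose $B$ is symmetric. Since $\dim V\ge1$ and $p$ is odd, the form represents some nonzero value, so there is $v$ with $B(v,v)=c\ne0$. Choose $a\in\F_p^\times$ with $a^2\ne1$; this is possible because $p\ge7>3$. By (SF) of Lemma~\ref{lem:scalar-fusion}, some $h\in H$ satisfies $hv=av$. Invariance of $B$ then gives
\[
c=B(v,v)=B(hv,hv)=a^2c,
\]
so $a^2=1$, a contradiction. Hence $B(v,v)=0$ for every $v$, so $B$ is alternating and (I) holds.

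The argument needs $p>3$ so that $a^2\ne1$ is possible, which explains the hypothesis $p\in\{7,11\}$.

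The only delicate points are the standard facts used in the first step: Brauer characters separate irreducible modules, and over a finite field the Galois twists of $W$ are pairwise distinct when $e>1$. If one prefers to avoid Brauer characters, there is an alternative route through (SF). Multiplication by a generator of $k^\times$ would let $k^\times$ act on $V$ commuting with $H$. Applying (SF) to a generator of $\F_p^\times$ acting on an eigenvector, together with the fact that $H$ preserves the $k$-structure, would then have to be compared against the action of $k^\times$. I would keep the character argument as the primary proof.
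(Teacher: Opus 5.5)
Your proof is correct and follows essentially the same route as the paper's: the Frobenius orbit of constituents collapses because irreducible Brauer characters are integer-valued, self-duality comes from $V$ and $V^*$ having the same Brauer character, the one-dimensional space of invariant forms forces $B^{\mathrm t}=\pm B$, and scalar fusion with some $a$ satisfying $a^2\neq1$ (the paper takes $a=2$) excludes the symmetric case. The one step you leave implicit is passing from $\overline V\cong\overline{V^*}$ over $\overline{\F}_p$ to $V\cong V^*$ over $\F_p$, which the paper handles via $\Hom_{\F_pH}(V,V^*)\otimes_{\F_p}\overline{\F}_p\cong\Hom_{\overline{\F}_pH}(\overline V,\overline{V^*})$; note also that your identity $B(v,v)=a^2B(v,v)$ holds for every $v$ and every invariant form, so it already shows directly that $B$ is alternating, without the symmetric/skew dichotomy.
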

\begin{proof}
Set $\overline V=V\otimes_{\F_p}\overline{\F}_p$, and let $W$
be an irreducible constituent with Brauer character $\varphi$.
For an irreducible module over a finite field, scalar extension
to the algebraic closure is a direct sum of pairwise
nonisomorphic irreducible modules forming a single orbit
under field Frobenius, each with multiplicity one;
see, for instance, \cite[Theorem 9.21]{I}.

A cyclotomic automorphism acting on the \(p'\)-roots of unity
has the form \(\zeta\mapsto\zeta^m\), with
\(m\) prime to the \(p'\)-part of \(|H|\).
By Lemma~\ref{lem:rational-brauer}, $\varphi$ is integer-valued.
So the cyclotomic automorphism \(\zeta\mapsto\zeta^p\) (corresponding to the field Frobenius)
fixes \(\varphi\), and hence fixes the isomorphism class of $W$.  %, since irreducible Brauer characters determine irreducible modules.
It follows that the Frobenius orbit has length one. Thus $\overline V=W$
is irreducible and $V$ is absolutely irreducible.

Complex conjugation is among the cyclotomic automorphisms
and fixes $\varphi$, so $\overline{V}$ and $\overline{V^*}$ have the same Brauer character.
  Hence $\overline{V}$ and $\overline{V^*}$ are isomorphic.
Since
\[
\Hom_{\F_pH}(V,V^*)\otimes_{\F_p}\overline{\F}_p
\cong\Hom_{\overline{\F}_pH}(\overline V,\overline V^*),
\]
 there is a nonzero
$\F_p H$-map $f:V\to V^*$ over $\F_p$. Therefore $f$ is an isomorphism by the
irreducibility of $V$. %In particular, $V$ is self-dual over $\F_p$.

Define
\[
B(v,w)=f(v)(w)
\qquad (v,w\in V).
\]
Then $B$ is a nonzero bilinear form on $V$.
Moreover, $B$ is $H$-invariant.
Indeed, for $h\in H$ we have $f(hv)=h\cdot f(v)$, and therefore
\[
B(hv,hw)
=f(hv)(hw)
=(h\cdot f(v))(hw)
=f(v)(w)
=B(v,w),
\]
where the action of $H$ on $V^*$ is given by
$(h\cdot \lambda)(w)=\lambda(h^{-1}w)$.
The radical of $B$ is an $H$-submodule of $V$. Since $B\neq 0$ and $V$ is
irreducible, the radical is zero, and hence $B$ is nondegenerate.

We next show that the space of $H$-invariant bilinear forms on $V$ is
one-dimensional. To an $H$-invariant bilinear form $C$ on $V$ associate
\[
f_C:V\longrightarrow V^*,\qquad
f_C(v)(w)=C(v,w).
\]
The $H$-invariance of $C$ is equivalent to $f_C$ being an $H$-module
homomorphism. Thus
\[
\{\text{$H$-invariant bilinear forms on $V$}\}
\cong \Hom_H(V,V^*).
\]
Since $V\cong V^*$ as $\F_p H$-modules and $V$ is absolutely irreducible, Schur's lemma says
$\End_H(V)=\F_p$. Therefore the space of $H$-invariant bilinear forms on $V$ is
one-dimensional.

Now define the transpose form $B^{\mathrm t}(v,w)=B(w,v)$,
which is also $H$-invariant. Hence there exists $c\in\F_p^\times$ such that
\[B^{\mathrm t}=cB.\] Transposing twice yields $c^2=1$.
 Since $p$ is odd, $c=\pm1$. Thus $B$ is either symmetric or
skew-symmetric. In odd characteristic, a skew-symmetric bilinear form is
alternating, so it only remains to exclude the symmetric case.

Suppose that $B$ is symmetric. Since $B$ is nonzero and $p$ is odd, there
exists $v\in V$ such that \[B(v,v)\neq 0.\]
Indeed, if $B(v,v)=0$ for every $v\in V$, then polarization yields
\[
2B(v,w)
=
B(v+w,v+w)-B(v,v)-B(w,w)=0
\]
for all $v,w\in V$, and hence $B=0$, a contradiction.

By (SF), applied with $a=2$, there exists a $p'$-element $h\in H$ such that $hv=2v$.
Since $B$ is $H$-invariant,
\[
B(v,v)=B(hv,hv)=B(2v,2v)=4B(v,v).
\]
Thus $3B(v,v)=0$,
which is impossible since $p\in \{7,11\}$ and $B(v,v)\neq0$. Therefore $B$
is not symmetric, meaning that $B$ is alternating.
Thus $V$ admits a nondegenerate $H$-invariant alternating bilinear form, and
so $V$ is symplectic.
\end{proof}

We call a faithful action satisfying {\rm(SF)}, {\rm(R)},
and {\rm(I)} \emph{admissible}, and call the corresponding
pair $(H,V)$ an \emph{admissible pair}.
Condition {\rm(CF)} is not part of this definition and will
be used only when explicitly assumed.
For the local component actions constructed later, we shall
verify {\rm(SF)} directly and prove that the restriction
to the component has integer-valued Brauer character.

\begin{remark}
\label{rem:faithful-image}
Whenever a subgroup $J$ stabilizes a subspace $U$, the group
\emph{induced by $J$ on $U$} means the faithful image
\[
                     J_U=J/C_J(U)\leq\GL(U).
\]

All local uses of the admissibility conditions refer to this induced
group. For an arbitrary subgroup $J\leq H$ stabilizing a subspace $U$ of $V$,
$(J_U,U)$ is not necessarily admissible.
Whenever admissibility of an induced action is needed, it will be
proved below.
\end{remark}

%\begin{remark}
%\label{rem:set-up}
%For our purposes, we shall need the central involution $z=-I$, and
% henceforth replace $H$ by $\langle H,-I\rangle$ if necessary. Specifically,
% $\langle H,-I\rangle$ coincides with $H$ if $-I\in H$, and is isomorphic to $H\times C_2$ otherwise. Therefore, this group is rational.
%
%Also, $-I$ preserves any bilinear form, so that
%\[
%B(-Iv,-Iw)=B(v,w).
%\]
%Hence $B$ is invariant under $\langle H,-I\rangle$, and the symplectic structure is preserved. Since any $K$-invariant subspace is automatically $H$-invariant
%and $\overline V=V\otimes_{\mathbb F_p}\overline{\mathbb F}_p$ has no non-trivial $H$-invariant subspaces
% over $\overline{\mathbb F}_p$, absolute irreducibility is retained upon passing to $K=\langle H,-I\rangle$.
%
%Property (SF) is inherited from $H$, as the required $p'$-elements already lie in $H$. Rationality yields property (R) as before.
%Thus we may always assume that $z=-I\in H$.
%This reduction applies to all admissible pairs considered in the paper.
%\end{remark}

\section{Low-degree invariant polynomials}\label{sec:quartics}

Throughout, polynomials are elements of polynomial rings, rather than
merely polynomial functions on finite vector spaces.
For a finite-dimensional space $W$ of dimension $n$, the coordinate linear functions $X_1,\ldots, X_n$ form
a basis of $W^*$. Products of $r$ coordinate functions span $\Sym^r(W^*)$, the
space of formal homogeneous polynomials of degree $r$. In coordinates,
every monomial has total degree $r$. For instance, for $W=\F_p^2$,
 homogeneous quartics on \(W\), which are polynomials
 in the coordinates $X_1,X_2$ with all monomials having total degree \(4\),
are linear combinations of $X_1^4,X_1^3X_2,X_1^2X_2^2,X_1X_2^3,X_2^4$.
The word \emph{formal} matters over finite fields, since $x^p-x$ is a nonzero
polynomial inducing the zero function on $\F_p$.

We shall use invariant polynomials to obtain
restrictions on symplectic actions satisfying scalar fusion.
%We first show that scalar fusion rules out nonzero homogeneous
%invariants of certain low degrees. We then construct quartic
%invariants from paired summands and tensor decompositions,
%thereby excluding these configurations. These obstructions
%prepare the way for the refinement arguments in the next section.
%
%The purpose of this section is to derive structural restrictions
%from scalar fusion by means of invariant polynomials.
The basic observation is that an invariant homogeneous polynomial
of degree $r$ satisfies $F(gw)=F(w)$ and $F(aw)=a^rF(w)$.
If scalar fusion provides an element $g$ with $gw=aw$, then
$(a^r-1)F(w)=0$. For certain low degrees, this forces the polynomial
to vanish. We use this observation to rule out certain
decompositions of the module by constructing nonzero invariant quartics from paired summands
and tensor products whose factors carry forms of opposite types.
These
restrictions will be used in the subsequent refinement
and structural arguments.

\begin{lemma}
\label{lem:quartic-scalar-principle}
Let $p$ be an odd prime, $W=\F_p^n$ and $L\leq\operatorname{GL}(W)$.
Suppose that for every $0\ne w\in W$ there is $g\in L$ with $gw=2w$.
If $1\leq r<p$ and $2^r\not\equiv1\pmod p$, then $L$  admits no nonzero invariant
homogeneous polynomial of degree $r$ on $W$.
\end{lemma}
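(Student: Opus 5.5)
The plan is to pass from the formal polynomial to the function it induces on $W$, use scalar fusion to show that this function vanishes identically, and then use the degree bound $r<p$ to recover vanishing of the formal polynomial. Let $F\in\Sym^r(W^*)$ be a nonzero $L$-invariant homogeneous polynomial of degree $r$; we aim for a contradiction.

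\textbf{Step 1: the induced function vanishes.} Evaluation at points of $W$ is compatible with the action of $L$ on $\Sym^r(W^*)$. So invariance of $F$ as a formal polynomial gives $F(gw)=F(w)$ for all $g\in L$ and $w\in W$, as functions. Homogeneity of degree $r$ gives $F(aw)=a^rF(w)$ for every $a\in\F_p$; this is a formal identity, so it also holds after evaluation. Now fix $0\neq w\in W$ and choose $g\in L$ with $gw=2w$. Then
\[
F(w)=F(gw)=F(2w)=2^rF(w),
\]
so $(2^r-1)F(w)=0$ in $\F_p$. Since $2^r\not\equiv1\pmod p$, we get $F(w)=0$. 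At $w=0$ we have $F(0)=0$ because $r\geq1$. Hence the function induced by $F$ on $W=\F_p^n$ is identically zero.

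\textbf{Step 2: back to the formal polynomial.} This is the only step where the formal/functional distinction stressed at the start of Section~\ref{sec:quartics} matters, and it is where the hypothesis $r<p$ is used. Every monomial of $F$ has total degree $r<p$, so each variable $X_i$ occurs with exponent at most $r\le p-1$. I would invoke the standard fact that a polynomial in $\F_p[X_1,\dots,X_n]$ with all partial degrees at most $p-1$ that vanishes on all of $\F_p^n$ is the zero polynomial. If one wants it self-contained, it follows by induction on $n$. Write $F=\sum_{j\le p-1}F_j(X_1,\dots,X_{n-1})X_n^j$. For each fixed point $(x_1,\dots,x_{n-1})$, the result is a polynomial in $X_n$ of degree $\le p-1$ with $p$ roots, so all coefficients $F_j(x_1,\dots,x_{n-1})$ vanish. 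The induction hypothesis then gives $F_j=0$ formally for every $j$. Hence $F=0$ in $\Sym^r(W^*)$, contradicting $F\ne0$.

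\textbf{Main obstacle.} No step is genuinely hard. The one point needing care is Step~2: vanishing as a function must be upgraded to vanishing as an element of $\Sym^r(W^*)$, and this relies on $r<p$. Without that bound, $x^p-x$ shows the argument would fail.
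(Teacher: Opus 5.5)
Your proof is correct and follows essentially the same route as the paper: evaluate the invariant at a vector $w$ with $gw=2w$ to get $(2^r-1)F(w)=0$, and use $r<p$ with the induction on $n$ to pass between vanishing as a function and vanishing as a formal polynomial. The only difference is ordering: the paper proves the vanishing lemma first and then picks a $w$ with $F(w)\neq0$, while you show the induced function is zero and then apply the lemma. That difference is immaterial.
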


\begin{proof} We first prove, by induction on $n$, that a polynomial
$F\in\F_p[X_1,\ldots,X_n]$ with $\deg_{X_i}F<p$ for every $i$
is zero if it vanishes at every point of $\F_p^n$.
Assume that $F$ vanishes at every such point.
For $n=1$, this is the well known fact that a
nonzero polynomial of degree less than $p$ cannot have $p$ distinct roots. For $n>1$, write
\[
F=\sum_{j=0}^{p-1}
F_j(X_1,\ldots,X_{n-1})X_n^j.
\]
Fix $(a_1,\ldots,a_{n-1})\in\mathbb F_p^{\,n-1}$. Then
$F(a_1,\ldots,a_{n-1},X_n)$
is a polynomial in $X_n$ of degree less than $p$ which vanishes at every
element of $\mathbb F_p$. Hence it is the zero polynomial, and so
$F_j(a_1,\ldots,a_{n-1})=0$
for every $j$. Each $F_j$ has degree less than $p$ in every remaining
variable, so the induction hypothesis gives $F_j=0$ and hence $F=0$.
In particular, a nonzero homogeneous polynomial of positive degree
$r<p$ has a nonzero value somewhere in $\F_p^n$. Since $F(0)=0$,
such a value occurs at a nonzero vector.

Now suppose that $F$ is a nonzero $L$-invariant homogeneous polynomial of
degree $r<p$ on $W$. By the first part, choose
$w\in W$ with $F(w)\ne0$. By assumption, there exists an element $g\in L$ such that $gw=2w$.
Since $F$ is $L$-invariant and homogeneous of degree $r$, we obtain
\[
F(w)=F(gw)=F(2w)=2^rF(w).
\]
Since $F(w)\ne0$, it follows that
$2^r\equiv1\pmod p$, a contradiction.
\end{proof}

\begin{remark}
For every prime $p\geq7$, scalar fusion excludes invariant quartics, since
$4<p$ and $2^4-1=15\ne0$ in $\mathbb F_p$.
The proof of Lemma~\ref{lem:quartic-scalar-principle} uses only
fusion with the scalar $2$; it does not require irreducibility
or the other admissibility conditions.
\end{remark}

Useful decompositions of a module are often visible only
after extending the field. We recall the standard fact
that taking invariants commutes with extension of the
ground field. Applied to homogeneous polynomials, this
allows us to construct invariants over an algebraic closure
and descend their existence to the original field.

\begin{lemma}
\label{lem:quartic-descent}
Let $k\subseteq K$ be a field extension, let $W$ be a finite-dimensional $k$-vector space, and let $L\leq\GL_k(W)$ be a
finite group. Write $W_K:=K\otimes_k W$, with the induced action of $L$ given
by
$g(a\otimes w)=a\otimes gw$, where $g\in L,\ a\in K$ and $w\in W$.
Then, for every integer $r\geq0$, the natural base-change map induces
an isomorphism of $K$-vector spaces
\[
\left(\operatorname{Sym}_k^r(W^*)^L\right)\otimes_k K
\cong
\operatorname{Sym}_K^r\bigl((W_K)^*\bigr)^L.
\]
Consequently,
\[
\operatorname{Sym}_K^r\bigl((W_K)^*\bigr)^L\neq0
\quad\Longleftrightarrow\quad
\operatorname{Sym}_k^r(W^*)^L\neq0.
\]
Equivalently, a nonzero $L$-invariant homogeneous polynomial of
degree $r$ exists on $W_K$ over $K$ if and only if one exists on
$W$ over $k$.
\end{lemma}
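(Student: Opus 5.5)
The plan is to reduce the statement to the general principle that taking fixed points of a finite group commutes with flat base change. The fact that $K$ is flat over $k$ is automatic, since $K$ is a $k$-vector space.

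First I identify the relevant $k$-spaces and their base changes. Since $W$ is finite-dimensional, the natural map $K\otimes_k W^*\to (W_K)^*$ is an isomorphism of $K$-spaces. Taking the coordinate functions $X_1,\ldots,X_n$ as a basis of $W^*$, their images form the dual basis of $W_K$. Consequently $K\otimes_k\Sym_k^r(W^*)\cong\Sym_K^r((W_K)^*)$, because both sides have the degree-$r$ monomials in $X_1,\ldots,X_n$ as a basis. These isomorphisms are compatible with the $L$-action, since $L$ acts on $W_K$ by $1\otimes g$, and hence on the dual and on symmetric powers by $1\otimes g$ as well. So it suffices to show that for a finite-dimensional $k$-representation $M=\Sym_k^r(W^*)$ of $L$, the natural map
\[
K\otimes_k M^L\longrightarrow (K\otimes_k M)^L
\]
is an isomorphism.

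For this I describe $M^L$ as a kernel. Let $g_1,\ldots,g_m$ be the elements of $L$ (a generating set would also do), and define the $k$-linear map
\[
\Phi:M\to M^{m},\qquad \Phi(x)=(g_1x-x,\ldots,g_mx-x).
\]
Then $M^L=\ker\Phi$. Similarly, $(K\otimes_k M)^L$ is the kernel of $1\otimes\Phi$. Tensoring the exact sequence $0\to\ker\Phi\to M\to M^m$ over the field $k$ with $K$ preserves exactness, because every $k$-module is free and hence flat. Therefore $K\otimes_k\ker\Phi=\ker(1\otimes\Phi)$, which is the required isomorphism.

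Concretely, this says that the dimension of the kernel of a matrix with entries in $k$ does not change under field extension, since rank is computed by minors. Note that no averaging over $L$ is used, so the argument is valid even when $p$ divides $|L|$.

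The consequence about nonvanishing follows by comparing dimensions: $\dim_K$ of the right-hand side equals $\dim_k\Sym_k^r(W^*)^L$, so one side is zero exactly when the other is.

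The only point requiring care is to keep formal polynomials separate from polynomial functions. This is handled automatically, because we work throughout with $\Sym^r$ as a vector space with its monomial basis and never evaluate polynomials at points. I expect no genuine obstacle. The step most prone to a slip is checking that the identification $K\otimes_k\Sym_k^r(W^*)\cong\Sym_K^r((W_K)^*)$ is $L$-equivariant, and this is immediate on monomials.
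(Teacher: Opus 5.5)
Your proposal is correct and follows essentially the same route as the paper: both write $M^L$ as the kernel of $F\mapsto\bigl((g-1)F\bigr)_{g\in L}$, show that this kernel commutes with scalar extension, and identify $M\otimes_k K$ with $\Sym_K^r\bigl((W_K)^*\bigr)$ compatibly with the $L$-actions. The only difference is that the paper justifies the kernel step by the invariance of rank under field extension (via minors) and a dimension count, whereas you cite flatness of $K$ over $k$. You also give the minor argument as the concrete version, so the two proofs agree.
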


\begin{proof}
Set $M=\operatorname{Sym}^r(W^*)$, with the action
$(g\cdot F)(w)=F(g^{-1}w)$.  Thus $F$ is $L$-invariant if and only if
$g\cdot F-F=0$ for every $g\in L$.
Define
\[
 \Phi:M\longrightarrow\bigoplus_{g\in L}M,
 \qquad F\longmapsto\bigl((g-1)F\bigr)_{g\in L}.
\]
Then $M^L=\ker\Phi$. Write $M_K=M\otimes_k K$
and let
\[
\Phi_K=\Phi\otimes_k\operatorname{id}_K:
M_K\longrightarrow\bigoplus_{g\in L}M_K
\]
be the map obtained by extending scalars from $\Phi$.
In bases over $k$, the matrix representing $\Phi$ has entries in $k$. Its rank is unchanged after extending scalars to $K$,
since the rank of a matrix is the largest size of a nonzero minor, and a minor over $k$
is nonzero over $k$ exactly when it is nonzero over $K$.
The natural inclusion
\[
 (\ker\Phi)\otimes_k K\ \subseteq\ \ker(\Phi_K)
\]
is therefore an equality, by comparing the $K$-dimensions of the two sides.
Note that extension of coefficients gives a natural isomorphism
$M\otimes_k K
\cong
\operatorname{Sym}_K^r\bigl((W_K)^*\bigr)$.
This isomorphism is compatible with the $L$-actions, since extension of
coefficients commutes with substitution by the matrices of
elements of $L$. Under this identification, we have
\[
\ker\Phi_K
=
\operatorname{Sym}_K^r\bigl((W_K)^*\bigr)^L, \qquad
\bigl(\operatorname{Sym}_k^r(W^*)^L\bigr)\otimes_k K
\cong
\operatorname{Sym}_K^r\bigl((W_K)^*\bigr)^L.
\]
Finally, the last assertion follows from the fact that a
finite-dimensional $k$-space is nonzero if and only if its
scalar extension to $K$ is nonzero.
\end{proof}

\begin{remark}
The proof of Lemma \ref{lem:quartic-descent} uses only linear equations, so it applies even when
$\operatorname{char}(k)$ divides $|L|$ and requires no finiteness
or Galois assumption on $K/k$.
%Every invariant over $K$ is a $K$-linear combination of invariants
%defined over $k$.% Thus the lemma descends the existence of a
%nonzero invariant, although an individual invariant over $K$
%need not have coefficients in $k$.
Every $L$-invariant homogeneous polynomial of degree $r$ over
$K$ is a $K$-linear combination of $L$-invariant homogeneous
polynomials of degree $r$ defined over $k$.
Thus the lemma guarantees the existence of a nonzero invariant homogeneous polynomial
of the same degree over $k$, but does not assert that a given
invariant over $K$ itself has coefficients in $k$.
\end{remark}

The following lemma allows us to combine compatible homogeneous
polynomials on a family of summands permuted by the group
into an invariant polynomial on the whole module.
We shall use this construction to obtain quartic
obstructions from decompositions of the module.

\begin{lemma} \label{lem:quartic-packet-sum}
Let $L$ act linearly on a finite-dimensional vector space $W$
over a field $K$. Suppose that
\[
W=W_0\oplus\bigoplus_{i\in I}P_i,
\]
where $|I|<\infty$, each $P_i\neq 0$, $W_0$ is $L$-invariant, and $L$ permutes the subspaces $P_i$.
Write $gi$ for the index determined by $gP_i=P_{gi}$.
For each $i\in I$, let $Q_i$ be a homogeneous polynomial
of degree $r\geq1$ on $P_i$. Assume that
$Q_{gi}\circ(g|_{P_i})=Q_i$, where $g\in L$ and $i\in I$,
as an identity of polynomials.
Then the polynomial on $W$ defined by
\[
Q\left(w_0+\sum_{i\in I}v_i\right)
=\sum_{i\in I}Q_i(v_i),
\qquad
w_0\in W_0,\quad v_i\in P_i,
\]
is $L$-invariant and homogeneous of degree $r$.
Moreover, if some $Q_j$ is nonzero, then $Q$ is nonzero.
\end{lemma}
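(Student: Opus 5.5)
The plan is to make the polynomial $Q$ precise as a formal object, then check homogeneity, invariance, and nonvanishing in turn. To do this, choose a basis of $W$ adapted to the decomposition $W=W_0\oplus\bigoplus_{i\in I}P_i$. Let $\pi_i:W\to P_i$ be the projection along $W_0\oplus\bigoplus_{j\ne i}P_j$. Then each $Q_i\in\Sym^r(P_i^*)$ pulls back along $\pi_i$ to the formal polynomial $Q_i\circ\pi_i\in\Sym^r(W^*)$. In coordinates, this simply reads $Q_i$ in the coordinates belonging to $P_i$. We set
\[
Q=\sum_{i\in I}Q_i\circ\pi_i .
\]
This is a formal polynomial, homogeneous of degree $r$, and it agrees with the displayed definition. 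Homogeneity of degree $r$ is therefore immediate.

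For invariance, fix $g\in L$ and an arbitrary vector $w=w_0+\sum_i v_i$. Because $W_0$ is $L$-invariant and $gP_i=P_{gi}$, we get
\[
gw=gw_0+\sum_{i}g v_i ,
\]
where $gw_0\in W_0$ and $gv_i\in P_{gi}$. Since $i\mapsto gi$ is a bijection of $I$, this shows $\pi_{gi}(gw)=g|_{P_i}(\pi_i w)$. Hence
\[
Q(gw)=\sum_i Q_{gi}\bigl(g|_{P_i}\pi_i w\bigr)=\sum_i Q_i(\pi_i w)=Q(w),
\]
using the hypothesis $Q_{gi}\circ(g|_{P_i})=Q_i$ and reindexing the sum. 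Since we work formally, this argument must be run at the level of polynomials rather than values. Concretely, I would verify the identity $\pi_{gi}\circ g=g|_{P_i}\circ\pi_i$ of linear maps $W\to P_{gi}$. Composition of a formal polynomial with a linear map is a well-defined operation on $\Sym^r$, so the computation above becomes an identity in $\Sym^r(W^*)$, namely $Q\circ g=Q$.

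Nonvanishing is the only point needing care, because the sum could conceivably cancel. It cannot, because the summands $Q_i\circ\pi_i$ involve pairwise disjoint sets of coordinate variables, those of $P_i$. Each has positive degree $r\geq 1$, so it has no constant part. Consequently, the monomials of distinct summands are distinct, and no cancellation can occur. In particular, if $Q_j\ne 0$, then the monomials of $Q_j\circ\pi_j$ survive in $Q$, and $Q\neq0$. The one thing to keep straight is that all statements are identities of formal polynomials, which matters in positive characteristic. This is handled by the basis-adapted coordinates above, so I do not expect a real obstacle here.
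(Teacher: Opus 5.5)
Your proposal is correct and takes essentially the same route as the paper. You define $Q$ through the projections $\pi_i$, obtain invariance because $g$ carries the $P_i$-component to the $P_{gi}$-component and then reindex, and get nonvanishing from the $Q_j$-summand. The paper phrases the last step as $Q|_{P_j}=Q_j\neq 0$, which is equivalent to your disjoint-variables observation, and your identity $\pi_{gi}\circ g=g|_{P_i}\circ\pi_i$ simply makes explicit the formal-polynomial bookkeeping the paper leaves implicit.
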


\begin{proof}
The direct-sum decomposition provides linear projections $W\to P_i$,
so the displayed expression defines a homogeneous polynomial of
degree $r$. For $g\in L$, the component $v_i$ is carried into $P_{gi}$,
and consequently
\[
 Q\left(gw_0+\sum_i gv_i\right)
 =\sum_iQ_{gi}(gv_i)=\sum_iQ_i(v_i).
\]
Thus $Q$ is $L$-invariant.
Finally, if  $Q_j\neq0$ for some $j\in I$, then $Q|_{P_j}=Q_j\neq0$, hence $Q\neq 0$.
\end{proof}

We now apply the preceding construction to a decomposition
into paired summands. The bilinear pairing on each pair
defines a quadratic polynomial that may change sign when
the two summands are interchanged. Squaring removes this
sign, and summing over the pairs produces a nonzero
invariant quartic polynomial. Lemma~\ref{lem:quartic-scalar-principle}, applied with $r=4$,
will therefore exclude such decompositions when $p\geq7$
and scalar fusion holds.

\begin{lemma}
\label{lem:quartic-dual-pairs}
Let $K$ be a field of characteristic different from \(2\), let $(W,B)$ be a
symplectic space, and let $L\leq\operatorname{Sp}(W,B)$. Suppose that
\[
 W=W_0\oplus\bigoplus_{i\in I}(U_i\oplus V_i),
\]
where $1\leq |I|<\infty $, $W_0$ is $L$-invariant, each $U_i,V_i$ is
nonzero, and the pairing $B:U_i\times V_i\to K$ is nondegenerate.
Suppose also that $L$ permutes the unordered pairs $\{U_i,V_i\}$.
Then
\begin{equation} \label{equ:quadratic}
Q\left(w_0+\sum_{i\in I}(u_i+v_i)\right)
=\sum_{i\in I}B(u_i,v_i)^2,
\qquad
w_0\in W_0,\quad u_i\in U_i,\quad v_i\in V_i,
\end{equation}
defines a nonzero $L$-invariant homogeneous polynomial
of degree 4 on $W$.
\end{lemma}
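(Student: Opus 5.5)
The plan is to apply Lemma~\ref{lem:quartic-packet-sum} with $r=4$, taking the summands $P_i=U_i\oplus V_i$ and the quartics $Q_i(u_i+v_i)=B(u_i,v_i)^2$. Three things need checking: that $L$ permutes the subspaces $P_i$, that $Q_{gi}\circ(g|_{P_i})=Q_i$ holds as a polynomial identity, and that some $Q_j$ is nonzero. Homogeneity of degree $4$ is immediate, since $(u,v)\mapsto B(u,v)$ is a bilinear form in the coordinates of $P_i$, hence a quadratic polynomial, and its square is quartic.

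For the permutation property, $g\in L$ sends the unordered pair $\{U_i,V_i\}$ to some pair $\{U_{gi},V_{gi}\}$. Hence $gP_i=P_{gi}$, so $L$ permutes the $P_i$, and $W_0$ is $L$-invariant by hypothesis. For invariance there are two cases.
\begin{enumerate}
\item If $gU_i=U_{gi}$ and $gV_i=V_{gi}$, then
\[
Q_{gi}(gu+gv)=B(gu,gv)^2=B(u,v)^2,
\]
because $L\leq\Sp(W,B)$.
\item If $gU_i=V_{gi}$ and $gV_i=U_{gi}$, then the $U_{gi}$-component of $g(u+v)$ is $gv$ and the $V_{gi}$-component is $gu$, so
\[
Q_{gi}(g(u+v))=B(gv,gu)^2=\bigl(-B(gu,gv)\bigr)^2=B(u,v)^2,
\]
using that $B$ is alternating, hence skew-symmetric.
\end{enumerate}
Both computations are identities of bilinear expressions in the coordinates, so they hold as polynomial identities and not merely as functions. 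This matters because Lemma~\ref{lem:quartic-packet-sum} is stated formally. The squaring is exactly what absorbs the sign in the second case.

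Nonvanishing is the one step needing a little care over small fields. Since the pairing $U_j\times V_j\to K$ is nondegenerate and $U_j\neq0$, choose bases so that some $u\in U_j$ and $v\in V_j$ satisfy $B(u,v)=1$. Then the quadratic form $q(u,v)=B(u,v)$ is a nonzero polynomial on $P_j$, with a monomial $X_aY_b$ having coefficient $1$. The polynomial ring over a field is a domain, so $q^2\neq0$ as a formal polynomial. Lemma~\ref{lem:quartic-packet-sum} then gives that $Q$ is a nonzero $L$-invariant homogeneous quartic on $W$, which agrees with the formula in \eqref{equ:quadratic}. The hypothesis $\operatorname{char}K\neq2$ is used only to guarantee that alternating forms are skew-symmetric with a genuine sign; in fact alternating already implies skew-symmetric in any characteristic, so no step here is really an obstacle.
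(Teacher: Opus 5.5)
Your proof is correct and follows the paper's argument essentially verbatim: define $Q_i(u+v)=B(u,v)^2$ on $P_i=U_i\oplus V_i$, check invariance in the two cases (when $g$ interchanges the roles of $U$ and $V$, squaring removes the sign coming from skew-symmetry), and conclude with Lemma~\ref{lem:quartic-packet-sum}. The only difference is cosmetic: for nonvanishing, the paper simply evaluates $Q_i(u+v)=1$ for a pair with $B(u,v)=1$, whereas you use a monomial with coefficient $1$ and the fact that the polynomial ring is an integral domain; your side remark that $\operatorname{char}K\neq 2$ is not actually needed is also accurate.
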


\begin{proof} On $P_i:=U_i\oplus V_i$, define
\[
 q_i(u+v)=B(u,v),\qquad Q_i(u+v)=B(u,v)^2,
\]  with $u\in U_i$ and $v\in V_i$.
The bilinearity of $B$ shows that $q_i$ is homogeneous of degree 2 and $Q_i$
is homogeneous of degree 4.

Suppose that $g\in L$ carries $\{U_i,V_i\}$ to $\{U_j,V_j\}$.
If $gU_i=U_j$ and $gV_i=V_j$, then
\[
 q_j(g(u+v))=B(gu,gv)=B(u,v)=q_i(u+v).
\]
If $gU_i=V_j$ and $gV_i=U_j$, then the $U_j$-component is $gv$ and
the $V_j$-component is $gu$. Since $B$ is alternating,
\[
 q_j(g(u+v))=B(gv,gu)=B(v,u)=-B(u,v).
\]
In either case, $Q_j(g(u+v))=Q_i(u+v)$. These calculations are
polynomial identities, since they use only bilinearity and form
preservation.

Choose $0\ne u\in U_i$. The nondegeneracy of $B$ shows that there is some $v\in V_i$ with
$B(u,v)\ne0$; rescale $v$ so that $B(u,v)=1$.
Then $Q_i(u+v)=1$, so $Q_i$ is nonzero.
Since $L$ permutes the summands $P_i$ and the polynomials $Q_i$
satisfy the required compatibility condition,
Lemma~\ref{lem:quartic-packet-sum} shows that (\ref{equ:quadratic}) is a nonzero
$L$-invariant homogeneous polynomial of degree 4 on $W$.
\end{proof}

\begin{remark} Transitivity on the pairs in Lemma \ref{lem:quartic-dual-pairs}  is unnecessary. If
$V_i$ is identified with $U_i^*$ via $v\mapsto B(-,v)$, the local
formula is precisely $Q_i(u+f)=f(u)^2$.
\end{remark}

The second construction concerns tensor products whose factors
carry nondegenerate bilinear forms of opposite types, one
symmetric and the other alternating. It produces a nonzero
homogeneous polynomial of degree $4$ whose transformation rule
ensures invariance under tensor products of similitudes with
reciprocal multipliers. Such transformations arise naturally
from the action on an isotypic component.
Together with Lemma~\ref{lem:quartic-scalar-principle},
this construction gives another restriction on symplectic
actions satisfying scalar fusion when $p\geq7$.

\begin{lemma}
\label{lem:quartic-tensor}
Let $K$ be a field of characteristic different from \(2\), and let
$X$ and $Y$ be finite-dimensional $K$-vector spaces with
$\dim X,\dim Y\geq2$.
Suppose that $X$ admits a nondegenerate symmetric bilinear form $b$
and that $Y$ admits a nondegenerate alternating bilinear form $c$.

For $v=\sum_i x_i\otimes y_i\in X\otimes_K Y$, let
$A_v:Y\to X$ be the linear map defined by
\[
 A_v(z)=\sum_i c(y_i,z)x_i.
\]
Let $A_v^\dagger:X\to Y$ be the unique linear map satisfying
\[
 c(z,A_v^\dagger x)=b(A_vz,x)
 \qquad\text{for all }z\in Y,\ x\in X.
\]
Then $Q(v)=\operatorname{tr}\bigl((A_v^\dagger A_v)^2\bigr)$
defines a nonzero homogeneous polynomial of degree 4 on
$X\otimes_K Y$.

Moreover, suppose that $g\in\operatorname{GL}(X)$ and
$h\in\operatorname{GL}(Y)$ satisfy
\[
 b(gx,gx')=\alpha b(x,x'),
 \qquad
 c(hy,hy')=\beta c(y,y')
\]
for all $x,x'\in X$ and $y,y'\in Y$, where
$\alpha,\beta\in K^\times$. Then
\[
 Q((g\otimes h)v)=(\alpha\beta)^2Q(v)
 \qquad\text{for all }v\in X\otimes_K Y.
\]
In particular, $Q$ is invariant under every such transformation
$g\otimes h$ for which $\alpha\beta=1$.
\end{lemma}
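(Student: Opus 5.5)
Three things have to be checked: that $Q$ is a homogeneous quartic, that it is nonzero, and that it transforms by $(\alpha\beta)^2$. The strategy is to treat $v\mapsto A_v$ and $v\mapsto A_v^\dagger$ as linear maps into $\Hom(Y,X)$ and $\Hom(X,Y)$. Then $Q$ is the trace of a product of four matrices whose entries are linear in the coordinates of $v$, and all three claims reduce to matrix identities.

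\emph{Well-definedness and degree.} The expression $\sum_i c(y_i,z)x_i$ is bilinear in $(x_i,y_i)$. It therefore factors through $X\otimes_K Y$, so $v\mapsto A_v$ is a well-defined linear map, independent of the chosen representation of $v$. Since $c$ is nondegenerate, $A_v^\dagger$ exists and is unique; in fact $A_v^\dagger=C^{-1}A_v^{\mathrm t}b$ in terms of Gram matrices, up to transposition conventions. Hence $v\mapsto A_v^\dagger$ is also linear. In fixed bases the entries of $A_v$ and $A_v^\dagger$ are linear forms in the coordinates of $v$. So $\operatorname{tr}((A_v^\dagger A_v)^2)$ is a formal homogeneous polynomial of degree $4$ in those coordinates.

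\emph{Nonvanishing.} I would exhibit a polynomial specialization with a nonzero value.
\begin{itemize}
\item Choose $x_0\in X$ with $b(x_0,x_0)\ne0$. This is possible by the polarization argument of Lemma~\ref{lem:symplectic}, since the characteristic is not $2$.
\item Choose $y_0,y_1\in Y$ with $c(y_0,y_1)=1$.
\item Take $v=x_0\otimes y_0$. Then $A_vz=c(y_0,z)x_0$.
\item From $c(z,A_v^\dagger x)=c(y_0,z)b(x_0,x)$ we get $A_v^\dagger x=-b(x_0,x)y_0$.
\item Hence $A_v^\dagger A_v z=-c(y_0,z)b(x_0,x_0)y_0$. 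This is a rank-one map sending $y_0$ to $0$, because $c(y_0,y_0)=0$; it is therefore nilpotent and the trace of its square vanishes.
\end{itemize}
So a pure tensor fails, and I would instead use $v=x_0\otimes y_0+x_1\otimes y_1$ with $x_0,x_1$ orthogonal and anisotropic. Working in the plane spanned by $y_0,y_1$, the map $A_v^\dagger A_v$ is then diagonal up to sign, with entries involving $b(x_0,x_0)$ and $b(x_1,x_1)$. The trace of its square is a nonzero multiple of $b(x_0,x_0)b(x_1,x_1)$. This is where $\dim X\ge2$ is used.

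This explicit computation is the step I expect to take the most care with signs. If the two contributions threatened to cancel, I would fall back on using $x_1=x_0$ scaled suitably, or pass to an algebraic closure. Nonvanishing over $\overline K$ implies nonvanishing as a formal polynomial, since the coefficients lie in $K$.

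\emph{Transformation rule.} First compute $A_{(g\otimes h)v}$. We have
\[
\sum_i c(hy_i,z)gx_i=\beta\sum_i c(y_i,h^{-1}z)gx_i,
\]
so $A_{(g\otimes h)v}=\beta\, gA_vh^{-1}$. For the adjoint, the similitude relations give $g^\dagger=\alpha h\cdots$ in the appropriate sense: the adjoint of $g$ with respect to $b$ is $\alpha g^{-1}$, and the adjoint of $h^{-1}$ with respect to $c$ is $\beta^{-1}h$. The adjoint of a product is the reversed product of the adjoints, and adjoints here are taken between the $b$- and $c$-structures. Together these give
\[
A_{(g\otimes h)v}^\dagger=\beta\cdot\beta^{-1}h\,A_v^\dagger\,\alpha g^{-1}=\alpha\, hA_v^\dagger g^{-1}.
\]
I will verify this directly from the defining identity of $\dagger$ rather than through the general formula. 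It then follows that $A^\dagger_{(g\otimes h)v}A_{(g\otimes h)v}=\alpha\beta\, hA_v^\dagger A_vh^{-1}$. Squaring and taking the trace gives $Q((g\otimes h)v)=(\alpha\beta)^2Q(v)$. All steps are polynomial identities, so the relation holds formally, and the final invariance claim is immediate.
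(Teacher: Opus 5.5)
Your proposal is correct and follows essentially the paper's route: $v\mapsto A_v$ is linear by bilinearity, the adjoint is computed directly as $A'=\beta gAh^{-1}$ and $A'^\dagger=\alpha hA^\dagger g^{-1}$, and nonvanishing uses the same test vector $x_0\otimes y_0+x_1\otimes y_1$, with $x_0,x_1$ orthogonal and anisotropic and $c(y_0,y_1)=1$. One correction to your description of that last step: $T=A_v^\dagger A_v$ is not diagonal on $\langle y_0,y_1\rangle$ but interchanges the two lines, via $Ty_0=b(x_1,x_1)y_1$ and $Ty_1=-b(x_0,x_0)y_0$, and it kills the $c$-orthogonal complement of that plane; hence $T^2$ is the scalar $-b(x_0,x_0)b(x_1,x_1)$ on the plane and $Q(v)=-2\,b(x_0,x_0)b(x_1,x_1)\neq0$, so no cancellation can occur, and your fallback $x_1=\lambda x_0$ would only produce a pure tensor, where $Q$ vanishes.
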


\begin{proof}
We first check that the construction is well defined.
The map
\[
 X\times Y\longrightarrow \operatorname{Hom}_K(Y,X),
 \qquad
 (x,y)\longmapsto \bigl(z\longmapsto c(y,z)x\bigr)
\]
is bilinear, and so induces a linear map
\[
 X\otimes_K Y\longrightarrow \operatorname{Hom}_K(Y,X),
 \qquad v\longmapsto A_v.
\]
Hence $A_v$ does not depend on the expression chosen for $v$ as a
sum of pure tensors, i.e., $A_v$ is well defined.
The nondegeneracy of  $c$ guarantees that $A_v^\dagger$ is well defined.
Moreover, both $A_v$ and $A_v^\dagger$ depend linearly on $v$.
After choosing bases of $X$ and $Y$, their matrix entries are
therefore linear polynomials in the coordinates of $v$.
It follows that $Q(v)=\operatorname{tr}\bigl((A_v^\dagger A_v)^2\bigr)$
is a homogeneous polynomial of degree 4, provided that it
is not the zero polynomial. We will show below that \(Q\) is not identically zero.

We next establish the transformation rule. Let
$A=A_v$ and $A'=A_{(g\otimes h)v}$.
The similitude identity for $h$ gives
$ c(hy,z)=\beta c(y,h^{-1}z)$.
Hence, writing $v=\sum_i x_i\otimes y_i$, we obtain
\[
 \begin{aligned}
 A'z
 &=\sum_i c(hy_i,z)gx_i\\
 &=\beta g\left(\sum_i c(y_i,h^{-1}z)x_i\right)
 =\beta gAh^{-1}z.
 \end{aligned}
\]
Thus $A'=\beta gAh^{-1}$.
For every $z\in Y$ and $x\in X$, we now have
\[
 \begin{aligned}
 b(A'z,x)
 &=\beta b(gAh^{-1}z,x)\\
 &=\alpha\beta b(Ah^{-1}z,g^{-1}x)\\
 &=\alpha\beta c(h^{-1}z,A^\dagger g^{-1}x)\\
 &=\alpha c(z,hA^\dagger g^{-1}x).
 \end{aligned}
\]
By uniqueness of the adjoint, we obtain $A'^\dagger=\alpha hA^\dagger g^{-1}$,
so $A'^\dagger A'
 =\alpha\beta\,h(A^\dagger A)h^{-1}$.
Therefore,  \[
 (A'^\dagger A')^2
 =(\alpha\beta)^2h(A^\dagger A)^2h^{-1}.
\]
Taking traces yields
\[
 Q((g\otimes h)v)=(\alpha\beta)^2Q(v).
\]
In particular, $Q$ is preserved whenever $\alpha\beta=1$.

It remains to prove that $Q$ is nonzero. We construct a vector
at which it has a nonzero value.
We may choose $e_1\in X$ with $b(e_1,e_1)\ne0$. This is possible
since if $b(x,x)=0$ for every $x$, then
\[
 2b(x,x')
 =b(x+x',x+x')-b(x,x)-b(x',x')=0
\]
for all $x,x'$. As $\operatorname{char}K\ne2$, this means
$b=0$, contradicting the nondegeneracy of $b$.

The orthogonal complement of $Ke_1$ is nondegenerate and has
positive dimension, since $\dim X\ge2$. Applying the same
argument in this complement, choose $e_2\in X$ such that
\[
 b(e_1,e_2)=0,\qquad
 a:=b(e_1,e_1)\ne0,\qquad
 d:=b(e_2,e_2)\ne0.
\]
By the nondegeneracy of $c$, we may also choose $f_1,f_2\in Y$ with $c(f_1,f_2)=1$.
Then  $c(f_2,f_1)=-1$. Set \[
 v_0=e_1\otimes f_1+e_2\otimes f_2,
 \qquad A=A_{v_0}.
\]
Since $c$ is alternating, we have $Af_1=-e_2$ and $Af_2=e_1$.
For arbitrary $z\in Y$, the orthogonality of $e_1,e_2$ yields
\[
 \begin{aligned}
 b(Az,e_1)&=a\,c(f_1,z)=c(z,-af_1), \qquad
 b(Az,e_2)&=d\,c(f_2,z)=c(z,-df_2).
 \end{aligned}
\]
Thus $
 A^\dagger e_1=-af_1$  and $A^\dagger e_2=-df_2$.
Denoting $T=A^\dagger A$, we obtain
\[Tf_1=df_2,\qquad Tf_2=-af_1, \qquad  T^2f_1=-adf_1,\qquad T^2f_2=-adf_2.
\]
The plane $F=\langle f_1,f_2\rangle_K$ is nondegenerate for $c$,
so
$Y=F\oplus F^\perp$.
If $z\in F^\perp$, then both $c(f_1,z)$ and $c(f_2,z)$ vanish, and therefore
$Az=0$. Thus $T$ vanishes on $F^\perp$, while $T^2$ acts as
$-ad$ times the identity on $F$. It follows that
\[
 Q(v_0)=\operatorname{tr}(T^2)=-2ad\ne0.
\]
This proves that $Q$ is nonzero.
\end{proof}

\begin{remark}
\label{rem:quartic-tensor-packets}
The construction in Lemma~\ref{lem:quartic-tensor} extends to
permuted tensor summands as follows. If the form types are
presented in the opposite order, interchange the factors.
Suppose that
\[
 W=W_0\oplus\bigoplus_{i\in I}(X_i\otimes_K Y_i),
\]
where $I$ is finite and nonempty, the group preserves $W_0$
and permutes the tensor summands, and each pair $(X_i,Y_i)$
satisfies the hypotheses of Lemma~\ref{lem:quartic-tensor}.
Choose nondegenerate symmetric forms $b_i$ on $X_i$ and
nondegenerate alternating forms $c_i$ on $Y_i$, and denote
the corresponding quartics by $Q_i$.

Assume that, whenever $\ell\in L$ sends
$X_i\otimes_K Y_i$ to $X_j\otimes_K Y_j$, its restriction
to $X_i\otimes_K Y_i$ has the form $g\otimes h$,
where $g:X_i\to X_j$ and $h:Y_i\to Y_j$ are linear
isomorphisms satisfying
\[
 b_j(gx,gx')=\alpha b_i(x,x'),\qquad
 c_j(hy,hy')=\beta c_i(y,y'),\qquad
 \alpha\beta=1
\]
for some $\alpha,\beta\in K^\times$.
The same adjoint calculation as in
Lemma~\ref{lem:quartic-tensor}, using the source and target
forms, yields
\[
 Q_j((g\otimes h)v)=Q_i(v).
\]
Thus Lemma~\ref{lem:quartic-packet-sum} shows that
\[
 Q\left(w_0+\sum_{i\in I}v_i\right)
   =\sum_{i\in I}Q_i(v_i),
 \qquad w_0\in W_0,\quad v_i\in X_i\otimes_K Y_i,
\]
is a nonzero invariant quartic on the whole space.
This compatibility is an explicit hypothesis; an abstract
vector-space tensor decomposition alone does not imply it.
Finally, the nonvanishing proof in
Lemma~\ref{lem:quartic-tensor} does not require the symmetric
form to contain a hyperbolic plane.
\end{remark}

The next lemma allows us to apply the polynomial obstruction
on the fixed space of a $p'$-element. This fixed space inherits
a nondegenerate symplectic form, and condition {\rm(CF)}
ensures that the action of the centralizer satisfies {\rm(SF)}.

\begin{lemma}\label{lem:localization}
Let $W$ be a symplectic space over $\F_p$, where $p$ is odd,
and let $J\leq\Sp(W)$ satisfy {\rm(CF)}.
Let $x\in J$ be a $p'$-element, and write $W_1=\Fix_W(x)$.
Then the following statements hold:
\begin{enumerate}
  \item The restriction of the symplectic form to $W_1$ is
nondegenerate.
  \item  $C_J(x)$ preserves $W_1$.
  \item The action of $C_J(x)$ on $W_1$ satisfies {\rm(SF)}
and admits no nonzero invariant homogeneous polynomial
of degree $1\leq r<p$ with $2^r\not\equiv1\pmod p$.
\end{enumerate}
\end{lemma}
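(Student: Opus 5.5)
The plan is to prove the three items in order. The only nontrivial input is coprimality of $o(x)$ with $p$, which gives a Fitting-type decomposition of $W$ under $x$ that is orthogonal for the form.

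For (1), I would use that $x$ is a $p'$-element. Then $\langle x\rangle$ acts semisimply on $W$ by Maschke's theorem, so
\[
W=W_1\oplus [W,x],\qquad [W,x]=(x-1)W.
\]
Here both summands are $x$-invariant. Concretely, $W_1=\ker(x-1)$ and $(x-1)W$ is the image, and these are complementary because $x-1$ acts invertibly on $(x-1)W$ by semisimplicity.

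Next I check that $W_1\perp (x-1)W$. For $w\in W_1$ and $u\in W$, invariance of $B$ under $x$ gives
\[
B(w,xu)=B(x^{-1}w,u)=B(w,u),
\]
so $B(w,(x-1)u)=0$. Thus the decomposition is orthogonal. Any vector in the radical of $B|_{W_1}$ is then orthogonal to both $W_1$ and $(x-1)W$, hence to all of $W$. It is therefore zero by nondegeneracy of $B$ on $W$. This proves (1).

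Item (2) is immediate. If $h\in C_J(x)$ and $w\in W_1$, then $xhw=hxw=hw$, so $hw\in W_1$.

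For (3), I first verify (SF) for the induced action. Take $0\ne w\in W_1$ and $a\in\F_p^\times$. Condition {\rm(CF)} for $J$ on $W$, applied to the $p'$-element $x$ and the vector $w\in\Fix_W(x)$, provides a $p'$-element $h\in C_J(x)$ with $hw=aw$. Its image in the group induced on $W_1$ (in the sense of Remark~\ref{rem:faithful-image}) is again a $p'$-element and sends $w$ to $aw$, so (SF) holds. In particular, taking $a=2$, every nonzero $w\in W_1$ has some element of $C_J(x)_{W_1}\le\GL(W_1)$ mapping it to $2w$. Lemma~\ref{lem:quartic-scalar-principle}, applied to $L=C_J(x)_{W_1}$ acting on $W_1\cong\F_p^{\dim W_1}$, then excludes nonzero invariant homogeneous polynomials of degree $r$ with $1\le r<p$ and $2^r\not\equiv1\pmod p$.

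Invariance under $C_J(x)$ and under its faithful image on $W_1$ coincide, since the kernel acts trivially on polynomials on $W_1$. If $W_1=0$ the statements are vacuous.

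The only point needing care is the orthogonal decomposition in (1), specifically that $W_1$ and $(x-1)W$ really are complementary. This uses $p\nmid o(x)$ essentially: without it, $x-1$ could be nilpotent and $W_1$ could be totally isotropic. Everything else is direct from {\rm(CF)} and Lemma~\ref{lem:quartic-scalar-principle}.
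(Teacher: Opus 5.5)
Your proof is correct. Items (2) and (3) match the paper's argument; the difference is in item (1).

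The paper extends scalars to $\overline{\F}_p$ and diagonalizes $x$. It then observes that $\overline B(\overline W_\lambda,\overline W_\mu)=0$ unless $\lambda\mu=1$, so $\overline W_1$ is orthogonal to every other eigenspace, and finally descends nondegeneracy from $\overline W_1$ to $W_1$. You instead stay over $\F_p$. Maschke gives $W=\ker(x-1)\oplus(x-1)W$, and the identity $B(w,(x-1)u)=0$ for $w\in W_1$ gives orthogonality. This avoids both the field extension and the descent step. It also shows exactly where $p\nmid o(x)$ enters. The inclusion $(x-1)W\subseteq W_1^\perp$ holds for any isometry, and a dimension count makes it an equality. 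So nondegeneracy of $W_1$ is equivalent to $W_1\cap(x-1)W=0$, which is precisely the semisimplicity of $x$ at the eigenvalue $1$.

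The paper's eigenspace argument yields a little more, namely the duality between the $\lambda$- and $\lambda^{-1}$-eigenspaces, and it parallels the weight-space arguments used elsewhere, e.g. in Lemma~\ref{lem:quartic-abelian-normal}. That extra information is not needed for this lemma.
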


\begin{proof} For (1), let $B$ be the $J$-invariant nondegenerate
alternating form on $W$. Write $\overline{W}=\overline{\F}_p\otimes_{\F_p}W$, and denote the scalar extension of $B$ by $\overline{B}$.
Since $x$ has order prime to $p$,
the action of $x$ on $\overline{W}$ is diagonalizable, and we may write
\[
    \overline{W}=\bigoplus_{\lambda}\overline{W}_{\lambda},
\]
where $\overline{W}_{\lambda}$ is the $\lambda$-eigenspace
of $x$. In particular, $\overline{W}_{1}
    =\overline{\F}_p\otimes_{\F_p}W_1$.

For $u\in\overline{W}_{\lambda}$ and
$v\in\overline{W}_{\mu}$,  the $J$-invariance of $B$ yields
\[
    \overline{B}(u,v)
    =\overline{B}(xu,xv)
    =\lambda\mu\,\overline{B}(u,v).
\]
Hence $\overline{B}(\overline{W}_{\lambda},
\overline{W}_{\mu})=0$ unless $\lambda\mu=1$.
In particular, $\overline{W}_1$ is orthogonal to every
$\overline{W}_{\lambda}$ with $\lambda\neq1$.
If $v\in\overline{W}_1$ is orthogonal to
$\overline{W}_1$, it is therefore orthogonal to all of
$\overline{W}$.
Since $\overline{B}$ is nondegenerate, we obtain
$v=0$. Therefore the restriction of $\overline{B}$ to
$\overline{W}_1$ is nondegenerate, and thus
the restriction of $B$ to $W_1$ is nondegenerate.

For (2), if $c\in C_J(x)$ and $v\in W_1$, then
\[
    x(cv)=c(xv)=cv.
\]
Thus $cv\in W_1$, and so $C_J(x)$ preserves $W_1$.

For (3), let $0\neq v\in W_1$ and
$a\in\F_p^\times$. By {\rm(CF)}, there exists a
$p'$-element $c\in C_J(x)$ such that $cv=av$.
Thus the action of $C_J(x)$ on $W_1$ satisfies {\rm(SF)}.
The polynomial assertion follows from Lemma~\ref{lem:quartic-scalar-principle}.
\end{proof}

\section{Abelian normal subgroups and refinement}

We now turn the polynomial constructions into information
about normal subgroups. In this section, we apply the quartic
obstructions to show that every abelian normal subgroup of
an admissible group is an elementary abelian $2$-group.
If such a subgroup acts nonscalarly, we pass to a common
eigenspace and the faithful action induced by its stabilizer.
We show that this refinement preserves admissibility and,
when assumed, condition $\boldsymbol{(R^-)}$ (defined below).
Iterating this process yields an admissible pair in which
every abelian normal subgroup acts by scalars.
We call such a pair \emph{$2$-terminal}.

\subsection{Abelian normal subgroups}

\begin{lemma}\label{lem:quartic-abelian-normal}
Let $p$ be an odd prime, let $W$ be a nonzero finite-dimensional
symplectic space over $\F_p$, with symplectic form $B$, and let
$L\leq\operatorname{Sp}(W)$ act irreducibly on $W$.
If $L$ has an abelian normal subgroup $A$ containing an element
of order greater than $2$, then $L$ admits a nonzero
invariant homogeneous quartic on $W$.

Consequently, if $p\geq7$ and every nonzero $w\in W$
is $L$-conjugate to $2w$, then every abelian normal
subgroup of $L$ is an elementary abelian 2-subgroup.
\end{lemma}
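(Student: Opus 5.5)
The plan is to extend scalars to $\overline{\F}_p$, split $\overline W=\overline{\F}_p\otimes_{\F_p}W$ into weight spaces for $A$, and pair each weight space with its dual weight space via the symplectic form. Lemma~\ref{lem:quartic-dual-pairs} then gives a nonzero invariant quartic over $\overline{\F}_p$, and Lemma~\ref{lem:quartic-descent} brings it down to $\F_p$.

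\emph{Step 1: $A$ is a $p'$-group.} First I will check that $O_p(A)=1$. It is characteristic in $A$, hence normal in $L$, and $\Fix_W(O_p(A))\neq0$ since a $p$-group acting on a nonzero $\F_p$-space has nonzero fixed points. This fixed space is $L$-invariant, so irreducibility forces $\Fix_W(O_p(A))=W$. Faithfulness of $L\le\Sp(W)$ then gives $O_p(A)=1$. Hence $A$ is an abelian $p'$-group and acts diagonalizably on $\overline W$:
\[
\overline W=\bigoplus_{\chi}\overline W_\chi ,
\]
where $\chi$ runs over the linear characters $A\to\overline{\F}_p^\times$ with $\overline W_\chi\ne0$.

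\emph{Step 2: action of $L$ and orthogonality.} Since $A\unlhd L$, each $g\in L$ maps $\overline W_\chi$ onto $\overline W_{g\chi}$, where $(g\chi)(a)=\chi(g^{-1}ag)$. This action on characters commutes with inversion and preserves the order of $\chi$.

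As in the proof of Lemma~\ref{lem:localization}(1), $A$-invariance of $\overline B$ gives
\[
\overline B(\overline W_\chi,\overline W_\psi)=0 \quad\text{unless }\chi\psi=1 .
\]
So, by nondegeneracy of $\overline B$, the pairing $\overline W_\chi\times\overline W_{\chi^{-1}}\to\overline{\F}_p$ is nondegenerate; in particular $\overline W_{\chi^{-1}}\ne0$ whenever $\overline W_\chi\neq0$.

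\emph{Step 3: a weight of order $>2$ exists.} Let $a\in A$ have order greater than $2$. Since $A$ acts faithfully, $a^2$ acts nontrivially, so some occurring $\chi$ has $\chi(a)^2\ne1$, i.e.\ $\chi^2\ne1$.

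\emph{Step 4: the quartic.} Let $\overline W_0$ be the sum of the $\overline W_\chi$ with $\chi^2=1$; it is $L$-invariant. The remaining occurring characters fall into nonempty $L$-stable unordered pairs $\{\chi,\chi^{-1}\}$ with $\chi\ne\chi^{-1}$. Choosing one representative per pair, set $U_i=\overline W_\chi$ and $V_i=\overline W_{\chi^{-1}}$. Then $L$ permutes the unordered pairs $\{U_i,V_i\}$, and the hypotheses of Lemma~\ref{lem:quartic-dual-pairs} hold over $K=\overline{\F}_p$, which has characteristic $\neq2$. That lemma gives a nonzero $L$-invariant quartic on $\overline W$, and Lemma~\ref{lem:quartic-descent} with $r=4$ yields one on $W$ over $\F_p$.

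\emph{Step 5: the consequence.} If $p\ge7$ and every $0\ne w$ satisfies $gw=2w$ for some $g\in L$, then Lemma~\ref{lem:quartic-scalar-principle} with $r=4<p$ forbids invariant quartics, because $2^4-1=15\not\equiv0\pmod p$. So no abelian normal subgroup has an element of order $>2$, i.e.\ each is an elementary abelian $2$-group.

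The main obstacle I expect is Step 1. The weight-space decomposition needs $A$ to act semisimply, and this is exactly where irreducibility is used; everything after that is bookkeeping with Lemmas~\ref{lem:quartic-dual-pairs} and~\ref{lem:quartic-descent}.
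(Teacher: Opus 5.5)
Your proposal is correct and follows the paper's proof essentially step for step. You rule out $p$-elements of $A$ via fixed points and irreducibility, decompose $\overline W$ into $A$-weight spaces and pair $\overline W_\chi$ with $\overline W_{\chi^{-1}}$, and use faithfulness to find a weight with $\chi^2\ne1$; then you apply Lemma~\ref{lem:quartic-dual-pairs}, descend with Lemma~\ref{lem:quartic-descent}, and finish with Lemma~\ref{lem:quartic-scalar-principle}. One wording nit: it is the collection of unordered pairs $\{\chi,\chi^{-1}\}$ that is $L$-stable, not each individual pair, as your next sentence correctly says.
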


\begin{proof}
We first show that $p$ does not divide $|A|$.
Let $P$ be the Sylow $p$-subgroup of $A$.
Since $A$ is abelian, $P$ is characteristic in $A$ and hence
normal in $L$.

The action of $P$ on $W$ has a nonzero fixed vector, so
\[W^P=\{w\in W:uw=w\text{ for every }u\in P\}\ne\{0\}.\]
The normality of $P$ implies that $W^P$ is $L$-invariant.
Since $W$ is an irreducible $\F_p L$-module, we obtain $W^P=W$.
Hence $P$ acts trivially on $W$, and $P=1$ since
$L\leq\operatorname{Sp}(W)$ acts faithfully.
Thus $p\nmid|A|$.

Write $K=\overline{\F}_p$ and $W_K=K\otimes_{\F_p}W$,
and denote the extended symplectic form also by $B$.
Since $A$ is abelian and has order coprime to $p$,  the elements of $A$
are simultaneously diagonalizable on $W_K$. We therefore have
\[
 W_K=\bigoplus_{\lambda\in\Lambda}W_\lambda,
 \qquad
 W_\lambda=
 \{w\in W_K:aw=\lambda(a)w\text{ for all }a\in A\},
\]
where $\Lambda$ consists of the characters
$\lambda:A\to K^\times$ with $W_\lambda\ne0$.

For $u\in W_\lambda$, $v\in W_\mu$, and $a\in A$,
the invariance of $B$ yields
\[
 B(u,v)=B(au,av)=\lambda(a)\mu(a)B(u,v).
\]
Consequently,
\[
 B(W_\lambda,W_\mu)=0
 \qquad\text{unless }\mu=\lambda^{-1}.
\]
Since $B$ is nondegenerate on $W_K$, it follows that
$\lambda^{-1}\in\Lambda$ whenever $\lambda\in\Lambda$,
and that the pairing between $W_\lambda$ and
$W_{\lambda^{-1}}$ is nondegenerate.
Indeed, a vector in $W_\lambda$ orthogonal to
$W_{\lambda^{-1}}$ is orthogonal to every weight space,
and hence is zero; the same argument applies to the other
factor.

Observe that some $\lambda\in\Lambda$ satisfies
$\lambda\ne\lambda^{-1}$. Otherwise, every weight would take
its values in $\{1,-1\}$, and hence $a^2$ would act trivially
on $W_K$ for every $a\in A$.
Since $A$ acts faithfully on $W_K$, this would imply that
$a^2=1$ for every $a\in A$, contrary to the hypothesis that
$A$ contains an element of order greater than 2.

Let
\[
 W_0=\bigoplus_{\lambda\in\Lambda,~
                         \lambda=\lambda^{-1}}W_\lambda,
\]
and choose one representative $\lambda_i$ from each
unordered pair $\{\lambda,\lambda^{-1}\}$ with
$\lambda\ne\lambda^{-1}$. Then
\[
 W_K=W_0\oplus
 \bigoplus_{i\in I}
 \bigl(W_{\lambda_i}\oplus W_{\lambda_i^{-1}}\bigr),
\]
where $I$ is nonempty.
For $g\in L$, the normality of $A$ yields
\[
 gW_\lambda=W_{\lambda^g},
 \qquad
 \lambda^g(a)=\lambda(g^{-1}ag).
\]
Moreover,
\[
 (\lambda^{-1})^g=(\lambda^g)^{-1}.
\]
Thus $W_0$ is $L$-invariant, and $L$ permutes the unordered
pairs $\{W_{\lambda_i},W_{\lambda_i^{-1}}\}$.
All the hypotheses of Lemma~\ref{lem:quartic-dual-pairs}
are therefore satisfied.
Hence it yields a nonzero $L$-invariant homogeneous polynomial
of degree $4$ on $W_K$. By Lemma~\ref{lem:quartic-descent},
such a polynomial also exists on $W$ over $\F_p$.

Finally, suppose that $p\geq7$ and that every nonzero vector
$w\in W$ can be mapped to $2w$ by an element of $L$.
Since $4<p$ and
\[
 2^4-1=15\not\equiv0\pmod p,
\]
Lemma~\ref{lem:quartic-scalar-principle} excludes a nonzero
invariant homogeneous polynomial of degree 4.
By the first part, no abelian normal subgroup of $L$ can
contain an element of order greater than 2.
Every such subgroup is therefore an elementary abelian 2-subgroup.
\end{proof}

The following result collects the quartic obstructions in
the form used later. It excludes certain configurations of
paired constituents, certain tensor decompositions, and
abelian normal subgroups containing elements of order
greater than 2.

\begin{corollary}\label{lem:quartics}
Let $p\geq7$ be a prime, let $(W,B)$ be a nonzero
finite-dimensional symplectic space over $\F_p$, and let
$L\leq\operatorname{Sp}(W,B)$ act irreducibly on $W$.
Suppose that, for every $0\ne w\in W$, there exists $g\in L$
such that $gw=2w$.

Write $K=\overline{\F}_p$ and $W_K=K\otimes_{\F_p}W$, and
extend $B$ to $W_K$ by $K$-bilinearity.
Then $W_K$ admits no nonzero $L$-invariant homogeneous
polynomial of degree 4. Consequently, the following
statements hold.

\begin{enumerate}
\item
The space $W_K$ admits no $L$-preserved decomposition
satisfying the hypotheses of
Lemma~\ref{lem:quartic-dual-pairs}.

\item
The space $W_K$ admits neither a tensor decomposition
with an $L$-action by reciprocal similitudes as in
Lemma~\ref{lem:quartic-tensor}, nor a decomposition
into a nonempty permuted family of such tensor summands
as in Remark~\ref{rem:quartic-tensor-packets}.

\item
Every abelian normal subgroup of $L$ is an elementary
abelian $2$-subgroup.
\end{enumerate}
\end{corollary}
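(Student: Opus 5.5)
The main claim is that $W_K$ carries no nonzero $L$-invariant homogeneous quartic. Items (1)--(3) then follow by contradiction, since each excluded configuration produces such a quartic by an earlier lemma.

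For the main claim, apply Lemma~\ref{lem:quartic-scalar-principle} to $L$ acting on $W$ over $\F_p$ with $r=4$. The hypotheses hold because $p\ge7$ gives $4<p$, and $2^4-1=15$ is not divisible by $p$, as $p\notin\{3,5\}$. The scalar-fusion hypothesis with the scalar $2$ is exactly the assumption on $L$. So $\Sym^4_{\F_p}(W^*)^L=0$. By Lemma~\ref{lem:quartic-descent} with $k=\F_p$ and $K=\overline{\F}_p$, we get $\Sym^4_K((W_K)^*)^L=0$, which is the assertion over $W_K$.

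For item (1), suppose $W_K$ had an $L$-preserved decomposition as in Lemma~\ref{lem:quartic-dual-pairs}. The group $L$ preserves the extended form $B$ on $W_K$, since $L\le\Sp(W,B)$ and invariance is a polynomial identity that survives scalar extension. The characteristic is not $2$. That lemma would then give a nonzero invariant quartic on $W_K$, a contradiction.

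Item (2) is the same argument. A tensor decomposition $W_K=X\otimes_K Y$ on which each element of $L$ acts as $g\otimes h$ with reciprocal multipliers $\alpha\beta=1$ gives, by Lemma~\ref{lem:quartic-tensor}, a nonzero quartic $Q$ with $Q((g\otimes h)v)=Q(v)$, so $Q$ is $L$-invariant. In the permuted-family version, Remark~\ref{rem:quartic-tensor-packets} combined with Lemma~\ref{lem:quartic-packet-sum} gives a nonzero invariant quartic. Either way this contradicts the main claim.

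Item (3) is the consequence clause of Lemma~\ref{lem:quartic-abelian-normal}, whose hypotheses ($p\ge7$, $L$ irreducible in $\Sp(W)$, and every $w$ sent to $2w$) are exactly ours. Directly: an abelian normal subgroup with an element of order greater than $2$ gives an invariant quartic on $W$, which the first paragraph excludes.

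There is no real obstacle. The one point needing care is that the invariant quartics in (1) and (2) live over $K$ rather than $\F_p$. That is why the main claim is transported to $W_K$ with Lemma~\ref{lem:quartic-descent} instead of applying Lemma~\ref{lem:quartic-scalar-principle} over $K$, where scalar fusion only concerns $\F_p$-rational vectors.
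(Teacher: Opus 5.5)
Your proposal is correct and follows essentially the same route as the paper: Lemma~\ref{lem:quartic-scalar-principle} with $r=4$ excludes invariant quartics on $W$, and Lemma~\ref{lem:quartic-descent} transfers this to $W_K$. Items (1)--(2) then follow because each configuration would produce a nonzero invariant quartic on $W_K$ via Lemmas~\ref{lem:quartic-dual-pairs}, \ref{lem:quartic-tensor} and Remark~\ref{rem:quartic-tensor-packets}, and item (3) follows from Lemma~\ref{lem:quartic-abelian-normal}.
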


\begin{proof}
Since $p\geq7$, we have $4<p$ and
$2^4-1=15\ne0$ in $\F_p$. Lemma~\ref{lem:quartic-scalar-principle} therefore
excludes nonzero $L$-invariant homogeneous polynomials
of degree 4 on $W$.
By Lemma~\ref{lem:quartic-descent}, no such polynomial
exists on $W_K$ either.

Statements~{\rm(1)} and~{\rm(2)} now follow from
Lemma~\ref{lem:quartic-dual-pairs},
Lemma~\ref{lem:quartic-tensor}, and
Remark~\ref{rem:quartic-tensor-packets}, since
each of the indicated decompositions would produce
a nonzero $L$-invariant quartic on $W_K$.

For~{\rm(3)}, let $A$ be an abelian normal subgroup of $L$.
Since $L$ admits no nonzero invariant quartic on $W$,
Lemma~\ref{lem:quartic-abelian-normal} excludes elements
of order greater than 2 in $A$.
Therefore $A$ is an elementary abelian $2$-group.
\end{proof}

In view of Corollary~\ref{lem:quartics}{\rm(3)}, we distinguish
the case in which every abelian normal subgroup acts by scalars
(and hence does not split the space into smaller common eigenspaces).

\begin{definition}\label{def:2-terminal}
An admissible pair $(L,W)$ is called \emph{$2$-terminal}
if every normal elementary abelian $2$-subgroup of $L$
acts by scalars on $W$.
\end{definition}

\subsection{Reduction to a 2-terminal pair}

For a central involution $z\in Z(J)$ of a finite group $J$, write
\[
\Irr^-(J,z)=\{\chi\in\Irr(J):\chi(z)=-\chi(1)\},
\]
and say that $(J,z)$ satisfies $\boldsymbol{(R^-)}$ if every
character in $\Irr^-(J,z)$ is rational-valued. This condition concerns ordinary characters lying above the
central sign character. Under refinement, {\rm(R)} is checked
directly on the module, while $\boldsymbol{(R^-)}$ is preserved
by ordinary Clifford correspondence.

In this subsection, we show that every admissible pair can be reduced to a
$2$-terminal admissible pair. If a normal elementary abelian
$2$-subgroup acts nonscalarly, we pass to a common eigenspace
and the faithful action induced by its stabilizer. This
refinement preserves admissibility and strictly decreases
the dimension, so iteration must terminate. We also show
that condition $\boldsymbol{(R^-)}$, when assumed, is preserved throughout
the process.

The following lemma provides the rationality needed when passing to a common eigenspace and its stabilizer.

\begin{lemma}\label{lem:negative-clifford}
Let $A\lhd L$ be an elementary abelian $2$-subgroup, $\lambda\in\Irr(A)$, and $I=I_L(\lambda)$.
If $L$ is rational, then every character in $\Irr(I\mid\lambda)$ is rational-valued.

The same conclusion holds if there is a central involution
$z\in A\cap Z(L)$ such that $\lambda(z)=-1$ and
$(L,z)$ satisfies $\boldsymbol{(R^-)}$.
\end{lemma}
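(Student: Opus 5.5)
We argue via Clifford correspondence combined with a Galois-action argument. Let $\psi\in\Irr(I\mid\lambda)$ and let $\sigma\in\operatorname{Gal}(\Q_{|L|}/\Q)$. Since $A$ is an elementary abelian $2$-group, $\lambda$ takes values in $\{\pm1\}$, so $\lambda^\sigma=\lambda$. Consequently $\psi^\sigma$ again lies in $\Irr(I\mid\lambda)$.

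By the Clifford correspondence, induction $\psi\mapsto\psi^L$ is a bijection from $\Irr(I\mid\lambda)$ onto $\Irr(L\mid\lambda)$. Galois action commutes with induction, so $(\psi^\sigma)^L=(\psi^L)^\sigma$. Under the first hypothesis $L$ is rational, hence $(\psi^L)^\sigma=\psi^L$. Since $\psi^\sigma$ and $\psi$ both lie over $\lambda$ and induce to the same irreducible character, injectivity of the Clifford correspondence gives $\psi^\sigma=\psi$. As $\sigma$ was arbitrary, $\psi$ is rational-valued.

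For the second statement, the only change is in justifying $(\psi^L)^\sigma=\psi^L$. Since $z\in A$ is central in $L$ and $\lambda(z)=-1$, every $\chi\in\Irr(L\mid\lambda)$ satisfies $\chi_A=e\sum_{g}\lambda^{g}$ with each conjugate $\lambda^g$ agreeing with $\lambda$ on the central element $z$. Hence $\chi(z)=-\chi(1)$, that is, $\chi\in\Irr^-(L,z)$. By $\boldsymbol{(R^-)}$, $\chi$ is rational-valued. Applying this to $\chi=\psi^L$, the argument of the previous paragraph goes through verbatim. We also note that $z\in I$, since $I$ contains $A$.

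The only point requiring care is checking that $\psi^\sigma$ still lies over $\lambda$, so that injectivity of induction on $\Irr(I\mid\lambda)$ applies. This holds because $\lambda$ is $\{\pm1\}$-valued, and restriction commutes with $\sigma$, so $(\psi^\sigma)_A=(\psi_A)^\sigma$ contains $\lambda^\sigma=\lambda$. No further obstacle is expected.
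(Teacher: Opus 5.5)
Your proposal is correct and follows essentially the same argument as the paper: $\lambda$ is $\{\pm1\}$-valued and hence Galois-fixed, so the Clifford correspondence commutes with the Galois action, and injectivity then gives $\psi^\sigma=\psi$. The second case reduces to the first through $\Irr(L\mid\lambda)\subseteq\Irr^-(L,z)$, which the paper also uses; you additionally spell out why this inclusion holds.
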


\begin{proof}
Since $A$ is an elementary abelian $2$-group, $\lambda$ takes values in $\{\pm1\}$ and hence is fixed by every cyclotomic
Galois automorphism. Consequently, the Clifford correspondence
\[
    \Irr(I\mid\lambda)\longrightarrow\Irr(L\mid\lambda),
    \qquad \psi\longmapsto\psi^L
\]
commutes with the action of the cyclotomic Galois group.
More precisely, for every $\psi\in\Irr(I\mid\lambda)$ and every
cyclotomic Galois automorphism $\sigma$, we have
$\psi^\sigma\in\Irr(I\mid\lambda)$ and $(\psi^\sigma)^L=(\psi^L)^\sigma$.

Fix $\psi\in\Irr(I\mid\lambda)$.
If $L$ is rational, then $\psi^L$ is rational-valued.
Thus, for every cyclotomic Galois automorphism $\sigma$,
\[
    (\psi^\sigma)^L=(\psi^L)^\sigma=\psi^L.
\]
The injectivity of the Clifford correspondence yields
$\psi^\sigma=\psi$, so $\psi$ is rational-valued.

For the second assertion, since $z\in Z(L)$ and
$\lambda(z)=-1$, we have
\[
    \Irr(L\mid\lambda)\subseteq\Irr^-(L,z).
\]
Hence $\psi^L$ is rational-valued by $\boldsymbol{(R^-)}$,
and the same argument shows that $\psi$ is rational-valued.
\end{proof}

\begin{lemma}\label{lem:sign-refinement}
Let $(L,W)$ be an admissible pair over $\mathbb F_p$,
where $p$ is an odd prime, and let $A\lhd L$
be an elementary abelian $2$-subgroup whose action on $W$ is nonscalar.
Let $U$ be a nonzero common eigenspace of $A$ on $W$,
and write
\[
 L_U=N_L(U)/C_L(U).
\]
Then $(L_U,U)$ is admissible and $0<\dim U<\dim W$.
If, in addition, $z=-I_W\in L$ and $(L,z)$ satisfies
$\boldsymbol{(R^-)}$, then $(L_U,-I_U)$ also satisfies
$\boldsymbol{(R^-)}$.
\end{lemma}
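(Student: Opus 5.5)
Since $A$ is an elementary abelian $2$-group of order prime to $p$ acting on $W$, we diagonalize it over $\F_p$ itself. Its characters take values in $\{\pm1\}\subseteq\F_p$, so
\[
W=\bigoplus_{\lambda\in\Lambda}W_\lambda
\]
as a decomposition over $\F_p$. Because $A$ acts nonscalarly, $|\Lambda|\ge2$, and hence $0<\dim U<\dim W$ for $U=W_\lambda$. The group $L$ permutes the $W_\lambda$ transitively by irreducibility, and $N_L(U)=I_L(\lambda)=:I$ is the inertia group. By Clifford theory, $U$ is an irreducible $\F_pI$-module and $W\cong\operatorname{Ind}_I^L U$.

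\textbf{Verifying (SF) for $(L_U,U)$.} Let $0\ne u\in U$ and $a\in\F_p^\times$. Property (SF) for $L$ gives a $p'$-element $h$ with $hu=au$. Then $hu\in U\cap hU$, so $hU=U$ because the eigenspaces are independent. Hence $h\in I$, and its image in $L_U$ is again a $p'$-element.

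\textbf{Verifying (I).} Each character $\lambda$ is $\pm1$-valued, so $\lambda=\lambda^{-1}$. By the orthogonality computation in the proof of Lemma~\ref{lem:quartic-abelian-normal}, $B(W_\lambda,W_\mu)=0$ for $\mu\ne\lambda$. Therefore $B|_U$ is nondegenerate, alternating, and $I$-invariant.

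For absolute irreducibility, we induce over $\overline{\F}_p$. Suppose $\overline U$ were reducible; a proper submodule $X$ would give $\operatorname{Ind}_I^L X$, a proper nonzero submodule of $\overline W$. This contradicts the absolute irreducibility of $W$. Alternatively, we can rerun the Frobenius-orbit argument of Lemma~\ref{lem:symplectic} once (R) is known.

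\textbf{Verifying (R), the main obstacle.} We must show that $I$, acting on $U$, has integer-valued Brauer character. We cannot simply quote rationality of $L$, since the admissible pair $(L,W)$ only has (R), not an ordinary rational group structure.

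The plan is a direct module-level Galois argument. For a $p'$-element $x\in I$ and $\sigma$ with $\zeta\mapsto\zeta^b$, we want $\BrTr_U(x^b)=\BrTr_U(x)$. The induction formula expresses $\BrTr_W(x)$ as a sum over the $\lambda$-orbit, and (R) for $W$ gives that $\BrTr_W$ is Galois-stable.

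First, irreducible Brauer constituents of $\overline W|_I$ lying over $\lambda$ correspond bijectively to constituents of $\overline W$, by the Clifford correspondence for Brauer characters. Since $\lambda$ is Galois-fixed, $\sigma$ permutes $\operatorname{IBr}(I\mid\lambda)$ compatibly with induction. Second, the Brauer character $\varphi$ of $\overline W$ is irreducible and $\sigma$-fixed by (R). Its Clifford correspondent $\theta$, the Brauer character of $\overline U$, is the unique constituent of $\varphi|_I$ over $\lambda$. Since $\lambda^\sigma=\lambda$, this forces $\theta^\sigma=\theta$. Hence $\theta$ is rational, and being an algebraic integer, it is integer-valued. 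This gives (R), so the faithful image $L_U$ makes $(L_U,U)$ admissible.

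\textbf{Condition $\boldsymbol{(R^-)}$.} Assume $z=-I_W\in L$ and $(L,z)$ satisfies $\boldsymbol{(R^-)}$. Then $z\in I$ and $z$ maps to $-I_U$. Since $z$ is central, $zA$ is again elementary abelian normal. We may replace $A$ by $\langle A,z\rangle$ without changing the eigenspace $U$, and $\lambda$ extends with $\lambda(z)=-1$.

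Take $\chi\in\Irr^-(L_U,-I_U)$ and inflate it to $I$. Then $\chi$ restricted to $C_L(U)\supseteq\ker\lambda\cap A$ is trivial on $C_I(U)$, and $\chi(z)=-\chi(1)$. Since $A$ acts on $U$ by $\lambda$ and $A\cap C_L(U)=\ker\lambda$, the restriction $\chi|_A$ is a multiple of $\lambda$. Hence $\chi\in\Irr(I\mid\lambda)$, and Lemma~\ref{lem:negative-clifford} (second assertion) shows that $\chi$ is rational-valued. This gives $\boldsymbol{(R^-)}$ for $(L_U,-I_U)$.
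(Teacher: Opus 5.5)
Your treatment of the dimension bound, (SF), the nondegeneracy of $B|_U$, absolute irreducibility and $\boldsymbol{(R^-)}$ matches the paper's proof. The gap is in your verification of (R).

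\textbf{The false step.} Your argument needs a cyclotomic automorphism $\sigma$ to permute $\operatorname{IBr}(I\mid\lambda)$, so that $\theta^\sigma$ is again an irreducible Brauer character over $\lambda$ occurring in $\varphi^\sigma|_I=\varphi|_I$. This is false for Brauer characters in general. Only the automorphisms $\zeta\mapsto\zeta^{\pm p^k}$, which come from Frobenius twists and duality, are guaranteed to permute irreducible Brauer characters.

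For example, take $\SL_2(7)$ in characteristic $7$, and write $\varphi_{k+1}$ for the Brauer character of the $k$-th symmetric power of the natural module. The automorphism $\zeta_8\mapsto\zeta_8^5$, which sends $\sqrt2\mapsto-\sqrt2$, sends $\varphi_6$ to $\varphi_2+\varphi_4$. It sends $\varphi_2$ to $\varphi_6-\varphi_4$, which is not a Brauer character at all.

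So from $\varphi^\sigma=\varphi$ and the uniqueness of $\theta$ inside $\varphi|_I$ you cannot conclude $\theta^\sigma=\theta$. That uniqueness concerns decompositions into irreducible Brauer characters, and $\theta^\sigma$ need not be one.

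\textbf{The missing idea.} You need to isolate $\theta$ by a linear formula that visibly commutes with $\sigma$; this is what the paper does. Let $t\in I$ be a $p'$-element. Since $t$ normalizes $A$, the group $A\langle t\rangle$ is a $p'$-group. Hence the restriction of $W$ to it lifts to characteristic zero, with ordinary character equal to its Brauer character. In this lift, $t$ commutes with $e_\lambda=|A|^{-1}\sum_{a\in A}\lambda(a)a$, and the lifted $\lambda$-weight space reduces to $U$. Therefore
\[
\BrTr_U(t)=\frac{1}{|A|}\sum_{a\in A}\lambda(a)\BrTr_W(ta).
\]
Each $ta$ is a $p'$-element, so by (R) for $W$ the right-hand side is rational. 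Since $\BrTr_U(t)$ is an algebraic integer, it is an integer. This argument needs no Galois action on $\operatorname{IBr}$ at all.
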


\begin{proof}
Let $k=\F_p$, and let $B$ be the $L$-invariant symplectic
form on $W$.
Since $A$ is an elementary abelian 2-subgroup and
$p$ is odd, we have
\begin{equation} \label{equ:orth-T}
 W=\bigoplus_{\mu\in\Omega}W_\mu,
 \qquad
 W_\mu=\{w\in W:aw=\mu(a)w\text{ for all }a\in A\},
\end{equation}
where $\Omega$ consists of the characters
$\mu:A\to\{1,-1\}$ occurring in $W$.
Write $U=W_\lambda$.

If $\mu\ne\nu$, choose $a\in A$ with
$\mu(a)\nu(a)=-1$.
For $u\in W_\mu$ and $v\in W_\nu$, the $L$-invariance of $B$ yields
\[
 B(u,v)=B(au,av)=\mu(a)\nu(a)B(u,v)=-B(u,v),
\]
so $B(u,v)=0$.
Thus the decomposition (\ref{equ:orth-T}) is orthogonal, and each
$W_\mu$ is nondegenerate.

Now the normality of $A$ implies that $L$ permutes these
weight spaces $W_\mu$, and the irreducibility of $W$ makes this
action transitive since the sum over any orbit
is $L$-invariant.
As $A$ is nonscalar, at least two weights occur.
Consequently,
\[
 0<\dim U<\dim W.
\]
Write $I=N_L(U)=I_L(\lambda)$.
The faithful induced group $L_U=I/C_I(U)$ preserves
the nondegenerate alternating form $B|_U$.

We verify the absolute irreducibility of $U$. Let $\overline{k}$ be an algebraic closure of $k$.
Assume that $D$ is a nonzero proper $I$-invariant subspace
of $\overline{k}\otimes_k U$. Then, for a set $T$
of representatives of the left cosets of $I$ in $L$,
\[
 \bigoplus_{t\in T}tD
\]
is a nonzero proper $L$-invariant subspace of
$\overline{k}\otimes_k W$.
Indeed, its summands lie in distinct weight spaces,
and $L$ permutes them.
This contradicts the absolute irreducibility of $W$.
Hence $U$ is absolutely irreducible for $L_U$.

For {\rm(SF)}, let $0\ne u\in U$ and $c\in k^\times$.
Choose a $p'$-element $g\in L$ with $gu=cu$.
Since $0\ne cu\in U\cap gU$ and distinct weight spaces
have zero intersection, $gU=U$.
Thus $g\in I$, and its image in $L_U$ is a
$p'$-element sending $u$ to $cu$.

For {\rm(R)}, let $x\in L_U$ be a $p'$-element,
and let $t\in I$ be a $p'$-lift of $x$.
The group $A\langle t\rangle$ has order prime to $p$,
so its representation on $W$ lifts to characteristic
zero with character equal to its Brauer character.
In this lift, the operator
\[
 e_\lambda=\frac1{|A|}
       \sum_{a\in A}\lambda(a)^{-1}a
\]
projects onto the $\lambda$-weight space.
Since $t$ stabilizes $\lambda$, it commutes with
$e_\lambda$.
The lifted weight space reduces to $U$, and therefore
\[
 \operatorname{BrTr}_U(x)
 =\operatorname{tr}(te_\lambda)
 =\frac1{|A|}\sum_{a\in A}
      \lambda(a)^{-1}\operatorname{BrTr}_W(ta).
\]
Every $ta$ is a $p'$-element.
By {\rm(R)} for $W$, the terms on the right are
integers, so $\operatorname{BrTr}_U(x)$ is rational.
Being a sum of roots of unity, it is also an
algebraic integer, and hence an integer.
The same applies to every power of $x$, proving
{\rm(R)} for $U$.
Thus $(L_U,U)$ is admissible.

Finally, suppose that $z=-I_W\in L$ and $(L,z)$
satisfies $\boldsymbol{(R^-)}$.
Replacing $A$ by $\langle A,z\rangle$ leaves its
weight spaces and their stabilizers unchanged.
We may therefore assume that $z\in A$, so
$\lambda(z)=-1$.
The image of $z$ in $L_U$ is $z_U=-I_U$.

Let $\theta\in\Irr^-(L_U,z_U)$ and inflate it to $I$.
Since the image of each $a\in A$ on $U$ is
$\lambda(a)I_U$, we have
\[
 \theta|_A=\theta(1)\lambda.
\]
Thus $\theta\in\Irr(I\mid\lambda)$, and
Lemma~\ref{lem:negative-clifford} implies that
$\theta$ is rational-valued.
Hence $(L_U,z_U)$ satisfies $\boldsymbol{(R^-)}$.
\end{proof}

The strict decrease in dimension ensures that the preceding
reduction terminates, as described in the following proposition.

\begin{proposition}
\label{prop:2-terminal-refinement}
Let $(L_0,W_0)$ be an admissible pair over $\F_p$,
where $p\geq7$.
Then successive applications of
Lemma~\ref{lem:sign-refinement} produce a finite
sequence of admissible pairs
\[
 (L_0,W_0),\ (L_1,W_1),\ \ldots,\ (L_m,W_m)
\]
such that $(L_m,W_m)$ is $2$-terminal.

If $z_0=-I_{W_0}\in L_0$ and $(L_0,z_0)$ satisfies
$\boldsymbol{(R^-)}$, then every pair in the sequence
also satisfies $\boldsymbol{(R^-)}$ with respect
to $z_i=-I_{W_i}$.

Moreover, every abelian normal subgroup of $L_m$
is contained in $\{I_{W_m},-I_{W_m}\}$.
\end{proposition}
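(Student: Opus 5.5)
The argument is an induction on $\dim W_0$; equivalently, we iterate Lemma~\ref{lem:sign-refinement} and observe that it must terminate. Begin with $(L_0,W_0)$. If it is $2$-terminal, take $m=0$. If not, Definition~\ref{def:2-terminal} provides a normal elementary abelian $2$-subgroup $A\lhd L_0$ acting nonscalarly on $W_0$. Let $W_1$ be any nonzero common eigenspace of $A$, and set $L_1=N_{L_0}(W_1)/C_{L_0}(W_1)$. By Lemma~\ref{lem:sign-refinement}, $(L_1,W_1)$ is admissible over $\F_p$ and satisfies $0<\dim W_1<\dim W_0$. Repeat the same step with $(L_1,W_1)$ in place of $(L_0,W_0)$. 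Dimensions are positive integers that strictly decrease at each step, so the process stops after finitely many steps at some $(L_m,W_m)$. It can only stop when no nonscalar normal elementary abelian $2$-subgroup exists, which is exactly the statement that $(L_m,W_m)$ is $2$-terminal.

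For the $\boldsymbol{(R^-)}$ clause, proceed inductively along the sequence. Suppose $z_i=-I_{W_i}\in L_i$ and $(L_i,z_i)$ satisfies $\boldsymbol{(R^-)}$. The final assertion of Lemma~\ref{lem:sign-refinement} then gives that $(L_{i+1},-I_{W_{i+1}})$ satisfies $\boldsymbol{(R^-)}$. This includes the fact that $-I_{W_{i+1}}\in L_{i+1}$: it is the image of $z_i$, which lies in $N_{L_i}(W_{i+1})$ because $z_i$ is scalar. So the hypothesis carries over to the next pair.

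For the last assertion, take an abelian normal subgroup $A$ of $L_m$. The pair $(L_m,W_m)$ is admissible, so $L_m\le\Sp(W_m)$ acts irreducibly. By (SF), every nonzero $w$ is sent to $2w$ by some element of $L_m$. Since $p\ge7$, Corollary~\ref{lem:quartics}(3) shows that $A$ is an elementary abelian $2$-group. Because the pair is $2$-terminal, $A$ acts by scalars. The action is faithful, so each $a\in A$ equals $cI_{W_m}$ with $c^2=1$; as $p$ is odd, $c=\pm1$. Hence $A\subseteq\{I_{W_m},-I_{W_m}\}$.

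The only point needing care is the $\boldsymbol{(R^-)}$ step, namely that the refinement lemma has to be applied with $z_i\in L_i$ at every stage. This follows from the image observation above, so I do not expect a genuine obstacle. The remaining work is bookkeeping that Lemma~\ref{lem:sign-refinement} already handles.
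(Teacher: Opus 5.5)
Your proposal is correct and follows the paper's proof essentially step by step: you iterate Lemma~\ref{lem:sign-refinement} with strictly decreasing dimension, propagate $\boldsymbol{(R^-)}$ through the lemma's final clause, and finish with Corollary~\ref{lem:quartics}(3), terminality, and the fact that the only scalar isometries in odd characteristic are $\pm I$. Your explicit check that $-I_{W_{i+1}}\in L_{i+1}$, as the image of $z_i$, is a harmless extra detail; the paper handles this inside the proof of the refinement lemma.
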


\begin{proof}
Suppose that $(L_i,W_i)$ has been constructed.
If every normal elementary abelian $2$-subgroup
of $L_i$ acts by scalars, we are done.
Otherwise choose a nonscalar normal elementary
abelian $2$-subgroup $A_i\lhd L_i$, choose one
of its nonzero weight spaces $W_{i+1}$, and write
\[
 L_{i+1}
 =N_{L_i}(W_{i+1})/C_{L_i}(W_{i+1}).
\]
Lemma~\ref{lem:sign-refinement} shows that
$(L_{i+1},W_{i+1})$ is admissible and
\[
 0<\dim W_{i+1}<\dim W_i.
\]
It also preserves $\boldsymbol{(R^-)}$ whenever
that condition holds at the preceding stage.

Since the dimensions are positive integers,
the process terminates at some stage.
By the stopping condition, the final pair
$(L_m,W_m)$ is $2$-terminal.
If the initial pair is already $2$-terminal,
we take $m=0$.

Finally, Corollary~\ref{lem:quartics}{\rm(3)}
shows that every abelian normal subgroup of $L_m$
is an elementary abelian 2-subgroup.
The terminality of $(L_m,W_m)$ makes each such subgroup scalar.
Since scalar symplectic transformations in odd
characteristic are $I_{W_m}$ and $-I_{W_m}$,
the final assertion follows.
\end{proof}

\section{The Fitting structure of $2$-terminal admissible pairs}
\label{sec:generalized-fitting}

Throughout this section, let $(L,W)$ be a $2$-terminal
admissible pair over $\F_p$, where $p\geq7$. We study the Fitting subgroup
$F(L)$. The restrictions on abelian normal subgroups imply
that $F(L)=O_2(L)$. We then prove that either $F(L)$ is
extraspecial, or $|F(L)|\leq2$ and $E(L)\neq1$.
These alternatives guide the remainder of the proof.

\begin{lemma}\label{lem:terminal-fitting}
$F(L)=O_2(L)$.
Moreover, every characteristic abelian subgroup
of $O_2(L)$ has order at most 2.
\end{lemma}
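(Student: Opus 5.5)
The plan is to derive both assertions from the final assertion of Proposition~\ref{prop:2-terminal-refinement} (equivalently, Corollary~\ref{lem:quartics}(3) combined with 2-terminality). Since $(L,W)$ is admissible with $p\ge7$, (SF) provides, for every nonzero $w$, an element $g\in L$ with $gw=2w$. Also $L\le\Sp(W)$ acts irreducibly. Hence Corollary~\ref{lem:quartics}(3) applies: every abelian normal subgroup of $L$ is elementary abelian of exponent $2$. By 2-terminality each such subgroup acts by scalars. The only symplectic scalars in odd characteristic are $\pm I_W$, and $L$ acts faithfully. Therefore every abelian normal subgroup of $L$ is contained in $\{\pm I_W\}$, and so has order at most $2$.

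For $F(L)=O_2(L)$, take any prime $r$ and consider $O_r(L)$. First, $O_p(L)=1$. Indeed, $\Fix_W(O_p(L))$ is nonzero, since a $p$-group acting on an $\F_p$-space has nonzero fixed points. It is $L$-invariant by normality, so it equals $W$ by irreducibility, and faithfulness then gives $O_p(L)=1$. This is the same argument used in Lemma~\ref{lem:quartic-abelian-normal}.

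Now let $r\ne 2$ and suppose $O_r(L)\ne1$. Then $Z(O_r(L))$ is a nontrivial abelian subgroup, characteristic in a normal subgroup, hence normal in $L$. By the first paragraph it lies in $\{\pm I_W\}$, so it is a $2$-group. Since it is also an $r$-group with $r$ odd, it is trivial, a contradiction. Hence $O_r(L)=1$ for all odd $r$, and $F(L)=\prod_r O_r(L)=O_2(L)$.

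For the second assertion, let $C$ be a characteristic abelian subgroup of $O_2(L)$. Because $O_2(L)\lhd L$, the subgroup $C$ is normal in $L$ and abelian. By the first paragraph, $C\le\{\pm I_W\}$, so $|C|\le2$.

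I expect no serious obstacle here. The only point requiring care is confirming that the hypotheses of Corollary~\ref{lem:quartics} hold for an admissible pair: irreducibility comes from (I), the symplectic embedding from (I), and fusion by the scalar $2$ from (SF).
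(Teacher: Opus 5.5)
Your proposal is correct and matches the paper's proof. Both show that for odd $q$ the group $Z(O_q(L))$ is a nontrivial abelian normal subgroup forced into $\{\pm I_W\}$, and both obtain the second claim because characteristic subgroups of $O_2(L)$ are normal in $L$; you derive the containment in $\{\pm I_W\}$ from Corollary~\ref{lem:quartics}(3) and $2$-terminality, whereas the paper cites the final assertion of Proposition~\ref{prop:2-terminal-refinement}, which is proved the same way, and your separate fixed-point argument for $O_p(L)=1$ is harmless but redundant since $p$ is odd and is already covered by the general odd-prime case.
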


\begin{proof}
Let $q$ be an odd prime, and suppose that
$Q=O_q(L)\ne1$.
Then $Z(Q)$ is nontrivial.
Since $Z(Q)$ is characteristic in $Q$ and
$Q\lhd L$, it is an abelian normal subgroup of $L$.
By Proposition \ref{prop:2-terminal-refinement}, we get that $|Z(Q)|\leq2$, which is impossible for a
nontrivial $q$-group. Hence $O_q(L)=1$ for every odd prime $q$, and
$F(L)=O_2(L)$.

If $A$ is a characteristic abelian subgroup of
$O_2(L)$, then $A\lhd L$.
Again by Proposition~\ref{prop:2-terminal-refinement}, we have
$|A|\leq2$, completing the proof.
\end{proof}

Recall that a finite $2$-group $R$ is
\emph{extraspecial} if
\[
 Z(R)=R'=\Phi(R)
 \quad\text{and}\quad |Z(R)|=2,
\]
where $\Phi(R)$ denotes the Frattini subgroup.

\begin{lemma}\label{lem:characteristic-abelian-2group}
Let $R$ be a finite $2$-group such that every
characteristic abelian subgroup of $R$ has order
at most 2.
Then either $|R|\leq2$ or $R$ is extraspecial.
\end{lemma}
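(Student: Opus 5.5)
This is the classical P. Hall argument applied to a $2$-group all of whose characteristic abelian subgroups are small. Assume $|R|>2$ and set $Z=Z(R)$. Since $Z$ is characteristic and abelian, $|Z|\le 2$, and $Z\neq 1$ because $R$ is a nontrivial $2$-group. Hence $|Z|=2$.

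\textbf{Step 1: $R$ is nonabelian and $R'=Z$.} If $R$ were abelian, then $R=Z(R)$ would have order $2$, contrary to $|R|>2$. So $R$ is nonabelian and $R'\ne 1$. By Hall's lemma on $2$-groups (equivalently, by applying the hypothesis to the last nontrivial term of the lower central series), every characteristic subgroup of class at most $2$ of $R$ has abelian pieces that are characteristic. Concretely, I would argue as follows. Let $C$ be a characteristic subgroup of $R$ maximal subject to being abelian. Then $C\le C_R(C)$, and the usual argument shows that if $C_R(C)>C$, we can choose $D/C$ a characteristic subgroup of $R/C$ contained in $Z(R/C)\cap C_R(C)/C$ of order $>1$. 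Then $D$ is characteristic of class at most $2$ with $D'\le C$. In fact the standard Hall theorem states: every characteristic abelian subgroup being cyclic forces $R$ to be a central product of an extraspecial group $E$ with a group $T$ that is cyclic, dihedral, semidihedral or generalized quaternion of order $\ge 16$.

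I would apply this theorem as follows. Since characteristic abelian subgroups have order $\le 2$, they are cyclic. Hence $R=E\circ T$ with $Z(E)\le T$.

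\textbf{Step 2: case analysis on $T$.} If $T$ is cyclic, then $T\le Z(R)$, so $|T|\le 2$ and $T=Z(E)$. Then $R=E$ is extraspecial, which is what we want.

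If $T$ is dihedral, semidihedral or generalized quaternion of order $\ge 16$, then $T$ contains a unique cyclic subgroup $T_0$ of index $2$, and $|T_0|\ge 8$. The subgroup $T_0$ should be characteristic in $R$. To see this, describe it intrinsically: $T_0\cdot Z(E)$ is generated by the elements of order $\ge 8$ of $R$, using that in the central product $E$ has exponent $\le 4$. Alternatively, $\mho_1$ of a suitable characteristic subgroup has order $\ge 4$. A cleaner route is to take $\mho_2(R)$, or $\Omega_1(\mho_1(R))$-type subgroups. The group $\mho_1(R)=\mho_1(T)$ is cyclic of order $|T_0|/2\ge 4$, because squares in $E$ lie in $Z(E)\le T$. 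So $\mho_1(R)$ is a characteristic abelian subgroup of order $\ge 4$, a contradiction.

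\textbf{Step 3: checking the extraspecial conditions.} Once $R=E$, the definition $Z=R'=\Phi(R)$ with $|Z|=2$ holds by Hall's classification. If one prefers to avoid citing Hall, it can be checked directly. Here $\Phi(R)=R'\mho_1(R)$, and each of $R'$, $\mho_1(R)$ and $\Phi(R)$ is normal in $R$. One shows that $\Phi(R)$ is abelian and central by using the fact that its centre and $\Omega_1$ are characteristic.

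\textbf{Main obstacle.} I expect the main obstacle to be proving Hall's structure theorem from scratch rather than citing it. I would cite it, for instance from Gorenstein's \emph{Finite Groups}, Theorem 5.4.9, and only verify the elimination of the dihedral, semidihedral and quaternion factors via $\mho_1(R)$ as in Step 2.
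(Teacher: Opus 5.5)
Your argument is correct and is essentially the paper's: the paper likewise notes that an abelian $R$ is itself a characteristic abelian subgroup (so $|R|\le 2$) and then simply cites P.~Hall's theorem. You instead cite the central-product form $R=E\circ T$ and add a correct elimination of a dihedral, semidihedral or generalized quaternion $T$, using the characteristic cyclic subgroup $\mho_1(R)\le T_0$, which has order $|T_0|/2\ge 4$. You should, however, delete the exploratory asides that your final proof does not use: the garbled sketch in Step~1, and the claim that the elements of order $\ge 8$ generate $T_0Z(E)$, which is false because $et$ has order $o(t)\ge 8$ for every $e\in E\setminus Z(E)$ and every $t\in T_0$ of order $\ge 8$.
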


\begin{proof}
If $R$ is abelian, then clearly $|R|\leq2$.
If $R$ is nonabelian, the conclusion follows from
a result of P. Hall \cite[p.~218,~Remark~6.4]{Berkovich2005}.
\end{proof}

Recall that a component of a finite group $L$ is a subnormal quasisimple
subgroup, and that the layer $E(L)$ is the product of all components.
Distinct components of $L$ commute, and $L$ permutes its components
by conjugation. In particular, $E(L)$ is a perfect normal
subgroup of $L$.

Also, recall that $F^*(L)=F(L)E(L)$ and $C_L(F^*(L))\leq F^*(L)$; see
\cite[\S 6.5]{Kurz}.

\begin{proposition}
\label{prop:terminal-first-alternatives}
We have $F(L)=O_2(L)$, and exactly one of the following holds:
\begin{enumerate}
\item
$O_2(L)$ is extraspecial and $Z(O_2(L))=\langle -I_W\rangle$.
\item
$O_2(L)\leq\{I_W,-I_W\}$ and $E(L)\ne1$.
\end{enumerate}
\end{proposition}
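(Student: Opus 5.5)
The proof combines Lemma~\ref{lem:terminal-fitting}, Lemma~\ref{lem:characteristic-abelian-2group}, and the standard fact $C_L(F^*(L))\le F^*(L)$. First, Lemma~\ref{lem:terminal-fitting} gives $F(L)=O_2(L)$. It also shows that every characteristic abelian subgroup of $R:=O_2(L)$ has order at most $2$. Lemma~\ref{lem:characteristic-abelian-2group} therefore yields a dichotomy: either $|R|\le 2$ or $R$ is extraspecial.

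\emph{Case $|R|\le2$.} Here $R$ is itself an abelian normal subgroup of $L$, so Proposition~\ref{prop:2-terminal-refinement} gives $R\le\{I_W,-I_W\}$. It remains to show $E(L)\ne1$. Suppose instead that $E(L)=1$. Then $F^*(L)=R$, and $R\le Z(L)$ because it consists of scalars. Hence $L=C_L(R)=C_L(F^*(L))\le F^*(L)=R$, so $L\le\{\pm I_W\}$. This contradicts (SF) with $p\ge7$: take any $0\ne w\in W$ and the scalar $2$; we would need $\pm w=2w$, forcing $p\mid 1$ or $p\mid 3$, which is impossible. So $E(L)\ne1$, and alternative (2) holds.

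\emph{Case $R$ extraspecial.} The centre $Z(R)$ is characteristic in $R$, hence normal in $L$, and it is abelian of order $2$. By Proposition~\ref{prop:2-terminal-refinement}, $Z(R)\le\{\pm I_W\}$. Since $Z(R)\ne1$ and the action is faithful, $Z(R)=\langle -I_W\rangle$, which is alternative (1).

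Exclusivity is immediate: in (1) $|O_2(L)|\ge8$, while in (2) $|O_2(L)|\le2$. The only delicate step is ruling out $E(L)=1$ when $|R|\le2$. The argument above through $C_L(F^*(L))\le F^*(L)$ settles it, because the degenerate group $L\le\{\pm I_W\}$ cannot satisfy scalar fusion by $2$ once $p\ge7$. One should also check that the argument does not need $W\ne0$ beyond the standing assumption that admissible pairs are nonzero; this holds by the strict positivity of dimensions in Proposition~\ref{prop:2-terminal-refinement}.
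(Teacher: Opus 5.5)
Your proof is correct and follows essentially the same route as the paper. Lemma~\ref{lem:terminal-fitting} and Lemma~\ref{lem:characteristic-abelian-2group} give the dichotomy that either $|R|\le 2$ or $R$ is extraspecial, and in the extraspecial case the centre is forced to be $\langle -I_W\rangle$. In the small case, $E(L)=1$ would give $L=C_L(F^*(L))\le R\le\{\pm I_W\}$, which contradicts (SF) with the scalar $2$. You are only slightly more explicit than the paper: you justify $Z(R)\le\{\pm I_W\}$ through Proposition~\ref{prop:2-terminal-refinement} and note why the two alternatives are exclusive.
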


\begin{proof}  Write $R=O_2(L)$.
By Lemma~\ref{lem:terminal-fitting},
$F(L)=R$, and every characteristic abelian
subgroup of $R$ has order at most 2.
Lemma~\ref{lem:characteristic-abelian-2group}
therefore shows that either $|R|\leq2$
or $R$ is extraspecial.

If $R$ is extraspecial, then $|Z(R)|=2$, hence $Z(R)=\langle -I_W\rangle$,
proving {\rm(1)}.

Suppose that $|R|\leq2$. Then $R$ is abelian and normal in $L$, so
$R\leq\{I_W,-I_W\}$. If $E(L)=1$, then
\[
 F^*(L)=R\leq Z(L), \qquad
 L=C_L(F^*(L))\leq F^*(L)=R.
\]
Thus every element of $L$ acts as $I_W$ or $-I_W$.
This contradicts {\rm(SF)}, since $W\ne0$ and
$2\notin\{1,-1\}$ in $\F_p$.
Hence $E(L)\ne1$, proving {\rm(2)}.
\end{proof}

\section{The extraspecial case}
\label{sec:extraspecial}

\noindent\textbf{Assumptions.}
Throughout this section, let $p\geq7$ be a prime and let
$(L,W)$ be a $2$-terminal admissible pair over $k=\F_p$,
with symplectic form $B$.  We write $\Sp(W,B)$ for the group of all $k$-linear
automorphisms of $W$ preserving $B$.
Suppose that $R=O_2(L)$ is extraspecial of order $2^{1+2m}$,
where $m\geq1$.
By Proposition~\ref{prop:terminal-first-alternatives},
\[
 Z(R)=\langle z\rangle,\qquad z=-I_W.
\]

\subsection{Minus type and the irreducible action of the extraspecial Fitting subgroup}
\label{subsec:extraspecial}

Here we shall show that $R$ has minus type and acts
absolutely irreducibly on $W$.
Recall that an extraspecial group of order $2^{1+2m}$
has \emph{plus type} if it is a central product
of $m$ dihedral groups of order 8, and
\emph{minus type} if it is a central product of
one quaternion group of order 8 and $m-1$
dihedral groups of order 8.

The following result describes the dimension and form type of the (unique)
 faithful irreducible module of an extraspecial 2-group.

\begin{lemma}\label{lem:extraspecial-module}
Let $K$ be an algebraically closed field of odd
characteristic, and let $R$ be an extraspecial
$2$-group of order $2^{1+2m}$.
Then $R$ has a unique faithful irreducible
$K$-module $X$, up to isomorphism, and
\[
 \dim_K X=2^m.
\]
The space of $R$-invariant bilinear forms on $X$
is one-dimensional and is generated by a
nondegenerate form $b$.
The form $b$ is symmetric if $R$ has plus type
and alternating if $R$ has minus type.
\end{lemma}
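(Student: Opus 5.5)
We plan to argue by ordinary character theory, lifted to $K$ because $|R|$ is a power of $2$ and $\operatorname{char}K$ is odd. The key invariant is the Frobenius--Schur indicator, which we will compute explicitly.

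\emph{Character table and the faithful module.} First we recall the complex character table of $R$. Since $R/Z(R)$ is elementary abelian of order $2^{2m}$ and $R'=Z(R)$, there are $2^{2m}$ linear characters, all trivial on $Z(R)$. The total number of classes is $2^{2m}+1$: every noncentral $x$ satisfies $[x,R]=Z(R)$, so its class is $\{x,xz\}$. Hence exactly one further irreducible character $\chi$ remains, and the degree equation $|R|=2^{2m}+\chi(1)^2$ gives $\chi(1)=2^m$. Moreover $\chi$ vanishes off $Z(R)$ and $\chi(z)=-2^m$, so $\chi$ is faithful and is the unique faithful irreducible character. Because $|R|$ is invertible in $K$, the group algebra $KR$ is semisimple. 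Its irreducible modules correspond bijectively to complex irreducible characters via reduction of a $\mathbb Z[\zeta]$-form modulo a prime over $p$, and Brauer characters equal ordinary characters. This gives the existence and uniqueness of $X$ with $\dim_K X=2^m$; the linear modules are not faithful since they are trivial on $z$.

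\emph{The invariant forms.} Since $\chi$ is real-valued, $X\cong X^*$. By Schur's lemma, $\Hom_R(X,X^*)$ is one-dimensional, so the invariant bilinear forms form a line spanned by some $b\ne0$. The radical of $b$ is a submodule, so $b$ is nondegenerate. Exactly as in Lemma~\ref{lem:symplectic}, the transpose $b^{\mathrm t}$ is a scalar multiple $\pm b$, so $b$ is symmetric or alternating, using $\operatorname{char}K\neq2$. In characteristic coprime to $|R|$, the sign is governed by the Frobenius--Schur indicator
\[
\nu(\chi)=\frac1{|R|}\sum_{x\in R}\chi(x^2).
\]
Indeed, $\Sym^2$ and $\wedge^2$ of $X^*$ have invariants of dimension $(\langle\chi^2,1\rangle\pm\nu)/2$, and this computation reduces correctly modulo $p$ because taking invariants of a $p'$-group commutes with reduction. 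So $b$ is symmetric when $\nu=1$ and alternating when $\nu=-1$.

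\emph{Computing the indicator.} We compute $\nu$ directly. Here $x^2\in Z(R)$ for all $x$, and the map $q(\bar x)=x^2$ is the quadratic form on $R/Z(R)$. Then
\[
\nu=\frac{2^m}{2^{1+2m}}\bigl(N_0-N_1\bigr),
\]
where $N_0$ and $N_1$ count the elements $x$ with $x^2=1$ and $x^2=z$ respectively. For a central product of $D_8$'s and $Q_8$'s, the number of singular vectors of $q$ equals $2^{2m-1}+\epsilon 2^{m-1}$, with $\epsilon=+1$ for plus type and $-1$ for minus type. Therefore
\[
N_0-N_1=2\bigl(2\cdot\#\{\text{singular vectors}\}-2^{2m}\bigr)=2\epsilon\,2^m,
\]
which gives $\nu=\epsilon$. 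Alternatively, one can avoid counting by induction, using that $\nu$ is multiplicative on central products ($\chi$ is the tensor product of the factors' faithful characters modulo the identified centre), with $\nu=1$ for $D_8$ and $\nu=-1$ for $Q_8$. I will use this inductive route, since the tensor argument is cleaner: a tensor product of forms of types $\epsilon_1,\epsilon_2$ has type $\epsilon_1\epsilon_2$.

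The main obstacle is justifying the passage from characteristic $0$ to $K$ cleanly. I will handle it by noting that the indicator criterion can be proved directly over $K$: average $\chi(x^2)$ as the trace of the swap operator on $X\otimes X$ over $R$, which is valid because $|R|$ is invertible in $K$.
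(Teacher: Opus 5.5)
Your proof is correct, but it takes a genuinely different route from the paper. The paper proves almost nothing here. It cites Glasby--Howlett (Section~3) for the existence and uniqueness of the faithful irreducible module, its dimension $2^m$, and the type of its invariant form. It then adds only the Schur's-lemma remark that $\dim_K\Hom_{KR}(X,X^*)=1$, so that the invariant forms form a line.

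You instead make the lemma self-contained:
\begin{itemize}
\item The class count $2^{2m}+1$ and the degree equation produce the unique faithful character of degree $2^m$, vanishing off $Z(R)$.
\item Semisimplicity of $KR$ transfers this character to a module over $K$.
\item The form type is read off from the Frobenius--Schur indicator. You evaluate it either through the count $2^{2m-1}+\epsilon\,2^{m-1}$ of singular vectors of $q(\bar x)=x^2$, or through multiplicativity over the central-product factors with base cases $\nu=1$ for $D_8$ and $\nu=-1$ for $Q_8$.
\end{itemize}

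Your swap-operator remark is the right way to stay over $K$. Since $x^2\in\{1,z\}$ acts as $\pm I_X$, the average $\frac1{|R|}\sum_x\operatorname{tr}(x^2|_X)$ is just $2^{-m-1}(N_0-N_1)$ computed in $K$. This equals the eigenvalue $\pm1$ of the swap on the line $(X\otimes X)^R$, and the two signs are distinguishable because the characteristic is odd.

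The inductive route does not actually need indicators. By uniqueness up to scalars, the invariant form on $X_1\otimes X_2$ is $b_1\otimes b_2$, whose type is the product of the two types. The base cases can then be checked with explicit matrices, exactly as the paper itself later does with $A,C,D,S$ in Lemma~\ref{lem:extraspecial-octic}.

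The trade-off is this. The citation buys the paper brevity. Your argument gives a complete, elementary justification of exactly the dichotomy (plus $\leftrightarrow$ symmetric, minus $\leftrightarrow$ alternating) that drives Proposition~\ref{prop:extraspecial-irreducible}. It also fits naturally with the indicator machinery the paper uses later in Lemma~\ref{lem:ordinary-symplectic-witness}.
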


\begin{proof}
The existence and uniqueness of the faithful irreducible
module $X$, its dimension $2^m$, and the asserted type
of its nondegenerate invariant bilinear form follow
from \cite[Section~3]{GlasbyHowlett1992}.
Since $K$ is algebraically closed, Schur's lemma shows that
\[
 \dim_K\operatorname{Hom}_{KR}(X,X^*)=1.
\]
Hence the space of $R$-invariant bilinear forms on $X$
is one-dimensional.
\end{proof}

\begin{lemma}\label{lem:extraspecial-multiplicity}
Write $K=\overline{k}$ and $W_K=K\otimes_k W$.
Let $X$ be the faithful irreducible $KR$-module.
Then there is a nonzero $K$-space $M$ such that
$W_K\cong X\otimes_K M$,
where $R$ acts trivially on $M$.

Under this identification, the extended
symplectic form has the form
$B_K=b\otimes c$,
where $b$ is the form in
Lemma~\ref{lem:extraspecial-module} and $c$ is
a nondegenerate bilinear form of the opposite
type.

Moreover, every $\ell\in L$ acts on $W_K$ as
$s_\ell\otimes t_\ell$
for suitable $s_\ell\in\operatorname{GL}(X)$
and $t_\ell\in\operatorname{GL}(M)$, and these
maps are similitudes of $b$ and $c$ with
reciprocal multipliers.
\end{lemma}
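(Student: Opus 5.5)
The plan is to use the standard Clifford-theoretic tensor decomposition for a normal subgroup whose irreducible constituent on $W_K$ is $L$-invariant and absolutely irreducible.

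\emph{Step 1: isotypic decomposition.} Since $z=-I_W$ generates $Z(R)$ and acts as $-1$ on $W_K$, every irreducible constituent of $W_K|_R$ is an irreducible $KR$-module on which $z$ acts by $-1$. Such a module is faithful, because every nontrivial normal subgroup of $R$ contains $Z(R)$. By Lemma~\ref{lem:extraspecial-module} it is isomorphic to $X$. Because $p\nmid|R|$, the restriction $W_K|_R$ is semisimple, hence $X$-isotypic. Setting $M=\Hom_{KR}(X,W_K)$, the evaluation map $X\otimes_K M\to W_K$ is an isomorphism, and $R$ acts only on the first factor.

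\emph{Step 2: the action of $L$.} Since $R\lhd L$ and $X$ is the unique faithful irreducible module, $X$ is $L$-stable. For each $\ell\in L$ I will choose $s_\ell\in\GL(X)$ with $s_\ell\, r\, s_\ell^{-1}=\ell r\ell^{-1}$ on $X$ for all $r\in R$; it exists because ${}^\ell X\cong X$, and it is unique up to scalars by Schur. The operator $(s_\ell^{-1}\otimes 1)\circ\ell$ on $W_K$ then commutes with $R$. Since $\End_{KR}(X\otimes M)=1\otimes\End(M)$, this operator equals $1\otimes t_\ell$, which gives $\ell=s_\ell\otimes t_\ell$. We only need these as individual maps, not a genuine projective representation, so no cohomological issue arises.

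\emph{Step 3: the forms.} The $K$-bilinear forms on $X\otimes M$ are identified with $\Hom(X,X^*)\otimes\Hom(M,M^*)$. Invariance of $B_K$ under $R$, which acts as $r\otimes 1$, means that $B_K$ lies in $\Hom_{KR}(X,X^*)\otimes\Hom(M,M^*)$. By Lemma~\ref{lem:extraspecial-module} the first factor is $Kb$, so $B_K=b\otimes c$ for a unique bilinear form $c$ on $M$. Nondegeneracy of $B_K$ forces $c$ to be nondegenerate. Transposing gives $B_K^{\mathrm t}=b^{\mathrm t}\otimes c^{\mathrm t}=\pm\, b\otimes c^{\mathrm t}$, and this must equal $-B_K$, so $c^{\mathrm t}=\mp c$. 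Thus $c$ has the opposite type to $b$; in odd characteristic, skew-symmetric means alternating.

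\emph{Step 4: the similitude multipliers.} For each $\ell$, the transformed form $b(s_\ell\cdot,s_\ell\cdot)$ is again $R$-invariant. This follows from $s_\ell r s_\ell^{-1}=\ell r\ell^{-1}$ together with the fact that $b$ is invariant under all of $R$. Hence $b(s_\ell x,s_\ell x')=\alpha_\ell b(x,x')$ for some $\alpha_\ell\in K^\times$. Then $L$-invariance of $B_K$ gives $b\otimes c=\alpha_\ell\, b\otimes c(t_\ell\cdot,t_\ell\cdot)$, so $c(t_\ell y,t_\ell y')=\alpha_\ell^{-1}c(y,y')$. This shows the multipliers are reciprocal.

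The main delicacy is Step 3: one must check that an $R$-invariant form on $X\otimes M$ genuinely factors as $b\otimes c$. The tensor identification of the space of bilinear forms handles this cleanly, because $R$ acts trivially on the $\Hom(M,M^*)$ factor, so taking $R$-invariants only affects the $X$ factor.
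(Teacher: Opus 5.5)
Your proposal is correct and follows essentially the same route as the paper. Both arguments use semisimplicity of $W_K|_R$ with every constituent isomorphic to $X$, the intertwiner $s_\ell$ together with $\End_{KR}(X\otimes_K M)=I_X\otimes\End_K(M)$ to write $\ell=s_\ell\otimes t_\ell$, and uniqueness of $b$ up to scalars both to factor $B_K=b\otimes c$ and to get reciprocal multipliers. The differences are only cosmetic: you obtain $c$ by taking $R$-invariants in $\Hom(X,X^*)\otimes\Hom(M,M^*)$ rather than fixing $u,v\in M$ pointwise, and you explicitly justify that constituents on which $z$ acts as $-1$ are faithful, a step the paper leaves implicit.
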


\begin{proof}
Since $p$ is odd, the restriction of $W_K$ to
$R$ is semisimple.
The central involution $z$ acts as $-I$ on
$W_K$, and hence on every irreducible constituent.
By Lemma~\ref{lem:extraspecial-module}, every
constituent is isomorphic to $X$.
Thus
\[
 W_K\cong X\otimes_K M
\]
for a nonzero multiplicity space $M$, with
$R$ acting only on the first factor.

For fixed $u,v\in M$, the map
\[
 (x,y)\longmapsto B_K(x\otimes u,y\otimes v)
\]
is an $R$-invariant bilinear form on $X$.
By Lemma~\ref{lem:extraspecial-module}, it is a
scalar multiple of $b$.
These scalars define a bilinear form $c$ on $M$
such that
\[
 B_K(x\otimes u,y\otimes v)=b(x,y)c(u,v).
\]
Moreover, the form $c$ is nondegenerate. Indeed, if $0\ne u\in M$
satisfied $c(u,v)=0$ for every $v\in M$, then, for any
$0\ne x\in X$, the nonzero vector $x\otimes u$ would be
orthogonal under $B_K$ to every pure tensor, and hence
to all of $X\otimes_K M$. This contradicts the
nondegeneracy of $B_K$.

Write
\[
 b(y,x)=\varepsilon b(x,y),
 \qquad \varepsilon\in\{1,-1\}.
\]
Since $B_K$ is alternating, we obtain
\[
 c(v,u)=-\varepsilon c(u,v).
\]
Thus $c$ is alternating when $b$ is symmetric,
and symmetric when $b$ is alternating.

Let $\rho:R\to\operatorname{GL}(X)$ denote the
representation on $X$, and fix $\ell\in L$.
The normality of $R$ implies that conjugation by
$\ell$ induces an automorphism of $R$. It follows that
the representation $r\longmapsto\rho(\ell r\ell^{-1})$ is faithful and irreducible, and hence is
equivalent to $\rho$.
Choose $s_\ell\in\operatorname{GL}(X)$ such that
\[
 s_\ell\rho(r)s_\ell^{-1}
 =\rho(\ell r\ell^{-1})
 \qquad(r\in R).
\]
It follows that
$(s_\ell^{-1}\otimes I_M)\ell$
commutes with $R$.
Since
\[
 \operatorname{End}_{KR}(X\otimes_K M)
 =I_X\otimes\operatorname{End}_K(M),
\]
there exists $t_\ell\in\operatorname{GL}(M)$
such that $\ell=s_\ell\otimes t_\ell$.

The form $(x,y)\mapsto b(s_\ell x,s_\ell y)$
is $R$-invariant.
Uniqueness of $b$ up to scalars gives
$\alpha_\ell\in K^\times$ with
\[
 b(s_\ell x,s_\ell y)=\alpha_\ell b(x,y).
\]
Since $\ell$ preserves $B_K=b\otimes c$, we
then have
\[
 c(t_\ell u,t_\ell v)
 =\alpha_\ell^{-1}c(u,v).
\]
Thus the multipliers are reciprocal.
\end{proof}

\begin{proposition}\label{prop:extraspecial-irreducible}
The group $R$ has minus type and acts absolutely
irreducibly on $W$.
In particular,
\[
 \dim_k W=2^m
 \qquad\text{and}\qquad
 C_L(R)=Z(R)=\langle -I_W\rangle.
\]
\end{proposition}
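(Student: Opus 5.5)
We use the tensor decomposition of Lemma~\ref{lem:extraspecial-multiplicity}, $W_K\cong X\otimes_K M$ with $B_K=b\otimes c$, and show that the multiplicity space $M$ is one-dimensional. The argument splits according to $\dim_K M$ and the type of $R$.

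\textbf{Type of $R$ when $\dim M\ge2$.} Suppose first that $\dim_K M\geq2$. Since $\dim_K X=2^m\geq2$, both factors have dimension at least $2$, and exactly one of $b$, $c$ is symmetric while the other is alternating, by Lemma~\ref{lem:extraspecial-multiplicity}.

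\textbf{Contradiction when $\dim M\ge2$.} Interchanging the factors if necessary, as allowed in Remark~\ref{rem:quartic-tensor-packets}, we are in the setting of Lemma~\ref{lem:quartic-tensor}. Every $\ell\in L$ acts as $s_\ell\otimes t_\ell$, a tensor product of similitudes with reciprocal multipliers $\alpha_\ell\beta_\ell=1$. Hence the quartic $Q$ of Lemma~\ref{lem:quartic-tensor} is a nonzero $L$-invariant homogeneous quartic on $W_K$. The pair $(L,W)$ is admissible, so $W$ is irreducible and symplectic for $L$, and (SF) gives every $0\ne w$ an element sending $w$ to $2w$. Corollary~\ref{lem:quartics}(2) forbids such a quartic since $p\geq7$. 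Therefore $\dim_K M=1$.

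\textbf{Consequences of $\dim M=1$.} With $\dim M=1$, the form $c$ on $M$ is nondegenerate. A nondegenerate alternating form cannot live on a one-dimensional space, so $c$ is symmetric. The opposite type then forces $b$ to be alternating, and Lemma~\ref{lem:extraspecial-module} says this happens exactly when $R$ has minus type. Also $W_K\cong X$ is irreducible for $R$, so $R$ acts absolutely irreducibly on $W$ and $\dim_k W=\dim_K X=2^m$.

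\textbf{Centralizer.} By Schur's lemma over $K$, $C_L(R)$ acts by scalars on $W_K$. Since $L\le\Sp(W,B)$ and $p$ is odd, those scalars lie in $\{\pm1\}$. Hence $C_L(R)\leq\langle -I_W\rangle=Z(R)$, and the reverse inclusion is clear.

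The main point needing care is the appeal to Corollary~\ref{lem:quartics}(2). It requires that the decomposition be $L$-compatible with reciprocal multipliers, which Lemma~\ref{lem:extraspecial-multiplicity} supplies. It also requires that the quartic be built over $K$ and then descended, which is the role of Lemma~\ref{lem:quartic-descent} inside the corollary. The construction is defined over $K$ and not over $k$, so this descent is exactly what is needed.
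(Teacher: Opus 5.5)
Your proposal is correct and follows the paper's proof essentially step for step. You rule out $\dim_K M\geq 2$ with the reciprocal-similitude tensor quartic of Lemma~\ref{lem:quartic-tensor}, via Corollary~\ref{lem:quartics}(2). You then read off minus type from the forced symmetry of $c$ on a one-dimensional space, and obtain $C_L(R)=\langle -I_W\rangle$ from Schur's lemma together with $a^2=1$ for scalar isometries.
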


\begin{proof}
Write $K=\overline{k}$.
By Lemma~\ref{lem:extraspecial-multiplicity}, we have
\[
 W_K=X\otimes_K M,\qquad B_K=b\otimes c,
\]
where $X$ is the faithful irreducible $KR$-module with $\dim X=2^m\geq2$,
the forms $b$ and $c$ are nondegenerate and of
opposite types, and $L$ acts through reciprocal
similitudes on the two factors.
If $\dim M\geq2$, then, after interchanging the factors if necessary,
this is precisely the tensor configuration
excluded by Corollary~\ref{lem:quartics}{\rm(2)}.
Therefore $\dim M=1$.

A nondegenerate bilinear form on a one-dimensional
space in odd characteristic is symmetric.
Thus $c$ is symmetric and $b$ is alternating.
By Lemma~\ref{lem:extraspecial-module}, $R$ has
minus type.
Moreover, $W_K|_R\cong X$, so $R$ acts absolutely
irreducibly on $W$ and
\[
 \dim_k W=2^m.
\]

Finally, Schur's lemma over $K$ shows that every
$k$-linear transformation commuting with $R$
is scalar over $K$.
Its matrix has entries in $k$, so its scalar
belongs to $k$.
Hence
\[
 C_{\operatorname{GL}_k(W)}(R)=k^\times I_W.
\]
A scalar $aI_W$ preserves the nondegenerate
symplectic form only if $a^2=1$.
Since $Z(R)=\langle -I_W\rangle$, we conclude that
\[
 C_L(R)=\{I_W,-I_W\}=Z(R).
\]
\end{proof}

\begin{corollary}\label{cor:extraspecial-quotient}
There is an embedding
\[
 L/R\hookrightarrow O^-_{2m}(2).
\]
\end{corollary}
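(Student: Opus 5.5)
The plan is the standard argument for extraspecial normal subgroups: conjugation by $L$ on $R$ induces a homomorphism $L\to\Aut(R)$. We then pass to the quotient $\overline R=R/Z(R)$, an elementary abelian $2$-group of rank $2m$ carrying a quadratic form, and show that the induced map $L/R\to O(\overline R)$ is injective.

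First, I will set up $\overline R=R/Z(R)\cong\F_2^{2m}$ with the quadratic form $q(\bar r)=r^2\in Z(R)=\langle z\rangle\cong\F_2$. Its polarization is the commutator form $(\bar r,\bar s)\mapsto[r,s]$, which is nondegenerate because $Z(R)=R'$ has order $2$ and is the full centre. The Witt type of $q$ equals the type of $R$. By Proposition~\ref{prop:extraspecial-irreducible}, $R$ has minus type, so $O(\overline R,q)=O^-_{2m}(2)$. Conjugation by $\ell\in L$ fixes $z$, since $z=-I_W$ is central in $L$, so it preserves $q$. This gives a homomorphism
\[
\phi:L\to O^-_{2m}(2).
\]

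Next I will determine $\ker\phi$. It consists of those $\ell$ acting trivially on $\overline R$, that is, with $\ell r\ell^{-1}r^{-1}\in\langle z\rangle$ for all $r\in R$. For such $\ell$, the map $r\mapsto [\ell,r]$ is a homomorphism $R\to\langle z\rangle$, because $\langle z\rangle$ is central. It kills $Z(R)$, so it factors through $\overline R$. Since the commutator pairing on $\overline R$ is nondegenerate, there is $s\in R$ with $[s,r]=[\ell,r]$ for all $r$; inner automorphisms of $R$ realize every such homomorphism. Then $s^{-1}\ell\in C_L(R)$, and $C_L(R)=\langle -I_W\rangle\le R$ by Proposition~\ref{prop:extraspecial-irreducible}, so $\ell\in R$. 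Conversely $R\le\ker\phi$, since inner automorphisms of $R$ act trivially on $\overline R$ (commutators lie in $Z(R)$). Hence $\ker\phi=R$, and $L/R\hookrightarrow O^-_{2m}(2)$.

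The main obstacle is the kernel computation, specifically the claim that every automorphism acting trivially on $\overline R$ and fixing $z$ is inner. The counting argument settles it: such automorphisms correspond to $\Hom(\overline R,\F_2)$, which has order $2^{2m}=|\mathrm{Inn}(R)|$, and inner automorphisms inject into this group. The identification of the type of $q$ with the type of $R$ is standard: $D_8$ gives a plus-type plane, $Q_8$ gives an anisotropic plane, and central products give orthogonal sums.
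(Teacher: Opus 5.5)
Your argument is correct and is essentially the paper's. Both proofs use the conjugation action of $L$ on $R$ together with $C_L(R)=Z(R)=\langle -I_W\rangle$ from Proposition~\ref{prop:extraspecial-irreducible} to show that the kernel is exactly $R$. The only difference is that the paper passes through $L/R\hookrightarrow\Out(R)$ and cites Winter's theorem for $\Out(R)\cong O^-_{2m}(2)$, whereas you prove directly, via nondegeneracy of the commutator pairing, the injectivity half $\Out(R)\hookrightarrow O(R/Z(R),q)$, which is all the statement needs.
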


\begin{proof}
Write $Z=Z(R)=\langle z\rangle$ and $V=R/Z$.
By Proposition~\ref{prop:extraspecial-irreducible}, we have $C_L(R)=Z$.
Hence the conjugation action of $L$ on $R$ induces an embedding
$L/Z\hookrightarrow\operatorname{Aut}(R)$.
Under this embedding, $R/Z$ corresponds to $\operatorname{Inn}(R)$.
Therefore,
\[
 L/R\hookrightarrow\operatorname{Out}(R).
\]
 By \cite[Theorem 1(c)]{Winter1972}, $\operatorname{Out}(R)\cong O^-_{2m}(2)$,
 finishing the proof.
\end{proof}

\subsection{The extraspecial case in characteristic $p\geq11$}
\label{sec:degree-eight}

By Proposition~\ref{prop:extraspecial-irreducible}, $R$ acts
absolutely irreducibly on $W$. Since $R\leq L$ preserves
the symplectic form $B$, the pair $(R,W)$ satisfies {\rm(I)}.

We construct a nonzero invariant homogeneous polynomial of degree 8 and use
scalar fusion to exclude the extraspecial case when $p\geq11$.

\begin{lemma}
\label{lem:extraspecial-octic}
The normalizer $N_{\operatorname{Sp}(W,B)}(R)$ admits
a nonzero invariant homogeneous polynomial of degree $8$
on $W$.
\end{lemma}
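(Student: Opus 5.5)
We will build the octic from the characters of $R$ rather than from a basis of $W$. Write $V=R/Z(R)$, an $\F_2$-space of dimension $2m$, and for each $\bar r\in V$ choose a lift $r\in R$. Since $R$ acts absolutely irreducibly on $W$ (Proposition~\ref{prop:extraspecial-irreducible}), $\End_k(W)$ has a basis consisting of the operators $r$, one for each $\bar r\in V$. This is the standard fact that $kR/(z+1)\cong\End_k(W)$ for the faithful irreducible module; it follows from a dimension count, $|V|=2^{2m}=(\dim W)^2$, together with linear independence of the distinct central characters.

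For each $\bar r$ we then define the quadratic form $q_r(w)=B(w,rw)$. Replacing $r$ by $zr=-r$ changes $q_r$ only by a sign, so $q_r^2$ depends only on $\bar r$. We set
\[
F(w)=\sum_{\bar r\in V}B(w,rw)^4 ,
\]
a homogeneous polynomial of degree $8$.

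The first step is invariance. Any $g\in N_{\Sp(W,B)}(R)$ satisfies $g^{-1}rg=r'$ with $\bar r\mapsto\bar r'$ a permutation of $V$, up to the sign $r'\in\{\pm\}$ lift. Hence
\[
B(gw,rgw)=B(w,g^{-1}rgw)=\pm q_{r'}(w).
\]
Fourth powers kill the sign, and summing over the permuted index set shows $F(gw)=F(w)$ as an identity of polynomials. The argument is the same permutation principle as in Lemma~\ref{lem:quartic-packet-sum}.

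The second and main step is to show that $F$ is not the zero polynomial. Note first that the even power cannot be lowered. The quartic $\sum_{\bar r}q_r^2$ is invariant but must vanish, since Corollary~\ref{lem:quartics} forbids invariant quartics whenever scalar fusion holds. For the octic, we would work over $K=\overline{k}$ (permissible by Lemma~\ref{lem:quartic-descent}) and evaluate at a well-chosen vector. We realize $W_K=X_1\otimes\cdots\otimes X_m$, with $X_1$ the $2$-dimensional module of $Q_8$ and $X_i$ that of $D_8$ for $i\ge2$, as in the central-product description. The basis operators $r$ then become tensor products of $2\times2$ Pauli-type matrices, $r=\sigma_1\otimes\cdots\otimes\sigma_m$. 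The form $B$ is a product $b_1\otimes\cdots\otimes b_m$, with $b_1$ alternating and the remaining factors symmetric, adjusted so that the product is alternating. It follows that $q_r$ on a pure tensor $w=x_1\otimes\cdots\otimes x_m$ factors as $\prod_i b_i(x_i,\sigma_ix_i)$. Consequently $F$ restricted to pure tensors factors as $\prod_i f_i(x_i)$, where $f_i(x)=\sum_{\sigma}b_i(x,\sigma x)^4$ and $\sigma$ runs over the four Pauli classes for that factor.

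This reduces nonvanishing to explicit binary octics $f_i$. Each is a sum of fourth powers of the three or four quadratic forms $b_i(x,\sigma x)$ in two variables, for example $(x_1^2+x_2^2)^4+(x_1^2-x_2^2)^4+(2x_1x_2)^4$ up to scalars. We would check that each is a nonzero polynomial by inspecting a single coefficient, such as that of $x_1^8$, which is a small integer like $2$ and is nonzero modulo $p\ge7$. The delicate part is exactly this verification. The signs of the Pauli classes in the alternating factor must be tracked so that the fourth powers do not cancel, and the coefficients of the symmetric-factor octics must stay nonzero in characteristic $p$. If a direct coefficient check fails, the fallback is to evaluate $F$ at an eigenvector of a noncentral element of $R$.
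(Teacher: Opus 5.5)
Your proposal is correct and is essentially the paper's proof. The cosets with $r^2=I$ that you include contribute $B(w,rw)=B(rw,w)=-B(w,rw)\equiv 0$, so your $F$ coincides with the paper's sum over $\mathcal S$, and invariance is the same permutation argument. Your factorization $F_K=\gamma^4\prod_i f_i(x_i)$ on pure tensors, evaluated at $x_i=e=(1,0)^T$, reproduces the paper's value $F_K(e^{\otimes m})=2^m\gamma^4$. In particular, the check you left open for the $Q_8$ factor goes through, since there $f_1(e)=0^4+(-1)^4+0^4+(-\iota)^4=2$.
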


\begin{proof}
Write $N:=N_{\operatorname{Sp}(W,B)}(R)$ and $|R|=2^{1+2m}$.
Since $W$ is absolutely irreducible, the action of $N$  on $W$ is faithful, and the characteristic of $k$
is odd, we have $Z:=Z(R)=\langle -I_W\rangle$.  Let
\[\mathcal S=\{rZ\in R/Z:r^2=-I_W\},\]
 and choose a representative $r_e$ for each
$e\in\mathcal S$. Define
\[
 F(w)=\sum_{e\in\mathcal S}B(w,r_ew)^4.
\]
We show that $F$ is well-defined, $N$-invariant, and nonzero.

The two representatives of a coset $rZ$ are $r$ and $-r$, and they have the same square.
Thus $\mathcal S$ is well-defined. Replacing $r_e$ by $-r_e$ changes $B(w,r_ew)$
by a sign but leaves its fourth power unchanged.
Therefore $F$ is independent of the chosen representatives, and so
$F$ is well-defined.
Since each $B(w,r_ew)$ is homogeneous of degree 2, we see that
$F$ is homogeneous of degree 8 if $F$ is nonzero.

Let $g\in N$. Conjugation by $g$ permutes the cosets in $\mathcal S$.
Since $g$ preserves $B$,
\[
 F(gw)
 =\sum_{e\in\mathcal S}B(w,g^{-1}r_egw)^4
 =F(w).
\]
Hence $F$ is $N$-invariant.

To prove that $F$ is nonzero, we extend scalars to an algebraic closure $K$ of $k$.
Since $R$ has minus type, it is a central product
of one quaternion group of order 8 and
$m-1$ dihedral groups of order 8.
The tensor product of their faithful
two-dimensional representations is faithful
and irreducible.
By Lemma~\ref{lem:extraspecial-module}, we may
therefore identify $W_K$ with this tensor-product
module.

Choose $\iota\in K$ with $\iota^2=-1$, and set
\[
 A=\begin{pmatrix}0&1\\-1&0\end{pmatrix},
 \qquad
 C=\begin{pmatrix}\iota&0\\0&-\iota\end{pmatrix},
 \qquad
 D=\begin{pmatrix}1&0\\0&-1\end{pmatrix},
 \qquad
 S=\begin{pmatrix}0&1\\1&0\end{pmatrix}.
\]
Thus $A^2=C^2=-I$ and $D^2=S^2=I$.
Since \[A^TAA=C^TAC=A,\] the quaternion group $\langle A,C\rangle$ preserves the
nondegenerate alternating form
$b_-(u,v)=u^TAv$ on $K^2$.
Similarly, since \[D^TD=S^TS=I,\] the dihedral
group $\langle D,S\rangle$ preserves the
nondegenerate symmetric form
$b(u,v)=u^Tv$ on $K^2$.
Here vectors are written as columns.
The tensor-product invariant form $b_-\otimes b^{\otimes(m-1)}$ has matrix
\[
 J=A\otimes I^{\otimes(m-1)}.
\]
By uniqueness of the invariant bilinear form
up to scalars, there exists $\gamma\in K^\times$
such that
\[
 B_K(u,w)=\gamma u^TJw.
\]

Notice that $\mathcal{R}_-:=\{I,A,C,AC\}$ and $\mathcal{R}_+:=\{I,D,S,DS\}$ are
sets of representatives for the cosets of $Z$
in the groups $\langle A,C \rangle$
and $\langle D,S\rangle$, respectively.
 Hence the matrices
\[
 r=T_0\otimes T_1\otimes\cdots\otimes T_{m-1},
\]
where $T_0\in \mathcal{R}_-$ and
$T_j\in \mathcal{R}_+$ for $j\geq1$, form a complete
set of representatives for the cosets of $Z(R)$
in $R$.

Let $e=(1,0)^T$ and $v=e^{\otimes m}$. Then $rv=T_0e\otimes T_1e\otimes\cdots\otimes T_{m-1}e$ and
\[
\begin{aligned}
 B_K(v,rv)
 &=\gamma b_-(e,T_0e)\prod_{j=1}^{m-1}b(e,T_je)\\
 &=\gamma (e^TAT_0e)\prod_{j=1}^{m-1}(e^TT_je),
\end{aligned}
\]
where $e^TMe$ is the $(1,1)$-entry of a matrix $M$.
Direct calculation yields
\[
\begin{array}{c|rrrr}
 T_0&I&A&C&AC\\ \hline
 e^TAT_0e&0&-1&0&-\iota
\end{array}
\qquad\text{and}\qquad
\begin{array}{c|rrrr}
 T_j&I&D&S&DS\\ \hline
 e^TT_je&1&1&0&0.
\end{array}
\]
Consequently, $B_K(v,rv)\ne0$ precisely when
$T_0\in\{A,AC\}$ and $T_j\in\{I,D\}$ with $1\leq j\leq m-1$.
There are exactly $2^m$ such representatives.
Since
\[
 A^2=(AC)^2=-I,\qquad I^2=D^2=I,
\]
each satisfies $r^2=-I_W$ and hence represents
a coset in $\mathcal S$.
For each of them,
\[
 B_K(v,rv)\in\{-\gamma,-\gamma\iota\},
 \qquad B_K(v,rv)^4=\gamma^4.
\]
Every other representative contributes zero.
Therefore
\[
 F_K(v)=2^m\gamma^4\ne0,
\]
since the characteristic of $k$ is odd.
When $m=1$, the empty product is interpreted
as 1, hence the same calculation applies.
Thus the scalar extension $F_K$ is nonzero.
Injectivity of scalar extension implies that
$F$ is nonzero over $k$, completing the proof.
\end{proof}

\begin{proposition}\label{prop:extraspecial-large-characteristic}
If $p\geq11$, then $O_2(L)$ is not extraspecial.
\end{proposition}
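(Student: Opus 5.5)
Assume for contradiction that $R=O_2(L)$ is extraspecial, of order $2^{1+2m}$ with $m\geq1$, and that $p\geq11$. The plan is to play the octic invariant of Lemma~\ref{lem:extraspecial-octic} against scalar fusion through Lemma~\ref{lem:quartic-scalar-principle} with $r=8$.

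First I would check that the octic is invariant under $L$ itself. Since $(L,W)$ is admissible, condition {\rm(I)} gives $L\leq\Sp(W,B)$, and $R\lhd L$, so $L\leq N_{\Sp(W,B)}(R)$. By Proposition~\ref{prop:extraspecial-irreducible}, $R$ has minus type and acts absolutely irreducibly on $W$, which are the hypotheses used in the proof of Lemma~\ref{lem:extraspecial-octic}. Hence the polynomial
\[
F(w)=\sum_{e\in\mathcal S}B(w,r_ew)^4
\]
is a nonzero homogeneous polynomial of degree $8$ on $W$, and it is $L$-invariant.

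Next I would apply Lemma~\ref{lem:quartic-scalar-principle} with $r=8$. By {\rm(SF)} with $a=2$, every nonzero $w\in W$ has some $g\in L$ with $gw=2w$. The lemma needs two numerical conditions. The first is $8<p$, which holds because $p\geq11$. The second is $2^8\not\equiv1\pmod p$. Here $2^8-1=255=3\cdot5\cdot17$, so this condition holds for $p=11$ and for every prime $p\geq11$ except $p=17$. In all those cases the lemma forbids a nonzero invariant octic, contradicting the previous step.

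The case $p=17$ is the main obstacle, because there $2$ has order $8$ modulo $17$ and fusion with the scalar $2$ gives no contradiction. I would instead use fusion with a different scalar $a\in\F_{17}^\times$, namely a primitive root such as $a=3$. The proof of Lemma~\ref{lem:quartic-scalar-principle} works verbatim for any scalar $a$, giving $F(w)=a^8F(w)$ at a point where $F(w)\neq0$. Since $3$ has order $16$ modulo $17$, we get $3^8\equiv-1\not\equiv1\pmod{17}$, which is again a contradiction. More generally, for any $p\geq11$, taking $a$ to be a primitive root gives $a^8\neq1$, because $p-1\nmid 8$. So I would phrase the final argument uniformly with a primitive root $a$ of $\F_p$, using {\rm(SF)} for that $a$, rather than relying on the scalar $2$.

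Note that only the single prime $p=11$ matters for the main theorem; the uniform argument simply covers all $p\geq11$.
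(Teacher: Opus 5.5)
Your argument is correct and matches the paper's proof. The paper also combines the $N_{\Sp(W,B)}(R)$-invariant nonzero octic of Lemma~\ref{lem:extraspecial-octic} with {\rm(SF)} for a generator $a$ of $\F_p^\times$, so that $a^8\neq1$ because $p-1>8$, and then applies the vanishing argument, which is valid since $8<p$. Your observation that the scalar $2$ alone fails at $p=17$ is exactly the point of the remark the paper makes after this proposition.
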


\begin{proof}
Assume, for a contradiction, that $O_2(L)$ is extraspecial.
By Lemma~\ref{lem:extraspecial-octic}, there is
a nonzero $L$-invariant homogeneous polynomial
$F$ of degree 8 on $W$.

Choose a generator $a$ of $\F_p^\times$.
Since $p-1>8$, we have $a^8\ne1$.
For every $0\ne w\in W$, condition {\rm(SF)}
provides $g\in L$ such that $gw=aw$. The $L$-invariance and homogeneity of $F$ yield
\[
 F(w)=F(gw)=F(aw)=a^8F(w).
\]
Hence $F(w)=0$ for every nonzero $w\in W$.
Also $F(0)=0$, so $F$ vanishes on all of $W$.
Since $8<p$, the polynomial vanishing argument in the first
part of the proof of Lemma~\ref{lem:quartic-scalar-principle}
implies that $F=0$, a contradiction.
\end{proof}

\begin{remark}
The preceding argument uses fusion with a generator
of $\F_p^\times$.
Fusion with the scalar $2$ alone does not suffice
uniformly for all primes $p\geq11$, since
$2^8$ can equal $1$ modulo $p$.

For the primes $p\in\{7,11\}$ occurring in our
application, Proposition~\ref{prop:extraspecial-large-characteristic}
eliminates $p=11$.
The extraspecial case in characteristic 7
remains to be considered.
\end{remark}

\subsection{The extraspecial case in characteristic $p=7$}
\label{sec:extraspecial-seven}

Throughout this subsection, let $p=7$.
We exclude the extraspecial case using scalar fusion when
$m\geq3$, and the additional condition $\boldsymbol{(R^-)}$
when $m=1,2$.

The invariant homogeneous polynomial of degree 8 used in the preceding subsection
does not yield the same contradiction in characteristic $7$, since
its degree exceeds the characteristic, so its vanishing
on all of $W$ does not imply that it is the zero polynomial.
We instead use integer-valued counts of nonzero quadratic
contractions.

\begin{lemma}\label{lem:extraspecial-seven-large}
Suppose that $p=7$ and $m\geq3$.
Then there exists $0\ne w\in W$ such that $gw\ne3w$
for every $g\in N_{\Sp(W,B)}(R)$.
Consequently, no subgroup of $N_{\Sp(W,B)}(R)$
satisfies {\rm(SF)} on $W$.
\end{lemma}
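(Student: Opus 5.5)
Write $N=N_{\Sp(W,B)}(R)$ and $Z=Z(R)=\langle -I_W\rangle$. The aim is an $N$-invariant function on $W\setminus\{0\}$ with values in a set where multiplication by $3$ acts nontrivially and can be detected. Following the hint before the lemma, we count nonzero quadratic contractions. For $w\in W$, consider the values $B(w,rw)\in\F_7$, where $r$ runs over representatives of the cosets in $\mathcal S=\{rZ: r^2=-I_W\}$. When $r^2=-I_W$, the form $(u,v)\mapsto B(u,rv)$ is symmetric: $B(v,ru)=-B(ru,v)=-B(r^2u,rv)=B(u,rv)$. So $q_r(w)=B(w,rw)$ is a quadratic form, well defined up to sign on each coset. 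Since $3$ is a nonsquare mod $7$, scaling $w\mapsto 3w$ multiplies each $q_r(w)$ by $9=2$, a square. We therefore track the multiset of values up to sign.

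In $\F_7$ we have $\{\pm1,\pm2,\pm3\}$, and the squares are $\{1,2,4\}$. Let $n_j(w)$ be the number of $e\in\mathcal S$ with $q_{r_e}(w)\in\{\pm j\}$, for $j=1,2,3$. Conjugation by $g\in N$ permutes $\mathcal S$ and preserves $B$, so $n_j(gw)=n_j(w)$. Scaling by $3$ multiplies $q$ by $2$, which permutes the classes $\{\pm1\}\to\{\pm2\}\to\{\pm4\}=\{\pm3\}\to\{\pm1\}$, a $3$-cycle. Hence $gw=3w$ forces $n_1(w)=n_2(w)=n_3(w)$. It suffices to exhibit $0\ne w\in W$ with $(n_1,n_2,n_3)$ not all equal.

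The main work is this exhibition, and I would do it with the explicit tensor model from the proof of Lemma~\ref{lem:extraspecial-octic}. First descend: $N$ and $W$ are over $\F_7$, and I want $w$ rational. Since $\iota\notin\F_7$, I would choose a model of $Q_8$ over $\F_7$ (for instance $A$ as before and $C=\begin{pmatrix}x&y\\y&-x\end{pmatrix}$ with $x^2+y^2=-1$, e.g.\ $x=2,y=3$), so that all representatives are defined over $\F_7$ and $W=(\F_7^2)^{\otimes m}$. By uniqueness of the form, $B=\gamma\,J$ with $\gamma\in\F_7^\times$; a uniform factor $\gamma$ only permutes the classes and preserves the criterion of unequal counts. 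Then take a product vector $w=w_0\otimes w_1\otimes\cdots\otimes w_{m-1}$. The contraction factorizes as $q_r(w)=\gamma\,b_-(w_0,T_0w_0)\prod_j b(w_j,T_jw_j)$, and the condition $r^2=-I$ requires $T_0^2=-I$ with an even number of dihedral factors squaring to $-I$, or $T_0^2=I$ with an odd number. Each local factor takes a few explicit values. Choosing, e.g., $w_j=e$ for $j\ge1$ gives local values $\{1,1,0,0\}$ as before, and the counts become binomial-type sums in $m$.

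The obstacle is arranging local choices so that the resulting class counts are provably unequal for every $m\ge3$. The cases $m=1,2$ are excluded by the statement, presumably because equality can occur there. I would compute the generating polynomial recording, for each local factor, the distribution of its value class in $\F_7^\times/\{\pm1\}\cong C_3$ together with the sign of $T^2$. The counts are then coefficients of a product in the group ring of $C_3\times C_2$, and I would show that the $C_3$-Fourier coefficient at a nontrivial character is nonzero. This reduces to a product of local character sums, and the main check is that none of these vanishes, perhaps after varying $w_1$ between $e$ and $e+f$. Once that nonvanishing is established, the "consequently" part follows directly: (SF) with $a=3$ would give $g\in N$ with $gw=3w$.
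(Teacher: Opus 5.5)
Your reduction is correct, and slightly finer than the paper's. If $gw=3w$ with $g\in N_{\Sp(W,B)}(R)$, conjugation by $g$ maps the cosets whose contraction lies in one class of $\F_7^\times/\{\pm1\}$ bijectively onto those in the next class. Hence $n_1=n_2=n_3$, whereas the paper uses only the weaker consequence $3\mid d(w)=n_1+n_2+n_3$.

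The gap is the witness vector, and the product-vector route you propose cannot produce one. For $w_0\in\F_7^2$ and the three quaternion representatives $T\in\{A,C,AC\}$, put $q_T(w_0)=b_-(w_0,Tw_0)$. If $J_0$ is the Gram matrix of $b_-$, then $M=w_0w_0^{\mathsf T}J_0$ is traceless of rank at most one, so $M^2=0$. The matrices $A,C,AC$ pairwise anticommute, so they form an orthogonal basis of the traceless $2\times2$ matrices for $(X,Y)\mapsto\operatorname{tr}(XY)$, with $\operatorname{tr}(T^2)=-2$. Since $q_T(w_0)=\operatorname{tr}(MT)$, this gives
\[
 q_A(w_0)^2+q_C(w_0)^2+q_{AC}(w_0)^2=-2\operatorname{tr}(M^2)=0 .
\]
Each $q_T$ is anisotropic over $\F_7$: $q_T(w_0)=0$ forces $w_0$ to be an eigenvector of $T$, whose eigenvalues are square roots of $-1$. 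The only way three nonzero squares of $\F_7$ sum to $0$ is $1+2+4$. So for every $0\ne w_0$ the three values lie in the three distinct classes $\{\pm1\},\{\pm2\},\{\pm3\}$. Your quaternion local character sum is therefore $1+\zeta+\zeta^2=0$ identically, where $\zeta$ is a primitive cube root of unity.

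Consequently every product vector $w=w_0\otimes w_1\otimes\cdots\otimes w_{m-1}$ has balanced counts. A nonzero $B(w,rw)$ forces $T_0\ne I$, since $b_-(w_0,w_0)=0$. It also forces every dihedral $T_j$ to be symmetric, since the skew representative contracts to $0$ against the symmetric form, and then $r^2=-I_W$ automatically. For each such dihedral tuple with nonzero local product, the three choices of $T_0$ land in the three different classes. Thus $n_1=n_2=n_3$, and even $3\mid d(w)$, for every product vector, whatever $w_1,\dots,w_{m-1}$ are. No nonvanishing check of Fourier coefficients can succeed, and varying $w_1$ does not help.

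The missing idea is an entangled test vector that mixes the quaternion factor with dihedral factors. The paper takes $v_3=e_0\otimes(u\otimes u)+e_1\otimes(e_0\otimes e_0)$ in $(\F_7^2)^{\otimes3}$, with $u=e_0+e_1$. It checks directly that exactly $4$ of the $36$ cosets of square $-I_W$ give zero contraction, so $d(v_3)=32$. Then $v_m=v_3\otimes e_0^{\otimes(m-3)}$ has $d(v_m)=2^{m+2}$, which is prime to $3$, so your finer criterion is met as well. This explicit computation is the substance of the lemma, and your proposal supplies neither it nor a workable substitute.
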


\begin{proof}
Write $N=N_{\Sp(W,B)}(R)$ and $Z=Z(R)=\langle-I_W\rangle$. We shall first construct a vector $v$ for which the number
\[
 d(v)=\bigl|\{rZ\in R/Z:r^2=-I_W,\ B(v,rv)\ne0\}\bigr|
\]
is a power of 2. The set counted by $d(v)$ is well defined, since replacing
$r$ by $-r$ changes neither $r^2$ nor the vanishing of $B(v,rv)$.
Over $\F_7$, let
\[
 I=\begin{pmatrix}1&0\\0&1\end{pmatrix},\quad
 J=\begin{pmatrix}0&-1\\1&0\end{pmatrix},\quad
 A=\begin{pmatrix}3&2\\2&4\end{pmatrix},\quad
 D=\begin{pmatrix}1&0\\0&-1\end{pmatrix},\quad
 X=JD=\begin{pmatrix}0&1\\1&0\end{pmatrix}.
\]
Since $J^2=A^2=-I$ and $JA=-AJ$, the matrices $J,A$ generate a quaternion group,
which preserves the alternating form with matrix $J$. The matrices $D,X$ generate a dihedral group of order 8, which  preserves
the symmetric form with matrix $I$. We use the central product model
\[
 R=Q_8*D_8*\cdots*D_8
\]
on $W=(\F_7^2)^{\otimes m}$, where the invariant alternating form $B'$ has matrix
$J\otimes I^{\otimes(m-1)}$.
This model affords the unique faithful absolutely irreducible
module of $R$; equivalence with the given module holds over
$\F_7$, since the intertwining linear equations commute with
extension of scalars.
After identifying the given module with this tensor model,
Schur's lemma shows that the space of $R$-invariant
bilinear forms is one-dimensional. Hence $B$ is a nonzero
scalar multiple of $B'$. Rescaling the form changes neither
its symplectic group nor the number $d(v)$, so we may
assume that $B=B'$.

Notice that representatives of $R/Z$ are
\[
 H\otimes M_1\otimes\cdots\otimes M_{m-1},
 \qquad H\in\{I,J,A,JA\},\quad M_i\in\{I,J,D,X\}.
\]
We first suppose $m=3$. Write
\[
 e_0=\begin{pmatrix}1\\0\end{pmatrix},\qquad
 e_1=\begin{pmatrix}0\\1\end{pmatrix},\qquad
 u=e_0+e_1=\begin{pmatrix}1\\1\end{pmatrix},
\]
and choose
\[
 s=u\otimes u,\qquad t=e_0\otimes e_0,\qquad
 v_3=e_0\otimes s+e_1\otimes t.
\]
For $M=M_1\otimes M_2$, where $M_i\in\{I,J,D,X\}$, set
\[
 a=s^{\mathsf T}Ms,\qquad b=s^{\mathsf T}Mt,
 \qquad c=t^{\mathsf T}Mt.
\]
The elementary identities
\[
\begin{array}{c|rrrr}
 Y&I&J&D&X\\\hline
 u^{\mathsf T}Yu&2&0&0&2\\
 u^{\mathsf T}Ye_0&1&1&1&1\\
 e_0^{\mathsf T}Ye_0&1&0&1&0
\end{array}
\]
yield $b=1$ for every $M$. Also,
\[ a=\begin{cases}4,&M_1,M_2\in\{I,X\},\\0,&\text{otherwise},\end{cases}
 \qquad
 c=\begin{cases}1,&M_1,M_2\in\{I,D\},\\0,&\text{otherwise}.\end{cases}
\]

There are six skew-symmetric choices of $M$, namely those
with exactly one factor equal to $J$. Each has $M^2=-I$.
For such $M$, the unique choice of first factor giving square
$-I$ is $H=I$, and
\[
 B(v_3,(I\otimes M)v_3)=-2b=-2\ne0.
\]
The other ten choices of $M$ are symmetric and have $M^2=I$.
They give square $-I$ precisely when $H\in\{J,A,JA\}$.
Direct multiplication of the two-by-two matrices yields
\[
\begin{aligned}
 B(v_3,(J\otimes M)v_3)&=-a-c,\\
 B(v_3,(A\otimes M)v_3)&=-2a+6+2c,\\
 B(v_3,(JA\otimes M)v_3)&=-3a-4-4c.
\end{aligned}
\]
For the four possible pairs $(a,c)$, these contractions
have the following values in $\F_7$:
\[
\begin{array}{c|ccc}
(a,c)&-a-c&-2a+6+2c&-3a-4-4c\\ \hline
(0,0)&0&6&3\\
(0,1)&6&1&6\\
(4,0)&3&5&5\\
(4,1)&2&0&1
\end{array}
\]
 Only their zeros matter. For the first expression, a zero
occurs precisely when $a=c=0$, which, among the symmetric
choices, means
\[
 M=J\otimes J,\qquad D\otimes X,\qquad X\otimes D.
\]
For the second expression, a zero occurs precisely when
$(a,c)=(4,1)$, which means $M=I\otimes I$.
The third never vanishes.
Thus, among the $6+3\cdot10=36$ cosets of square $-I$,
exactly four give zero contraction. Consequently,
\[
 d(v_3)=36-4=32.
\]

For general $m\geq3$, take
\[
 v_m=v_3\otimes e_0^{\otimes(m-3)}.
\]
In each additional dihedral factor,
\[
 e_0^{\mathsf T}Ie_0=e_0^{\mathsf T}De_0=1,
 \qquad e_0^{\mathsf T}Je_0=e_0^{\mathsf T}Xe_0=0.
\]
Hence a nonzero contraction requires every additional factor
to be $I$ or $D$. Both have square $I$, and both preserve
the contraction. Every coset counted by $d(v_3)$ therefore
gives exactly $2^{m-3}$ cosets counted by $d(v_m)$, and
there are no others. We obtain
\[
 d(v_m)=32\cdot2^{m-3}=2^{m+2}.
\]
In particular, this is not divisible by 3.

We now show that, for any $v\in W$, the existence of
$g\in N$ with $gv=3v$ forces $3\mid d(v)$.
For a coset $e=rZ$ with $r^2=-I_W$, define
\[
 q_e(v)=B(v,rv)^2.
\]
This is independent of the representative $r$,
since $B(v,(-r)v)^2=(-B(v,rv))^2=B(v,rv)^2$.
Conjugation by $g\in N$ permutes these cosets, and
\[
 q_{geg^{-1}}(gv)=q_e(v).
\]
If $gv=3v$, then $3^4=4$ in $\F_7$ yields
\[
 q_{geg^{-1}}(v)=2q_e(v).
\]
Thus conjugation by $g$ permutes the $d(v)$ cosets with
$q_e(v)\ne0$, multiplying their values by $2$.
Every cycle of this permutation has length divisible by
3, since $2$ has multiplicative order 3 in
$\F_7^\times$. It follows that $3\mid d(v)$.

Therefore, applying this to $v=v_m$ contradicts
$d(v_m)=2^{m+2}$. Since $v_m\ne0$, taking $w=v_m$
proves the first assertion. The final assertion follows
from {\rm(SF)} applied to the scalar $3\in\F_7^\times$.
\end{proof}

The two smallest extraspecial groups require the additional hypothesis
$\boldsymbol{(R^-)}$. The quotient restrictions and
Lemma~\ref{lem:gallagher-quotient} provide the information needed
to exclude these cases.

\begin{lemma}
\label{lem:gallagher-quotient}
Let $R\lhd J$, and let $z\in R\cap Z(J)$ be an involution.
Suppose that $(J,z)$ satisfies $\boldsymbol{(R^-)}$ and that
$\chi_R\in\Irr(R)$ for some $\chi\in\Irr^-(J,z)$.
Then $J/R$ is a rational group.
\end{lemma}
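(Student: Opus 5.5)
Since $\chi_R=\theta$ is irreducible, Gallagher's theorem applies: the map $\beta\mapsto\beta\chi$ is a bijection from $\Irr(J/R)$ onto $\Irr(J\mid\theta)$. The plan is to show that every $\beta\in\Irr(J/R)$ is rational-valued. To do this, I would transfer rationality from the characters $\beta\chi$, which lie above $\theta$, back to $\beta$ by using this bijection together with Galois equivariance.

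The first step is to check that everything in sight lies in $\Irr^-(J,z)$. Because $z\in R$ and $\beta$ is inflated from $J/R$, we have $\beta(z)=\beta(1)$. Hence
\[
(\beta\chi)(z)=\beta(1)\chi(z)=-\beta(1)\chi(1)=-(\beta\chi)(1),
\]
so $\beta\chi\in\Irr^-(J,z)$. By $\boldsymbol{(R^-)}$, both $\beta\chi$ and $\chi$ are rational-valued. Since $\chi$ is rational, $\theta=\chi_R$ is rational as well.

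Next, fix a cyclotomic Galois automorphism $\sigma$. Because $\chi$ and $\beta\chi$ are rational,
\[
\beta^\sigma\chi=\beta^\sigma\chi^\sigma=(\beta\chi)^\sigma=\beta\chi .
\]
Now $\beta^\sigma$ is again an irreducible character of $J/R$, so the injectivity part of Gallagher's theorem gives $\beta^\sigma=\beta$. As this holds for every $\sigma$, $\beta$ is rational-valued. Since $\beta\in\Irr(J/R)$ was arbitrary, $J/R$ is rational.

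The only step that needs care is the appeal to Gallagher's correspondence. The input there is that $\chi$ extends $\theta$, and this is exactly the hypothesis $\chi_R\in\Irr(R)$. It is also worth noting that we never need $\theta$ itself to satisfy $\boldsymbol{(R^-)}$ on $R$: rationality of $\chi$ is all the argument uses. I do not expect a genuine obstacle beyond invoking Gallagher's theorem correctly (for instance, Isaacs, Corollary~6.17).
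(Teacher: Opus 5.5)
Your proof is correct, but it finishes differently from the paper's.

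The two arguments share the first half. Gallagher's theorem makes $\beta\chi$ irreducible, the evaluation at $z$ places it in $\Irr^-(J,z)$, and $\boldsymbol{(R^-)}$ makes both $\chi$ and $\beta\chi$ rational-valued.

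You then conclude with Galois equivariance, $\beta^\sigma\chi=(\beta\chi)^\sigma=\beta\chi$, together with the injectivity clause of Gallagher's correspondence. That clause is part of Isaacs, Corollary~6.17: the characters $\beta\chi$ are distinct for distinct $\beta$. You also correctly use that $\beta^\sigma$ again lies in $\Irr(J/R)$.

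The paper uses only the irreducibility clause. It picks a unitary representation affording $\chi$ and applies Schur orthogonality of matrix coefficients over $R$ to get $\sum_{r\in R}|\chi(rg)|^2=|R|$. So $\chi$ is nonzero somewhere on every coset $Rg$, and then $\tau(g)=(\chi\tau)(rg)/\chi(rg)\in\mathbb{Q}$ is read off directly.

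Nonvanishing of $\chi$ on every coset of $R$ immediately implies the injectivity you cite. The paper's computation is therefore in effect a self-contained proof of the piece of Gallagher's theorem your argument relies on. Your version is shorter and conceptually cleaner, since it invokes the full standard statement of Gallagher's theorem. The paper's version is more elementary and gives slightly more pointwise information about $\chi$. Either suffices for the lemma.
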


\begin{proof}
Let $\tau\in\Irr(J/R)$, and also denote its inflation to $J$
by $\tau$. Since $z\in R$, we have $\tau(z)=\tau(1)$.
As $\chi_R$ is irreducible, Gallagher's theorem
\cite[Corollary 6.17]{I} shows that $\chi\tau\in\Irr(J)$.
Moreover,
\[
    (\chi\tau)(z)
    =\chi(z)\tau(z)
    =-\chi(1)\tau(1)
    =-(\chi\tau)(1).
\]
Thus $\chi\tau\in\Irr^-(J,z)$, and both $\chi$ and
$\chi\tau$ are rational-valued by $\boldsymbol{(R^-)}$.

To show that $\tau$ is rational-valued, choose a unitary
representation $\rho$ of $J$ affording $\chi$ (see \cite[Theorem 4.17]{I}), and write
$d=\chi(1)$.
Since $\chi_R$ is irreducible, $\rho|_R$ is irreducible.

We first establish the matrix coefficient orthogonality
identity needed below. For $1\leq j,\ell\leq d$, let
$E_{j\ell}$ be the matrix whose $(j,\ell)$-entry is $1$
and whose other entries are 0, and set
\begin{equation}\label{eq:similar}
    T_{j\ell}
    =\sum_{r\in R}\rho(r)E_{j\ell}\rho(r)^{-1}.
\end{equation}
For every $s\in R$, replacing $r$ by $sr$ in the sum yields
$\rho(s)T_{j\ell}\rho(s)^{-1}=T_{j\ell}$.
Thus $T_{j\ell}$ commutes with $\rho(R)$.
By Schur's lemma, $T_{j\ell}=cI_d$ for some scalar $c$.
Taking traces in \eqref{eq:similar}, we have
\[
    dc
    =\operatorname{tr}(T_{j\ell})
    =|R|\operatorname{tr}(E_{j\ell})
    =|R|\delta_{j\ell},
\]
where $\delta$ denotes the Kronecker delta. Hence
\[
    T_{j\ell}=\frac{|R|}{d}\delta_{j\ell}I_d.
\]
Comparing the $(i,k)$-entries, and using
$\rho(r)^{-1}=\rho(r)^*$, we obtain
\begin{equation}\label{eq:orth}
    \sum_{r\in R}\rho(r)_{ij}
        \overline{\rho(r)_{k\ell}}
    =\frac{|R|}{d}\delta_{ik}\delta_{j\ell}.
\end{equation}

Now fix $g\in J$. For every $r\in R$, we have
\begin{equation}\label{eq:chi(g)}
    \chi(rg)
    =\operatorname{tr}(\rho(r)\rho(g))
    =\sum_{i,j=1}^{d}\rho(r)_{ij}\rho(g)_{ji}.
\end{equation}
Consequently, \eqref{eq:orth} and \eqref{eq:chi(g)} yield
\[
\begin{aligned}
    \sum_{r\in R}|\chi(rg)|^2
    &=
    \sum_{i,j,k,\ell=1}^{d}
        \rho(g)_{ji}\overline{\rho(g)_{\ell k}}
        \sum_{r\in R}\rho(r)_{ij}
            \overline{\rho(r)_{k\ell}}\\
    &=\frac{|R|}{d}\sum_{i,j=1}^{d}|\rho(g)_{ji}|^2\\
    &=|R|,
\end{aligned}
\]
where the last equality follows from the unitarity of $\rho(g)$.
In particular, there exists $r\in R$ such that
$\chi(rg)\neq0$. Since $\tau$ is inflated from $J/R$,
it is constant on the coset $Rg$. Therefore
\[
    \tau(g)
    =\tau(rg)
    =\frac{(\chi\tau)(rg)}{\chi(rg)}
    \in\mathbb{Q}.
\]
Since $g\in J$ was arbitrary, $\tau$
is rational-valued.
Hence
 every irreducible character of $J/R$ is
rational-valued, and thus $J/R$ is a rational group.
\end{proof}

\begin{remark}\label{rem:one-character}
Condition $\boldsymbol{(R^-)}$ is an additional hypothesis, since it does not
follow from admissibility. Indeed, a faithful two-dimensional
symplectic action of $\operatorname{SL}_2(3)\cong Q_8\rtimes C_3$
over $\F_7$ satisfies {\rm(SF)} and {\rm(R)}, but its negative
irreducible character family contains nonrational characters.
This is why we retain condition $\boldsymbol{(R^-)}$ explicitly throughout
the refinement process.
\end{remark}

\begin{lemma}\label{lem:extraspecial-seven-small} Let $p=7$.
Suppose that $m\in\{1,2\}$ and that $(L,z)$ satisfies
$\boldsymbol{(R^-)}$.
Then $L$ does not satisfy {\rm(SF)} on $W$.
\end{lemma}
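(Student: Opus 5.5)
Assume, for a contradiction, that $L$ satisfies {\rm(SF)} on $W$; the aim is to show that $L/R$ would have to be a rational subgroup of a very small orthogonal group, and that no such subgroup gives {\rm(SF)}. First I apply Lemma~\ref{lem:gallagher-quotient} with $J=L$. By Proposition~\ref{prop:extraspecial-irreducible}, $W$ is absolutely irreducible for $R$, with $R$ of minus type, $\dim_k W=2^m$, and $z=-I_W$. The Brauer character of $W$ lifts to an ordinary character $\chi$ of $L$ whose restriction to $R$ is the faithful irreducible character of degree $2^m$. Concretely, $R$ is a $p'$-group, so the module lifts, and the lift extends to $L$ because $W$ is an $L$-module. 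The extension can be chosen as an actual lift of $W$, so that $\chi(z)=-\chi(1)$ and $\chi\in\Irr^-(L,z)$. I expect to justify this either by Fong--Swan for $p$-solvable $L$, or more safely by noting that the $R$-character extends (it is $L$-invariant) and picking an extension in $\Irr(L\mid\chi_R)$. In the latter case only membership in $\Irr^-(L,z)$ matters, and that holds for any constituent lying over the faithful character of $R$. Gallagher's lemma then shows that $L/R$ is rational, and Corollary~\ref{cor:extraspecial-quotient} embeds it in $O^-_{2m}(2)$.

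For $m=1$ we have $O_2^-(2)\cong S_3$. The rational subgroups of $S_3$ are $1$, $C_2$ and $S_3$, so $|L|$ divides $48$ and $L\le N_{\Sp_2(7)}(Q_8)$, which is $\SL_2(3)$ or its extension of order $48$. For {\rm(SF)} on $W=\F_7^2$, each of the $48$ nonzero vectors must be sent to $3v$ by some element. Such an element has eigenvalue $3$, of order $6$ in $\F_7^\times$, and lies in $\Sp_2(7)$, so its other eigenvalue is $3^{-1}=5$. It therefore has order $6$ with eigenvalues $\{3,5\}$, hence trace $1$. I will count the vectors covered by such elements. Each element of order $6$ with eigenvalues $\{3,5\}$ contributes at most $6$ eigenvectors, namely the nonzero vectors of its $3$-eigenline, and distinct elements that are inverse to each other swap the roles of the two lines. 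I will show that the elements of order $6$ in $L$ with this eigenvalue pattern number at most $8$; this is the count in $\SL_2(3)$, and the rational quotient condition rules out further ones. The resulting count falls short of $48$, or else I exhibit a specific vector missed. Alternatively, rationality of $L/R$ forces $L/R\cong S_3$, and then $\Irr^-(L,z)$ contains the characters of degree $2$ of $2.S_4^{\pm}$, which have values $\pm\sqrt{2}$ or $\pm\sqrt{-2}$. That contradicts $\boldsymbol{(R^-)}$ directly, while $L/R\le C_2$ leaves too few $3$-elements for {\rm(SF)}.

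For $m=2$, $L/R$ is a rational subgroup of $O_4^-(2)\cong S_5$ and $\dim W=4$. Here I use (R) together with $\boldsymbol{(R^-)}$ and the explicit groups $2^{1+4}_-.A$. {\rm(SF)} with scalar $3$ requires elements of order divisible by $3$ whose Brauer characteristic polynomial has $\widehat3$, a primitive $6$th root of unity, as a root. By Lemma~\ref{lem:rational-brauer}, its conjugate $\widehat5$ then occurs with equal multiplicity. The orbit-counting argument from Lemma~\ref{lem:extraspecial-seven-large} carries over: take $d(v)$ for a suitable $v$ in the $m=2$ model, with scalar $3$ multiplying the $q_e$ by $2$. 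If I can find $v$ with $d(v)$ prime to $3$, the case is finished with no need for $\boldsymbol{(R^-)}$. Failing that, I will use the negative faithful characters of degree $4$ of $2^{1+4}_-.A_5$ and $2^{1+4}_-.S_5$ (which involve $\sqrt5$ or similar irrationalities), together with smaller rational quotients such as $S_3$, $S_4$ and $C_2$. Those cases are handled by counting elements with eigenvalue $3$, as for $m=1$.

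The main obstacle is the $m=2$ case-check over the rational subgroups of $S_5$. I would first try the $d(v)$ search with a vector analogous to $v_3$, since that is uniform.
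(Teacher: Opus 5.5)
Your opening step is the right start, but the justification needs fixing. Since $|L|$ divides $2^{1+2m}|O^-_{2m}(2)|$, we have $7\nmid|L|$, so $W$ itself lifts to a character $\chi$ with $\chi_R$ irreducible. The inference ``$\chi_R$ is invariant, hence extends'' is not valid in general. An arbitrary member of $\Irr(L\mid\chi_R)$ will also not do, because Lemma~\ref{lem:gallagher-quotient} needs $\chi_R\in\Irr(R)$.

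The genuine gap is the top case $L/R=O^-_{2m}(2)$, i.e.\ $L=N:=N_{\Sp(W,B)}(R)$. There (SF) really holds, so neither of your counting routes can produce a contradiction.
\begin{itemize}
\item For $m=1$, the eight elements of order $6$ in $\SL_2(3)\le L$ have pairwise distinct $3$-eigenlines, and these exhaust the eight points of $\mathbb P^1(\F_7)$. Your count is therefore exactly $8\cdot 6=48$, and no vector is missed (compare Remark~\ref{rem:one-character}).
\item For $m=2$, $N$ likewise satisfies (SF) on $\F_7^4$. By the argument of Lemma~\ref{lem:extraspecial-seven-large}, every $v$ then has $3\mid d(v)$; in the tensor model one finds $d(v)\in\{6,9\}$. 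So a vector $v$ with $3\nmid d(v)$ does not exist.
\end{itemize}

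Hence the contradiction must come from $\boldsymbol{(R^-)}$, through an explicit element of $L$ at which $\chi$ is irrational, and this first requires proving $L=N$. Your alternative for $m=1$ does exactly this: $L\cong 2.S_4$, with $\chi=\pm\sqrt{\pm2}$ at elements of order $8$. That agrees with the paper.

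For $m=2$ both ingredients are missing.
\begin{itemize}
\item \emph{Forcing $P=L/R=S_5$.} Each element of order $3$ in $P$ has exactly four lifts of order $3$. Each lift has a totally isotropic $2$-eigenspace of dimension at most $2$, and (SF) forces these eigenspaces to cover $W\setminus\{0\}$. This gives $n_3(P)\ge 13$, so $P$ is transitive on five points, and rationality then leaves only $S_5$. Your ``smaller quotients'' are handled by this count, not by the $m=1$ count, which was wrong.
\item \emph{Locating the irrationality.} It is not a $\sqrt5$. An element of order $5$ acts fixed-point-freely on $R/Z(R)$, which forces $\chi=-1$ on it.
\end{itemize}

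The missing idea is the paper's element $h=2(I_W+b)$, where $b\in R$ and $b^2=-I_W$. It lies in $N=L$ and induces a transvection on $R/Z(R)$. Computing the trace of $\mathrm{Ad}\,\rho(h)$ on the basis $\{\rho(r_e)\}$ of $\End(\mathbb C^{2^m})$ gives $|\chi(h)|^2=2^{2m-1}=8$. This is not a rational square, contradicting $\chi(h)\in\mathbb Q$.
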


\begin{proof}
Suppose, for a contradiction, that $L$ satisfies {\rm(SF)}.
Write $N=N_{\Sp(W,B)}(R)$ and $P=L/R$.
Since $R$ acts faithfully and absolutely irreducibly on $W$,
we have
\[
 \dim W=2^m,\qquad Z(R)=\langle z\rangle,
 \qquad z=-I_W.
\]

We first describe the action of $N$ on $R/Z(R)$.
The quadratic form defined by
$r^2=z^{q(rZ(R))}$ is nondegenerate and has minus type. It is clear that conjugation by $N$
preserves this form. Moreover, the kernel of this action
is $R$. Indeed, an automorphism of $R$ acting trivially
on $R/Z(R)$ is inner. Thus, after multiplication by an
element of $R$, an element of the kernel centralizes $R$.
By the absolute irreducibility of $W$ and Schur's lemma, $C_{\GL(W)}(R)=\F_7^\times I_W$.
Since a scalar isometry has square one, it follows that
\[
 C_{\Sp(W,B)}(R)=\{I_W,-I_W\}\leq R.
\]
Consequently,
$P=L/R\leq N/R\leq O^-_{2m}(2)$.

In the two ranks under consideration, we have
\[O^-_2(2)\cong S_3,\qquad O^-_4(2)\cong S_5.\]
The first identification follows because every nonzero vector
of the anisotropic plane over $\F_2$ is nonsingular.
For the second, use the quadratic space
\[
 V_0=\left\{(a_1,\ldots,a_5)\in\F_2^5:
                 \sum_{i=1}^5a_i=0\right\},
 \qquad
 q(a)=\frac{\operatorname{wt}(a)}2\pmod2.
\]
Its five nonzero singular vectors are the vectors of weight
4, and they span $V_0$. Every isometry therefore permutes
these five vectors faithfully. Conversely, coordinate
permutations give all permutations of these vectors, proving
the claimed identification.

In particular, $7\nmid |L|$. By the ordinary--Brauer correspondence for groups of order prime to $7$,
the given absolutely irreducible module lifts to an ordinary
irreducible character $\chi$ of $L$.
 This correspondence is compatible
with restriction to $R$, so $\chi_R$ is the faithful irreducible
extraspecial character of degree $2^m$.
 Thus
\[
 \chi_R\in\Irr(R),\qquad \chi(z)=-\chi(1).
\]
Since $(L,z)$ satisfies $\boldsymbol{(R^-)}$,
Lemma~\ref{lem:gallagher-quotient} shows that $P=L/R$
is rational.

We next prove that
\[
 P=O^-_{2m}(2).
\]

Suppose first that $m=1$. By {\rm(SF)}, for any nonzero
$w\in W$ there is $t\in L$ such that $tw=2w$.
Since $2$ has multiplicative order 3 in $\F_7^\times$,
the order of $t$ is divisible by 3.
Hence $3\mid |P|$, since $R$ is a $2$-group.
As $P\leq S_3$, either $P=C_3$ or $P=S_3$.
The group $C_3$ is not rational, so $P=S_3$.

Now suppose that $m=2$. We shall first count the elements
of order 3 in $L$ lying above an element of order
3 in $P$.

Let $\bar g\in P$ have order 3. By the Schur--Zassenhaus theorem, there is an element $g\in L$ of order 3 mapping to
$\bar g$.
Under the identification $P\leq S_5$,
the element $\bar g$ is a three-cycle.
Its fixed space on $V_0$ has dimension two, since
after relabelling the coordinates, a fixed vector has
the form
\[
 (a,a,a,b,c),\qquad a+b+c=0.
\]
Therefore $|C_{R/Z(R)}(g)|=4$.

Clearly, the natural map
$C_R(g)\longrightarrow C_{R/Z(R)}(g)$
is onto. Indeed, a $g$-invariant coset of $Z(R)$ contains
two elements, and an element of order 3 cannot
interchange them. Its kernel is $Z(R)$, so
\[
 |C_R(g)|=2\cdot4=8.
\]

Observe that every lift of $\bar g$ of order 3 generates a complement
to $R$ in the inverse image of $\langle\bar g\rangle$.
The conjugacy part of the Schur--Zassenhaus theorem shows that all
such lifts are $R$-conjugate to $g$.
Consequently, $\bar g$ has exactly
\[|R:C_R(g)|=32/8=4\]
lifts of order 3.

Since $P\leq S_5$ and $R$ is a $2$-group, the largest
power of 3 dividing $|L|$ is at most 3.
For every nonzero $w\in W$, choose $t\in L$ with $tw=2w$.
Write $o(t)=3s$, where $3\nmid s$, and choose an integer $a$
such that
\[
 a\equiv1\pmod3,\qquad a\equiv0\pmod s.
\]
Then $g=t^a$ has order 3 and $gw=2w$.
Thus the $2$-eigenspaces of the elements
of $L$ of order 3 cover $W\setminus\{0\}$.

We see that each such eigenspace is totally isotropic. Indeed, if
$gu=2u$ and $gv=2v$, then
\[
 B(u,v)=B(gu,gv)=4B(u,v),
\]
and hence $B(u,v)=0$.
Since $\dim W=4$, each such eigenspace has dimension at most $2$.

Let $n_3(P)$ denote the number of elements of order 3
in $P$. Every element of $L$ of order 3 maps to one
of order 3 in $P$, and each such element of $P$
has exactly four lifts of order 3. Therefore
\[
 7^4-1\leq4n_3(P)(7^2-1),
\]
which gives
$n_3(P)\geq13$. Notice that an intransitive subgroup of $S_5$ has at most eight
elements of order 3, since a subgroup fixing a point
lies in $S_4$, while an intransitive subgroup without
a fixed point lies in $S_3\times S_2$.
Hence $P$ is transitive, and $5\mid |P|$.

Choose $C=\langle c\rangle\leq P$ of order $5$.
Since $P$ is rational, $c$ is conjugate in $P$ to each of the four
generators of $C$.  Hence conjugation by elements of $N_P(C)$ induces
all automorphisms of $C$, and therefore
\[
 N_P(C)/C_P(C)\cong\Aut(C)\cong C_4.
\]
Since $C_{S_5}(C)=C$ and $C\leq P$, we have $C_P(C)=C$.  Thus
\[
 |N_P(C)|=|C|\,|\Aut(C)|=5\cdot4=20.
\]
As $|N_{S_5}(C)|=20$, it follows that
$N_P(C)=N_{S_5}(C)$.  This normalizer contains a $4$-cycle.  Indeed, identify the five points
with $\F_5$ and regard $C$ as the translation group
$C=\{x\mapsto x+a:a\in\F_5\}$.
Then the permutation $x\mapsto2x$ normalizes $C$ and, fixing $0$, acts
on the four nonzero elements as
$(1\,2\,4\,3)$.
Hence $N_P(C)$, and therefore $P$, contains a $4$-cycle.

We also have $3\mid |P|$, so $60\mid |P|$.
Since $P\leq S_5$, its order is $60$ or $120$.
In the former case, $P$ would be an index-two subgroup
of $S_5$, hence equal to $A_5$, contradicting the presence
of a four-cycle. Therefore $P=S_5$.

We have now proved in both cases that
$L/R=O^-_{2m}(2)$.
Together with $L\leq N$ and $N/R\leq O^-_{2m}(2)$,
this implies $L=N$.

Choose $b\in R$ with $b^2=z=-I_W$, and define
$h=2(I_W+b)$,
where the scalar $2$ belongs to $\F_7$.
Since $b$ is an isometry, its adjoint is
$b^\dagger=b^{-1}=-b$. Hence
\[
 h^\dagger h
 =4(I_W-b)(I_W+b)
 =8I_W=I_W,
 \qquad
 h^2=8b=b.
\]
Thus $h$ is an isometry. It also normalizes $R$.
For $r\in R$, either $r$ commutes with $b$, in which
case $hrh^{-1}=r$, or $rb=-br$, in which case
direct multiplication gives $hrh^{-1}=br$.
Therefore
$h\in N=L$.

Let $\rho:L\to\GL_{2^m}(\mathbb C)$ afford the character
$\chi$ introduced above. Choose representatives $r_e$
for the cosets $e\in R/Z(R)$. Their matrices $\rho(r_e)$
form a basis of $\End_{\mathbb C}(\mathbb C^{2^m})$.
To see this, recall that the faithful irreducible
extraspecial character vanishes outside $Z(R)$.
Consequently,
\[
 \operatorname{tr}\bigl(\rho(r_e)\rho(r_f)^{-1}\bigr)
 =
 \begin{cases}
 2^m,&e=f,\\
 0,&e\ne f.
 \end{cases}
\]
The matrices are therefore linearly independent, and
their number $2^{2m}$ equals the dimension of the
endomorphism space.

Conjugation by $\rho(h)$ permutes the one-dimensional spaces
spanned by the basis matrices. The line indexed by $rZ(R)$
is fixed precisely when $r$ commutes with $b$, or equivalently,
when $rZ(R)$ is orthogonal to $bZ(R)$ under the commutator form.
Thus the fixed basis lines are indexed by
$(bZ(R))^\perp$.
Since the commutator form is nondegenerate and $bZ(R)\ne0$,
this orthogonal complement has dimension $2m-1$ over $\F_2$.
Hence exactly $2^{2m-1}$ basis lines are fixed.

If $rZ(R)\in(bZ(R))^\perp$, then $r$ commutes with $b$,
and therefore with $h=2(I_W+b)$.
Consequently, conjugation by $\rho(h)$ fixes the corresponding
basis matrix, so its contribution to the trace is $+1$.
Every other basis line is moved and contributes zero.
It follows that
\[
 \operatorname{tr}\bigl(\operatorname{Ad}\rho(h)\bigr)
 =2^{2m-1}.
\]
On the other hand, the conjugation representation on the
endomorphism space is $\rho\otimes\rho^*$, so
\[
 |\chi(h)|^2
 =\chi(h)\chi(h^{-1})
 =2^{2m-1}.
\]

Since $\chi(z)=-\chi(1)$, condition
$\boldsymbol{(R^-)}$ implies that $\chi(h)\in\mathbb Q$.
But $2^{2m-1}$ is not a square in $\mathbb Q$,
because its exponent of $2$ is odd.
This contradiction proves that $L$ cannot satisfy
{\rm(SF)}.
\end{proof}

The following proposition excludes the extraspecial case for $O_2(L)$
under the additional hypothesis $\boldsymbol{(R^-)}$.

\begin{proposition}
\label{prop:extraspecial-excluded}
Let $(L,W)$ be a $2$-terminal admissible pair over $\F_p$,
where $p\in\{7,11\}$.
Suppose that $z=-I_W\in L$ and that $(L,z)$ satisfies
$\boldsymbol{(R^-)}$.
Then $O_2(L)$ is not extraspecial. Moreover,
\[
 F(L)=O_2(L)=\langle z\rangle
 \qquad\text{and}\qquad
 E(L)\ne1.
\]
\end{proposition}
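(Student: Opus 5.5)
Suppose, for a contradiction, that $R=O_2(L)$ is extraspecial, say of order $2^{1+2m}$ with $m\ge1$. We split according to $p$. If $p=11$, Proposition~\ref{prop:extraspecial-large-characteristic} excludes this case directly. If $p=7$, we split according to $m$.

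\begin{itemize}
\item For $m\ge3$, Lemma~\ref{lem:extraspecial-seven-large} applies. Since $L\le N_{\Sp(W,B)}(R)$ (indeed $R\lhd L$ and $L$ preserves $B$), that lemma gives a nonzero vector $w$ with $gw\ne3w$ for all $g\in L$. This contradicts (SF), which is part of admissibility.
\item For $m\in\{1,2\}$, Lemma~\ref{lem:extraspecial-seven-small} applies, using the hypotheses $z=-I_W\in L$ and $\boldsymbol{(R^-)}$ for $(L,z)$. It shows that $L$ cannot satisfy (SF), again a contradiction.
\end{itemize}

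Hence $O_2(L)$ is not extraspecial.

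Now Proposition~\ref{prop:terminal-first-alternatives} leaves only its second alternative: $F(L)=O_2(L)\le\{I_W,-I_W\}$ and $E(L)\ne1$. Since $z=-I_W\in L$ is central of order $2$, the subgroup $\langle z\rangle$ is a normal $2$-subgroup of $L$. It is therefore contained in $O_2(L)$, which forces $O_2(L)=\langle z\rangle$. This gives $F(L)=O_2(L)=\langle z\rangle$ together with $E(L)\ne1$.

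The only point needing care is to confirm that the hypotheses of the two characteristic-$7$ lemmas match the present setting. They are stated for the standing assumptions of Section~\ref{sec:extraspecial}: a $2$-terminal admissible pair with $O_2(L)$ extraspecial and $Z(R)=\langle -I_W\rangle$. These are supplied by Proposition~\ref{prop:terminal-first-alternatives}(1). No step here is expected to be a genuine obstacle, since all the substantive work is already done in the preceding lemmas.
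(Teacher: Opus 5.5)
Your proof is correct and follows the paper's argument exactly. You exclude the extraspecial case via Proposition~\ref{prop:extraspecial-large-characteristic} for $p=11$ and via Lemmas~\ref{lem:extraspecial-seven-large} and~\ref{lem:extraspecial-seven-small} for $p=7$, and then conclude with Proposition~\ref{prop:terminal-first-alternatives} and the inclusion $\langle z\rangle\le O_2(L)$. The only step the paper makes explicit that you leave implicit is the appeal to Proposition~\ref{prop:extraspecial-irreducible} (minus type and absolute irreducibility of $R$); this is what lets the characteristic-$7$ lemmas identify $W$ with their tensor model.
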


\begin{proof}
Write $R=O_2(L)$, and suppose that $R$ is extraspecial.
If $p=11$, this contradicts
Proposition~\ref{prop:extraspecial-large-characteristic}.

Suppose therefore that $p=7$, and write
$|R|=2^{1+2m}$, where $m\geq1$.
By Proposition~\ref{prop:extraspecial-irreducible},
$R$ has minus type and acts absolutely irreducibly on $W$.
Since $R\lhd L$, we have
\[
 R\leq L\leq N_{\Sp(W)}(R).
\]
If $m\geq3$, Lemma~\ref{lem:extraspecial-seven-large}
shows that $L$ cannot satisfy {\rm(SF)}.
If $m\in\{1,2\}$, the same conclusion follows from
Lemma~\ref{lem:extraspecial-seven-small}, using
$\boldsymbol{(R^-)}$.
Both conclusions contradict admissibility.
Thus $R$ is not extraspecial.

By
Proposition~\ref{prop:terminal-first-alternatives}, we have
\[
 F(L)=O_2(L)\leq\{I_W,-I_W\},
 \qquad E(L)\ne1.
\]
Since $z=-I_W$ is a central involution of $L$,
the subgroup $\langle z\rangle$ is a normal $2$-subgroup
of $L$ and hence is contained in $O_2(L)$.
Therefore
\[
 F(L)=O_2(L)=\langle z\rangle,
\]
as required.
\end{proof}

\begin{remark}\label{rem:R}
As will be shown in Section~\ref{sec:terminal-contradiction},
the $2$-terminal pairs arising in the proof of the main theorems
satisfy $\boldsymbol{(R^-)}$.  Hence
Proposition~\ref{prop:extraspecial-excluded} applies to these pairs,
and the extraspecial case for $O_2(L)$ is thereby completely excluded.
\end{remark}

\section{The layer, its irreducible constituents, and component subspaces}
\label{sec:layer}

\noindent\textbf{Assumptions.}
Throughout this section, let $p\in\{7,11\}$ and let $(L,W)$
be a $2$-terminal admissible pair over $k=\F_p$,
with symplectic form $B$.
Assume that $z=-I_W\in L$ and that $(L,z)$ satisfies
$\boldsymbol{(R^-)}$.
Set
\[
 E=E(L),\qquad K=\overline{k},\qquad W_K=K\otimes_k W,
\]
and let $B_K$ denote the $K$-bilinear extension of $B$ to $W_K$.
By Proposition~\ref{prop:extraspecial-excluded},
\[
 F(L)=\langle z\rangle,\qquad E\ne1.
\]

We consider the restriction of $W_K$ to the layer $E$. The following
lemma shows that it is a multiplicity-free orthogonal sum of
nontrivial irreducible symplectic modules, permuted transitively by $L$.

\begin{lemma}\label{lem:layer-constituents}
In the notation above, the restriction of $W_K$ to $E$
is a direct sum
\[
 W_K=V_1\perp\cdots\perp V_t
\]
of pairwise nonisomorphic nontrivial irreducible $KE$-modules.
The restriction of $B_K$ to each $V_i$ is nondegenerate.
Moreover, $L$ permutes the subspaces $V_1,\ldots,V_t$
transitively.
\end{lemma}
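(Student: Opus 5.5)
The plan is to apply Clifford's theorem to the normal subgroup $E\lhd L$. Then I would use the quartic obstructions of Corollary~\ref{lem:quartics} to force orthogonality and multiplicity one. By {\rm(I)}, $W_K$ is an irreducible $KL$-module. Since $E\lhd L$, Clifford's theorem shows that $W_K|_E$ is semisimple; this holds even if $p$ divides $|E|$. It also gives $W_K=Y_1\oplus\cdots\oplus Y_t$, the homogeneous components, with $L$ permuting the $Y_i$ transitively. Each $Y_i\cong X_i\otimes_K M_i$, where $X_i$ is irreducible and $E$ acts trivially on the multiplicity space $M_i$.

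\emph{Nontriviality.} The fixed space $W_K^E$ is $L$-invariant. If it were nonzero, it would be all of $W_K$, so $E$ would act trivially. This contradicts faithfulness, since $E\neq1$. By transitivity, no $X_i$ is trivial. Moreover, $E$ is perfect, so a nontrivial irreducible $KE$-module cannot be one-dimensional. Hence $\dim X_i\geq2$.

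\emph{Orthogonality.} This is the argument of Lemma~\ref{lem:quartic-abelian-normal}, with weights replaced by isomorphism types. The form $B_K$ induces an $E$-map $Y_i\to Y_j^*$, so $B_K(Y_i,Y_j)=0$ unless $X_j\cong X_i^*$. Nondegeneracy therefore gives an involution $i\mapsto i^*$ with $Y_i$ paired nondegenerately with $Y_{i^*}$. Suppose $i^*\neq i$ for some $i$. Then, by transitivity, $i^*\neq i$ for every $i$, because the $L$-action is compatible with duals. The components then split into unordered pairs $\{Y_i,Y_{i^*}\}$ permuted by $L$, with $W_0=0$. This is exactly the configuration of Lemma~\ref{lem:quartic-dual-pairs}, which is excluded by Corollary~\ref{lem:quartics}(1). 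Hence every $X_i$ is self-dual, the sum is orthogonal, and $B_K|_{Y_i}$ is nondegenerate.

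\emph{Multiplicity one.} Fix $i$. As in Lemma~\ref{lem:extraspecial-multiplicity}, the space of $E$-invariant forms on $X_i$ is one-dimensional, spanned by a nondegenerate $b_i$ of some type $\varepsilon$. Then $B_K|_{Y_i}=b_i\otimes c_i$, with $c_i$ nondegenerate of the opposite type. Choose coset representatives $\ell_j\in L$ with $\ell_jY_1=Y_j$, and transport $X_1,M_1,b_1,c_1$ to $Y_j$ along $\ell_j$. The transported $b_j,c_j$ are still of the same types. Take any $\ell\in L$ with $\ell Y_i=Y_j$. Then $\ell_j^{-1}\ell\ell_i$ stabilizes $Y_1$, and conjugation by it maps $X_1$ to an isomorphic module. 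The Schur-lemma argument of Lemma~\ref{lem:extraspecial-multiplicity} then writes it as $s\otimes t$, where $s$ and $t$ are similitudes of $b_1$ and $c_1$ with reciprocal multipliers. Translating back, $\ell|_{Y_i}=g\otimes h$ satisfies the compatibility hypothesis of Remark~\ref{rem:quartic-tensor-packets}.

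If $\dim M_1\geq2$, then both factors have dimension at least $2$. After ordering the factors so that the symmetric one comes first, Corollary~\ref{lem:quartics}(2) gives a contradiction. Thus $\dim M_i=1$. So $V_i:=Y_i=X_i$ is irreducible, the $V_i$ are pairwise nonisomorphic, and they are mutually orthogonal and nondegenerate.

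The main obstacle is this last step. One must check carefully that the permuted tensor packets carry the compatible reciprocal-similitude structure required in Remark~\ref{rem:quartic-tensor-packets}. In particular, the choice of transported forms via coset representatives must make every $\ell\in L$, not just the stabilizer of $Y_1$, act by a single tensor $g\otimes h$ with $\alpha\beta=1$. I would handle this by reducing every such $\ell$ to the stabilizer through the fixed representatives $\ell_j$, as indicated above.
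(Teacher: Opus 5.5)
Your proof is correct and follows the paper's argument step by step: Clifford decomposition into isotypic components, exclusion of non-self-dual pairs via Lemma~\ref{lem:quartic-dual-pairs} and Corollary~\ref{lem:quartics}, and multiplicity one via the reciprocal-similitude tensor quartic of Lemma~\ref{lem:quartic-tensor}. The only cosmetic difference is in the last step. The paper builds the quartic invariant under the stabilizer of one isotypic component and transports it to the other components (well defined because of that invariance), then applies Lemma~\ref{lem:quartic-packet-sum}; you instead transport the forms along fixed coset representatives and verify the compatibility hypothesis of Remark~\ref{rem:quartic-tensor-packets} directly, which also works.
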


\begin{proof}
We first show that $W_K$ is semisimple as an $E$-module.
Choose an irreducible $KE$-submodule $U$ of $W_K$.
Since $E\lhd L$, every translate $gU$, with $g\in L$,
is again an irreducible $KE$-submodule.
The sum
\[
 \sum_{g\in L}gU
\]
is a nonzero $KL$-submodule of $W_K$. Therefore the absolute irreducibility of $W$ implies that
this sum is $W_K$. Hence $W_K$ is a sum of simple
submodules, and so is semisimple, which proves the assertion.

Write
\[
 W_K=\bigoplus_{\lambda\in\Lambda}V_\lambda
\]
for its decomposition into $E$-isotypic components.
Thus, for each $\lambda$, there is an irreducible
$KE$-module $X_\lambda$ such that $V_\lambda$ is a direct
sum of copies of $X_\lambda$, and the modules $X_\lambda$
are pairwise nonisomorphic.  Since $E$ is normal in $L$, it follows that $L$ permutes these
isotypic components. The sum of the components in any
one $L$-orbit is an $L$-invariant subspace.
Since $W_K$ is irreducible for $L$, the permutation
action on $\Lambda$ is transitive.

It is easy to see that no $X_\lambda$ is trivial. Indeed, if one $X_\lambda$ is trivial,
then all its $L$-conjugates are also trivial.  Hence $E$ acts trivially on
$W_K$, contrary to $E\ne1$ and faithfulness of the
action. Since $E$ is perfect, every one-dimensional
$KE$-module is trivial. Consequently,
\[
 \dim_K X_\lambda\geq2
 \qquad\text{for every }\lambda\in\Lambda.
\]

We next prove that every $X_\lambda$ is self-dual.
The $E$-invariance of $B_K$ implies that
\[
 B_K(V_\lambda,V_\mu)=0
 \quad\text{unless}\quad
 X_\mu\cong X_\lambda^*.
\]
Indeed, a nonzero pairing would induce a nonzero
$E$-homomorphism from $V_\lambda$ to $V_\mu^*$,
which is possible only when their irreducible
constituents are isomorphic.

Suppose that some $X_\lambda$ is not self-dual. Since the property of self-duality
is preserved under conjugation by $L$,  the transitivity of $L$ on $\Lambda$ implies that none of the
$X_\lambda$ is self-dual. Then the nondegeneracy of $B_K$ pairs the isotypic components
in distinct dual pairs
\[
 \{V_\lambda,V_{\lambda^*}\},
\]
with a nondegenerate pairing between the two members
of each pair, and $L$ permutes these unordered
pairs. By Lemma~\ref{lem:quartic-dual-pairs}, there is a nonzero $L$-invariant homogeneous polynomial
of degree 4 on $W_K$.
This contradicts Corollary~\ref{lem:quartics}.
Hence every $X_\lambda$ is self-dual.

It follows that distinct isotypic components are
orthogonal. Since $B_K$ is nondegenerate on $W_K$,
its restriction to each $V_\lambda$ is nondegenerate.

Fix one isotypic component $V=V_\lambda$, and write
$X=X_\lambda$. Let
$M=\Hom_{KE}(X,V)$.
The evaluation map gives an $E$-module isomorphism
\[
 X\otimes_K M\longrightarrow V,\qquad
 x\otimes f\longmapsto f(x),
\]
where $E$ acts trivially on $M$.
In particular, $\dim_K M$ is the multiplicity of
$X$ in $V$.

Since $X$ is irreducible and self-dual, Schur's lemma
shows that the space of $E$-invariant bilinear forms
on $X$ is one-dimensional. Let $b$ be a nonzero such
form. Then $b$ is nondegenerate and is either symmetric
or alternating.

Under the identification $V=X\otimes_K M$, the form
$B_K|_V$ has the form
$B_K|_V=b\otimes c$
for a bilinear form $c$ on $M$.
To see this, for fixed $u,v\in M$, the map
\[
 (x,y)\longmapsto B_K(x\otimes u,y\otimes v)
\]
is an $E$-invariant bilinear form on $X$, hence is a
scalar multiple of $b$. These scalars define $c$. Moreover, the nondegeneracy of $B_K|_V$ implies that $c$ is
nondegenerate. Since $B_K|_V$ is alternating,
the forms $b$ and $c$ have opposite types.

We claim that $\dim_K M=1$.
Suppose otherwise, and write $H=\{g\in L:gV=V\}$.
For each $g\in H$, the action of $g$ on $V$ has the form
\[
 a_g\otimes h_g,
 \qquad a_g\in\GL_K(X),\quad h_g\in\GL_K(M).
\]
Indeed, $g$ induces an automorphism of $E$ preserving
the isomorphism class of $X$. Choose an intertwiner
$a_g$ implementing this automorphism on $X$.
After composing the action of $g$ with
$a_g^{-1}\otimes I_M$, the resulting map commutes
with $E$ and hence has the form $I_X\otimes h_g$,
by Schur's lemma.

The space of \(E\)-invariant bilinear forms on \(X\) is one-dimensional, so
there is a scalar $\alpha_g\in K^\times$ such that
\[
 b(a_gx,a_gy)=\alpha_g b(x,y).
\]
Since $g$ preserves $b\otimes c$, we also have
\[
 c(h_gu,h_gv)=\alpha_g^{-1}c(u,v).
\]
Thus $H$ acts on $X\otimes_K M$ by reciprocal
similitudes of the two forms.
Both factors have dimension at least 2, and their
forms have opposite types. After interchanging the
factors if necessary, Lemma~\ref{lem:quartic-tensor}
shows that there is a nonzero $H$-invariant homogeneous
polynomial $Q$ of degree 4 on $V$.

We now transport $Q$ to the other isotypic components.
Explicitly, if $V_\mu=gV$, define
\[
 Q_\mu(gv)=Q(v)\qquad(v\in V).
\]
This definition is independent of the choice of $g$,
because $Q$ is $H$-invariant.
The transported polynomials are compatible with the
permutation action of $L$ on the isotypic components.
By Lemma~\ref{lem:quartic-packet-sum}, there is a nonzero
$L$-invariant homogeneous polynomial of degree 4 on $W_K$, again contradicting
Corollary~\ref{lem:quartics}.

Therefore $\dim_K M=1$. It follows that each
isotypic component is irreducible and occurs with
multiplicity one. Its restricted form is already
known to be nondegenerate and alternating.
Relabelling the isotypic components as \(V_1,\ldots,V_t\), we obtain the desired conclusion.
\end{proof}

We also need an invariant quartic for an odd number of alternating
tensor factors. Together with Lemma~\ref{lem:quartic-tensor},
this will show that exactly one component acts nontrivially
on each irreducible constituent of $W_K$.

\begin{lemma}
\label{lem:odd-tensor-quartic}
Let $K$ be a field of characteristic greater than 3,
and let $(X_i,b_i)$, $1\leq i\leq s$, be nonzero symplectic
spaces, where $s\geq3$ is odd. Write
\[
 X=\bigotimes_{i=1}^sX_i,\qquad
 B=\bigotimes_{i=1}^sb_i.
\]
Then $X$ admits a nonzero homogeneous polynomial $Q$
of degree 4 with the following property:

Suppose that $\pi\in S_s$ and that
$a_i:X_i\to X_{\pi(i)}$ are linear isomorphisms satisfying
\[
 b_{\pi(i)}(a_ix,a_iy)=\mu_i b_i(x,y),
 \qquad \mu_i\in K^\times.
\]
Let $g$ be the induced map on $X$, obtained by applying
the maps $a_i$ and placing their images in the positions
specified by $\pi$. Then
\[
 Q(gv)=\left(\prod_{i=1}^s\mu_i\right)^2Q(v).
\]
In particular, $Q$ is invariant under every such $g$
that preserves $B$.
\end{lemma}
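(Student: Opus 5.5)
The plan is to build $Q$ as a sum of the quartics of Lemma~\ref{lem:quartic-tensor}, one for each way of splitting off a single factor. For each $i$, write
\[
 X=X_i\otimes_K Y_i,\qquad Y_i=\bigotimes_{j\ne i}X_j,\qquad c_i=\bigotimes_{j\ne i}b_j .
\]
Since $s-1$ is even, $c_i$ is a nondegenerate \emph{symmetric} form on $Y_i$, while $b_i$ is alternating on $X_i$. Also $\dim X_i\geq2$ (a nonzero symplectic space) and $\dim Y_i\geq4$. So, after interchanging the factors as allowed in Remark~\ref{rem:quartic-tensor-packets}, Lemma~\ref{lem:quartic-tensor} gives a homogeneous quartic $Q_i$ on $X$, namely $\operatorname{tr}\bigl((A_v^\dagger A_v)^2\bigr)$ for this splitting. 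I then set $Q=\sum_{i=1}^sQ_i$.

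\emph{Transformation rule.} Let $g$ be induced by $\pi$ and the maps $a_i$. Then $g$ carries the splitting $X_i\otimes Y_i$ of the source to the splitting $X_{\pi(i)}\otimes Y_{\pi(i)}$ of the target. It does so as $a_i\otimes h_i$, where $h_i$ is built from the $a_j$ with $j\ne i$. The map $a_i$ is a similitude of $b_i\to b_{\pi(i)}$ with multiplier $\mu_i$, and $h_i$ is a similitude of $c_i\to c_{\pi(i)}$ with multiplier $\prod_{j\ne i}\mu_j$. The adjoint computation of Lemma~\ref{lem:quartic-tensor}, run with source and target forms as in Remark~\ref{rem:quartic-tensor-packets}, then gives
\[
 Q_{\pi(i)}(gv)=\Bigl(\prod_j\mu_j\Bigr)^2Q_i(v).
\]
Summing over $i$ and reindexing by $\pi$ yields $Q(gv)=(\prod_j\mu_j)^2Q(v)$. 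If $g$ preserves $B$, then $\prod_j\mu_j=1$, which gives the invariance statement. All of these identities are identities of polynomials, since they use only multilinearity.

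\emph{Nonvanishing.} This is the main obstacle, because the $Q_i$ could cancel in the sum. To handle it, I will expand each $Q_i$ as a full contraction of $v^{\otimes4}$. In factor $i$ the four copies of $v$ are paired by the pattern $(14)(23)$, and in every other factor by $(12)(34)$.

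The first test vector to try is $v=e_1\otimes\cdots\otimes e_s+f_1\otimes\cdots\otimes f_s$, where $b_j(e_j,f_j)=1$. Label each of the four copies by its summand, $E$ or $F$. A term survives only if paired copies carry different labels in every factor. For $Q_i$ this forces the labelling $EFEF$ or $FEFE$. These two terms have equal sign contributions, and the situation is symmetric in $i$. So each $Q_i(v)$ should equal the same value $\pm2$, and $Q(v)=\pm2s$.

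This value is nonzero unless $p\mid s$, which is exactly where the computation becomes delicate. For that case I would instead use a vector that breaks the symmetry between the factors, so that $Q_1(v)\neq0$ while $Q_j(v)=0$ for $j\neq1$. One candidate is a combination of three pure tensors, agreeing in factors $2,\dots,s$ in a way that kills the $(14)(23)$ pairings there but not in factor $1$. Another is to show that the $Q_i$ are linearly independent by evaluating them at such vectors. In these computations small integer coefficients such as $2$ and $3$ appear, and the hypothesis $\operatorname{char}K>3$ is what ensures they are nonzero.
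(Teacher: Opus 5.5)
Your definition $Q=\sum_iQ_i$ and the transformation law are correct. Under the reordered splittings $X=X_i\otimes Y_i$ and $X=X_{\pi(i)}\otimes Y_{\pi(i)}$ the map $g$ is $a_i\otimes h_i$, with multipliers $\mu_i$ and $\prod_{j\ne i}\mu_j$. Lemma~\ref{lem:quartic-tensor} then gives $Q_{\pi(i)}(gv)=(\prod_j\mu_j)^2Q_i(v)$, and reindexing by $\pi$ finishes.

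\textbf{The gap is nonvanishing when $\operatorname{char}K$ divides $s$.} This case lies inside the hypotheses (e.g.\ $\operatorname{char}K=s=7$) and can occur in Lemma~\ref{lem:single-component-action}. There your test vector gives $Q(v)=\pm2s=0$, and you only list candidate remedies. The concrete one provably fails. Expand $v=\sum_L v^L$ into pure tensors. In the contraction for $Q_i$, copies $1,2$ and $3,4$ are paired in every factor $j\ne i$, and copies $1,4$ and $2,3$ in factor $i$. Since $b_j(u,u)=0$, only labellings $(L_1,L_2,L_3,L_4)$ that properly colour the $4$-cycle with edges $12,23,34,41$ contribute. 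With at most three summands, every such colouring has $L_1=L_3$ or $L_2=L_4$, and then the term does not depend on $i$. For instance, $(P,R,P,S)$ contributes, up to a global sign, $\prod_{j\ne i}b_j(v^P_j,v^R_j)b_j(v^P_j,v^S_j)\cdot b_i(v^P_i,v^S_i)b_i(v^R_i,v^P_i)=-\prod_{j=1}^s b_j(v^P_j,v^R_j)b_j(v^P_j,v^S_j)$, and the case $L_2=L_4$ is the same computation. Hence every sum of at most three pure tensors satisfies $Q_1(v)=\cdots=Q_s(v)$, so $Q(v)=sQ_1(v)=0$ when $\operatorname{char}K\mid s$. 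Linear independence of the $Q_i$ is also unavailable in general. For $s=3$ with two-dimensional factors, every tensor has rank at most $3$ over $\overline K$, so $Q_1=Q_2=Q_3$.

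\textbf{How to close it.} Split off three factors: take $v=v'\otimes v''$ with $v'\in X_1\otimes X_2\otimes X_3$ and $v''\in X''=\bigotimes_{j\ge4}X_j$, so the contraction factorizes.
\begin{itemize}
\item If $i\ge4$, it contains the factor $c'(v',v')^2$, where $c'=b_1\otimes b_2\otimes b_3$ is alternating, so $Q_i(v)=0$.
\item If $i\le3$, it equals $Q_i^{(3)}(v')\,c''(v'',v'')^2$. Here $Q_i^{(3)}$ is your quartic for three factors and $c''=\bigotimes_{j\ge4}b_j$ is symmetric.
\end{itemize}
Take $v'=e_1\otimes e_2\otimes e_3+f_1\otimes f_2\otimes f_3$, for which your computation gives $Q_i^{(3)}(v')=2$. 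Take $v''=\bigotimes_{k=2}^{(s-1)/2}(e_{2k}\otimes e_{2k+1}+f_{2k}\otimes f_{2k+1})$, for which $c''(v'',v'')=2^{(s-3)/2}$. This gives $Q(v)=6\cdot2^{s-3}\ne0$ whenever $\operatorname{char}K>3$, for every odd $s\ge3$.

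With this repair your proof is a genuine alternative to the paper's. The paper forms the symmetric $4$-linear form $T=\sum_{a=1}^3\bigotimes_i d_{a,i}$ from sign-free combinations $d_{a,i}$ of the three pairings. It shows $T\neq0$ from the independence of $d_{1,i},d_{2,i}$ and the oddness of $s$, then polarizes. Your route gets the transformation law directly from Lemma~\ref{lem:quartic-tensor}, at the cost of this explicit evaluation.
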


\begin{proof}
For each $i$, define 4-linear forms on $X_i$ by
\[
\begin{aligned}
 p_{1,i}(x_1,x_2,x_3,x_4)
   &=b_i(x_1,x_2)b_i(x_3,x_4),\\
 p_{2,i}(x_1,x_2,x_3,x_4)
   &=b_i(x_1,x_3)b_i(x_2,x_4),\\
 p_{3,i}(x_1,x_2,x_3,x_4)
   &=b_i(x_1,x_4)b_i(x_2,x_3).
\end{aligned}
\]
Denote
\[
 d_{1,i}=p_{1,i}-p_{3,i},\qquad
 d_{2,i}=-p_{1,i}-p_{2,i},\qquad
 d_{3,i}=p_{2,i}+p_{3,i}.
\]
Then $d_{1,i}+d_{2,i}+d_{3,i}=0$.

Any permutation of the four vectors $x_1,x_2,x_3,x_4$ merely permutes
these three forms; in particular, no sign changes occur.
Indeed, each of the transpositions $(12)$ and $(34)$
interchanges $d_{1,i}$ and $d_{2,i}$ and fixes $d_{3,i}$,
whereas $(13)$ interchanges $d_{2,i}$ and $d_{3,i}$
and fixes $d_{1,i}$. These transpositions generate $S_4$,
and their action on the three forms is independent of $i$.

We also need to show that $d_{1,i}$ and $d_{2,i}$ are linearly
independent. Choose $e,f\in X_i$ with $b_i(e,f)=1$.
Their values on two quadruples are
\[
\begin{array}{c|cc}
 &d_{1,i}&d_{2,i}\\ \hline
 (e,f,e,f)&2&-1\\
 (e,e,f,f)&-1&-1
\end{array}
\]
and the determinant is $-3\ne0$.
This proves the required independence.

For decomposable vectors
\[
 v_j=x_{1j}\otimes\cdots\otimes x_{sj}
 \qquad(1\leq j\leq4),
\]
define
\[
 T(v_1,v_2,v_3,v_4)
 =
 \sum_{a=1}^3
 \prod_{i=1}^s
 d_{a,i}(x_{i1},x_{i2},x_{i3},x_{i4}),
\]
and extend $T$ multilinearly to $X^4$.
By the simultaneous permutation rule established above, any
permutation of $v_1,v_2,v_3,v_4$ induces the same permutation of the
three summands in the displayed sum.  Hence their sum is unchanged,
and $T$ is a symmetric $4$-linear form on $X$.

We claim that $T\ne0$.
Under the natural rearrangement of tensor factors,
$T$ corresponds to
\[
 \bigotimes_{i=1}^s d_{1,i}
 +\bigotimes_{i=1}^s d_{2,i}
 +\bigotimes_{i=1}^s d_{3,i}.
\]
Using $d_{3,i}=-d_{1,i}-d_{2,i}$ and the oddness of $s$, the last
tensor product becomes
\[
 -\bigotimes_{i=1}^s(d_{1,i}+d_{2,i}).
\]
Consider the elementary tensor
\[
 d_{2,1}\otimes d_{1,2}\otimes\cdots\otimes d_{1,s}.
\]
It occurs exactly once in the expansion of the last tensor product,
by choosing $d_{2,1}$ from the first factor and $d_{1,i}$ from every
remaining factor, and hence occurs there with coefficient $-1$.
It occurs in neither of the first two tensor products
\[
 \bigotimes_{i=1}^s d_{1,i}
 \qquad\text{and}\qquad
 \bigotimes_{i=1}^s d_{2,i}.
\]
Since $d_{1,i}$ and $d_{2,i}$ are linearly independent for every $i$,
the corresponding elementary tensors are linearly independent.
Therefore the displayed tensor has coefficient $-1$ in $T$, and in
particular $T\ne0$.

Define
\[
 Q(v)=T(v,v,v,v).
\]
This is a homogeneous polynomial of degree 4.
It is nonzero by polarization, since
\[
 24T(v_1,v_2,v_3,v_4)
 =
 \sum_{I\subseteq\{1,2,3,4\}}
 (-1)^{4-|I|}
 Q\left(\sum_{j\in I}v_j\right),
\]
and $24$ is invertible in $K$.

Finally, suppose that $g$ acts on the tensor factors through
similitudes $a_i$ with multipliers $\mu_i$.  Since each $p_{a,i}$ is
a product of two values of $b_i$, we have
\[
 p_{a,\pi(i)}(a_ix_1,a_ix_2,a_ix_3,a_ix_4)
   =\mu_i^2p_{a,i}(x_1,x_2,x_3,x_4).
\]
The same identity holds with $p$ replaced by $d$.
Moreover, any permutation of the tensor factors only rearranges the factors within
each product in the definition of $T$, and hence leaves $T$ unchanged.
It follows that
\[
 T(gv_1,gv_2,gv_3,gv_4)
 =
 \left(\prod_{i=1}^s\mu_i\right)^2
 T(v_1,v_2,v_3,v_4).
\]
Thus the quartic $Q(v)=T(v,v,v,v)$ satisfies the asserted
transformation rule.

On the other hand, the tensor-product form $B$ transforms according to
\[
 B(gv,gw)
 =
 \left(\prod_{i=1}^s\mu_i\right)B(v,w).
\]
Hence, if $g$ preserves $B$, then $\prod_{i=1}^s\mu_i=1$.
The preceding transformation rule for $T$ then shows that
$Q(gv)=Q(v)$,
so $Q$ is invariant under every such $g$.
\end{proof}

\begin{remark}
The construction does not involve averaging over
permutations of the tensor factors.
In particular, it remains valid when the characteristic
of $K$ divides $s!$.
\end{remark}

We can now apply this construction to the layer $E=E(L)$ and show
that only one component of $L$ can act nontrivially on each
irreducible constituent of $W_K\!\downarrow_E$.

\begin{lemma}\label{lem:single-component-action}
Let $T_1,\ldots,T_r$ be the components of $L$.
For every irreducible $KE$-submodule $V$ of $W_K$,
there is exactly one index $i$ such that $T_i$
acts nontrivially on $V$.
\end{lemma}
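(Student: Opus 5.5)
Existence is immediate from the previous lemma: by Lemma~\ref{lem:layer-constituents} every irreducible constituent $V=V_j$ of $W_K\!\downarrow_E$ is nontrivial. Since $E=T_1\cdots T_r$, at least one $T_i$ acts nontrivially on $V$. The work is in uniqueness, which I plan to obtain from the quartic obstruction, Corollary~\ref{lem:quartics}, in the same way the proof of Lemma~\ref{lem:layer-constituents} excluded multiplicities.

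Fix $V$ and let $T_{i_1},\dots,T_{i_s}$ be the components acting nontrivially on it, and suppose $s\geq2$. The central product $E$ is a quotient of $T_{i_1}\times\cdots\times T_{i_s}\times(\text{rest})$, and the rest acts on $V$ by scalars. Hence, over the algebraically closed field $K$, the irreducible module $V$ decomposes as $V\cong X_1\otimes\cdots\otimes X_s$, where each $X_\ell$ is a nontrivial irreducible $KT_{i_\ell}$-module. Each $X_\ell$ has dimension at least $2$, since $T_{i_\ell}$ is perfect.

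Next I transfer the nondegenerate alternating form $B_K|_V$ (Lemma~\ref{lem:layer-constituents}) to the factors. Because $V$ is self-dual and the tensor factorization of an irreducible module of a central product is unique, each $X_\ell$ is self-dual. So it carries an invariant form $b_\ell$, unique up to scalars, with $B_K|_V=\bigotimes_\ell b_\ell$. The product of the signs of the $b_\ell$ must be $-1$, so an odd number of the $b_\ell$ are alternating. Now let $H$ be the stabilizer of $V$ in $L$. Elements of $H$ permute the components $T_{i_1},\dots,T_{i_s}$ and, by Schur's lemma, act as a permutation of tensor factors composed with isomorphisms $a_\ell:X_\ell\to X_{\pi(\ell)}$. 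These $a_\ell$ are similitudes whose multipliers have product $1$, because $H$ preserves $B_K|_V$.

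This gives two cases. In the first, some $b_\ell$ is symmetric. Group the factors into the symmetric ones, $X^{+}$, and the alternating ones, $X^{-}$; both groups are nonempty and each is stable under the permutations induced by $H$. Then $V=X^+\otimes X^-$, where $X^+$ carries a symmetric form and $X^-$ an alternating one, both of dimension at least $2$, and $H$ acts by reciprocal similitudes. Lemma~\ref{lem:quartic-tensor} therefore yields a nonzero $H$-invariant quartic on $V$. In the second case, all $b_\ell$ are alternating, so $s$ is odd and $s\geq3$. Here Lemma~\ref{lem:odd-tensor-quartic} applies, using $p\geq7>3$, and again gives a nonzero $H$-invariant quartic on $V$. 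In either case I transport the quartic to the $L$-conjugates $gV$, exactly as in the proof of Lemma~\ref{lem:layer-constituents}. Lemma~\ref{lem:quartic-packet-sum} then produces a nonzero $L$-invariant quartic on $W_K$, which contradicts Corollary~\ref{lem:quartics}. Hence $s=1$.

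The main obstacle is making rigorous that $H$ acts on $V$ through "permutation of factors composed with similitudes". This needs three things. First, the factorization of $V$ into the $X_\ell$ must be canonical up to scalars, which follows from the uniqueness of the tensor decomposition for central products of quasisimple groups over $K$. Second, the forms $b_\ell$ must be chosen compatibly along the $H$-orbits of factors, so that the multipliers are well defined. Third, in the first case one must check that regrouping into $X^+\otimes X^-$ is $H$-equivariant, which is true because the permutation of components preserves the form types. I would handle the second point by fixing $b_\ell$ on one factor in each $H$-orbit and transporting it along chosen elements of $H$; the multipliers then absorb the ambiguity.
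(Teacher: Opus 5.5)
Your proof is correct and follows essentially the same route as the paper. It tensor-factorizes $V$ over the nontrivially acting components, puts forms $b_\ell$ on the factors so that the stabilizer $H$ acts by permuted similitudes with multiplier product $1$, and reaches a contradiction from the quartics of Lemma~\ref{lem:quartic-tensor} or Lemma~\ref{lem:odd-tensor-quartic}, transported to all of $W_K$ by Lemma~\ref{lem:quartic-packet-sum}. The only difference is the case split: you separate symmetric from alternating factors, whereas the paper splits on whether $H$ is transitive on those components; yours works equally well because $H$ preserves the form type of each factor.
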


\begin{proof}
By Lemma~\ref{lem:layer-constituents}, the irreducible
$KE$-submodules of $W_K$ are pairwise nonisomorphic,
are nontrivial, and form an orthogonal direct-sum
decomposition. The restriction of $B_K$ to each
of them is nondegenerate.

Fix one such submodule $V$, and write $H=\{g\in L:gV=V\}$.
Since distinct components commute and together generate $E$, the map
\[
 T_1\times\cdots\times T_r\longrightarrow E,\qquad
 (t_1,\ldots,t_r)\longmapsto t_1\cdots t_r,
\]
is a surjective homomorphism.
Inflating $V$ along this epimorphism, we may regard it as an
irreducible $K(T_1\times\cdots\times T_r)$-module.  Hence
\[
 V\cong X_1\otimes\cdots\otimes X_r,
\]
with each $X_i$ an irreducible $KT_i$-module.

Let
\[
 S=\{i:T_i\text{ acts nontrivially on }V\}.
\]
The set $S$ is nonempty, since $V$ is a nontrivial
$E$-module. For $i\notin S$, the corresponding $KT_i$-module $X_i$ is
one-dimensional and trivial. Thus
\[
        V\cong\bigotimes_{i\in S}X_i.
\]
Each $T_i$ is perfect, so a nontrivial irreducible
$KT_i$-module cannot have dimension one. Hence
\[
 \dim_KX_i\geq2\qquad(i\in S).
\]

Since $V$ is self-dual, every $X_i$ is self-dual.
Indeed, the restriction of $V$ to $T_i$ is a direct sum
of copies of $X_i$, whereas the restriction of $V^*$ is
a direct sum of copies of $X_i^*$.
The isomorphism $V\cong V^*$ therefore implies
$X_i\cong X_i^*$.

Choose a nonzero $T_i$-invariant bilinear form $b_i$
on each $X_i$. By Schur's lemma, $b_i$ is nondegenerate
and is either symmetric or alternating.
The product form $\bigotimes_{i\in S}b_i$ is a nonzero
$E$-invariant form on $V$.
Since the invariant form on $V$ is unique up to a nonzero scalar,
$B_K|_V$ is a scalar multiple of $\bigotimes_{i\in S}b_i$.
After rescaling one of the forms $b_i$, we may assume that
\[
 B_K|_V=\bigotimes_{i\in S}b_i.
\]
In particular, the product of the form types is
alternating.

The group $H$ permutes the components indexed by $S$, and hence
permutes the corresponding tensor factors $X_i$.  For $h\in H$,
choose an intertwiner between each $X_i$ and the factor to which it is
sent by $h$.  The tensor product of these intertwiners, followed by
the corresponding permutation of the tensor factors, induces the same
action on $E$ as $h$.  Comparing this map with the action of $h$, we
see that the two maps differ by an endomorphism centralizing the
irreducible action of $E$ on $V$.  By Schur's lemma, this endomorphism
is a scalar.  Absorbing the scalar into one of the intertwiners, we
may therefore realize the action of $h$ as a tensor product of the
chosen intertwiners followed by the corresponding permutation of the
factors.

Since each $b_i$ is unique up to scalars, these intertwiners are
similitudes between the corresponding forms.  If their multipliers
are $\mu_i$, then the multiplier on
\[
 B_K|_V=\bigotimes_{i\in S}b_i
\]
is $\prod_{i\in S}\mu_i$.  Since $H$ preserves $B_K|_V$, it follows
that  \[\prod_{i\in S}\mu_i=1.\]

Suppose first that $H$ has at least two orbits on $S$.
Choose one orbit $S_1$, and write $S_2=S\setminus S_1$.
Then
\[
 V=Y_1\otimes Y_2,\qquad
 Y_j=\bigotimes_{i\in S_j}X_i.
\]
Both $Y_1$ and $Y_2$ have dimension at least two.
Their product forms have opposite types, since their
tensor product is $B_K|_V$, which is alternating.
Moreover, $H$ preserves this two-factor decomposition
and acts by reciprocal similitudes. By Lemma~\ref{lem:quartic-tensor},
$H$ admits a nonzero invariant homogeneous polynomial of
degree 4 on $V$.

Suppose instead that $H$ is transitive on $S$
and that $|S|>1$. Since conjugate factors have the same
form type, all the forms $b_i$ with $i\in S$
have the same type. They cannot all be symmetric, because their product
is alternating. Thus they are all alternating,
and $|S|$ is odd. Since $|S|>1$, we have $|S|\geq3$. Again by Lemma~\ref{lem:odd-tensor-quartic},
$H$ admits a
nonzero invariant homogeneous polynomial of
degree 4 on $V$.

In either case, we may transport this polynomial to each
$L$-conjugate of $V$; the resulting polynomial is well defined because
the original one is $H$-invariant.  By
Lemma~\ref{lem:layer-constituents}, these conjugate subspaces are
precisely the irreducible $E$-summands of $W_K$.
Lemma~\ref{lem:quartic-packet-sum} therefore yields a nonzero
$L$-invariant homogeneous polynomial of degree $4$ on $W_K$,
contrary to Corollary~\ref{lem:quartics}.

Thus $|S|=1$, as required.
\end{proof}

The final proposition of this section collects the constituents
associated with each component and descends the resulting subspaces to
the original field.  The resulting orthogonal decomposition allows us
to study the components separately while retaining the action of $L$
permuting the corresponding subspaces.

\begin{proposition}
\label{prop:component-decomposition}
Let $T_1,\ldots,T_r$ be the components of $L$, and set
\[
 U_i=[T_i,W]
 =
 \operatorname{span}_{k}
 \{tw-w:t\in T_i,\ w\in W\},
 \qquad k=\F_p.
\]
Then the following statements hold.
\begin{enumerate}
\item
Each $U_i$ is nonzero, $
 W=U_1\perp\cdots\perp U_r$, and the restriction of $B$ to each $U_i$ is nondegenerate.

\item
The component $T_i$ acts faithfully on $U_i$,
and every $T_j$ with $j\ne i$ acts trivially on $U_i$.

\item
The group $L$ permutes the components transitively.
Moreover, for $g\in L$,
\[
 gT_i g^{-1}=T_j
 \quad\Longrightarrow\quad
 gU_i=U_j.
\]

\item The layer is the direct product $E(L)=T_1\times\cdots\times T_r$.
\end{enumerate}
\end{proposition}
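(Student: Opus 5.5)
Work first over $K=\overline{k}$ with the decomposition $W_K=V_1\perp\cdots\perp V_t$ of Lemma~\ref{lem:layer-constituents}. By Lemma~\ref{lem:single-component-action}, each $V_j$ has a unique component $T_{i(j)}$ acting nontrivially on it. For each $i$ set
\[
 \widetilde U_i=\bigoplus_{j:\,i(j)=i}V_j .
\]
These are orthogonal sums of nondegenerate pieces, so each $\widetilde U_i$ is nondegenerate and $\widetilde U_i\perp\widetilde U_{i'}$ for $i\ne i'$. Since every $V_j$ lies in exactly one $\widetilde U_i$, we get $W_K=\bigoplus_i\widetilde U_i$.

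\textbf{Identifying $\widetilde U_i$ with $[T_i,W_K]$.} $T_i$ acts trivially on $\widetilde U_{i'}$ for $i'\neq i$. On each $V_j\subseteq\widetilde U_i$ it acts irreducibly and nontrivially, so $[T_i,V_j]$ is a nonzero $E$-submodule of $V_j$ (normality of $T_i$ in $E$), hence equals $V_j$. Therefore $[T_i,W_K]=\widetilde U_i$. Since $[T_i,W_K]=K\otimes_k[T_i,W]$ (the spanning vectors $tw-w$ are defined over $k$), we get $\widetilde U_i=K\otimes_k U_i$. Nondegeneracy and orthogonality then descend to $k$, as in the proof of Lemma~\ref{lem:localization}(1).

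\textbf{Nonvanishing and faithfulness.} Each $U_i\ne0$, because otherwise $T_i$ would act trivially on $W$, contradicting faithfulness. $T_j$ with $j\ne i$ acts trivially on $U_i$ by construction. If $t\in T_i$ acts trivially on $U_i$, then $t$ acts trivially on every $U_j$, hence on $W$, so $t=1$. This proves (1) and (2).

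\textbf{Permutation and transitivity.} For $g\in L$ with $gT_ig^{-1}=T_j$ we have
\[
 g[T_i,W]=[gT_ig^{-1},gW]=[T_j,W],
\]
which gives the implication in (3). The sum of the $U_i$ over one $L$-orbit of components is $L$-invariant and nonzero. By irreducibility of $W$ it equals $W$, and the directness of $W=\bigoplus U_i$ forces the orbit to be all the components. Hence $L$ permutes the components transitively.

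\textbf{The layer is a direct product.} For (4), the multiplication map $T_1\times\cdots\times T_r\to E$ is surjective. Let $(t_1,\dots,t_r)$ lie in its kernel. Then $t_1\cdots t_r=1$, and acting on $U_i$ the product acts as $t_i$, since the other factors act trivially there. So $t_i$ is trivial on $U_i$, and by (2) $t_i=1$.

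\textbf{Main obstacle.} The delicate point is the descent step: checking that $[T_i,W_K]=K\otimes[T_i,W]$ and that $\widetilde U_i$ is exactly that commutator space. Here I would use that $K\otimes_k-$ is exact and commutes with spans of $k$-rational vectors.
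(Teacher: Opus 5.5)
Your proof is correct and follows the paper's argument essentially step for step: you build $\widetilde U_i$ from the constituents $V_j$, identify it with $[T_i,W_K]=K\otimes_k U_i$, descend to $k$, and read off (2)--(4) in the same way. The one step you leave implicit is $W=U_1\oplus\cdots\oplus U_r$ over $k$. It follows from $\sum_i\dim_k U_i=\sum_i\dim_K\widetilde U_i=\dim_k W$ together with the directness of an orthogonal sum of nondegenerate subspaces; the paper instead notes that the addition map $\bigoplus_i U_i\to W$ becomes an isomorphism after extending scalars.
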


\begin{proof}
Denote $E=E(L)$ and $K=\overline{k}$.
By Lemma~\ref{lem:layer-constituents}, we have
\[
 W_K=V_1\perp\cdots\perp V_t,
\]
where the $V_a$ are pairwise nonisomorphic nontrivial
irreducible $KE$-modules and the restriction of $B_K$
to each $V_a$ is nondegenerate.
By Lemma~\ref{lem:single-component-action}, exactly
one component acts nontrivially on each $V_a$.

For each $i$, define
\[
 \widetilde U_i
 =
 \bigoplus_{\substack{1\leq a\leq t\\
 T_i\text{ acts nontrivially on }V_a}}V_a.
\]
Every summand $V_a$ occurs in exactly one of these
subspaces, so
\[
 W_K=\widetilde U_1\perp\cdots\perp\widetilde U_r.
\]
Each $\widetilde U_i$ is nonzero.
Indeed, if $\widetilde U_i=0$, then $T_i$ acts trivially
on every $V_a$, and hence on $W_K$.
This contradicts the faithfulness of the action.
Each $\widetilde U_i$ is also nondegenerate, being
an orthogonal sum of nondegenerate subspaces.

We next identify these subspaces over $k$.
Since $T_i\lhd E$, the subspace $[T_i,V_a]$ is
$E$-invariant. Explicitly, for $e\in E$, $t\in T_i$,
and $v\in V_a$,
\[
 e(tv-v)=(ete^{-1})(ev)-ev\in[T_i,V_a].
\]
If $T_i$ acts trivially on $V_a$, then $[T_i,V_a]=0$.
Otherwise, $[T_i,V_a]$ is a nonzero $E$-submodule
of the irreducible module $V_a$, so
$[T_i,V_a]=V_a$.
Consequently,
$[T_i,W_K]=\widetilde U_i$.

On the other hand,
\[
 U_i=[T_i,W]=\sum_{t\in T_i}(t-I_W)W.
\]
Extension of scalars commutes with these images
and their finite sum. Therefore, under the natural
identification inside $W_K$,
\[
 K\otimes_k U_i
 =
 \sum_{t\in T_i}(t-I_{W_K})W_K
 =
 [T_i,W_K]
 =
 \widetilde U_i.
\]
In particular, each $U_i$ is nonzero.

Consider the addition map
\[
 \bigoplus_{i=1}^r U_i\longrightarrow W.
\]
After extending scalars to $K$, it becomes the
isomorphism
\[
 \bigoplus_{i=1}^r\widetilde U_i\longrightarrow W_K.
\]
Since extension of scalars preserves the rank of
a linear map, the original addition map is also
an isomorphism. Thus
$W=U_1\oplus\cdots\oplus U_r$.
The orthogonality of the $\widetilde U_i$ implies
the orthogonality of the $U_i$.
If $u$ lies in the radical of $B|_{U_i}$, then $u$ is orthogonal to
$U_i$.  Since the subspaces $U_j$ are mutually orthogonal, it is also
orthogonal to every $U_j$ with $j\ne i$, and hence to all of $W$.
The nondegeneracy of $B$ therefore shows that $u=0$.  Thus $B|_{U_i}$ is
nondegenerate, proving~{\rm(1)}.

If $j\ne i$, then $T_j$ acts trivially on each
irreducible summand contained in $\widetilde U_i$.
Hence it acts trivially on $\widetilde U_i$ and
therefore on $U_i$.
Now let $t\in T_i$ act trivially on $U_i$.
The same observation shows that $T_i$ acts trivially
on every $U_j$ with $j\ne i$.
Thus $t$ acts trivially on all of $W$.
Since the action of $L$ on $W$ is faithful, it follows that $t=1$.
This proves~{\rm(2)}.

For $g\in L$, we have $g(tw-w)=(gtg^{-1})(gw)-gw$, so by the definition of the commutator subspace,
\[
 g[T_i,W]=[gT_i g^{-1},W].
\]
Thus conjugation of components corresponds to
permutation of their associated subspaces.

Suppose that the components have more than one
$L$-conjugacy class. Choose one such class, indexed
by a nonempty proper subset
$I\subsetneq\{1,\ldots,r\}$.
Then $\bigoplus_{i\in I}U_i$
is a nonzero proper $L$-invariant subspace of $W$.
This contradicts the irreducibility of $W$.
Hence the components form one conjugacy class,
proving (3).

Finally, since distinct components commute, multiplication
defines a surjective homomorphism
\[
 T_1\times\cdots\times T_r\longrightarrow E.
\]
Suppose that $t_1\cdots t_r=1$, where $t_i\in T_i$.
On $U_i$, all factors except $t_i$ act trivially by~{\rm(2)}.
Hence $t_i$ acts trivially on $U_i$.  Since $T_i$ acts faithfully
on $U_i$, we have $t_i=1$ for every $i$.
 Thus the multiplication
map is injective, and hence is an isomorphism, proving~{\rm(4)}.
\end{proof}

\section{Local actions of components}
\label{sec:local-actions}

We now pass to the stabilizer of a component subspace and consider its
faithful induced action.  The following lemma records the properties
inherited by this action and identifies an elementary abelian
centralizer whose eigenspaces will provide the next local reduction.

\begin{lemma}
\label{lem:component-subspace-action}
Let $T=T_i$ be a component of $L$, with associated subspace
$U=U_i=[T,W]$ as in Proposition~\ref{prop:component-decomposition},
and set $H=N_L(T)$.
Let \[\pi:H\longrightarrow\GL(U)\] be the homomorphism induced by restriction to $U$, and set
$J=\pi(H)$ and $\overline T=\pi(T)$.
Then the following statements hold.
\begin{enumerate}
\item
The group $H$ is the setwise stabilizer of $U$ in $L$.
The group $J\leq\Sp(U)$ acts faithfully and irreducibly
on $U$.

\item
The restriction $\pi|_T$ is injective.
In particular, $\overline T\cong T$ is a quasisimple
normal subgroup of $J$.

\item
The action of $J$ on $U$ satisfies {\rm(SF)}.

\item
For every $p'$-element $t\in T$,
\[
 \BrTr_U(\pi(t))
 =
 \BrTr_W(t)-(\dim W-\dim U)\in\mathbb Z.
\]

\item
The centralizer $C_J(\overline T)$ is a normal
elementary abelian $2$-subgroup of $J$.
\end{enumerate}
\end{lemma}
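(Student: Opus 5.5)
I will verify the five items in order, using Proposition~\ref{prop:component-decomposition} throughout.

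\emph{Item (1).} By Proposition~\ref{prop:component-decomposition}(3), $gU_i=U_j$ whenever $gT_ig^{-1}=T_j$. Conversely, suppose $gU=U$ and $gTg^{-1}=T_j$. Then $U_j=gU=U$, and since the $U_j$ are distinct nonzero summands of a direct sum, $j=i$. Hence $H=N_L(T)$ is exactly the setwise stabilizer of $U$. Because $J$ preserves $B|_U$, which is nondegenerate by Proposition~\ref{prop:component-decomposition}(1), we get $J\le\Sp(U)$; faithfulness of $J$ holds by construction. For irreducibility, take a nonzero proper $J$-invariant subspace $D\le U$ and a transversal $S$ of $H$ in $L$. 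Then $\bigoplus_{s\in S}sD$ is a nonzero proper $L$-invariant subspace of $W$, because its summands lie in the distinct $sU$. This contradicts the irreducibility of $W$.

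\emph{Items (2) and (3).} Item (2) is Proposition~\ref{prop:component-decomposition}(2), namely that $T$ acts faithfully on $U$. Normality of $\overline T$ in $J$ is clear since $T\lhd H$. For (SF), let $0\ne u\in U$ and $a\in\F_p^\times$, and choose a $p'$-element $g\in L$ with $gu=au$. Then $0\ne au\in U\cap gU$, and distinct summands meet trivially, so $gU=U$. Thus $g\in H$, and $\pi(g)$ is a $p'$-element with the required eigenvalue, exactly as in Lemma~\ref{lem:sign-refinement}.

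\emph{Item (4).} Take $t\in T$ a $p'$-element. It acts trivially on each $U_j$ with $j\ne i$, and $W=U\oplus\bigoplus_{j\ne i}U_j$. Hence the Brauer trace splits as $\BrTr_W(t)=\BrTr_U(\pi(t))+(\dim W-\dim U)$. Integrality then follows from Lemma~\ref{lem:rational-brauer}, that is, from (R) for $W$.

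\emph{Item (5), the main obstacle.} Let $C=C_J(\overline T)$, which is normal in $J$ because $\overline T\lhd J$. The goal is to show that $C$ is elementary abelian of exponent $2$. My plan is to extend scalars to $K$. The summands $V_a$ of $\widetilde U_i=K\otimes U$ are irreducible $KT$-modules (other components act trivially on them) and are pairwise nonisomorphic as $E$-modules, hence also as $T$-modules. By Schur's lemma, $C$ therefore acts on each $V_a$ by a scalar $\lambda_a(c)$, so $C$ is abelian and diagonal with respect to the decomposition $\bigoplus V_a$.

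To obtain exponent $2$, I would use that each $V_a$ is nondegenerate and the $V_a$ are mutually orthogonal, by Lemma~\ref{lem:layer-constituents}. Since $c$ is an isometry acting as the scalar $\lambda_a(c)$ on $V_a$, invariance of the form forces $\lambda_a(c)^2=1$. Hence $c^2$ acts trivially on every $V_a$, and by faithfulness of $J$ we get $c^2=1$.

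The delicate point is that $C$ lives in $J$, not in $L$, so Corollary~\ref{lem:quartics}(3) does not apply directly. The orthogonality argument avoids this, provided the $V_a$ remain pairwise nonisomorphic as $T$-modules. That is where I expect care to be needed: I will derive it from nonisomorphism as $E$-modules together with the triviality of the other components on $U$.
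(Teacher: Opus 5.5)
Your proposal is correct and follows the paper's proof essentially step for step. The matching steps are: the stabilizer and translate argument for (1), faithfulness from Proposition~\ref{prop:component-decomposition}(2), directness of the decomposition for (SF), and the trace splitting for (4). For (5) you use the same extension to $\overline{\F}_p$, Schur's lemma on the pairwise nonisomorphic orthogonal summands, and the isometry condition forcing the scalars to be $\pm1$. Your observation that nonisomorphism as $E(L)$-modules passes to $T$-modules, because the other components act trivially on $U$, is exactly what the paper's remark ``all other components act trivially on them'' implicitly relies on.
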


\begin{proof}
By Proposition~\ref{prop:component-decomposition},
conjugation of components corresponds to permutation
of their associated subspaces via \[
 g[T,W]=[gTg^{-1},W]\qquad(g\in L).
\]
Thus $H=N_L(T)$ stabilizes $U$.
Conversely, suppose that $gU=U$.
Write $gTg^{-1}=T_j$. Then $gU=U_j$, so $U_j=U_i$.
The subspaces in the component decomposition are
nonzero and form a direct sum; hence $j=i$.
Therefore $g\in N_L(T)$.
This proves that $H$ is exactly the setwise stabilizer
of $U$.

Every element of $H$ preserves the restriction of $B$
to $U$, which is nondegenerate.
Consequently, $J\leq\Sp(U)$, and the action of $J$
is faithful by its definition as an induced group.

For the irreducibility of $U$, suppose that $0\ne A<U$ is an $H$-invariant
$k$-subspace, where $k=\F_p$.
Since $L$ permutes the $r$ component subspaces
transitively, choose $g_1,\ldots,g_r\in L$ such that
$g_jU=U_j$.
The subspace
\[
 A_0=\bigoplus_{j=1}^r g_jA
\]
is $L$-invariant. Indeed, if $g\in L$ and
$gg_jU=g_\ell U$, then
$g_\ell^{-1}gg_j\in H$, and so $gg_jA=g_\ell A$.
Moreover,
\[
 0<\dim A_0=r\dim A<r\dim U=\dim W.
\]
This contradicts the irreducibility of $W$.
Hence $J$ acts irreducibly on $U$, proving (1).

By Proposition~\ref{prop:component-decomposition},
$T$ acts faithfully on $U$, so $\pi|_T$ is injective.
Also $T\lhd H$, and therefore $\overline T\lhd J$.
Since $T$ is quasisimple, so is $\overline T$.
This proves (2).

To prove (3), let $0\ne u\in U$ and $a\in k^\times$.
By {\rm(SF)} for $L$ on $W$, there is a $p'$-element
$g\in L$ with $gu=au$.
Since $g$ permutes the component subspaces, $gu$
belongs to one of the $U_j$.
But $gu=au$ is a nonzero vector of $U$.
The directness of the component decomposition forces
$gU=U$, so $g\in H$.
Its image $\pi(g)$ is a $p'$-element of $J$ satisfying
$\pi(g)u=au$.
Thus $J$ satisfies {\rm(SF)} on $U$.

For (4), let $t\in T$ be a $p'$-element.
The component $T$ acts trivially on every $U_j$
other than $U$. Therefore
\[
 \BrTr_W(t)
 =
 \BrTr_U(\pi(t))
 +\sum_{j\ne i}\dim U_j.
\]
Condition {\rm(R)} for $(L,W)$ implies that
$\BrTr_W(t)\in\mathbb Z$, and hence
\[
 \BrTr_U(\pi(t))
 =
 \BrTr_W(t)-(\dim W-\dim U)\in\mathbb Z.
\]

Finally, write  $C=C_J(\overline T)$.
Since $\overline T\lhd J$, its centralizer $C$
is normal in $J$.
Extend scalars to $K=\overline{k}$.
By the preceding results, $U_K$ is an orthogonal sum
\[
 U_K=V_1\perp\cdots\perp V_s
\]
of pairwise nonisomorphic irreducible $K\overline T$-modules,
each carrying the nondegenerate alternating form
obtained by restricting $B_K$.
Indeed, these are exactly the irreducible $KE(L)$-summands
associated with $T$; all other components act trivially
on them.

Let $c\in C$. Since $c$ commutes with $\overline T$,
it preserves each $V_a$ and acts on it by a scalar
$\varepsilon_a(c)\in K^\times$, by Schur's lemma.
Because $c$ is an isometry,
\[
 B_K(cx,cy)=B_K(x,y)
 \qquad(x,y\in V_a).
\]
By the nondegeneracy of $B_K|_{V_a}$, we obtain $\varepsilon_a(c)^2=1$.
Thus $\varepsilon_a(c)\in\{1,-1\}$.
Consider the homomorphism
\[
 C\longrightarrow\{1,-1\}^s,\qquad
 c\longmapsto
 \bigl(\varepsilon_1(c),\ldots,\varepsilon_s(c)\bigr).
\]
If $c$ lies in its kernel, then $c$ acts trivially on every summand of
$U_K$, and hence on $U$.  Since $J$ acts faithfully on $U$, we have
$c=1$.  Thus the homomorphism is injective, and $C$ is an elementary
abelian $2$-group.  This proves~{\rm(5)}.
\end{proof}

Before refining the action on a component subspace, we consider
the case in which a component has nontrivial center.
In this case, there is only one component, and the whole group
already has an almost simple quotient by the scalar signs.

\begin{lemma}
\label{lem:unique-centered-component}
Suppose that a component $T$ of $L$ has nontrivial center.
Then $T$ is the unique component of $L$, and
\[
 E(L)=T,\qquad
 Z(T)=C_L(T)=\langle z\rangle.
\]
Moreover, conjugation induces
an embedding $S\leq L/\langle z\rangle\leq\Aut(S)$,
where $S=T/Z(T)$.
\end{lemma}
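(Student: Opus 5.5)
The plan is to use the structure already established: $F(L)=\langle z\rangle$ by Proposition~\ref{prop:extraspecial-excluded}, and $E(L)=T_1\times\cdots\times T_r$ is a direct product of components by Proposition~\ref{prop:component-decomposition}(4).

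\emph{Step 1: $Z(T)\le\langle z\rangle$.} The center $Z(T)$ is characteristic in $T$, so it is centralized by every other component and normalized by $N_L(T)$. Take the product $Z_0=\prod_j Z(T_j)$ over all conjugates of $T$. This is an abelian normal subgroup of $L$, so Proposition~\ref{prop:2-terminal-refinement} gives $Z_0\le\{I_W,-I_W\}$. Alternatively, $Z(T)\le Z(E(L))$, which is characteristic in $E(L)$ and hence normal in $L$, and abelian. Either way, $1\ne Z(T)\le\langle z\rangle$, so $Z(T)=\langle z\rangle$.

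\emph{Step 2: $T$ is the unique component.} If $T_j\ne T$ were another component, then $T_j$ would be conjugate to $T$ by Proposition~\ref{prop:component-decomposition}(3). Hence $Z(T_j)=gZ(T)g^{-1}=\langle z\rangle$, because $z$ is central. This contradicts the directness $T\cap T_j=1$ in $E(L)=T_1\times\cdots\times T_r$. Alternatively, $z$ acts as $-1$ on $U_j$, but $T$ acts trivially on $U_j$ by Proposition~\ref{prop:component-decomposition}(2). So $r=1$, $E(L)=T$, and $U_1=W$.

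\emph{Step 3: $C_L(T)=\langle z\rangle$.} Set $C=C_L(T)$, which is normal in $L$. By Lemma~\ref{lem:component-subspace-action}(5) applied with $U=W$, $H=L$ and $J=L$, the group $C$ is a normal elementary abelian $2$-subgroup of $L$. Terminality then gives $C\le\{I_W,-I_W\}=\langle z\rangle$. Conversely, $z$ is central, so $C=\langle z\rangle=Z(T)$. A direct alternative is to use $C_L(F^*(L))\le F^*(L)$, with $F^*(L)=\langle z\rangle T=T$, so that $C_L(T)\le T\cap C_L(T)=Z(T)$.

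\emph{Step 4: the embedding.} Conjugation gives $L\to\Aut(T)$ with kernel $C_L(T)=\langle z\rangle$. Since $T$ is perfect, $\Aut(T)$ embeds in $\Aut(S)$, where $S=T/Z(T)$: an automorphism trivial on $T/Z(T)$ gives a homomorphism $T\to Z(T)$, which is trivial. Thus $L/\langle z\rangle\hookrightarrow\Aut(S)$, and the image of $T/\langle z\rangle$ is $\operatorname{Inn}(S)\cong S$, giving $S\le L/\langle z\rangle\le\Aut(S)$.

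The main point needing care is Step~2, where transitivity must be combined with centrality of $z$ to force overlap of the centers. Steps~1 and~3 follow directly from terminality and Lemma~\ref{lem:component-subspace-action}.
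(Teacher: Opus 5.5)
Your proof is correct and follows essentially the same route as the paper: $Z(T)\le Z(E(L))\le F(L)=\langle z\rangle$; uniqueness of $T$ because $z\in T$ would have to act trivially on $[T',W]$ (your alternative in Step~2), with your primary argument via $T\cap T_j=1$ in the direct product $E(L)$ an equally valid variant; $C_L(T)\le O_2(L)$ via Lemma~\ref{lem:component-subspace-action}(5); and the perfectness argument for the embedding into $\Aut(S)$. Your $F^*(L)$ alternative in Step~3 is also valid: $F^*(L)=\langle z\rangle T=T$ gives $C_L(T)\le T\cap C_L(T)=Z(T)$ directly, without invoking terminality.
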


\begin{proof}
Since distinct components commute, every element
of $Z(T)$ commutes with every component of $L$.
Consequently, $Z(T)\leq Z(E(L))$.
The subgroup $Z(E(L))$ is characteristic in $E(L)$
and hence normal in $L$. Therefore
\[
 Z(T)\leq Z(E(L))\leq F(L)=\langle z\rangle.
\]
Since $Z(T)\ne1$, it follows that
$Z(T)=\langle z\rangle$.

Suppose that $T'$ is another component.
By Proposition~\ref{prop:component-decomposition},
the associated subspace
\[
 U'=[T',W]
\]
is nonzero, and $T$ acts trivially on $U'$.
In particular, $z\in T$ acts trivially on $U'$.
But $z=-I_W$, so it acts as $-I_{U'}$.
Since the characteristic is odd, this is impossible.
Thus $T$ is the unique component, and
$E(L)=T$. In particular, $T\lhd L$.

The component decomposition now has only one summand,
so
$[T,W]=W$.
Applying Lemma~\ref{lem:component-subspace-action}
with $U=W$ and $N_L(T)=L$, we see that $C_L(T)$
is a normal elementary abelian $2$-subgroup of $L$.
Hence
\[
 C_L(T)\leq O_2(L)=\langle z\rangle.
\]
The reverse inclusion holds because $z$ is central
in $L$. Therefore
$C_L(T)=\langle z\rangle=Z(T)$.

It remains to identify the projective action of $L$.
Since $T\lhd L$, conjugation induces a homomorphism
\[
 \alpha:L\longrightarrow\Aut(S),
 \qquad S=T/Z(T).
\]
We claim that $\ker\alpha=C_L(T)$. Clearly, $C_L(T)\leq\ker\alpha$.
Conversely, suppose that $g\in\ker\alpha$.
For every $t\in T$, there is $c(t)\in Z(T)$ such that
$gtg^{-1}=t\,c(t)$.
Since the elements $c(t)$ are central, we have $c(tu)=c(t)c(u)$ for $t,u\in T$.
Thus $c:T\to Z(T)$ is a homomorphism.
The group $T$ is perfect and $Z(T)$ is abelian,
so $c$ is trivial.
Therefore $g$ centralizes $T$, proving the claim.

It follows that
$L/\langle z\rangle\hookrightarrow\Aut(S)$.
The image of $T$ under $\alpha$ is the group of
inner automorphisms of $S$, naturally isomorphic
to $S$. Hence
\[
 S\leq L/\langle z\rangle\leq\Aut(S),
\]
as required.
\end{proof}

We now decompose $U$ into common eigenspaces of
$C_J(\overline T)$.  On each such eigenspace, the resulting local
action has scalar centralizer, and hence its quotient by the scalar
signs is almost simple.  This connects the local symplectic action
with the automorphism group of the simple quotient of the component.

\begin{proposition}
\label{prop:local-component-action}
Let $J$ and $\overline T$ be as in
Lemma~\ref{lem:component-subspace-action}, and set
$C=C_J(\overline T)$.
Let $Y$ be a nonzero common eigenspace of $C$ on $U$,
and let $J_Y$ be the group induced on $Y$ by its
setwise stabilizer in $J$.
Write $T_Y$ for the group induced by $\overline T$ on $Y$.
Then the following statements hold.
\begin{enumerate}
\item
The restriction of $B$ to $Y$ is nondegenerate,
and $J_Y\leq\Sp(Y)$ acts faithfully and irreducibly
on $Y$. Its action satisfies {\rm(SF)}.

\item
The action of $\overline T$ on $Y$ is faithful.
In particular, $T_Y\cong T$ is a quasisimple normal
subgroup of $J_Y$.

\item
We have $C_{J_Y}(T_Y)=\langle-I_Y\rangle$. If $S=T/Z(T)$, then conjugation induces an embedding
$S\leq J_Y/\langle-I_Y\rangle\leq\Aut(S)$.

\item
For every $p'$-element $t\in T$, the Brauer trace of
its action on $Y$ is an integer.
\end{enumerate}
\end{proposition}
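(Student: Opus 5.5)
The plan is to treat $Y$ exactly as in the refinement Lemma~\ref{lem:sign-refinement}, but inside $J$, using Lemma~\ref{lem:component-subspace-action}. By part~(5) of that lemma, $C=C_J(\overline T)$ is a normal elementary abelian $2$-subgroup of $J\leq\Sp(U)$. Since $p$ is odd, $U$ splits as $U=\bigoplus_\mu U_\mu$ into $C$-weight spaces with $\mu:C\to\{\pm1\}$. The computation $B(u,v)=\mu(c)\nu(c)B(u,v)$ shows that distinct weight spaces are orthogonal. Hence $B|_Y$ is nondegenerate, and $J_Y\leq\Sp(Y)$ acts faithfully by construction.

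\textbf{Part (1).} For (SF), I follow the argument of Lemma~\ref{lem:component-subspace-action}(3). Given $0\ne y\in Y$ and $a\in\F_p^\times$, (SF) for $J$ gives a $p'$-element $g\in J$ with $gy=ay$. Since $g$ permutes the weight spaces and $0\ne ay\in Y\cap gY$, we get $gY=Y$. Irreducibility is the coset argument again. The stabilizer of $Y$ is $I_J(\lambda)$, and $J$ permutes the weight spaces transitively because $U$ is $J$-irreducible. So if $0\ne A<Y$ were invariant under the stabilizer, then $\bigoplus_t tA$ over coset representatives $t$ would be a proper nonzero $J$-invariant subspace of $U$.

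\textbf{Part (2).} The group $\overline T$ centralizes $C$, so it stabilizes every weight space. The kernel of $\overline T$ on $Y$ is normal in $\overline T$. Because $\overline T$ is quasisimple, this kernel is either central or all of $\overline T$. The second case is impossible: $\overline T$ is normal in $J$ and $J$ is transitive on weight spaces, so $\overline T$ would then be trivial on all of $U$. For the central case, a central element of $\overline T$ acts on each irreducible summand $V_a$ of $U_K$ by a scalar of square $1$ (it is an isometry). That scalar is determined by the restriction of the central character. I expect the cleanest route to be the following: $Z(T)\leq\langle z\rangle$ by the argument of Lemma~\ref{lem:unique-centered-component}, and $z=-I$ acts as $-I_Y\ne I_Y$. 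So a nontrivial central kernel is impossible, and $\overline T$ is faithful on $Y$. Then $T_Y\cong\overline T\cong T$, and $T_Y$ is normal in $J_Y$ because $\overline T$ is normal in the stabilizer.

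\textbf{Part (3).} This is the step I expect to be the main obstacle. I will show that $C_{J_Y}(T_Y)$ is elementary abelian $2$ and normal by repeating the Schur/isometry argument of Lemma~\ref{lem:component-subspace-action}(5) on $Y_K$. I then need to show it is scalar. Suppose some $c\in C_{J_Y}(T_Y)$ acts nonscalarly on $Y$. Then its eigenspaces in $Y$ would be permuted by the stabilizer. Lifting to $J$, I want to show that a lift of $c$ (or an element built from it) centralizes $\overline T$ on all of $U$, which would force it into $C$ and hence to be scalar on $Y$. The difficulty is that $c$ is defined only on $Y$. I would instead argue via quartics. The $\pm1$ eigenspaces of $C_{J_Y}(T_Y)$ on $Y$, together with their $J$-translates, give a configuration of the kind in Lemma~\ref{lem:quartic-abelian-normal}, and the refinement can be iterated on the pair $(J_Y,Y)$ itself. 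Alternatively, I would first apply Lemma~\ref{lem:sign-refinement} to reduce to the case where the abelian normal subgroups of $J_Y$ are scalar. This is possible since $(J_Y,Y)$ inherits (SF) and, by Corollary~\ref{lem:quartics}(3), has only elementary abelian $2$ normal abelian subgroups. I would then choose $Y$ within that reduction. Once $C_{J_Y}(T_Y)=\langle -I_Y\rangle$ is known, the embedding $S\leq J_Y/\langle -I_Y\rangle\leq\Aut(S)$ follows verbatim from the perfectness argument in Lemma~\ref{lem:unique-centered-component}.

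\textbf{Part (4).} As in Lemma~\ref{lem:sign-refinement}, let $t\in T$ be a $p'$-element. The group $C\langle\pi(t)\rangle$ is a $p'$-group with $\pi(t)$ centralizing $C$, so I lift to characteristic zero and apply the idempotent $e_\lambda$. This gives
\[
\BrTr_Y(t)=\frac1{|C|}\sum_{c\in C}\lambda(c)\BrTr_U(c\,\pi(t)).
\]
Each term $\BrTr_U(c\,\pi(t))$ should be an integer. For $c=1$ this is Lemma~\ref{lem:component-subspace-action}(4). For general $c\in C$, I need (R) for $J$ on $U$, which I would deduce from (R) for $L$ by the same trace-subtraction argument, taking a lift in $H$ and accounting for the permuted summands $U_j$ with $j\ne i$. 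Since $\BrTr_Y(t)$ is then rational and an algebraic integer, it is an integer.
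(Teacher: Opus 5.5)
Parts (1) and (2) are fine. For nontriviality of $\overline T$ on $Y$ you use $\overline T\unlhd J$ together with transitivity of $J$ on the weight spaces, where the paper notes that $U_K$ has no nonzero $\overline T$-fixed vectors; both work. The genuine gap is in part (3): neither route you sketch proves $C_{J_Y}(T_Y)\le\{\pm I_Y\}$ for the given $Y$.

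\begin{itemize}
\item The quartic route cannot work. The centralizer in question is an elementary abelian $2$-group, so its weight spaces are mutually orthogonal and self-paired. Lemma~\ref{lem:quartic-dual-pairs} then yields nothing, and Corollary~\ref{lem:quartics}(3) only gives ``elementary abelian $2$'', never ``scalar''.
\item The re-refinement route replaces $Y$ by a smaller space. The statement concerns an arbitrary nonzero common eigenspace of $C$, and it is used that way later, so this proves nothing about it. It would also need admissibility of $(J_Y,Y)$, including {\rm(R)} for all $p'$-elements, which you have not established.
\end{itemize}

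The missing idea is a one-line commutator argument using the faithfulness from (2). Let $d$ lie in the stabilizer of $Y$ in $J$ with image centralizing $T_Y$, and let $t\in\overline T$. Then $[d,t]\in\overline T$ because $\overline T\unlhd J$. It acts trivially on $Y$, so $[d,t]=1$ by faithfulness of $\overline T$ on $Y$. Hence $d\in C_J(\overline T)=C$, and $C$ acts on the common eigenspace $Y$ as $\pm I_Y$. This answers your worry that ``$c$ is defined only on $Y$'': you never extend $c$, since any preimage works. You also need $-I_Y\in J_Y$, which holds because $z$ induces $-I_U\in C$.

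Part (4) has a related weakness. You propose to prove {\rm(R)} for all of $J$ on $U$ ``by the same trace-subtraction argument''. But for a general $p'$-lift $h\in H$, the trace of $h$ on $U'=\bigoplus_{j\ne i}U_j$ need not be an integer. Lemma~\ref{lem:component-subspace-action}(4) relies on $T$ acting trivially on $U'$, so it does not extend. You only need $\BrTr_U(c\,\pi(t))\in\mathbb Z$ for $c\in C$, and the same commutator trick gives it:
\begin{itemize}
\item Any preimage $h\in H$ of $c$ centralizes $T$, since $[h,x]\in T$ acts trivially on $U$ and $T$ is faithful on $U$.
\item The $p'$-part $\widetilde c$ of $h$ still maps to $c$, because $p$ is odd and $o(c)\le2$.
\item The elements $t\widetilde c$ and $\widetilde c$ act identically on $U'$, so
\[
\BrTr_U(\pi(t)c)=\BrTr_U(c)+\BrTr_W(t\widetilde c)-\BrTr_W(\widetilde c)\in\mathbb Z,
\]
using that $c$ has eigenvalues $\pm1$ on $U$.
\end{itemize}
With this integrality, your projector formula finishes the proof.
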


\begin{proof}
By Lemma~\ref{lem:component-subspace-action},
$C$ is a normal elementary abelian $2$-subgroup of $J$.
Since $p$ is odd, the action of $C$ is simultaneously
diagonalizable over $k=\F_p$, with eigenvalues in
$\{1,-1\}$. Thus
\[
 U=\bigoplus_{\lambda\in\Lambda}Y_\lambda,
 \qquad
 Y_\lambda=\{u\in U:cu=\lambda(c)u
                 \text{ for every }c\in C\},
\]
where $\Lambda$ consists of the homomorphisms
$\lambda:C\to\{1,-1\}$ with $Y_\lambda\ne0$.
Fix $\lambda\in\Lambda$ and write $Y=Y_\lambda$.

We claim that distinct eigenspaces are orthogonal.
Indeed, if $\lambda\ne\mu$, choose $c\in C$ such that
$\lambda(c)\ne\mu(c)$. For $u\in Y_\lambda$ and
$v\in Y_\mu$, we have
\[
 B(u,v)=B(cu,cv)
       =\lambda(c)\mu(c)B(u,v)
       =-B(u,v).
\]
Hence $B(u,v)=0$, proving the claim.
Since $B|_U$ is nondegenerate, its restriction to
each $Y_\lambda$ is nondegenerate.

Observe that the normality of $C$ implies that $J$ permutes its
nonzero common eigenspaces.
This permutation action is transitive, since the sum
of the eigenspaces in any orbit is a $J$-invariant
subspace of the irreducible module $U$.

Let  $D=\{j\in J:jY=Y\}$,
and let $\rho:D\to\GL(Y)$ be restriction, so that
$J_Y=\rho(D)$.
The action of $D$ on $Y$ is irreducible.
Indeed, if $0\ne A<Y$ is $D$-invariant, then the $J$-conjugates of $A$ lie in the corresponding common
eigenspaces, and their sum is a nonzero proper $J$-invariant subspace
of $U$, contradicting the irreducibility of $U$.
Hence the induced
action of $J_Y$ on $Y$ is irreducible.  It is faithful by definition,
and, since it preserves the restriction of the symplectic form,
\[
 J_Y\leq\Sp(Y).
\]

Let $0\ne y\in Y$ and $a\in k^\times$. By the scalar fusion on $U$, there is a $p'$-element $j\in J$
such that $jy=ay$.
Because $J$ permutes the common eigenspaces and
$jy$ is a nonzero vector of $Y$, we have $jY=Y$.
Thus $\rho(j)\in J_Y$ is a $p'$-element sending
$y$ to $ay$. This proves (1).

Since $\overline T$ centralizes $C$, it preserves every common
eigenspace $Y_\lambda$.  Hence $\overline T\leq D$, and therefore
\[
 T_Y=\rho(\overline T)\unlhd J_Y.
\]
It remains to show that $\rho|_{\overline T}$ is injective.  We first
note that
\[
 Z(T)\leq Z(E(L))\leq F(L)=\langle z\rangle.
\]
Indeed, distinct components commute, so $Z(T)\leq Z(E(L))$, while
$Z(E(L))$ is an abelian normal subgroup of $L$ and hence lies in
$F(L)$.  Thus $Z(T)$ is either trivial or $\langle z\rangle$.

The action of $\overline T$ on $Y$ is nontrivial.  To see this, extend
scalars to $K$.  The space $U_K$ is a multiplicity-free sum of
nontrivial irreducible $K\overline T$-modules and therefore has no
nonzero $\overline T$-fixed vectors.
Since $0\ne Y_K\subseteq U_K$, the action of $\overline T$
on $Y$ cannot be trivial.

The kernel of $\overline T$ on $Y$ is therefore a proper normal
subgroup of the quasisimple group $\overline T$, and so it is contained
in $Z(\overline T)$.  By the preceding observation, the only possible
nontrivial central element is the image of $z$, and this acts on $Y$
as $-I_Y$.  It cannot lie in the kernel.  Hence the kernel is trivial,
so $\rho|_{\overline T}$ is injective.  This proves~{\rm(2)}.

We now determine the centralizer of $T_Y$ in $J_Y$.
Let $\rho(d)\in C_{J_Y}(T_Y)$, with $d\in D$.
For each $t\in\overline T$, the commutator $[d,t]$ lies in
$\overline T$.  Since $\rho(d)$ centralizes $T_Y=\rho(\overline T)$,
the element $[d,t]$ acts trivially on $Y$.  The faithfulness of
$\overline T$ on $Y$, proved above, therefore shows that $[d,t]=1$.
Thus $d\in C_J(\overline T)=C$.  By the definition of the common
eigenspace $Y$, every element of $C$ acts on $Y$ as a scalar in
$\{1,-1\}$.  Hence $C_{J_Y}(T_Y)\leq\{I_Y,-I_Y\}$.
On the other hand, $z=-I_W$ induces $-I_U\in C$, and hence
$-I_Y\in J_Y$.  Therefore
\[
 C_{J_Y}(T_Y)=\langle -I_Y\rangle.
\]

Identify $T_Y$ with $T$, and write $S=T/Z(T)$.
We have a homomorphism
$J_Y\longrightarrow\Aut(S)$ via conjugation.
Suppose that $a\in J_Y$ lies in its kernel.
Then for every $t\in T_Y$,
$ata^{-1}=t\,c(t)$ for some $c(t)\in Z(T_Y)$.
Since the elements $c(t)$ are central,
\[
\begin{aligned}
 a(tu)a^{-1}
   &=(ata^{-1})(aua^{-1})\\
   &=t\,c(t)\,u\,c(u)
    =tu\,c(t)c(u),
\end{aligned}
\]
and hence $c(tu)=c(t)c(u)$.  Thus $t\mapsto c(t)$ is a homomorphism
from $T_Y$ to the abelian group $Z(T_Y)$.  Since $T_Y$ is perfect,
this homomorphism is trivial.
Hence $a$ centralizes $T_Y$.
It follows that the kernel of the conjugation map
is precisely $\langle-I_Y\rangle$.

The image of $T_Y$ under conjugation is ${\rm Inn}(S)\cong S$.
Consequently,
\[
S\leq J_Y/\langle-I_Y\rangle\leq\Aut(S),
\]
which proves~{\rm(3)}.

It remains to prove the Brauer trace assertion.
Fix a $p'$-element $t\in T$, and write
$\overline t=\pi(t)\in\overline T$.
For each $c\in C$, choose a preimage $h\in H$
under $\pi$.
Since $c$ centralizes $\overline T$, for every
$x\in T$ the commutator $[h,x]$ belongs to $T$
and acts trivially on $U$.
Since the action of $T$ on $U$ is faithful, we have
$[h,x]=1$. Hence $h$ centralizes $T$.

Let $\widetilde c$ be the $p'$-part of $h$.
Since $\pi(h)=c$ has order at most 2 and $p$ is odd, we have $\pi(\widetilde c)=c$.
Also $\widetilde c$ centralizes $T$, and so $t\widetilde c$ is a $p'$-element.

Write
\[
 U'=\bigoplus_{j\ne i}U_j.
\]
The elements $\widetilde c$ and $t\widetilde c$
preserve both $U$ and $U'$, and they have the same
action on $U'$, since $T$ acts trivially there.
Therefore
\[
 \BrTr_U(\overline t c)-\BrTr_U(c)
 =
 \BrTr_W(t\widetilde c)-\BrTr_W(\widetilde c)
 \in\mathbb Z.
\]

Here the integrality follows from {\rm(R)} on $W$.
Since $c$ has only the eigenvalues $1$ and $-1$,
its Brauer trace on $U$ is an integer. Hence $\BrTr_U(\overline t c)\in\mathbb Z$
for any $c\in C$.

Note that the group $C\langle\overline t\rangle$ has order
prime to $p$, since $C$ centralizes $\overline t$.
Applying the ordinary character projector for the
sign character $\lambda$ yields
\[
 \BrTr_Y(t)
 =
 \frac1{|C|}
 \sum_{c\in C}\lambda(c)\BrTr_U(\overline t c).
\]
Here $\BrTr_Y(t)$ denotes the Brauer trace of the
action induced by $t$ on $Y$.
The right-hand side is rational, since every
$\lambda(c)$ equals $1$ or $-1$ and all the traces
in the sum are integers. The left-hand side is a sum of roots
of unity and hence an algebraic integer. Therefore
\[\BrTr_Y(t)\in\mathbb Z.
\]
This proves (4).
\end{proof}

The following lemma gives a uniform lower bound on the dimension of the
local space $Y$, which will be used in the counting argument below.

\begin{lemma}\label{lem:local-dimension} Let $Y$ be a local symplectic space obtained in
Proposition~\ref{prop:local-component-action}. Then
\[
 \dim_{\F_p}Y\geq4.
\]
\end{lemma}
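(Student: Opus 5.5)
The plan is to show that $\dim_{\F_p}Y$ is even and at least $2$, and then to rule out $\dim_{\F_p}Y=2$. The tool for the last step is the integrality of Brauer traces in Proposition~\ref{prop:local-component-action}(4).

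By Proposition~\ref{prop:local-component-action}(1), the restriction of $B$ to $Y$ is a nondegenerate alternating form, so $\dim Y$ is even. By part~(2), the quasisimple group $T_Y\cong T$ acts faithfully on $Y$. A perfect group has no nontrivial one-dimensional representations, so $\dim Y\ge 2$. It therefore suffices to exclude $\dim Y=2$.

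Suppose that $\dim Y=2$. Then $T_Y\le J_Y\le\Sp(Y)=\SL_2(p)$ with $p\in\{7,11\}$. By Dickson's classification of subgroups of $\SL_2(p)$, a quasisimple subgroup is either $\SL_2(p)$ itself or $\SL_2(5)$. The group $\SL_2(5)$ occurs only when $p\equiv\pm1\pmod{10}$, that is, only for $p=11$. So $T_Y$ is one of $\SL_2(7)$, $\SL_2(11)$, or $\SL_2(5)$ in characteristic $11$, and $Y$ is its natural $2$-dimensional module.

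We now exhibit a $p'$-element with irrational Brauer trace in each case.
\begin{enumerate}
\item If $T_Y=\SL_2(p)$, take an element $t$ of a non-split torus of order $p+1$, which is prime to $p$. Its eigenvalues on $Y$ are $\lambda^{\pm1}$, with $\lambda$ a primitive $(p+1)$-st root of unity in $\F_{p^2}$. Its Brauer trace is $\zeta+\zeta^{-1}$ for a primitive complex $(p+1)$-st root of unity $\zeta$. This equals $\pm\sqrt2$ when $p=7$ and $\pm\sqrt3$ when $p=11$, so it is irrational.
\item If $T_Y=\SL_2(5)\le\SL_2(11)$, take $t$ of order $5$, which is prime to $11$. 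Its eigenvalues are $\lambda^{\pm1}$ with $\lambda$ of order $5$, so the Brauer trace is $\zeta_5+\zeta_5^{-1}=(-1\pm\sqrt5)/2\notin\mathbb Z$.
\end{enumerate}
Identifying $T_Y$ with $T$, each case contradicts Proposition~\ref{prop:local-component-action}(4). Hence $\dim Y\ne 2$, and so $\dim Y\ge4$.

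The step that needs the most care is the appeal to Dickson's list. We must confirm that no other quasisimple subgroup of $\SL_2(7)$ or $\SL_2(11)$ exists. The candidates to rule out are $\SL_2(3)$, which is not perfect, and $\SL_2(5)\le\SL_2(7)$, which fails because $120\nmid|\SL_2(7)|=336$. We must also confirm that the faithful $2$-dimensional module is the natural one up to Galois twist, which does not change the irrationality of the trace.

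If a self-contained argument is preferred to Dickson, an alternative is the following. Any perfect subgroup $X\le\SL_2(p)$ has $Z(X)=X\cap\{\pm I\}$, so $X/Z(X)$ embeds in $\PSL_2(p)$. The perfect subgroups of $\PSL_2(7)$ and $\PSL_2(11)$ can be read off from their subgroup lattices: they are the whole group, together with $A_5$ in the case $p=11$.
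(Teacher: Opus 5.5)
Your argument is correct, but it reaches the contradiction by a different route from the paper. You use Dickson's classification (or the subgroup lattices of $\PSL_2(7)$ and $\PSL_2(11)$) to pin $T_Y$ down as $\SL_2(7)$, $\SL_2(11)$, or $\SL_2(5)\leq\SL_2(11)$. You then exhibit an explicit $p'$-element whose trace is $\pm\sqrt2$, $\pm\sqrt3$, or $(-1\pm\sqrt5)/2$.

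The paper avoids any subgroup classification. Integrality of $\zeta+\zeta^{-1}$ first forces every $p'$-element of $T$ to have order in $\{1,2,3,4,6\}$. If $p\nmid|T|$, Cauchy's theorem and Burnside's $p^aq^b$ theorem make $T$ solvable; this branch absorbs your $\SL_2(5)$ case. If $p\mid|T|$, a short conjugation argument puts both the upper and lower unipotent subgroups inside $T$, so $T=\SL_2(p)$. The norm-one torus then gives an element of order $p+1\in\{8,12\}$, which is the same element you use.

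Your version is shorter once Dickson is granted. The paper's version is self-contained, needing only Burnside's theorem and generation of $\SL_2(p)$ by elementary matrices, and it treats $p=7$ and $p=11$ uniformly. Your proposed ``self-contained'' alternative still imports the subgroup lattices, so it is not really self-contained.

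Your concern about identifying $Y$ with the natural module up to Galois twist is unnecessary. $T_Y$ is literally a subgroup of $\SL(Y)$, so its Brauer traces come directly from the eigenvalues of those matrices.
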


\begin{proof}
Since $Y$ is a nonzero symplectic space, its dimension
is a positive even integer. Suppose, for a contradiction,
that $\dim_{\F_p}Y=2$.

Identify the faithfully acting component $T_Y$ with $T$.
Then
\[
 T\leq\Sp(Y)=\SL_2(p).
\]
For every $p'$-element $t\in T$, its Brauer trace on $Y$
is an integer, by
Proposition~\ref{prop:local-component-action}.

We first determine the possible orders of such elements.
Let $t\in T$ be a $p'$-element.
It is diagonalizable over $\overline{\F}_p$, and its
eigenvalues are $\lambda,\lambda^{-1}$, since its
determinant is one. If $\zeta$ is the complex
Teichm\"uller lift of $\lambda$, then
\[
 \BrTr_Y(t)=\zeta+\zeta^{-1}\in\mathbb Z.
\]
This is a real number in $[-2,2]$, so
$\zeta+\zeta^{-1}\in\{-2,-1,0,1,2\}$.
The corresponding equations
\[
 \zeta^2-(\zeta+\zeta^{-1})\zeta+1=0
\]
show that the corresponding orders of $\zeta$ are
$2,3,4,6$, and $1$, respectively.
Since Teichm\"uller lifting preserves orders, and
$t$ is diagonalizable with eigenvalues
$\lambda,\lambda^{-1}$, it follows that
\begin{equation}\label{eq:pprime-orders-two-dim}
o(t)\in\{1,2,3,4,6\}.
\end{equation}

Suppose first that $p\nmid |T|$.
By Cauchy's theorem and \eqref{eq:pprime-orders-two-dim}, every prime divisor
of $|T|$ is either 2 or 3.
Burnside's $p^aq^b$ theorem then says that $T$
is solvable, contradicting the fact that $T$
is quasisimple.

We may therefore assume that $p\mid |T|$.
Choose a subgroup $P\leq T$ of order $p$.
A nonidentity element of $P$ is a nontrivial unipotent
matrix on the two-dimensional space $Y$.
After choosing a suitable basis, we have
\[
 P=
 \left\{
 u(a)=
 \begin{pmatrix}1&a\\0&1\end{pmatrix}
 :a\in\F_p
 \right\}.
\]
Indeed, the powers of any nonidentity upper unipotent
matrix give all these matrices because the field is
the prime field $\F_p$.
The unique line fixed by $P$ is
\[
 \ell=\langle e_1\rangle,\qquad
 e_1=(1,0)^T.
\]

The group $T$ does not stabilize $\ell$.
Otherwise $T$ would lie in the upper triangular
subgroup of $\SL_2(p)$, which is solvable, again
contradicting quasisimplicity.
Choose $g\in T$ such that $g\ell\ne\ell$.
The subgroup $gPg^{-1}$ has order $p$ and fixes
the line $g\ell$.

The group $P$ acts transitively on the $p$ lines
different from $\ell$. These lines are
\[
 \langle(x,1)^T\rangle,\qquad x\in\F_p,
\]
and
\[
 u(a)\langle(x,1)^T\rangle
   =\langle(x+a,1)^T\rangle.
\]
Hence some $u\in P$ sends $g\ell$ to
$\langle e_2\rangle$, where $e_2=(0,1)^T$.
It follows that $T$ contains a subgroup of order $p$
fixing $\langle e_2\rangle$.

The stabilizer of $\langle e_2\rangle$ in $\SL_2(p)$
consists of the matrices
\[
 \begin{pmatrix}a&0\\b&a^{-1}\end{pmatrix},
 \qquad a\in\F_p^\times,\quad b\in\F_p.
\]
Its unique subgroup of order $p$ is
\[
 P^-=
 \left\{
 \begin{pmatrix}1&0\\b&1\end{pmatrix}
 :b\in\F_p
 \right\}.
\]
Thus $P^-\leq T$.
The upper and lower elementary matrices generate
$\SL_2(p)$, so
$T=\SL_2(p)$.

Finally, regard $\F_{p^2}$ as a two-dimensional vector space over
$\F_p$.  The norm-one subgroup
\[
 \{\alpha\in\F_{p^2}^{\times}:\alpha^{p+1}=1\}
\]
is cyclic of order $p+1$.  Choose a generator $\alpha$.
Multiplication by $\alpha$ defines an $\F_p$-linear transformation of
order $p+1$, whose determinant is
\[
 N_{\F_{p^2}/\F_p}(\alpha)=\alpha^{p+1}=1.
\]
After choosing an $\F_p$-basis of $\F_{p^2}$, this transformation
therefore gives an element of $\SL_2(p)=T$ of order $p+1$.

For $p=7$ and $11$, this order is $8$ and $12$, respectively.
In either case it is prime to $p$ and is not among the orders allowed
by \eqref{eq:pprime-orders-two-dim}, a contradiction.

Therefore $\dim_{\F_p}Y\ne2$, and hence
$\dim_{\F_p}Y\geq4$.
\end{proof}

We next use scalar fusion and the symplectic form to obtain a lower
bound on the order of the acting group modulo scalar signs.

\begin{lemma}
\label{lem:projective-count}
Let $(Y,B)$ be a nonzero symplectic space of dimension
$2n$ over $\F_p$, where $p\geq7$, and let
$J\leq\Sp(Y,B)$ satisfy {\rm(SF)}.
Write $Z_0=J\cap\{I_Y,-I_Y\}$.  Then $p^n+1\leq |J/Z_0|$.
\end{lemma}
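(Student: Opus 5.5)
Fix a nonzero vector $y\in Y$ and use (SF) to produce many elements of $J$ sending $y$ to scalar multiples of itself. The bound $p^n+1$ should come from counting lines, or projective points, together with isotropy; the natural tool is a covering argument like the one in the proof of Lemma~\ref{lem:extraspecial-seven-small}. Choose a generator $a$ of $\F_p^\times$. Since $p\ge7$, its order $p-1\ge6$, so $a\ne\pm1$. By (SF), for every $0\ne y\in Y$ there is $g\in J$ with $gy=ay$, so $y$ lies in the $a$-eigenspace $E_a(g)=\ker(g-a)$.

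The key observation is that $E_a(g)$ is totally isotropic. If $gu=au$ and $gv=av$, then $B(u,v)=a^2B(u,v)$, and $a^2\ne1$, so $B(u,v)=0$. Hence $\dim E_a(g)\le n$ and $|E_a(g)\setminus\{0\}|\le p^n-1$. The sets $E_a(g)\setminus\{0\}$, for $g\in J$, therefore cover all $p^{2n}-1$ nonzero vectors.

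To pass to $J/Z_0$, note that $g$ and $-g$ have eigenspaces $E_a(g)$ and $E_a(-g)=E_{-a}(g)$. So a coset $gZ_0$ contributes at most the two eigenspaces $E_a(g)$ and $E_{-a}(g)$. These are $B$-orthogonal to each other only when $-a^2=1$, which need not hold, so I would not rely on orthogonality between them.

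A cleaner bound uses only one eigenvalue per coset: for $y$ pick $g$ with $gy=ay$. Then $-g$ satisfies $(-g)y=-ay$, so each coset $\bar g$ determines the pair of eigenspaces $\{E_a(g),E_{-a}(g)\}$. Their union has at most $2(p^n-1)$ nonzero vectors, giving
\[
 p^{2n}-1\le 2\,|J/Z_0|\,(p^n-1),
\]
that is, $|J/Z_0|\ge (p^n+1)/2$. This is off by a factor of $2$, and removing it is the main obstacle.

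To recover the factor $2$, I would also exploit the identity element, which moves no vector to $a y$. I would look for a sharper combinatorial disjointness instead. Consider the map from pairs $(\bar g, \text{line})$ with $\bar g\in J/Z_0$ and $\ell$ a line in $E_a(g)\cup E_{-a}(g)$. Each of the $(p^{2n}-1)/(p-1)$ lines of $Y$ is covered. If $E_a(g)$ and $E_{-a}(g)$ both have dimension $n$, they are complementary Lagrangians only if paired by $B$, which forces $a\cdot(-a)=1$. This is impossible unless $a^2=-1$, and in that case I would instead use a generator with $a^2\ne-1$, possible since $p\ge7$ by choosing $a$ with $a^4\ne1$. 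Then $B(E_a,E_{-a})=0$, so $E_a(g)+E_{-a}(g)$ is totally isotropic and has dimension at most $n$. The union of the two eigenspaces therefore has at most $p^n-1$ nonzero vectors. This gives $p^{2n}-1\le |J/Z_0|(p^n-1)$, which is the desired $|J/Z_0|\ge p^n+1$.

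So the plan is: pick $a$ with $a^4\ne1$, which exists since $p-1\ge6$ and $a$ can be taken to be a generator. Show that $E_a(g)\oplus E_{-a}(g)$ is totally isotropic, because the pairwise products $a^2,-a^2,a^2$ are all $\ne1$. Then cover $Y\setminus\{0\}$ by these subspaces, one per coset of $Z_0$, and count.
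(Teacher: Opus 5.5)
Your final plan is correct and is essentially the paper's argument: cover $Y\setminus\{0\}$ by the sets $E_a(g)\cup E_{-a}(g)$, one per coset of $Z_0$, and use that $E_a(g)\oplus E_{-a}(g)$ is totally isotropic because $a^2\ne1$ and $-a^2\ne1$. The paper takes $a=2$, where these conditions read $p\ne3,5$, and treats $-I_Y\notin J$ and $-I_Y\in J$ separately; your uniform treatment also works, since $-g$ is an isometry of $Y$ even when it does not lie in $J$. Note only that your intermediate remark that $E_a(g)$ and $E_{-a}(g)$ are orthogonal ``only when $-a^2=1$'' is backwards: the identity $B(u,v)=-a^2B(u,v)$ forces orthogonality whenever $-a^2\ne1$, which is exactly what your final plan correctly uses.
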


\begin{proof}
For $g\in J$ and $a\in\F_p$, write
$E_a(g)=\{v\in Y:gv=av\}$.
If $u,v\in E_2(g)$, then
\[
 B(u,v)=B(gu,gv)=4B(u,v).
\]
Since $p\ne3$, this implies $B(u,v)=0$.
Thus $E_2(g)$ is totally isotropic and
$\dim E_2(g)\leq n$.
In particular, $E_2(g)$ contains at most $p^n-1$
nonzero vectors.

Suppose first that $-I_Y\notin J$.
For every nonzero $v\in Y$, condition {\rm(SF)},
applied to the scalar $2$, shows that there is an element $g\in J$
such that $gv=2v$.
Consequently,
\[
 Y\setminus\{0\}
 =
 \bigcup_{g\in J}\bigl(E_2(g)\setminus\{0\}\bigr).
\]
Counting this union gives
$p^{2n}-1\leq |J|(p^n-1)$.
Here $Z_0=1$, so division by $p^n-1$ proves
the required inequality.

Now suppose that $-I_Y\in J$.
We count the contributions of $g$ and $-g$ together.
Since
\[
 E_2(-g)=E_{-2}(g),
\]
the nonzero vectors covered by the coset
$gZ_0=\{g,-g\}$ form the set
\[
 \bigl(E_2(g)\cup E_{-2}(g)\bigr)\setminus\{0\}.
\]
Write $r=\dim E_2(g)$ and $s=\dim E_{-2}(g)$.
Both eigenspaces are totally isotropic.
They are also orthogonal to one another:
if $u\in E_2(g)$ and $v\in E_{-2}(g)$, then
\[
 B(u,v)=B(gu,gv)=-4B(u,v),
\]
so $B(u,v)=0$, since $p\ne5$.
Moreover, the eigenvalues $2$ and $-2$ are distinct,
so the two eigenspaces intersect trivially.
Their direct sum is therefore totally isotropic,
and hence
$r+s\leq n$.

The number of nonzero vectors covered by this coset
is consequently
\[
 (p^r-1)+(p^s-1)
 =p^r+p^s-2
 \leq p^{r+s}-1
 \leq p^n-1.
\]
The first inequality follows from
\[
 p^{r+s}-1-(p^r+p^s-2)
 =(p^r-1)(p^s-1)\geq0.
\]

By {\rm(SF)}, the sets associated with all cosets
of $Z_0$ cover $Y\setminus\{0\}$.
Thus
\[
 p^{2n}-1\leq |J/Z_0|(p^n-1).
\]
Dividing by $p^n-1$ gives
$p^n+1\leq |J/Z_0|$,
as required.
\end{proof}

An application of Lemma~\ref{lem:projective-count} to $J_Y$ yields a bound on the
local dimension in terms of the order of the automorphism group of the
associated simple group.

\begin{corollary}\label{cor:local-dimension-bound}
Let $Y$ and $T$ be as in
Proposition~\ref{prop:local-component-action}, and set
$d=\dim_{\F_p}Y$ and $S=T/Z(T)$.
Then $d$ is even, $d\geq4$, and
\[
 p^{d/2}+1
 \leq |J_Y/\langle-I_Y\rangle|
 \leq |\Aut(S)|.
\]
In particular,
$d\leq 2\log_p\bigl(|\Aut(S)|-1\bigr)$.
\end{corollary}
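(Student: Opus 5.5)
The corollary assembles results already proved; we take them in order. By Proposition~\ref{prop:local-component-action}(1), the restriction of $B$ to $Y$ is nondegenerate and alternating. Hence $d=\dim_{\F_p}Y$ is even; write $d=2n$. Lemma~\ref{lem:local-dimension} then gives $d\geq4$.

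For the first inequality, we apply Lemma~\ref{lem:projective-count} to $J_Y\leq\Sp(Y)$. Its hypotheses hold: $p\in\{7,11\}$ satisfies $p\geq7$, and $J_Y$ satisfies {\rm(SF)} on $Y$ by Proposition~\ref{prop:local-component-action}(1). The lemma yields $p^n+1\leq|J_Y/Z_0|$ with $Z_0=J_Y\cap\{I_Y,-I_Y\}$. By Proposition~\ref{prop:local-component-action}(3), $-I_Y\in J_Y$, since $C_{J_Y}(T_Y)=\langle -I_Y\rangle$. Therefore $Z_0=\langle -I_Y\rangle$, and we obtain $p^{d/2}+1\leq|J_Y/\langle -I_Y\rangle|$.

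For the second inequality, Proposition~\ref{prop:local-component-action}(3) gives an embedding $J_Y/\langle -I_Y\rangle\hookrightarrow\Aut(S)$ induced by conjugation. Hence $|J_Y/\langle -I_Y\rangle|\leq|\Aut(S)|$.

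Finally, the chain gives $p^{d/2}\leq|\Aut(S)|-1$. The right-hand side is positive, since $\Aut(S)$ contains the nontrivial group $S$. Taking $\log_p$ yields $d/2\leq\log_p(|\Aut(S)|-1)$, which is the stated bound.

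There is no genuine obstacle. The only point needing care is identifying $Z_0$ with $\langle -I_Y\rangle$, which uses the membership $-I_Y\in J_Y$. That membership comes from $z=-I_W\in L$ inducing $-I_U\in C$, as recorded in the proof of Proposition~\ref{prop:local-component-action}.
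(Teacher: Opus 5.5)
Your proposal is correct and matches the paper's own proof: $d$ is even because $Y$ is symplectic, $d\geq4$ comes from Lemma~\ref{lem:local-dimension}, and both inequalities follow from Lemma~\ref{lem:projective-count}. That lemma applies because Proposition~\ref{prop:local-component-action} gives (SF) for $J_Y$ on $Y$, the membership $-I_Y\in J_Y$ (so $Z_0=\langle -I_Y\rangle$), and the embedding $J_Y/\langle -I_Y\rangle\leq\Aut(S)$.
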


\begin{proof}
The space $Y$ is symplectic, so $d$ is even, and
Lemma~\ref{lem:local-dimension} gives $d\geq4$.
By Proposition~\ref{prop:local-component-action},
the group $J_Y$ satisfies {\rm(SF)} on $Y$,
contains $-I_Y$, and satisfies
$J_Y/\langle-I_Y\rangle\leq\Aut(S)$.
The asserted inequalities now follow from
Lemma~\ref{lem:projective-count}.
\end{proof}

\section{Recognition of the local simple group}
\label{sec:recognition}

In this section we determine the possible simple quotients
of the components by using the action of the original
rational group on the full family of blocks.

\subsection{Scalar fusion on collections of blocks}

The following lemma translates {\rm(SF)} into a simultaneous fusion
condition on arbitrary collections of orthogonal summands, after
factoring out the scalar signs on these summands.

\begin{lemma}\label{lem:fusion-on-supports}
Let $G\leq\Sp(V,B)$ satisfy {\rm(SF)} over $\F_p$, where $p$ is odd.
Suppose that $G$ permutes the nonzero nondegenerate summands of
\[
 V=\mathop{\perp}_{\omega\in\Omega}V_\omega.
\]
Write $X_\omega=(V_\omega\setminus\{0\})/\{\pm1\}$, with
$[v]=\{v,-v\}$ and $a[v]=[av]$.
Let $D\leq G$ consist of the elements acting as $\pm I$ on each summand.
Then the following statements hold.
\begin{enumerate}
\item $D$ is a normal elementary abelian $2$-subgroup of $G$, and $Q=G/D$ acts faithfully
on $\coprod_{\omega\in\Omega}X_\omega$, with $qX_\omega=X_{q\omega}$.
\item For every nonempty $\Sigma\subseteq\Omega$, every tuple
$(x_\omega)\in\prod_{\omega\in\Sigma}X_\omega$, and every
$a\in\F_p^\times$, there is a $p'$-element $q\in Q$ satisfies
\[
 q\Sigma=\Sigma,\qquad qx_\omega=a x_{q\omega}
 \quad(\omega\in\Sigma).
\]
\end{enumerate}
\end{lemma}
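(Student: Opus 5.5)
The plan is to realize $Q$ as the group of permutations induced by $G$ on the sign classes of vectors in the summands. Part~(2) will then come from a single application of {\rm(SF)} to a suitably chosen vector.

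For~(1), I would start from the permutation action. Each $g\in G$ maps $V_\omega$ onto some summand $V_{g\omega}$, and it commutes with $-1$. Hence the rule $g[v]=[gv]$ is well defined and sends $X_\omega$ to $X_{g\omega}$, which gives an action of $G$ on $\coprod_\omega X_\omega$. The next step is to identify the kernel of this action with $D$.
\begin{enumerate}
\item If $g$ acts trivially, then $g$ fixes every $X_\omega$ setwise, so $gV_\omega=V_\omega$.
\item Moreover $gv=\pm v$ for every $v\in V_\omega$. A linear map for which every nonzero vector is an eigenvector is a scalar $c$, and then $c=\pm1$. So $g$ acts as $\pm I$ on $V_\omega$, that is, $g\in D$.
\item Conversely, every element of $D$ fixes each class $[v]$.
\end{enumerate}
Thus $D$ is the kernel, hence normal in $G$, and $Q=G/D$ acts faithfully with $qX_\omega=X_{q\omega}$. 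Any two elements of $D$ act diagonally by signs on the orthogonal decomposition, so they commute and square to $I$. Since the action is faithful, $D$ is an elementary abelian $2$-group.

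For~(2), write $x_\omega=[v_\omega]$ with $0\ne v_\omega\in V_\omega$, and set $v=\sum_{\omega\in\Sigma}v_\omega$. This vector is nonzero because $\Sigma\neq\emptyset$ and the sum is direct. By {\rm(SF)} there is a $p'$-element $g\in G$ with $gv=av$. Applying $g$ termwise gives
\[
gv=\sum_{\omega\in\Sigma}gv_\omega,\qquad gv_\omega\in V_{g\omega}\setminus\{0\}.
\]
The indices $g\omega$ are pairwise distinct because $g$ permutes $\Omega$. On the other hand, $av$ has nonzero components exactly in the summands indexed by $\Sigma$.

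By uniqueness of components in the direct sum, I will conclude that $g\Sigma\subseteq\Sigma$, and hence $g\Sigma=\Sigma$ since $\Sigma$ is finite and $g$ is injective on $\Omega$. The same comparison gives $gv_\omega=a\,v_{g\omega}$ for each $\omega\in\Sigma$. Now let $q=gD\in Q$. It is a $p'$-element, since it is the image of one. It satisfies $q\Sigma=\Sigma$ and
\[
qx_\omega=[gv_\omega]=[a v_{g\omega}]=a\,x_{q\omega}.
\]

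The only point needing care is the component comparison: distinct $\omega$ must go to distinct summands, so that $gv=av$ forces both $g\Sigma=\Sigma$ and the individual equalities $gv_\omega=a\,v_{g\omega}$. This follows directly from directness of the decomposition $V=\perp_\omega V_\omega$, so I do not expect a real obstacle.
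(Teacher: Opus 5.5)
Your proposal is correct and takes essentially the same route as the paper. Part~(1) identifies the kernel of the action on sign classes with $D$; where the paper checks the ``every nonzero vector is an eigenvector, hence the map is scalar'' fact directly with a basis and the vectors $e_i+e_j$, you cite it as standard. Part~(2) comes from a single application of {\rm(SF)} to $v=\sum_{\omega\in\Sigma}v_\omega$, followed by comparing components in the direct sum.
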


\begin{proof}
The group $G$ acts on the disjoint union
\[
 \mathcal X=\coprod_{\omega\in\Omega}X_\omega
\]
by
$g[v]=[gv]$. This is well defined because $g(-v)=-gv$.

We show that its kernel is precisely $D$.
Every element of $D$ clearly fixes every element of $\mathcal X$.
Conversely, suppose that $g$ fixes every element of $\mathcal X$.
Then $g$ stabilizes each $V_\omega$, and $gv\in\{v,-v\}$ for every $0\ne v\in V_\omega$.
Choose a basis $e_1,\ldots,e_d$ of $V_\omega$ and write
$ge_i=\varepsilon_i e_i$, where $\varepsilon_i\in\{1,-1\}$.
For $i\ne j$, the element $[e_i+e_j]\in X_\omega$ is also fixed.
Hence
\[
 \varepsilon_i e_i+\varepsilon_j e_j
 \in\{e_i+e_j,-e_i-e_j\},
\]
and comparison of coefficients yields
$\varepsilon_i=\varepsilon_j$.
Thus $g$ acts as a single sign on $V_\omega$.
Since this holds for every $\omega$, we have $g\in D$.
Therefore the kernel is $D$, and hence $D\lhd G$ and $Q=G/D$
acts faithfully on $\mathcal X$.

Restriction to the summands embeds $D$ in
$\prod_{\omega\in\Omega}\{I_{V_\omega},-I_{V_\omega}\}$.
Hence $D$ is an elementary abelian 2-subgroup,
proving (1).

To prove~{\rm(2)}, choose, for each $\omega\in\Sigma$, a nonzero vector
$v_\omega\in V_\omega$ such that $x_\omega=[v_\omega]$, and set
\[
 v=\sum_{\omega\in\Sigma}v_\omega.
\]
The directness of the decomposition gives $v\ne0$.  By {\rm(SF)}, there
is a $p'$-element $g\in G$ such that $gv=av$.  The nonzero components
of $gv$ lie in the subspaces indexed by $g\Sigma$, while those of
$av$ lie in the subspaces indexed by $\Sigma$.  Hence
$g\Sigma=\Sigma$.

For each $\omega\in\Sigma$, comparison of the components in
$V_{g\omega}$ yields $gv_\omega=a v_{g\omega}$.
Passing to the corresponding elements of $X_{g\omega}$ and writing
$\bar g=gD\in Q$, we obtain
$\bar g x_\omega=a\,x_{\bar g\omega}$.
Since the order of $\bar g$ divides the order of $g$, the element
$\bar g$ is a $p'$-element.  This proves~{\rm(2)}.
\end{proof}

\begin{remark}
The sets $X_\omega$ consist of nonzero vectors modulo
sign, not of one-dimensional subspaces.
This distinction is essential.  Multiplication by $2$
is nontrivial on $X_\omega$ when $p\geq7$, whereas
every nonzero scalar acts trivially on the set of
one-dimensional subspaces.
\end{remark}

By bounding the number of tuples that a single element sends to their
$2$-multiples, the following lemma gives a basic counting inequality.

\begin{lemma}\label{lem:count-fused-labelings}
Retain the notation of Lemma~\ref{lem:fusion-on-supports},
and suppose that $p\geq7$.
Let $\varnothing\ne\Sigma\subseteq\Omega$, and assume
that every block indexed by $\Sigma$ has dimension $d$.
Write $b=|\Sigma|$.

For a fixed $q\in Q$ stabilizing $\Sigma$, the number of tuples
\[
 (x_\omega)_{\omega\in\Sigma}
 \in\prod_{\omega\in\Sigma}X_\omega
\]
satisfying $qx_\omega=2x_{q\omega}$ for every $\omega\in\Sigma$
is at most $p^{db/2}-1$.
Consequently, if $Q_\Sigma$ denotes the setwise
stabilizer of $\Sigma$ in $Q$, then
\[
 \left(\frac{p^d-1}{2}\right)^b
 \leq |Q_\Sigma|\bigl(p^{db/2}-1\bigr).
\]
\end{lemma}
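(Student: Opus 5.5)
The plan is to lift tuples back to vectors and reuse the isotropy argument from Lemma~\ref{lem:projective-count}. Fix $q\in Q$ stabilizing $\Sigma$ and choose a preimage $g\in G$. Write $V_\Sigma=\bigoplus_{\omega\in\Sigma}V_\omega$. Since $g\Sigma=\Sigma$, this is a nondegenerate subspace of dimension $db$, and $g$ preserves it.

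First set up the correspondence between tuples and vectors. Given a tuple $(x_\omega)$ with $qx_\omega=2x_{q\omega}$, choose representatives $v_\omega$ with $[v_\omega]=x_\omega$. The condition reads $gv_\omega=\pm2v_{g\omega}$, with a sign $\varepsilon_\omega\in\{\pm1\}$ attached to each $\omega$. Changing the representative of $[v_\omega]$ does not change the relevant objects, so each tuple is determined by the set of $2^b$ vectors $\sum_\omega\pm v_\omega$.

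The main step, and the one I expect to be the real obstacle, is handling the sign pattern $\varepsilon$. Reparametrising the representatives flips $\varepsilon_\omega$ and $\varepsilon_{g^{-1}\omega}$ simultaneously. Consequently the only invariant of the pattern is the product of the signs along each cycle of $g$ on $\Sigma$.

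To absorb these signs, I would compose $g$ with an element of the sign group $\{\pm I\}^{\Sigma}$. Such an element preserves $B$, since each block is nondegenerate and the blocks are orthogonal. So for each tuple there is some $h=\delta g$ with $\delta\in\prod_{\omega\in\Sigma}\{\pm I_{V_\omega}\}$ such that $v=\sum v_\omega$ satisfies $hv=2v$.

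For each such $h$, the eigenspace $E_2(h)\cap V_\Sigma$ is totally isotropic, since $B(u,u')=4B(u,u')$ and $p\ne3$. Hence it has dimension at most $db/2$.

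Next, bound the count. The tuples arising from a fixed $h$ inject into $(E_2(h)\setminus\{0\})$ modulo the sign action. The map is $v\mapsto([v_\omega])$, and the vectors mapping to one tuple are the sign variants of $v$. To obtain the stated bound $p^{db/2}-1$ rather than a multiple of it, I would mirror the second half of the proof of Lemma~\ref{lem:projective-count}.

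The $2^b$ choices of $\delta$ give eigenspaces $E_2(\delta g)$. I would argue that these, for different cycle-sign classes, are mutually orthogonal and intersect trivially. The orthogonality should come from $B(u,u')=\pm4B(u,u')$ together with $p\ne3,5$.

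Given that, the sum of all the relevant eigenspaces restricted to $V_\Sigma$ is totally isotropic. Summing $p^{r_i}-1$ over these classes, under the constraint $\sum r_i\le db/2$, gives at most $p^{db/2}-1$, exactly as in the inequality $p^r+p^s-2\le p^{r+s}-1$. Each tuple contributes at least one nonzero vector to this union, and distinct tuples contribute distinct vectors. This proves the first claim.

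The consequence then follows by double counting. By Lemma~\ref{lem:fusion-on-supports}(2) with $a=2$, every one of the $\bigl((p^d-1)/2\bigr)^b$ tuples is fused to its $2$-multiple by some element of $Q_\Sigma$. Summing the bound $p^{db/2}-1$ over $q\in Q_\Sigma$ gives
\[
\left(\frac{p^d-1}{2}\right)^b\le |Q_\Sigma|\bigl(p^{db/2}-1\bigr).
\]
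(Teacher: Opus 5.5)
\textbf{There is a genuine gap.} It sits at the step you flagged yourself: bounding the union of the eigenspaces $E_2(\delta g)$ over the different sign classes.

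For $u\in E_2(\delta g)$ and $u'\in E_2(\delta' g)$, invariance of $B$ gives
\[
B(u,u')=B(\delta g u,\delta g u')=4B(u,\delta\delta' u').
\]
Since $\delta\delta'$ is a nonscalar sign element, no relation $B(u,u')=\pm4B(u,u')$ follows. Restrict to a single $g$-cycle of length $\ell$ in $\Sigma$, and let $M$ be the monodromy isometry of one block. The two sign classes then correspond to $E_{2^\ell}(M)$ and $E_{-2^\ell}(M)$. Invariance under $M$ forces these to be orthogonal only when $-4^\ell\neq1$ in $\F_p$, and that fails for primes the lemma covers.

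A concrete example for $p=17$: let $V_1=V_2=\langle e,f\rangle$ be a hyperbolic plane with $B(e,f)=1$. Put $g(v_1,v_2)=(Mv_2,v_1)$ with $M=\operatorname{diag}(4,-4)$; this is an isometry because $-16=1$.
\begin{itemize}
\item With $\delta=1$, $E_2(g)=\langle(2e,e)\rangle$.
\item With $\delta'$ acting as $-I$ on the first block and $I$ on the second, $E_2(\delta'g)=\langle(2f,f)\rangle$.
\item These two vectors pair to $4+1=5\neq0$.
\end{itemize}
Your \emph{trivial intersection} claim also fails whenever $g$ has at least two cycles on $\Sigma$. Two classes that agree on one cycle share that cycle's part of the eigenspace. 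So the estimate $\sum(p^{r_i}-1)$ with $\sum r_i\le db/2$ is unfounded even for $p\in\{7,11\}$.

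\textbf{How to fix it.} No relation between different eigenspaces is needed. The paper counts pairs $(\delta,v)$, where $\delta$ runs over all $2^b$ sign elements and $v$ is a vector with nonzero component in every $V_\omega$ ($\omega\in\Sigma$) satisfying $\delta g v=2v$.
\begin{itemize}
\item Each operator $\delta g$ contributes at most $p^{db/2}-1$ such vectors, so there are at most $2^b(p^{db/2}-1)$ pairs.
\item Each tuple yields exactly $2^b$ pairs: one for each choice of representatives, with the sign element then forced.
\item Distinct tuples yield distinct vectors.
\end{itemize}
Hence $|\mathcal A_q|\le p^{db/2}-1$.

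The same fix works in your per-class language. For a full-support $v\in E_2(\delta g)$, a sign variant $sv$ lies again in $E_2(\delta g)$ exactly when $s$ is constant on the $g$-cycles. Let $c$ be the number of $g$-cycles on $\Sigma$. Then each tuple of sign class $\sigma$ has exactly $2^c$ full-support representatives in $E_2(\delta_\sigma g)$. So each of the $2^c$ classes contributes at most $(p^{db/2}-1)/2^c$ tuples, which gives the bound after summing.

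Your final double count over $Q_\Sigma$ is fine.
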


\begin{proof}
Choose a representative $g\in G$ of $q$.
Since $q$ stabilizes $\Sigma$, the element $g$ preserves
the nondegenerate symplectic subspace
\[
 V_\Sigma=\mathop{\perp}_{\omega\in\Sigma}V_\omega.
\]

For each choice of signs $\varepsilon=(\varepsilon_\omega)_{\omega\in\Sigma}$, where $\varepsilon_\omega\in\{1,-1\}$,
let $s_\varepsilon$ act as
$\varepsilon_\omega I_{V_\omega}$ on $V_\omega$.
The map $s_\varepsilon$ is an isometry of $V_\Sigma$,
so $h_\varepsilon=s_\varepsilon g|_{V_\Sigma}$
is also an isometry.
These maps need not belong to $G$; only their
preservation of the form is needed.

The $2$-eigenspace of $h_\varepsilon$ is totally
isotropic. Indeed, for vectors $u,v$ in this eigenspace,
\[
 B(u,v)=B(h_\varepsilon u,h_\varepsilon v)=4B(u,v),
\]
and hence $B(u,v)=0$.
Its dimension is therefore at most $db/2$, so it
contains at most $p^{db/2}-1$ nonzero vectors.

Let $\mathcal A_q$ be the set of tuples $(x_\omega)_{\omega\in\Sigma}
 \in\prod_{\omega\in\Sigma}X_\omega$ satisfying
$qx_\omega=2x_{q\omega}$ for every $\omega\in\Sigma$.
Each tuple in $\mathcal A_q$ has exactly $2^b$ choices of vector
representatives
\[
 (v_\omega)_{\omega\in\Sigma},
 \qquad 0\ne v_\omega\in V_\omega,
 \qquad x_\omega=[v_\omega].
\]
For each such choice, the defining relation for $\mathcal A_q$ implies
\[
 gv_\omega=\delta_{q\omega}\,2v_{q\omega}
 \qquad(\omega\in\Sigma)
\]
for uniquely determined signs
$\delta_{q\omega}\in\{1,-1\}$.
Consequently, there is exactly one choice of $\varepsilon$ for which
\[
 h_\varepsilon
 \left(\sum_{\omega\in\Sigma}v_\omega\right)
 =
 2\sum_{\omega\in\Sigma}v_\omega.
\]

Conversely, a vector with a nonzero component in every summand
$V_\omega$, $\omega\in\Sigma$, satisfying this equation determines a
tuple in $\mathcal A_q$.  Counting pairs consisting of a choice of
signs and such a vector, we obtain
\[
 2^b|\mathcal A_q|
 \leq 2^b\bigl(p^{db/2}-1\bigr),
\]
which proves the first assertion after cancelling $2^b$.

There are
\[
 \prod_{\omega\in\Sigma}|X_\omega|
 =
 \left(\frac{p^d-1}{2}\right)^b
\]
tuples in $\prod_{\omega\in\Sigma}X_\omega$.
By Lemma~\ref{lem:fusion-on-supports}, each of them satisfies the
required relation for some element of $Q_\Sigma$.
Summing the preceding bound over $Q_\Sigma$, we get
\[
 \left(\frac{p^d-1}{2}\right)^b
 \leq |Q_\Sigma|\bigl(p^{db/2}-1\bigr),
\]
as required.
\end{proof}

The following lemma allows us to pass the fusion property through a
normal subgroup whose block stabilizers act trivially on the
corresponding sets $X_\omega$, while preserving the induced local actions.

\begin{lemma}
\label{lem:block-quotient-descent}
Let a finite group $Q$ act on a finite set $\Omega$ and on
$\coprod_{\omega\in\Omega}X_\omega$, with $qX_\omega=X_{q\omega}$.
Suppose that multiplication by $2$ on these sets commutes with  the action of  $Q$,
and that every tuple $(x_\omega)\in\prod_{\omega\in\Sigma}X_\omega$,
for every nonempty $\Sigma\subseteq\Omega$, admits $q\in Q$ satisfying
\[
 q\Sigma=\Sigma,\qquad qx_\omega=2x_{q\omega}
 \quad(\omega\in\Sigma).
\]
Let $N\lhd Q$, and suppose that $N_\omega:=N\cap Q_\omega$ acts trivially on $X_\omega$ for every $\omega\in\Omega$.
Then there is a family of sets
$\{\widehat X_\delta:\delta\in\Omega/N\}$
on which $Q/N$ acts, with the following properties.
\begin{enumerate}
\item
For each $\delta=N\omega$, after choosing the representative $\omega$,
the set $\widehat X_\delta$ is naturally identified with $X_\omega$.

\item
Under this identification, the permutation group induced on
$\widehat X_\delta$ by its stabilizer in $Q/N$ is exactly the group induced by
$Q_\omega$ on $X_\omega$.

\item
The family $\{\widehat X_\delta:\delta\in\Omega/N\}$ satisfies the same fusion
property.
\end{enumerate}
If, for a fixed prime $p$, the fusion elements in $Q$ can always
be chosen to be $p'$-elements, the same is true in $Q/N$.
\end{lemma}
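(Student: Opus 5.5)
The plan is to build the quotient family by gluing the sets $X_\omega$ along $N$-orbits, using the hypothesis on $N_\omega$ to make the gluing canonical. For $\delta\in\Omega/N$, define $\widehat X_\delta$ as the set of $N$-orbits on $\coprod_{\omega\in\delta}X_\omega$; equivalently, $\widehat X_\delta=\bigl(\coprod_{\omega\in\delta}X_\omega\bigr)/N$. The group $Q/N$ acts on $\coprod_\delta\widehat X_\delta$ by $qN\cdot(Nx)=N(qx)$. This is well defined because $N\lhd Q$, and it sends $\widehat X_\delta$ to $\widehat X_{q\delta}$. Multiplication by $2$ commutes with $N$, so it descends to each $\widehat X_\delta$ and still commutes with the $Q/N$-action.

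For (1), fix a representative $\omega\in\delta$ and consider the map $X_\omega\to\widehat X_\delta$, $x\mapsto Nx$. It is surjective because every $N$-orbit meets $X_\omega$: $N$ is transitive on $\delta$. It is injective because if $nx=x'$ with $x,x'\in X_\omega$, then $n\in N_\omega$, which acts trivially on $X_\omega$, so $x'=x$. This identification is compatible with multiplication by $2$.

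For (2), note that the stabilizer of $\delta$ in $Q/N$ is $Q_\delta N/N$, where $Q_\delta$ is the setwise stabilizer of $\delta$. By the Frattini-type argument from transitivity of $N$ on $\delta$, we have $Q_\delta=NQ_\omega$. Take $q\in Q_\omega$ and $x\in X_\omega$. Then $qN$ sends $Nx$ to $N(qx)$, which corresponds to $qx$. Hence the image of $Q_\omega$ induces on $\widehat X_\delta\cong X_\omega$ exactly the permutation group induced by $Q_\omega$ on $X_\omega$. Since every element of $Q_\delta/N$ has a representative in $Q_\omega$, nothing larger arises.

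For (3), take a nonempty $\widehat\Sigma\subseteq\Omega/N$ and a tuple $(\widehat x_\delta)_{\delta\in\widehat\Sigma}$. Lift to $\Sigma=\bigcup_{\delta\in\widehat\Sigma}\delta\subseteq\Omega$. For each $\omega\in\Sigma$, let $x_\omega\in X_\omega$ be the unique element corresponding to $\widehat x_{N\omega}$. Uniqueness comes from (1), and it gives $n x_\omega=x_{n\omega}$ for $n\in N$. Apply the hypothesis to obtain $q\in Q$ (a $p'$-element if available) with $q\Sigma=\Sigma$ and $qx_\omega=2x_{q\omega}$. Then $qN$ stabilizes $\widehat\Sigma$, and passing to $N$-orbits gives $qN\cdot\widehat x_\delta=2\widehat x_{qN\delta}$. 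The image of a $p'$-element in $Q/N$ is again a $p'$-element.

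I expect the main obstacle to be bookkeeping rather than mathematics: checking well-definedness of the identification in (1), the choice of a compatible lift tuple in (3), and the assertion in (2) that the induced group is exactly, not merely a quotient of, the group induced by $Q_\omega$. Each of these reduces to the trivial action of $N_\omega$ on $X_\omega$.
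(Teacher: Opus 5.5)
Your proof is correct and follows the paper's argument: you identify the new sets with $X_\omega$ using the triviality of $N_\omega$, use the factorization $Q_\delta=NQ_\omega$, and lift a tuple to the union of the chosen $N$-orbits before applying the original fusion property. The only difference is that you model $\widehat X_\delta$ as the $N$-orbit space of $\coprod_{\eta\in\delta}X_\eta$, whereas the paper uses $N$-equivariant tuples $(s_\eta)_{\eta\in\delta}$; under the hypothesis on $N_\omega$, each orbit meets every $X_\eta$ in exactly one point, so the two models coincide (yours makes the $Q/N$-action automatic, while the paper's makes the compatible lift in (3) automatic).
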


\begin{proof}
For each $N$-orbit $\delta\subseteq\Omega$, let $\widehat X_\delta$
be the set of all $N$-equivariant tuples
\[
 s=(s_\eta)_{\eta\in\delta},
 \qquad s_\eta\in X_\eta,
\]
where $N$-equivariance means that
$s_{n\eta}=n s_\eta$ for any $n\in N$ and $\eta\in\delta$.

Fix $\omega\in\delta$.  Every $x\in X_\omega$ determines a unique
element of $\widehat X_\delta$.  Namely, define
\[
 s_{n\omega}=nx\qquad(n\in N).
\]
This is well defined.  If $n_1\omega=n_2\omega$, then
$n_2^{-1}n_1\in N_\omega$, which acts trivially on $X_\omega$.
Hence $n_1x=n_2x$. Since every element of $\delta$ has the form $n\omega$ for some
$n\in N$, the value of $s$ at $\omega$ determines all its other
values.  Hence the extension is unique.

Thus evaluation at $\omega$ induces a bijection
\[
 \operatorname{ev}_\omega:\widehat X_\delta\longrightarrow X_\omega,
 \qquad s\longmapsto s_\omega.
\]
Define multiplication by $2$ on $\widehat X_\delta$ coordinatewise.
It preserves $N$-equivariance and, under the above bijection,
corresponds to multiplication by $2$ on $X_\omega$.  This proves~{\rm(1)}.

We next define the action of $Q$.  If $s\in\widehat X_\delta$ and
$q\in Q$, put
\[
 (qs)_\eta=q\,s_{q^{-1}\eta}
 \qquad(\eta\in q\delta).
\]
Since $N\lhd Q$, the tuple $qs$ is again $N$-equivariant.  Indeed, if
$n\in N$, then
\[
\begin{aligned}
 (qs)_{n\eta}
 &=q\,s_{q^{-1}n\eta}\\
 &=q\,s_{(q^{-1}nq)(q^{-1}\eta)}\\
 &=q(q^{-1}nq)s_{q^{-1}\eta}\\
 &=n(qs)_\eta.
\end{aligned}
\]
Moreover, $N$ acts trivially on every $\widehat X_\delta$, since for
$n\in N$,
\[
 (ns)_\eta=n\,s_{n^{-1}\eta}=s_\eta.
\]
The action of $Q$ therefore induces an action of $Q/N$ on the family
$\{\widehat X_\delta:\delta\in\Omega/N\}$.

We now determine the action associated with a fixed
$\delta=N\omega$.  Let
\[
 Q_\delta=\{q\in Q:q\delta=\delta\}.
\]
Then $Q_\delta=NQ_\omega$.
Indeed, if $q\delta=\delta$, then $q\omega=n\omega$ for some $n\in N$,
so $n^{-1}q\in Q_\omega$.  The reverse inclusion is immediate.

Since $N$ acts trivially on $\widehat X_\delta$, the group induced by
$Q_\delta$ on $\widehat X_\delta$ is the same as that induced by
$Q_\omega$.  For $h\in Q_\omega$,
\[
 \operatorname{ev}_\omega(hs)
 =(hs)_\omega
 =h s_\omega
 =h\,\operatorname{ev}_\omega(s).
\]
Thus evaluation at $\omega$ identifies this action with the original
action of $Q_\omega$ on $X_\omega$.  Since the stabilizer of $\delta$
in $Q/N$ is $Q_\delta/N$, this proves~{\rm(2)}.

Finally, let $\mathcal D$ be a nonempty collection of $N$-orbits, and
choose $s_\delta\in\widehat X_\delta$ for each $\delta\in\mathcal D$.
Set
\[
 \Sigma=\bigcup_{\delta\in\mathcal D}\delta.
\]
For $\omega\in\Sigma$, let $\delta$ be the unique member of
$\mathcal D$ containing $\omega$, and define
\[
 x_\omega=(s_\delta)_\omega\in X_\omega.
\]
By the original fusion property, there exists $q\in Q$ such that
\[
 q\Sigma=\Sigma,\qquad
 qx_\omega=2x_{q\omega}
 \quad(\omega\in\Sigma).
\]
Since $N\lhd Q$, the element $q$ sends $N$-orbits to $N$-orbits and
therefore permutes $\mathcal D$.  For $\delta\in\mathcal D$ and
$\eta\in q\delta$, we have
\[
 (q s_\delta)_\eta
 =q x_{q^{-1}\eta}
 =2x_\eta
 =(2s_{q\delta})_\eta.
\]
Hence
\[
 q s_\delta=2s_{q\delta}
 \qquad(\delta\in\mathcal D).
\]
Passing to the image $qN\in Q/N$, we conclude that the family
$\{\widehat X_\delta:\delta\in\Omega/N\}$ satisfies the same fusion
property.  This proves~{\rm(3)}. For the final assertion, choose $q$ to be a $p'$-element.
The order of $qN$ divides $o(q)$, so $qN$ is again a $p'$-element.
\end{proof}
\begin{remark}
The sets $\widehat X_\delta$ constructed above are sets of
$N$-equivariant tuples; they need not arise from a direct-sum
decomposition of the original vector space.  After choosing
$\omega\in\delta$, however, evaluation at $\omega$ identifies
$\widehat X_\delta$ with $X_\omega$, and under this identification
the induced permutation groups are the same.

If $Q$ is rational, then so is $Q/N$.  If the action of $Q/N$ on the
family $\{\widehat X_\delta:\delta\in\Omega/N\}$ has a nontrivial
kernel, we may factor out this kernel.  This does not change the
induced permutation groups on the individual sets $\widehat X_\delta$
or the fusion property.
\end{remark}

\subsection{Restrictions on transitive normal subgroups}

In this subsection, we apply the preceding counting bounds to
restrict the possible nonabelian simple direct factors of a
transitive normal subgroup and determine its action in the
alternating case.

We shall also use the following lower bound for the degree of a
faithful transitive action of a direct power of a nonabelian simple
group.

\begin{lemma}
\label{lem:direct-power-bound}
Let $S$ be a nonabelian finite simple group, and let $r\geq1$.
Write
\[
 \mu(S)=\min\{[S:M]:M<S\}.
\]
If $S^r$ acts faithfully and transitively on a set of size $b$, then $b\geq \mu(S)^r$.
\end{lemma}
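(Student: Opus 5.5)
We argue by induction on $r$, using the structure of point stabilizers in a transitive action. Let $S^r=S_1\times\cdots\times S_r$ act faithfully and transitively on a set $\Delta$ with $|\Delta|=b$, and fix $\delta\in\Delta$ with stabilizer $M$. Then $b=[S^r:M]$, and faithfulness says that $M$ contains no nontrivial normal subgroup of $S^r$. The normal subgroups of $S^r$ are exactly the subproducts $\prod_{i\in I}S_i$, since $S$ is nonabelian simple. Hence faithfulness means precisely that $M\not\supseteq S_i$ for every $i$, and $M$ is core-free in $S^r$.

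For $r=1$ the claim is immediate. Since $M$ is a proper subgroup, $[S:M]\geq\mu(S)$.

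For the inductive step, write $S^r=S_1\times N$ with $N=S_2\times\cdots\times S_r$, and let $\pi:S^r\to N$ be the projection. Then
\[
 [S^r:M]=[S^r:MS_1]\,[MS_1:M]=[N:\pi(M)]\,[S_1:M\cap S_1].
\]
Since $S_1\not\leq M$, the second factor is at least $\mu(S)$. For the first factor there are two cases.

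First suppose that $\pi(M)$ contains no $S_j$ with $j\geq2$. Then $N$ acts faithfully and transitively on $N/\pi(M)$, and the inductive hypothesis gives $[N:\pi(M)]\geq\mu(S)^{r-1}$. Multiplying the two bounds gives $b\geq\mu(S)^r$.

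The main obstacle is the second case, where $\pi(M)\supseteq S_j$ for some $j$, for example when $M$ is a diagonal subgroup. In that case the splitting above loses information, so I would instead argue directly. Let $\pi_i$ be the projection onto $S_i$.

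\emph{If $\pi_i(M)=S_i$ for all $i$.} A standard Goursat-type lemma then shows that $M$ is a product of full diagonal subgroups over a partition of $\{1,\ldots,r\}$, up to automorphisms on the coordinates. Core-freeness forces every part to have size at least $2$. A diagonal part of size $k$ has index $|S|^{k-1}$ in $S^k$. Since $|S|\geq\mu(S)^2$ (indeed $|S|>\mu(S)!$ and $\mu(S)\geq5$), we get $|S|^{k-1}\geq\mu(S)^k$ for $k\geq2$. Multiplying over the parts gives $b\geq\mu(S)^r$.

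\emph{In general.} Put $M_0=\prod_i\pi_i(M)$, which contains $M$. Then
\[
 b=[S^r:M_0]\,[M_0:M],
\]
and $[S^r:M_0]\geq\mu(S)^{s}$, where $s$ is the number of indices $i$ with $\pi_i(M)<S_i$. The remaining $r-s$ coordinates, on which $M$ projects onto $S_i$, are then treated by the diagonal argument applied to the projection of $M$ onto those coordinates. Faithfulness on them has to be checked: $M$ cannot contain $S_i$ there. For this I would use that if $\pi_i(M)=S_i$ and $M\cap S_i\neq1$, then $M\cap S_i$ is normal in $\pi_i(M)=S_i$, hence equals $S_i$. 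This contradicts faithfulness, so $M\cap S_i=1$ for each such $i$. This lets the diagonal counting go through and yields $b\geq\mu(S)^{s}\cdot\mu(S)^{r-s}$.
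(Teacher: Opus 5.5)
Your proposal has a genuine gap. The key inequality $|S|\geq\mu(S)^2$ is given a false justification, and this inequality cannot be avoided.

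\textbf{The inequality $|S|\geq\mu(S)^2$.} Your reason ``$|S|>\mu(S)!$'' is backwards. $S$ acts faithfully on the $\mu(S)$ cosets of a subgroup of minimal index, so $|S|\leq\mu(S)!$. Nor is the inequality a side remark. The diagonal $\{(s,s):s\in S\}$ is core-free of index $|S|$ in $S^2$, so the case $r=2$ of the lemma already asserts $|S|\geq\mu(S)^2$. Equivalently, it asserts that $S$ has a proper subgroup of order at least $|S|^{1/2}$. This is true, but it is a substantive fact about simple groups and is not far from sharp. For $J_3$ one has $\mu(J_3)=6156$, with $6156^2=37\,896\,336$ against $|J_3|=50\,232\,960$. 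It therefore needs a reference or a classification-based check. The paper does not try to prove the lemma; it cites Cannon--Holt--Unger, Lemma~2.1(i), for the minimal index of a core-free subgroup of $S^r$.

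\textbf{The ``In general'' step.} Let $J$ be the set of indices with $\pi_i(M)=S_i$, and $I$ its complement. For the diagonal count you need that $\pi_J(M)$ contains no $S_i$, that is, its Goursat decomposition has no singleton parts. You only prove $M\cap S_i=1$, which is a statement about $M$, not about its projection. When $I\neq\varnothing$ these are different statements.

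The repair is as follows. Suppose $S_i\leq\pi_J(M)$, and set $K=\{m\in M:\pi_j(m)=1 \text{ for } j\in J\setminus\{i\}\}$.
\begin{itemize}
\item $K$ maps onto $S_i$.
\item Because $M\cap S_i=1$, the restriction $\pi_I|_K$ is injective.
\item Hence $S$ is a composition factor of a subgroup of $\prod_{j\in I}\pi_j(M)$.
\item Every such composition factor is a composition factor of a subgroup of some proper subgroup $\pi_j(M)<S$, so it has order less than $|S|$. This is a contradiction.
\end{itemize}
You should also state explicitly that $[M_0:M]\geq[S^J:\pi_J(M)]$, which follows from the surjection $M_0\to S^J$.

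With these two repairs and $|S|\geq\mu(S)^2$ supplied, your ``In general'' argument proves the lemma directly. The induction in the first half then becomes unnecessary.
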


\begin{proof}
Let $\Omega$ be the set on which $S^r$ acts, and fix
$\omega\in\Omega$. Write $H=(S^r)_\omega$.
Since the action is transitive and faithful, we have
\[
 b=|\Omega|=[S^r:H], \qquad
 \operatorname{Core}_{S^r}(H)
 =\bigcap_{g\in S^r}H^g=1.
\]
Hence $H$ is a core-free subgroup of $S^r$. By
\cite[Lemma~2.1(i)]{CannonHoltUnger2019}, the smallest index of a
core-free subgroup of $S^r$ is $\mu(S)^r$. Thus
$b=[S^r:H]\geq\mu(S)^r$, as required.
\end{proof}

The next lemma combines the preceding counting bounds with a bound
on the order of the ambient group obtained from a transitive normal
subgroup.

\begin{lemma}
\label{lem:normal-subgroup-count}
Let $p\geq7$ and $|\Omega|=b$. For each $\omega\in\Omega$, let
$V_\omega$ be a symplectic $\F_p$-space of dimension $d$, and write
$X_\omega=(V_\omega\setminus\{0\})/\{\pm1\}$.
Let a finite group $Q$ act faithfully on
$\coprod_{\omega\in\Omega}X_\omega$, permuting the sets $X_\omega$
by maps induced by linear isometries.
Suppose that the following conditions hold:

\begin{enumerate}
\item Every tuple $(x_\omega)\in\prod_{\omega\in\Omega}X_\omega$
admits $q\in Q$ with $qx_\omega=2x_{q\omega}$ for all $\omega$.
\item A normal subgroup $N\cong S^r$ acts faithfully and transitively
on $\Omega$, where $S$ is nonabelian simple.
\item $C_{A_\omega}(B_\omega)=1$, where $A_\omega$ and $B_\omega$
are induced on $X_\omega$ by $Q_\omega$ and $N_\omega$, respectively.
\end{enumerate}
Then
\[
 \left(\frac{p^d-1}{2}\right)^b
 \leq b\,|\Aut(S)|^r r!\bigl(p^{db/2}-1\bigr).
\]
In particular,
\[
 \left(\frac{p^d-1}{2p^{d/2}}\right)^b
 <b\,|\Aut(S)|^r r!.
\]
\end{lemma}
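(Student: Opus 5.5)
The plan is to bound $|Q|$ and then run the counting argument of Lemma~\ref{lem:count-fused-labelings} with $\Sigma=\Omega$. I will prove
\[
 |Q|\leq b\,|\Aut(S)|^r\,r!
\]
through the centralizer $C=C_Q(N)$.

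\textbf{Step 1: $C$ acts semiregularly on $\Omega$.} Let $c\in C$ fix some $\omega\in\Omega$. Since $c$ commutes with $N$, we get $c(n\omega)=n(c\omega)=n\omega$ for every $n\in N$. By transitivity of $N$, $c$ then fixes every $\eta\in\Omega$, so $c\in Q_\eta$ for all $\eta$. Moreover $c$ centralizes $N_\eta$, so the permutation it induces on $X_\eta$ lies in $A_\eta$ and centralizes $B_\eta$. By hypothesis~(3) that permutation is trivial. Hence $c$ acts trivially on $\coprod_\eta X_\eta$, and faithfulness of $Q$ gives $c=1$. Therefore $C$ acts semiregularly on $\Omega$, and $|C|\leq b$.

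\textbf{Step 2: bound $Q/C$.} Conjugation gives an embedding $Q/C\hookrightarrow\Aut(N)=\Aut(S^r)\cong\Aut(S)\wr S_r$. The latter has order $|\Aut(S)|^r r!$. Combining with Step~1 gives the displayed bound on $|Q|$.

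\textbf{Step 3: the counting inequality.} I will redo the argument of Lemma~\ref{lem:count-fused-labelings} with $\Sigma=\Omega$, so the setwise stabilizer is all of $Q$. That lemma was stated for a linear group $G$, but the proof uses only this: each $q$ is represented blockwise by linear isometries $V_\omega\to V_{q\omega}$, which is assumed here. Their direct sum, twisted by all sign patterns $s_\varepsilon$, is an isometry of $\perp_\omega V_\omega$. The $2$-eigenspace of each such isometry is totally isotropic, because $4\neq1$ in $\F_p$ for $p\geq7$. So each $q$ accounts for at most $p^{db/2}-1$ tuples satisfying $qx_\omega=2x_{q\omega}$. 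By hypothesis~(1) the $q\in Q$ together cover all $((p^d-1)/2)^b$ tuples. Inserting the bound on $|Q|$ gives the first inequality.

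\textbf{Step 4: the second form.} Dividing by $p^{db/2}$ and using $p^{db/2}-1<p^{db/2}$ yields the strict inequality.

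The main point needing care is Step~1. There one must check that an element of $C\cap Q_\eta$ really induces an element of $A_\eta$ centralizing $B_\eta$. This holds because it commutes with every element of $N_\eta$ as permutations of $X_\eta$, which is exactly what hypothesis~(3) is designed to exploit.
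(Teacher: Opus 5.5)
Your proposal is correct and follows the paper's proof essentially step for step: the same bound $|Q|\leq |C_Q(N)|\,|\Aut(S)\wr S_r|$ with $C_Q(N)$ semiregular on $\Omega$, followed by the same count of sign-twisted isometries whose $2$-eigenspaces are totally isotropic. The only cosmetic difference is in showing semiregularity. You first show that an element of $C_Q(N)$ fixing one block fixes every block, and then invoke hypothesis~(3) at every $\eta$; this is harmless, since that hypothesis transfers along $N$ by transitivity. The paper instead applies (3) once at $\omega$ and transports the resulting triviality to each $X_\eta$ via $c(nx)=n(cx)=nx$.
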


\begin{proof}
We first bound $|Q|$.
Write
$C=C_Q(N)$, so that $Q/C\hookrightarrow\Aut(N)\cong \Aut(S)\wr S_r$.
We claim that the action of $C$ on $\Omega$
has trivial point stabilizers.

Let $c\in C$ fix $\omega$.
For every $n\in N_\omega$, the elements $c$ and $n$
commute. Hence the permutation induced by $c$
on $X_\omega$ centralizes $B_\omega$.
By hypothesis,
$C_{A_\omega}(B_\omega)=1$,
so $c$ acts trivially on $X_\omega$.

Now let $\eta\in\Omega$.  Since $N$ acts transitively on $\Omega$,
there exists $n\in N$ such that $n\omega=\eta$.
Since $c$ commutes with $n$ and fixes $\omega$,
\[
 c\eta=cn\omega=nc\omega=n\omega=\eta.
\]
Moreover, every element of $X_\eta$ has the form $nx$
for some $x\in X_\omega$. Therefore
\[
 c(nx)=n(cx)=nx.
\]
Hence $c$ fixes every element of $\mathcal X$.
Since the action of $Q$ on $\mathcal X$ is faithful,
it follows that $c=1$.
Consequently, $C_\omega=1$ and $|C|=|C\omega|\leq b$.
Therefore
\[
 |Q|\leq b\,|\Aut(S)|^r r!.
\]

The number of tuples in $\prod_{\omega\in\Omega}X_\omega$ is
\[
 \prod_{\omega\in\Omega}|X_\omega|
 =
 \left(\frac{p^d-1}{2}\right)^b.
\]
Fix $q\in Q$. We show that at most $p^{db/2}-1$ of these tuples
satisfy
\[
 qx_\omega=2x_{q\omega}
 \qquad(\omega\in\Omega).
\]

For each $\omega\in\Omega$, choose a linear isometry
$T_\omega:V_\omega\longrightarrow V_{q\omega}$ inducing the
action of $q$ on $X_\omega$.
Given a tuple $(x_\omega)_{\omega\in\Omega}$ satisfying the
displayed condition, choose a nonzero vector
$v_\omega\in V_\omega$ representing $x_\omega$ for each
$\omega\in\Omega$.
Then
\[
 T_\omega v_\omega
 =2\varepsilon_\omega v_{q\omega}
 \qquad(\omega\in\Omega)
\]
for uniquely determined signs $\varepsilon_\omega\in\{1,-1\}$.

For each of the $2^b$ choices of signs
$\varepsilon_\omega\in\{1,-1\}$, define a linear isometry $T$
of the symplectic space
\[
 (V,B)=\mathop{\perp}_{\omega\in\Omega}V_\omega
\]
by $T|_{V_\omega}=\varepsilon_\omega T_\omega$.
For the signs determined by the chosen representatives, we have
\[
 T\left(\sum_{\omega\in\Omega}v_\omega\right)
 =2\sum_{\omega\in\Omega}v_\omega.
\]
For every choice of signs, the $2$-eigenspace of $T$ is
totally isotropic. Indeed, if $Tu=2u$ and $Tv=2v$, then
\[
 B(u,v)=B(Tu,Tv)=4B(u,v),
\]
so $B(u,v)=0$ because $p\geq7$.
Its dimension is therefore at most $db/2$, and it contains
at most $p^{db/2}-1$ nonzero vectors.
Thus there are at most $2^b(p^{db/2}-1)$ possible pairs
consisting of a choice of signs and a nonzero vector satisfying
the corresponding equation.
Each tuple $(x_\omega)_{\omega\in\Omega}$ under consideration
gives exactly $2^b$ distinct such pairs, one for each choice
of its representatives $v_\omega$.
Different tuples give disjoint sets of pairs.
Hence at most $p^{db/2}-1$ tuples satisfy the displayed condition.

By hypothesis, every tuple in $\prod_{\omega\in\Omega}X_\omega$
satisfies $qx_\omega=2x_{q\omega}$ for all $\omega\in\Omega$
for at least one element $q\in Q$.
Therefore
\[
 \left(\frac{p^d-1}{2}\right)^b
 \leq |Q|\bigl(p^{db/2}-1\bigr)
 \leq b\,|\Aut(S)|^r r!\,
          \bigl(p^{db/2}-1\bigr).
\]
Dividing by $p^{db/2}$, we obtain
\[
 \left(\frac{p^d-1}{2p^{d/2}}\right)^b
 \leq b\,|\Aut(S)|^r r!
       \left(1-p^{-db/2}\right),
\]
which implies the final strict inequality.
\end{proof}

The following corollary gives numerical bounds independent of
$p$ and $d$, which will restrict the possible simple factors
of a transitive normal subgroup.

\begin{corollary}\label{cor:normal-subgroup-numerical-bound}
Under the hypotheses of
Lemma~\ref{lem:normal-subgroup-count}, suppose that
$d\geq4$. Then
\[
 24^b<b\,|\Aut(S)|^r r!,
 \qquad
 b\geq \mu(S)^r.
\]
\end{corollary}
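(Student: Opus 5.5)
The corollary has two parts, and I will prove each by a direct specialization of an earlier result.

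\emph{The inequality $b\geq\mu(S)^r$.} This is Lemma~\ref{lem:direct-power-bound}. Hypothesis~(2) of Lemma~\ref{lem:normal-subgroup-count} says that $N\cong S^r$ acts faithfully and transitively on $\Omega$, and $|\Omega|=b$.

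\emph{The inequality $24^b<b\,|\Aut(S)|^r r!$.} Start from the strict final inequality of Lemma~\ref{lem:normal-subgroup-count},
\[
 \left(\frac{p^d-1}{2p^{d/2}}\right)^b<b\,|\Aut(S)|^r r!.
\]
It suffices to show that the base satisfies $(p^d-1)/(2p^{d/2})\geq 24$ whenever $p\geq7$ and $d\geq4$.

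Write $x=p^{d/2}$. The base equals
\[
 f(x)=\frac{x^2-1}{2x}=\frac12\bigl(x-x^{-1}\bigr),
\]
which is increasing in $x>0$. Since $p\geq7$ and $d\geq4$, we have $x\geq 7^2=49$. Hence
\[
 f(x)\geq \frac12\Bigl(49-\frac1{49}\Bigr)=\frac{49^2-1}{98}=\frac{2400}{98}>24.
\]
It follows that $24^b<f(x)^b<b\,|\Aut(S)|^r r!$.

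The only point needing care is that $d$ is even, as a symplectic dimension, so $p^{d/2}$ is an integer. This is not actually needed: monotonicity of $f$ in the real variable $x$ suffices. I expect no real obstacle here; the whole content lies in the earlier lemma.
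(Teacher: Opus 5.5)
Your proposal is correct and follows the paper's proof essentially verbatim: you bound the base $\frac{p^d-1}{2p^{d/2}}=\frac12\bigl(p^{d/2}-p^{-d/2}\bigr)\geq\frac{1200}{49}>24$ and feed it into the strict inequality of Lemma~\ref{lem:normal-subgroup-count}. You also obtain $b\geq\mu(S)^r$ from Lemma~\ref{lem:direct-power-bound} via the faithful transitive action of $N\cong S^r$ on $\Omega$, exactly as the paper does.
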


\begin{proof}
Since $p\geq7$ and $d\geq4$,
\[
 \frac{p^d-1}{2p^{d/2}}
 =
 \frac{p^{d/2}-p^{-d/2}}2
 \geq\frac{49-49^{-1}}2
 =\frac{1200}{49}>24.
\]
The first inequality follows from
Lemma~\ref{lem:normal-subgroup-count}.
The second follows from
Lemma~\ref{lem:direct-power-bound}, since
$N\cong S^r$ acts faithfully and transitively on
the $b$ blocks.
\end{proof}

For alternating groups, we use the following standard bound on subgroup indices.

\begin{lemma}\label{lem:alternating-index}
Let $n\geq9$, and let $M<A_n$.  Then either $M$ is the stabilizer of
a point in the natural action of $A_n$, or
\[
[A_n:M]\geq\binom{n}{2}.
\]
\end{lemma}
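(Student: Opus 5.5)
The plan is to use the O'Nan--Scott type classification of maximal subgroups of $A_n$, read off from Liebeck--Praeger--Saxl, and to compare indices case by case. Replacing $M$ by a maximal subgroup containing it only decreases the index, so we may assume $M$ is maximal in $A_n$. The exception is the case where the maximal overgroup is a point stabilizer $A_{n-1}$ while $M$ is proper in it. Then
\[
[A_n:M]=n\,[A_{n-1}:M]\geq n\,\mu(A_{n-1}),
\]
and we need $\mu(A_{n-1})\geq (n-1)/2$. This holds because proper subgroups of $A_{n-1}$ have index at least $n-1$ when $n-1\geq5$; this is the classical fact that $A_m$ has no proper subgroup of index less than $m$, which follows from simplicity together with the embedding $A_m\hookrightarrow S_k$ given by the action on the cosets of a subgroup of index $k$. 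This case therefore gives $[A_n:M]\geq n(n-1)\geq\binom n2$.

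For maximal $M$ we split into three types.

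\emph{Intransitive subgroups} $(S_k\times S_{n-k})\cap A_n$ with $1\leq k<n/2$. For $k=1$ this is the point stabilizer. For $k\geq2$ the index is $\binom nk\geq\binom n2$, since $\binom nk$ increases for $k\leq n/2$.

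\emph{Imprimitive subgroups} $(S_a\wr S_m)\cap A_n$ with $n=am$ and $a,m\geq2$. The index is $n!/((a!)^m m!)$. For $n\geq9$ the minimum occurs at $m=2$ and equals $\tfrac12\binom{n}{n/2}$, which exceeds $\binom n2$ once $n\geq 8$. The other factorizations give larger values, and a direct inequality handles these.

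\emph{Primitive proper subgroups} other than $A_n$. Here I would invoke Praeger--Saxl (or Bochert's bound $[S_n:M]\geq \lfloor (n+1)/2\rfloor!$). For $n\geq9$ this gives $[A_n:M]\geq \tfrac12\lfloor (n+1)/2\rfloor!$. That quantity is at least $\binom n2$ once $n\geq 11$ or so, since at $n=11$ it is $360$ against $55$. For $n=9,10$ we check the known primitive groups directly: for $n=9$ the largest is $P\Gamma L_2(8)$, of index $360/\ldots\geq 120$ in $A_9$, and for $n=10$ the largest is $P\Gamma L_2(9)$, with index at least $252$.

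The main obstacle is the primitive case for small $n$. I would handle it by citing the standard bound $|M|<4^n$ for primitive $M\not\geq A_n$ (Praeger--Saxl). Combined with $n!/(2\cdot 4^n)\geq\binom n2$, which is valid for $n\geq 12$, this leaves $n=9,10,11$. For those three values we use the explicit lists of primitive groups: $P\Gamma L_2(8)$ and $AGL_2(3)$ for $n=9$; $P\Gamma L_2(9)$ and $S_6$ for $n=10$; $M_{11}$ and $PSL_2(11)$ for $n=11$. All of these have index well above $\binom n2$.
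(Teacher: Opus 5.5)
Your argument is correct, but it takes a different route from the paper. The paper makes a single citation, Dixon--Mortimer, Theorem~5.2A with $r=2$. That theorem says a subgroup of $A_n$ of index less than $\binom n2$ lies between $A_{(\Delta)}$ and $A_{\{\Delta\}}$ with $|\Delta|\leq1$, or is the two-block imprimitive subgroup of index $\tfrac12\binom{n}{n/2}$, or is one of six small exceptions. The paper then only checks that the last two alternatives cannot have index below $\binom n2$ when $n\geq9$.

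You instead rebuild the case $r=2$ from classical ingredients:
\begin{itemize}
\item you pass to a maximal overgroup, and treat a subgroup properly inside a point stabilizer via $\mu(A_{n-1})=n-1$;
\item you bound the intransitive and imprimitive indices by explicit counts;
\item you control primitive subgroups by Bochert's bound.
\end{itemize}
This is more self-contained, and it does not actually need O'Nan--Scott or Liebeck--Praeger--Saxl: you only use the elementary trichotomy intransitive / transitive imprimitive / primitive. The paper's proof is shorter because the cited theorem is tailored exactly to small-index subgroups.

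A few remarks on the details.
\begin{itemize}
\item Bochert's bound already settles the primitive case for every $n\geq9$. The quantity $\tfrac12\lfloor (n+1)/2\rfloor!$ equals $60$ for $n=9,10$, which exceeds $\binom92=36$ and $\binom{10}2=45$, and the inequality persists for larger $n$. So the explicit lists for $n=9,10,11$ are unnecessary.
\item Your alternative route via $|M|<4^n$ contains an arithmetic error. The inequality $n!/(2\cdot4^n)\geq\binom n2$ fails at $n=12$ (left side about $14.3$ against $66$) and at $n=13$ (about $46.4$ against $78$). It holds only from $n=14$ on, so that route would also have to check degrees $12$ and $13$.
\item For the imprimitive case with $m\geq3$ blocks, the omitted ``direct inequality'' can be made explicit. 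Counting the blocks through two points shows the index is at least $\binom{n-1}{a-1}\binom{n-a-1}{a-1}\geq(n-1)(n-a-1)\geq\binom n2$.
\item Smaller slips do not affect the conclusion. $AGL_2(3)$ and $P\Gamma L_2(9)$ are not contained in $A_9$ and $A_{10}$; you should intersect with $A_n$, which only increases the index. Also, for odd $n$ there is no $m=2$ case.
\end{itemize}
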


\begin{proof}
Apply \cite[Theorem~5.2A]{DixonMortimer} with $r=2$.
Its exceptional pairs $(n,[A_n:M])$ are
$(5,6)$, $(6,6)$, $(6,15)$, $(7,15)$, $(8,15)$, and $(9,120)$;
none satisfies $n\geq9$ and $[A_n:M]<\binom n2$.
The alternative preserving two equal blocks, for $n=2m\geq10$,
has index at least $\tfrac12\binom{2m}{m}>\binom{2m}{2}$,
so it is excluded as well.
Thus a proper subgroup of index less than $\binom n2$ contains
the pointwise stabilizer of one point, a subgroup isomorphic to
$A_{n-1}$. Since this subgroup is maximal in $A_n$, $M$ is that
point stabilizer.
\end{proof}

We first apply the numerical bounds to alternating groups,
showing that $N$ must be a single alternating group acting naturally
on $\Omega$.

\begin{lemma}
\label{lem:alternating-block-action}
Under the hypotheses of
Lemma~\ref{lem:normal-subgroup-count}, suppose that
$d\geq4$ and $N\cong A_n^r$, where $n\geq5$.
Then $r=1$, $n\geq21$, and the action of $N\cong A_n$
on $\Omega$ is equivalent to its natural action.
In particular,
\[
 |\Omega|=n,\qquad N_\omega\cong A_{n-1}.
\]
\end{lemma}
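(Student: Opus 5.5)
We will combine the two inequalities of Corollary~\ref{cor:normal-subgroup-numerical-bound} with $S=A_n$. For $n\geq7$ we have $\Aut(A_n)=S_n$; for $n=6$, $|\Aut(A_6)|=1440$; and for $n=5$, $\Aut(A_5)=S_5$. Also $\mu(A_n)=n$ for $n\geq5$, except $\mu(A_6)=6$. So $b\geq n^r$ in all cases, and
\[
 24^b<b\,|\Aut(A_n)|^r\,r!.
\]
The plan is to first force $r=1$, then force $n\geq21$, and finally identify the action using Lemma~\ref{lem:alternating-index}.

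\textbf{Step 1: $r=1$ and $n\ge 21$.} Take logarithms. For $n\geq7$, $|\Aut(A_n)|^r r!\leq (n!)^r r!\leq n^{nr} r^r$. The left side $24^b$ with $b\geq n^r$ grows like $24^{n^r}$, while the right side is $b\cdot e^{O(rn\log n)}$. The function $b\mapsto 24^b/b$ is increasing, so it suffices to check the inequality at $b=n^r$, where it reads
\[
 24^{n^r}<n^r (n!)^r r!.
\]
This fails whenever $r\geq2$, since $24^{25}$ is already far above $25\cdot(5!)^2\cdot2$. For $r=1$ it becomes $24^n<n\cdot n!$. Because $n!<n^n$, this forces $n>24$ roughly. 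Using Stirling, $n!\approx(n/e)^n$, we get $(24e)^n<n\cdot n^n\cdot O(1)$, i.e.\ $n\gtrsim 24e\approx65$. In any case it certainly gives $n\geq21$. The small cases $n=5,6$ fail numerically, e.g.\ $24^6>6\cdot1440$. I would simply tabulate $24^n$ against $n\cdot n!$ to confirm failure for $n\leq20$; these are routine comparisons.

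\textbf{Step 2: the action is natural.} With $r=1$ and $n\geq21$, let $M=N_\omega$. This is a proper subgroup, since $N$ acts faithfully and $b>1$. Suppose $M$ is not a point stabilizer. Then Lemma~\ref{lem:alternating-index} gives $b=[A_n:M]\geq\binom n2$. Substituting into the same monotone inequality yields
\[
 24^{\binom n2}<\binom n2\, n!,
\]
which is absurd, since the left side grows like $24^{n^2/2}$ and $n!<n^n$. Hence $M$ is a point stabilizer, so $M\cong A_{n-1}$, $b=n$, and the action is equivalent to the natural one. A point stabilizer with respect to the natural action determines a transitive action isomorphic to it via the coset action.

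\textbf{Main obstacle.} The only delicate point is making the monotonicity substitution rigorous: we must replace $b$ by its lower bound in $24^b<b\cdot C$. This is valid because $24^x/x$ is increasing for $x\geq1$. The second, minor point is handling $A_6$, whose automorphism group is larger than $S_6$; I would treat it separately by direct numerics.
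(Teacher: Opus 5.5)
Your proposal is correct and follows the paper's proof essentially step for step: the two bounds of Corollary~\ref{cor:normal-subgroup-numerical-bound}, monotonicity of $24^x/x$ to substitute $b=n^r$, exclusion of $r\ge2$ (which, as in the paper, needs a routine induction over all $n\ge5$, $r\ge2$ rather than the single check at $n=5$, $r=2$), a finite check for $5\le n\le 20$, and then Lemma~\ref{lem:alternating-index} with $b\ge\binom n2$. Your side remarks are imprecise but harmless: $\mu(A_6)=6$ is not an exception, and $n!<n^n$ alone already rules out $n\le 20$ rather than giving $n>24$. The paper also avoids a separate $A_6$ case by using $|\Aut(A_n)|\le 2\,n!$ throughout.
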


\begin{proof}
Write $b=|\Omega|$.
By Corollary~\ref{cor:normal-subgroup-numerical-bound},
\begin{equation}\label{eq:alt-count-initial}
24^b<b\,|\Aut(A_n)|^r r!.
\end{equation}
We shall compare this with the lower bounds on $b$.

First, $\mu(A_n)=n$. Indeed, a natural point stabilizer has
index $n$. Conversely, since $A_n$ is simple, its action on
the cosets of any proper subgroup is faithful. An index
$m<n$ would therefore give an embedding $A_n\hookrightarrow S_m$,
contrary to \[
 |A_n|=\frac{n!}{2}>(n-1)!\geq m!.
\]
Therefore Lemma~\ref{lem:direct-power-bound} gives
\begin{equation}\label{eq:alt-permutation-degree}
b\geq n^r.
\end{equation}

For $n\geq5$, we have $|\Aut(A_n)|\leq2n!$.
Indeed, $\Aut(A_n)\cong S_n$ when $n\ne6$, whereas
$|\Aut(A_6)|=1440=2\cdot6!$.
Consequently, \eqref{eq:alt-count-initial} implies
\begin{equation}\label{eq:alt-count-factorial}
24^b<b(2n!)^r r!.
\end{equation}

We first exclude $r\geq2$.
Induction first on $n$ when $r=2$, and then on $r$ for each
fixed $n\geq5$, shows that
\begin{equation}\label{eq:alt-power-inequality}
24^{n^r}>n^r(2n!)^r r!
\qquad(n\geq5,\ r\geq2).
\end{equation}

The function $24^a/a$ is strictly increasing on
the positive integers, since
\[
 \frac{24^{a+1}/(a+1)}{24^a/a}
 =\frac{24a}{a+1}>1.
\]
Thus \eqref{eq:alt-permutation-degree} and \eqref{eq:alt-power-inequality} imply
\[
 \frac{24^b}{b}
 \geq\frac{24^{n^r}}{n^r}
 >(2n!)^r r!,
\]
contradicting \eqref{eq:alt-count-factorial}.
Hence $r=1$.

Now \eqref{eq:alt-permutation-degree} gives $b\geq n$, and \eqref{eq:alt-count-factorial} becomes
\begin{equation}\label{eq:alt-count-one-factor}
24^b<2b\cdot n!.
\end{equation}
If $5\leq n\leq20$, then
\begin{equation}\label{eq:alt-small-degree}
24^n>2n\cdot n!.
\end{equation}
The inequality holds at $n=5$, and
the ratio of its left-hand side to its right-hand side
is multiplied
by
\[
 \frac{24n}{(n+1)^2}>1
 \qquad(5\leq n\leq19).
\]
Thus \eqref{eq:alt-small-degree} holds throughout the stated range.
Since $b\geq n$ and $24^a/a$ is increasing,
\eqref{eq:alt-small-degree} contradicts \eqref{eq:alt-count-one-factor}.
Therefore $n\geq21$.

Identify $N$ with $A_n$, and let $M=N_\omega$ be the stabilizer
of $\omega$ in the action on $\Omega$.
If $M$ is a point stabilizer in the natural action of $A_n$,
then the two transitive actions are equivalent, and the
conclusion follows.

Suppose otherwise. Lemma~\ref{lem:alternating-index} shows that
\[
 b=[A_n:M]\geq
 b_0:=\binom{n}{2}.
\]
We also have
\begin{equation}\label{eq:alt-large-index-estimate}
24^{\binom{n}{2}}>2n^2n!
\qquad(n\geq21).
\end{equation}
In fact, the inequality already holds at $n=5$. The ratio of
the left-hand side to the right-hand side is multiplied by
\[
 \frac{24^n n^2}{(n+1)^3}
 \geq\frac{24^n}{8n}>1
 \qquad(n\geq5)
\]
at the next step, proving the claim by induction.  Therefore
\[
 \frac{24^b}{b}
 \geq\frac{24^{b_0}}{b_0}
 \geq\frac{24^{b_0}}{n^2}
 >2n!,
\]
again contradicting \eqref{eq:alt-count-one-factor}.
Thus $M$ is a natural point stabilizer,
and the proof is complete.
\end{proof}

The following corollary identifies the local socle as $A_{n-1}$
when the normal subgroup $A_n$ acts naturally. This explains the
alternating possibility in the recognition theorem.

\begin{corollary}\label{cor:alternating-local-socle}
Under the hypotheses of Lemma~\ref{lem:alternating-block-action},
suppose that $A_\omega$ is almost simple with socle $C$ and that
the image of $N_\omega$ contains $C$. Then $C\cong A_{n-1}$.
\end{corollary}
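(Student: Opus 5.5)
By Lemma~\ref{lem:alternating-block-action}, $r=1$, $n\geq21$, $N\cong A_n$ acts naturally on $\Omega$, and $N_\omega\cong A_{n-1}$. Let $B_\omega$ be the group induced by $N_\omega$ on $X_\omega$; it is a quotient of $N_\omega\cong A_{n-1}$. The plan is to show that $B_\omega$ is a faithful copy of $A_{n-1}$ and then read off the socle. Since $n-1\geq20$, the group $A_{n-1}$ is simple, so $B_\omega$ is either trivial or isomorphic to $A_{n-1}$.

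First we rule out the trivial case. The hypothesis says the image of $N_\omega$ contains the socle $C$ of the almost simple group $A_\omega$. As $C$ is nonabelian simple, $C\neq1$, so $B_\omega\neq1$. Hence $B_\omega\cong A_{n-1}$, and $B_\omega$ is a nonabelian simple subgroup of $A_\omega$.

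Next we identify $B_\omega$ with the socle. Because $N_\omega\lhd Q_\omega$ (as $N\lhd Q$), its image $B_\omega$ is normal in $A_\omega$. In an almost simple group every nontrivial normal subgroup contains the socle, so $C\leq B_\omega$. The subgroup $C$ is then a nontrivial normal subgroup of the simple group $B_\omega$, which forces $C=B_\omega\cong A_{n-1}$.

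The only step needing care is the use of normality. We must check that the image of a normal subgroup of $Q_\omega$ under the action homomorphism $Q_\omega\to\Sym(X_\omega)$ is normal in the image $A_\omega$; this holds for any homomorphism onto its image. We also need the standard fact that the socle of an almost simple group is its unique minimal normal subgroup. If the paper intends "image of $N_\omega$" in a slightly different sense, for instance via the identification of Lemma~\ref{lem:block-quotient-descent}, the same argument applies, since that identification preserves the induced permutation groups.
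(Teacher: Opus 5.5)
Your proposal is correct and follows essentially the same route as the paper: use $N_\omega\cong A_{n-1}$ from Lemma~\ref{lem:alternating-block-action}, note that $B_\omega\ne1$ because it contains $C$, conclude $B_\omega\cong A_{n-1}$ by simplicity, and then observe that $C$ is a nontrivial normal subgroup of the simple group $B_\omega$, so $C=B_\omega$. Your detour through the normality of $B_\omega$ in $A_\omega$ to obtain $C\leq B_\omega$ is harmless but redundant, since $C\leq B_\omega$ is already one of the corollary's hypotheses.
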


\begin{proof}
By Lemma~\ref{lem:alternating-block-action},
$N_\omega\cong A_{n-1}$, where $n\geq21$.
Its image $B_\omega$ in $A_\omega$ is nontrivial because it
contains $C$. Simplicity therefore gives $B_\omega\cong A_{n-1}$.
Since $C\lhd A_\omega$ and $C\leq B_\omega$, the subgroup $C$
is a nontrivial normal subgroup of the simple group $B_\omega$.
Hence $C=B_\omega$.
\end{proof}

\begin{lemma}\label{lem:bounded-simple-exclusion}
Let $N\cong S^r$ be a transitive normal subgroup of $Q$
as in Lemma~\ref{lem:normal-subgroup-count}, with $d\geq4$.
If $\mu(S)\geq8$, then
\[
 |\Aut(S)|>\frac{24^8}{8}.
\]
\end{lemma}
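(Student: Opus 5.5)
We combine the two numerical conclusions of Corollary~\ref{cor:normal-subgroup-numerical-bound}. Write $b=|\Omega|$ and $\mu=\mu(S)\geq8$. The corollary gives
\[
 24^b<b\,|\Aut(S)|^r r!,
 \qquad
 b\geq\mu^r\geq 8^r .
\]
The plan is to bound the right-hand side from above in terms of $|\Aut(S)|$ alone. Then we compare with the left-hand side at the smallest admissible value of $b$.

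First, the function $a\mapsto 24^a/a$ is increasing on positive integers, as shown in the proof of Lemma~\ref{lem:alternating-block-action}. Since $b\geq 8^r$, we obtain
\[
 \frac{24^{8^r}}{8^r}\leq\frac{24^b}{b}<|\Aut(S)|^r\,r!.
\]
Suppose, for contradiction, that $|\Aut(S)|\leq 24^8/8$. Then
\[
 \frac{24^{8^r}}{8^r}<\Bigl(\frac{24^8}{8}\Bigr)^r r! .
\]

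For $r=1$ this reads $24^8/8<24^8/8$, which is false. For $r\geq2$ we show by a crude estimate that the left side dominates. The exponent $8^r$ exceeds $8r$ by a wide margin: already for $r=2$ we have $8^r=64$ versus $8r=16$. Concretely, $24^{8^r}\geq 24^{8r}\cdot 24^{8^r-8r}$, and $24^{8^r-8r}$ easily exceeds $8^{r}\,r!$. For the latter, use $8^r r!\leq 8^r r^r\leq 24^{2r\log_{24}(8r)}$ together with $8^r-8r\geq 2r\log_{24}(8r)$ for $r\geq2$. This can be checked at $r=2$ and then propagated by induction on $r$, since the left side grows by a factor of $8$ while the right grows only additively.

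Thus no $r$ is possible, and hence $|\Aut(S)|>24^8/8$. The only mildly delicate point is the $r\geq2$ comparison, which I would settle by the same ratio-induction style already used in \eqref{eq:alt-power-inequality}.
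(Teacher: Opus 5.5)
Your proof is correct and is essentially the paper's proof. You assume $|\Aut(S)|\le 24^8/8$, combine $b\ge 8^r$ and $24^b<b\,|\Aut(S)|^r r!$ from Corollary~\ref{cor:normal-subgroup-numerical-bound} with the monotonicity of $24^a/a$, and contradict the inequality $24^{8^r}\ge 8^r(24^8/8)^r r!$. The only difference is that the paper proves that inequality uniformly for all $r\ge1$ by induction (with equality at $r=1$), whereas you split off $r=1$; also, for $r\ge2$ you only need $24^{8^r-8r}\ge r!$, so your bound by $8^r r!$ is more than enough.
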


\begin{proof}
Write $b=|\Omega|$ and put
\[
 M=\frac{24^8}{8}.
\]
Suppose, for a contradiction, that $|\Aut(S)|\leq M$.
Since $\mu(S)\geq8$, it follows from Corollary~\ref{cor:normal-subgroup-numerical-bound}
that
\[
 b\geq8^r,
 \qquad
 24^b<b\,|\Aut(S)|^r r!\leq bM^r r!.
\]
On the other hand, induction on $r$ shows that
\[
 24^{8^r}\geq8^rM^r r!
 \qquad(r\geq1).
\]
Since the function $24^a/a$ is strictly increasing on the positive
integers, the inequality $b\geq8^r$ implies
\[
 \frac{24^b}{b}
 \geq\frac{24^{8^r}}{8^r}
 \geq M^r r!,
\]
contrary to $24^b<bM^r r!$.
Therefore $|\Aut(S)|>24^8/8$.
\end{proof}

We now apply the preceding lemma to exclude the groups in
$\mathcal E$ as simple direct factors of a transitive normal subgroup.

\begin{corollary}\label{cor:exceptional-normal-subgroup-excluded}
Let $N\cong S^r$ be a transitive normal subgroup of $Q$
as in Lemma~\ref{lem:normal-subgroup-count}, with $d\geq4$.
Then $S\notin\mathcal E$.
\end{corollary}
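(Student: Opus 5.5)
We argue by contradiction: suppose $S\in\mathcal E$. We will apply Lemma~\ref{lem:bounded-simple-exclusion} directly when $\mu(S)\geq8$, and fall back on the raw inequality of Corollary~\ref{cor:normal-subgroup-numerical-bound} when $\mu(S)<8$. Note that $24^8/8=3\cdot 24^7$, which is about $1.37\times10^{10}$.

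First we record the data for the five groups. The automorphism group orders are
\[
|\Aut(\PSp_4(3))|=51840,\quad |\Aut(\Sp_6(2))|=1451520,\quad |\Aut({\rm O}_8^+(2))|=174182400\cdot 6,
\]
\[
|\Aut(\PSL_3(4))|=20160\cdot 12,\quad |\Aut(\PSU_4(3))|=3265920\cdot 8 .
\]
All of these are far below $24^8/8$. The largest is $|\Aut({\rm O}_8^+(2))|\approx1.05\times10^9<1.37\times10^{10}$. The minimal permutation degrees, taken from the list of maximal subgroups in the ATLAS, are
\[
\mu(\PSp_4(3))=27,\quad \mu(\Sp_6(2))=28,\quad \mu({\rm O}_8^+(2))=120,\quad \mu(\PSL_3(4))=21,\quad \mu(\PSU_4(3))=112 .
\]
Each of these is at least $8$. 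Hence for every $S\in\mathcal E$ we have both $\mu(S)\geq8$ and $|\Aut(S)|\leq 24^8/8$. This contradicts Lemma~\ref{lem:bounded-simple-exclusion}, whose hypotheses ($N\cong S^r$ a transitive normal subgroup as in Lemma~\ref{lem:normal-subgroup-count}, with $d\geq4$) are exactly the hypotheses of the corollary.

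The only real obstacle is justifying the numerical values, chiefly the lower bounds on $\mu(S)$. Only $\mu(S)\geq8$ is needed, and this can be shown cheaply without the ATLAS. If $\mu(S)=m\leq7$, then $S$ would embed in $A_7$, since $S$ is simple and its coset action is faithful. That would force $|S|$ to divide $2520$. But every group in $\mathcal E$ has order divisible by $2^6$ (for $\PSL_3(4)$, $|S|=20160=2^6\cdot3^2\cdot5\cdot7$) or by $3^4$ (for $\PSp_4(3)$, $|S|=25920=2^6\cdot3^4\cdot5$), while $2520=2^3\cdot3^2\cdot5\cdot7$. So no such embedding exists.

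For the upper bound on $|\Aut(S)|$, the crude estimate $|\Aut(S)|\leq|S|\cdot|\Out(S)|$ suffices. We use $|\Out(S)|\leq 8$ for $\PSU_4(3)$ and $\leq 12$ for $\PSL_3(4)$, and $|{\rm O}_8^+(2)|=174182400$ with $\Out\cong S_3$. Every product is then under $10^{10}$, comfortably below the bound.
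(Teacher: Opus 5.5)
Your proof is correct and is essentially the paper's argument. You show $\mu(S)\geq 8$ because no group in $\mathcal E$ embeds in a symmetric group of degree at most $7$; you do this via divisibility by $2^6$ against $|A_7|=2520$, whereas the paper simply notes $|S|>7!$. The ATLAS values of $|\Aut(S)|$, all below $24^8/8=13\,759\,414\,272$, then contradict Lemma~\ref{lem:bounded-simple-exclusion}. The announced fallback to Corollary~\ref{cor:normal-subgroup-numerical-bound} for $\mu(S)<8$ and the exact minimal degrees you list are never needed.
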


\begin{proof}
The orders and outer automorphism groups in the ATLAS
\cite{Atlas1985,AtlasOnline} give
\[
\begin{array}{c|r|r|r}
 S&|S|&|\Out(S)|&|\Aut(S)|\\ \hline
 \PSp_4(3)&25\,920&2&51\,840\\
 \Sp_6(2)&1\,451\,520&1&1\,451\,520\\
{\rm O}_8^+(2)&174\,182\,400&6&1\,045\,094\,400\\
 \PSL_3(4)&20\,160&12&241\,920\\
 \PSU_4(3)&3\,265\,920&8&26\,127\,360
\end{array}
\]
Suppose that $S\in\mathcal E$.
Every group in the table has order greater than $7!$,
so it cannot embed in $S_7$.
Since the action of a simple group on the cosets of any
proper subgroup is faithful, this implies $\mu(S)\geq8$.
Lemma~\ref{lem:bounded-simple-exclusion} therefore gives
\[
 |\Aut(S)|>\frac{24^8}{8}=13\,759\,414\,272,
\]
contrary to the table.
Thus $S\notin\mathcal E$.
\end{proof}

\subsection{Recognition and global realization}

Our aim is to determine the possible simple groups $T/Z(T)$ arising
from the components of the terminal group $L$.  We already know that
$T/Z(T)$ is the socle of an almost simple group acting on a local
space $Y$, after scalar signs have been factored out.  However, this
local group is obtained from a stabilizer, and a subgroup of a rational
group need not be rational.  We therefore cannot apply the
Feit--Seitz theorem directly to the local group.

The argument has two steps. First, we prove an abstract
recognition result: if an almost simple local action arises
inside a rational group satisfying the fusion condition,
then its socle is either an alternating group or isomorphic
to a composition factor of the rational group.
Second, we place the local
space $Y$ back into a decomposition of the original module.  This
provides the rational group and the fusion condition needed for the
first step.

\begin{proposition}
\label{prop:local-simple-recognition}
Let $Q$ be a finite rational group acting transitively on a finite
nonempty set $\Omega$. For each $\omega\in\Omega$, let $V_\omega$
be a symplectic $\F_p$-space of dimension $d$, where $p\geq7$ and
$d\geq4$, and write $X_\omega=(V_\omega\setminus\{0\})/\{\pm1\}$.
Assume that the following conditions hold:
\begin{enumerate}
\item $Q$ acts faithfully on $\coprod_{\omega\in\Omega}X_\omega$,
with each map $X_\omega\to X_{q\omega}$ induced by a linear isometry.
\item For every nonempty $\Sigma\subseteq\Omega$ and every tuple
$(x_\omega)\in\prod_{\omega\in\Sigma}X_\omega$, some $p'$-element
$q\in Q$ satisfies
\[
 q\Sigma=\Sigma,\qquad qx_\omega=2x_{q\omega}
 \quad(\omega\in\Sigma),
\]
where $2[v]=[2v]$.
\item The group $A_\omega$ induced by $Q_\omega$ on $X_\omega$
is almost simple with socle isomorphic to $C$.
\end{enumerate}
Then $C$ is alternating or isomorphic to a composition factor of $Q$.
In particular, $C$ is alternating or belongs to $\mathcal E$
in~\eqref{eq:exceptional-list}.
\end{proposition}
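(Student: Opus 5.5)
The plan is to argue by induction on $|Q|$, peeling off minimal normal subgroups. Let $M$ be a minimal normal subgroup of $Q$, and for $\omega\in\Omega$ let $B_\omega$ be the group induced on $X_\omega$ by $M_\omega=M\cap Q_\omega$. Since $M\lhd Q$, we have $M_\omega\lhd Q_\omega$, so $B_\omega\lhd A_\omega$. Because $Q$ is transitive on $\Omega$, whether $B_\omega$ is trivial does not depend on $\omega$.

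\emph{Case $B_\omega=1$ for every $\omega$.} Here I apply Lemma~\ref{lem:block-quotient-descent} to $N=M$. This gives an action of $Q/M$ on the family $\widehat X_\delta$, $\delta\in\Omega/M$, with the following features:
\begin{enumerate}
\item the induced local groups are again $A_\omega$;
\item the fusion property survives, with $p'$-elements;
\item each $\widehat X_\delta$ is identified with some $X_\omega$, so it is still a set of vectors modulo sign in a $d$-dimensional symplectic space.
\end{enumerate}
I then factor out the kernel of this action, as in the remark following that lemma. The result is a rational group, since it is a quotient of $Q/M$, and it is transitive on $\Omega/M$ with strictly smaller order. Induction then says that $C$ is alternating or is a composition factor of that quotient, hence of $Q$. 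One point to check is that condition~(1), that maps are induced by isometries, transfers through evaluation at a representative; this looks routine.

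\emph{Case $B_\omega\neq1$.} Now $B_\omega$ is a nontrivial normal subgroup of the almost simple group $A_\omega$, so it contains the socle $C$. In particular $C_{A_\omega}(B_\omega)\le C_{A_\omega}(C)=1$. If $M$ were abelian, then $B_\omega$ would be a nontrivial abelian normal subgroup of an almost simple group, which is impossible. Hence $M\cong S^r$ with $S$ nonabelian simple, and $S$ is a composition factor of $Q$.

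To use the counting machinery, I pass to an $M$-orbit $\Delta\subseteq\Omega$ and its setwise stabilizer $Q_\Delta$. This subgroup is generally not rational, but Lemma~\ref{lem:normal-subgroup-count} does not need rationality. Taking $\Sigma=\Delta$ in hypothesis~(2) supplies the fusion condition on $\prod_{\omega\in\Delta}X_\omega$ by elements of $Q_\Delta$. I then replace $Q_\Delta$ by its faithful image on $\coprod_{\omega\in\Delta}X_\omega$ and $M$ by the image of $M$ there; this image is again a direct power $S^{r'}$ and is transitive on $\Delta$. With these choices the hypotheses of Lemma~\ref{lem:normal-subgroup-count} hold, using $d\ge4$. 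If $S$ is alternating, Lemma~\ref{lem:alternating-block-action} and Corollary~\ref{cor:alternating-local-socle} give $C\cong A_{n-1}$, so $C$ is alternating.

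\emph{Non-alternating $S$.} The step I expect to be the main obstacle is proving $C\cong S$ when $S$ is not alternating. What we know is that $C$ is a section of $M_\omega\le S^{r'}$. I would first try to show that the projections of $M_\omega$ to the simple factors are surjective and that $B_\omega$ is almost simple with socle a diagonal copy of $S$. The tool would be the numerical bounds of Corollary~\ref{cor:normal-subgroup-numerical-bound}, namely $b\ge\mu(S)^{r'}$ and $24^b<b|\Aut(S)|^{r'}r'!$, which force $|\Delta|$ to be small compared with $|S|$ and so restrict $M_\omega$ to a subgroup of bounded index. If that fails, the fallback is a direct argument that a proper subquotient of $S$ cannot be the socle of $A_\omega$. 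That argument would apply Lemma~\ref{lem:projective-count} to $A_\omega$, viewed as a group acting on $V_\omega$ modulo sign; this gives $|A_\omega/\{\pm1\}|\ge p^{d/2}+1$, to be compared against the above bounds. Once $C\cong S$ is established, it is a composition factor of $Q$.

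For the final sentence of the statement, apply Feit--Seitz to the rational group $Q$: every nonabelian composition factor of $Q$ is alternating or lies in $\mathcal E$. Since $C$ is nonabelian, it is therefore alternating or belongs to $\mathcal E$.
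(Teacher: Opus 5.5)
Your first case ($B_\omega=1$) is the paper's Case~1 and is fine. The gap is in the second case, at the sentence ``With these choices the hypotheses of Lemma~\ref{lem:normal-subgroup-count} hold.''

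That lemma needs the normal subgroup to act faithfully on the \emph{index set} $\Delta$. So does everything built on it: the bound $b\ge\mu(S)^r$ in Corollary~\ref{cor:normal-subgroup-numerical-bound}, Lemma~\ref{lem:alternating-block-action}, and Corollary~\ref{cor:exceptional-normal-subgroup-excluded}. Passing to the faithful image of $Q_\Delta$ on $\coprod_{\eta\in\Delta}X_\eta$ only removes $M\cap H$, where $H$ is the kernel on the vectors. The kernel $K$ of $M\to\operatorname{Sym}(\Delta)$ fixes every block but may act nontrivially inside the blocks. Hence $M/(M\cap H)$ need not be faithful on $\Delta$; in the extreme case $|\Delta|=1$ it acts nontrivially on a single point.

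The missing step is a case split on $K$. Since $K\le M_\omega$ and $Q_\omega\le Q_\Delta$ normalizes $K$, the image of $K$ in $A_\omega$ is normal, so it either contains $C$ or is trivial.
\begin{itemize}
\item If it contains $C$, then $C$ is a composition factor of $K\lhd M\cong S^\ell$. Hence $C\cong S$, and you are done.
\item If it is trivial, then $K$ acts trivially on every $X_\eta$ with $\eta\in\Delta$. So $M\cap H=K$, and $M/K\cong S^r$ is faithful and transitive on $\Delta$. Only now does the counting machinery apply.
\end{itemize}
In particular, your derivation of $C\cong A_{n-1}$ is valid only in this second subcase.

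Second, your treatment of non-alternating $S$ aims at the wrong target. Already in the faithful subcase with $r=1$, $M_\omega$ is a proper subgroup of $S$. Its section $C$ therefore has order less than $|S|$ and cannot be isomorphic to $S$. No argument about surjective projections or diagonal copies can give $C\cong S$ there. Lemma~\ref{lem:projective-count} only gives a lower bound on $|A_\omega|$, which does not identify $C$.

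What is needed is to show that the faithful subcase cannot occur for non-alternating $S$. The paper does this by applying Feit--Seitz to $S$ itself, not just to $C$ at the end. This is legitimate because $S$ is a composition factor of the rational group $Q$, even though $Q_\Delta$ is not rational. Feit--Seitz leaves $S$ alternating or in $\mathcal E$, and Corollary~\ref{cor:exceptional-normal-subgroup-excluded} rules out $\mathcal E$ by the counting bound. The non-faithful subcase has already been settled by $C\cong S$. With these two additions your outline becomes the paper's proof.
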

\begin{proof}
We use induction on $|Q|$.  The trivial group cannot satisfy the
hypothesis on the local socle, so there is no initial case to check.
Choose a minimal nontrivial normal subgroup $N$ of $Q$, fix
$\omega\in\Omega$, and abbreviate $A_\omega$ to $A$.
We identify the socle of $A$ with $C$, so that
\[
 C\leq A\leq\Aut(C),\qquad C_A(C)=1.
\]

We shall repeatedly use the following observation: a normal subgroup
$B$ of $A$ is either trivial or contains $C$.  Indeed,
$B\cap C\lhd C$.  If $B\cap C=1$, then
$[B,C]\leq B\cap C=1$, and hence $B\leq C_A(C)=1$.
Otherwise, since $C$ is simple, we have $C\leq B$.
In particular, this observation applies to the image of
$N_\omega=N\cap Q_\omega$ in $A$.

\medskip
\noindent\textbf{Case 1: $N_\omega$ acts trivially on $X_\omega$.}

Since $Q$ is transitive on $\Omega$ and $N\lhd Q$, the stabilizer
$N_\eta$ acts trivially on $X_\eta$ for every $\eta\in\Omega$.
Lemma~\ref{lem:block-quotient-descent} therefore allows us to replace
the index set $\Omega$ by the set $\Omega/N$ of $N$-orbits and to
replace $Q$ by $Q/N$.

By Lemma~\ref{lem:block-quotient-descent}, each set
$\widehat X_\delta$ can be identified with $X_\eta$
for a chosen representative $\eta\in\delta$, with the
same local permutation group and multiplication by $2$.
The fusion elements in the quotient are images of
$p'$-elements of $Q$, so they are still $p'$-elements.

The maps between the sets $\widehat X_\delta$ also
come from isometries.
Indeed, for chosen representatives $\eta\in\delta$
and $\eta'\in q\delta$, choose $n\in N$ such that
$nq\eta=\eta'$. Under the above identifications, the
map induced by $qN$ is induced by $nq$, and hence
comes from an isometry.

The group $Q/N$ acts transitively on $\Omega/N$.
Let $Q'$ be its image in the permutation group on
$\coprod_{\delta\in\Omega/N}\widehat X_\delta$.
Then $Q'$ is rational and $|Q'|<|Q|$.
The local permutation groups and the fusion condition
are unchanged, so the induction hypothesis applies to $Q'$.
Hence $C$ is alternating or isomorphic to a composition
factor of $Q'$. Since $Q'$ is a quotient of $Q$,
every composition factor of $Q'$ is also a composition
factor of $Q$. This proves the assertion in this case.

\medskip
For the rest of the proof, the image of $N_\omega$ in $A$ is
nontrivial and hence contains $C$.  In particular, $N$ is
nonabelian.  The structure of finite minimal normal subgroups gives
\[
 N\cong S^\ell
\]
for a nonabelian finite simple group $S$ and some $\ell\geq1$.
Since $N\lhd Q$, the group $S$ is a composition factor of $Q$.

Let $\Delta$ be the $N$-orbit containing $\omega$, and put
\[
 \Delta=N\omega,\qquad
 K=\ker\bigl(N\longrightarrow\operatorname{Sym}(\Delta)\bigr).
\]
Thus every element of $K$ fixes each point of $\Delta$
and preserves each set $X_\eta$, with $\eta\in\Delta$.
However, its action on $X_\eta$ need not be trivial.

Since $Q_\Delta$ normalizes $N$ and preserves $\Delta$,
it also normalizes $K$.
Moreover, $Q_\omega\leq Q_\Delta$ and $K\leq N_\omega$.
Hence the image of $K$ in $A$ is normal in $A$,
so it is either trivial or contains $C$.
We consider these two possibilities separately.

\medskip
\noindent\textbf{Case 2: the image of $K$ in $A$ contains $C$.}

Let $B$ be this image. Since $C\lhd A$ and $C\leq B$,
the simple group $C$ is a composition factor of $B$,
and hence of $K$.
On the other hand, $K\lhd N\cong S^\ell$, so $K$ is
the direct product of some of the simple direct factors
of $N$. Thus all composition factors of $K$ are
isomorphic to $S$, and consequently $C\cong S$.
Since $S$ is a composition factor of $Q$, the conclusion
follows in this case.

\medskip
\noindent\textbf{Case 3: $K$ acts trivially on $X_\omega$.}

Since $K\lhd N$ and $N$ is transitive on $\Delta$,
$K$ acts trivially on $X_\eta$ for every $\eta\in\Delta$.
Write
\[
 \mathcal X_\Delta=\coprod_{\eta\in\Delta}X_\eta,
\]
and let $H$ be the kernel of the action of $Q_\Delta$
on $\mathcal X_\Delta$. Then \[
 \Lambda=Q_\Delta/H
\]
acts faithfully on $\mathcal X_\Delta$.

We claim that $N\cap H=K$.
Since $K$ acts trivially on $\mathcal X_\Delta$,
we have $K\leq N\cap H$.
Conversely, every element of $N\cap H$ fixes all points
of $\mathcal X_\Delta$, and therefore fixes every
$\eta\in\Delta$. Hence it belongs to $K$.

Thus the image of $N$ in $\Lambda$ is
\[
 \overline N=NH/H\cong N/K\cong S^r
\]
for some $r\geq1$. Here $N/K$ is nontrivial because
the image of $N_\omega$ on $X_\omega$ contains $C$,
whereas $K$ acts trivially.
By the definition of $K$, the action of $\overline N$
on $\Delta$ is faithful. It is also transitive,
and $\overline N$ is normal in $\Lambda$.

We now check the remaining hypotheses of
Lemma~\ref{lem:normal-subgroup-count}.
Since $Q_\omega\leq Q_\Delta$, the group induced by
$\Lambda_\omega$ on $X_\omega$ is still $A$.
Likewise, $\overline N_\omega$ and $N_\omega$ induce
the same group $B$ on $X_\omega$. As $C\leq B$, we have
\[
 C_A(B)\leq C_A(C)=1.
\]

The fusion hypothesis, applied with $\Sigma=\Delta$,
provides an element of $Q_\Delta$ that fuses any given
tuple in $\prod_{\eta\in\Delta}X_\eta$ to its
$2$-multiple. Its image in $\Lambda$ has the same action.
Moreover, the maps between the sets $X_\eta$ are still
induced by isometries, and the corresponding spaces
have dimension $d\geq4$.
Thus Lemma~\ref{lem:normal-subgroup-count} and its
consequences apply to $\overline N\lhd\Lambda$.

Although $\Lambda$ need not be rational, $S$ is a
composition factor of the rational group $Q$.
By the Feit--Seitz theorem
\cite[Theorem~B]{FeitSeitz1989}, $S$ is therefore
alternating or belongs to $\mathcal E$.
Corollary~\ref{cor:exceptional-normal-subgroup-excluded}
rules out the latter possibility.
Hence $S\cong A_n$ for some $n\geq5$.

By Lemma~\ref{lem:alternating-block-action}, we have
$r=1$ and $n\geq21$, and $\overline N\cong A_n$
acts naturally on $\Delta$.
Hence $\overline N_\omega\cong A_{n-1}$.
Since its image in $A$ contains $C$, it follows from
Corollary~\ref{cor:alternating-local-socle} that
\[
 C\cong A_{n-1}.
\]
Thus $C$ is alternating, completing the induction.

Finally, whenever $C$ is a composition factor of $Q$, the
Feit--Seitz theorem again says that $C$ is alternating or belongs
to $\mathcal E$.  This proves the last assertion.
\end{proof}

The next lemma shows how an orthogonal decomposition of
one block extends to a decomposition of the whole space
by taking its translates. It also shows that the group
induced by the stabilizer of each smaller block remains
unchanged.

\begin{lemma}
\label{lem:refine-global-blocks}
Let $G\leq\Sp(V)$ transitively permute the nonzero nondegenerate
summands of $V=\mathop{\perp}_{\omega\in\Omega}V_\omega$.
Fix $U=V_{\omega_0}$, and let $L$ be the group induced by $G_U$ on $U$.
If $L$ transitively permutes the nonzero nondegenerate summands of
$U=Y_1\perp\cdots\perp Y_s$, then:

\begin{enumerate}
\item The distinct $G$-translates of $Y_1$ form an orthogonal
decomposition of $V$, permuted transitively by $G$.
\item $G_{Y_1}$ and $L_{Y_1}$ induce the same group on $Y_1$.
\end{enumerate}
\end{lemma}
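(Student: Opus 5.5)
The plan is to work entirely with stabilizers and coset representatives, mirroring the arguments already used in Lemma~\ref{lem:component-subspace-action} and Proposition~\ref{prop:local-component-action}. Write $\Omega=\{\omega_0,\ldots\}$. Since $G$ is transitive on the summands $V_\omega$, choose $g_\omega\in G$ with $g_\omega U=V_\omega$. Let $\rho:G_U\to L$ denote restriction to $U$. For each $j$, choose $h_j\in G_U$ with $\rho(h_j)Y_1=Y_j$; this is possible by transitivity of $L$ on the $Y_j$.

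For (1), I first show that every $G$-translate of $Y_1$ has the form $g_\omega h_j Y_1=g_\omega Y_j$. Let $g\in G$, and suppose $gU=V_\omega$. Then $g_\omega^{-1}g\in G_U$, so its image in $L$ permutes the summands $Y_j$. Hence $gY_1=g_\omega(g_\omega^{-1}g)Y_1=g_\omega Y_j$ for some $j$.

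Conversely, each $g_\omega Y_j$ is a translate of $Y_1$. The family $\{g_\omega Y_j\}$ gives an orthogonal decomposition of $V$. Indeed, $g_\omega$ is an isometry carrying the orthogonal decomposition $U=\perp_j Y_j$ onto a decomposition of $V_\omega$ into nondegenerate summands, and the $V_\omega$ are mutually orthogonal. Distinct pairs $(\omega,j)$ give distinct subspaces, since the sum is direct and each summand is nonzero. Thus the set of distinct translates is exactly this family; its elements are nonzero and nondegenerate. By construction $G$ permutes the family, and it does so transitively.

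For (2), the main point is that $G_{Y_1}\leq G_U$. If $gY_1=Y_1$, then $gU$ is a summand $V_\omega$ meeting $Y_1\subseteq U$ nontrivially. Directness of $V=\perp V_\omega$ then forces $gU=U$. Hence $G_{Y_1}=\{g\in G_U:\rho(g)Y_1=Y_1\}$, which is the full preimage under $\rho$ of $L_{Y_1}$, the stabilizer of $Y_1$ in $L$.

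Restriction to $Y_1$ factors through restriction to $U$. So the group induced by $G_{Y_1}$ on $Y_1$ equals the group induced by $\rho(G_{Y_1})=L_{Y_1}$ on $Y_1$, using surjectivity of $\rho$ onto $L$.

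No step is a real obstacle. The only care needed is the containment $G_{Y_1}\leq G_U$, which uses that a nonzero subspace of $U$ cannot lie in a different summand. A second point is bookkeeping: the translates must be counted as distinct subspaces rather than indexed by cosets. That is why the decomposition in (1) is phrased in terms of the distinct translates.
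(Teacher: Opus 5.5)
Your proposal is correct and follows essentially the same route as the paper. You transport the decomposition of $U$ by chosen elements $g_\omega$ and use that $g_\omega^{-1}g\in G_U$ acts on $U$ through $L$; the paper phrases this as independence of the choice of $g_\omega$, whereas you phrase it as a two-sided inclusion between the translates of $Y_1$ and the family $\{g_\omega Y_j\}$. For (2) you, like the paper, obtain $G_{Y_1}\leq G_U$ from the zero-intersection argument, so that $G_{Y_1}=\rho^{-1}(L_{Y_1})$, and surjectivity of $\rho$ finishes the proof.
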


\begin{proof}
For each $\omega\in\Omega$, choose $g_\omega\in G$ with
$g_\omega U=V_\omega$, and transport the decomposition of $U$:
\[
 V_\omega=g_\omega Y_1\perp\cdots\perp g_\omega Y_s.
\]
These decompositions together give an orthogonal decomposition of
$V$ into nonzero nondegenerate subspaces.

The collection inside $V_\omega$ does not depend on the choice of
$g_\omega$.  Indeed, if $g'_\omega U=V_\omega$, then
$g'_\omega=g_\omega h$ for some $h\in G_U$.
The restriction of $h$ to $U$ belongs to $L$, so it permutes the
$Y_j$.  Thus $g'_\omega$ and $g_\omega$ give the same collection.
It follows also that $G$ permutes all the transported subspaces.
Since $G$ is transitive on the original blocks and $L$
is transitive on the $Y_j$, $G$ is transitive on the
subspaces $g_\omega Y_j$. They are
therefore precisely the distinct translates of $Y_1$.

It remains to show that $G_{Y_1}$ and $L_{Y_1}$
induce the same group on $Y_1$. If $gY_1=Y_1$, then
both $U$ and $gU$ contain the nonzero subspace $Y_1$.
Distinct original blocks have zero intersection, so $gU=U$.
Hence $G_{Y_1}\leq G_U$.
Let $\pi:G_U\to L$ be restriction to $U$.  For $g\in G_U$,
\[
 gY_1=Y_1\quad\Longleftrightarrow\quad\pi(g)Y_1=Y_1.
\]
Thus $G_{Y_1}=\pi^{-1}(L_{Y_1})$. The surjectivity of $\pi$ now shows that these two stabilizers induce
exactly the same transformations on $Y_1$.
\end{proof}

Our construction involves three successive refinements:
the sign eigenspaces leading to $W$, the component
subspaces of $W$, and the common eigenspaces of the
component centralizer in a chosen component subspace $U$.
By applying Lemma~\ref{lem:refine-global-blocks} at each
stage, we obtain an orthogonal decomposition of $V_0$
whose summands include $Y$ and are permuted transitively
by $G_0$. The chosen spaces satisfy
\[
 Y\leq U\leq W\leq V_0,
\]
and their stabilizers in $G_0$ induce $J_Y$, $J$, $L$,
and $G_0$, respectively.

 Although $J_Y$ need not be rational,
we realize its action modulo sign as a local action
of a rational quotient of $G_0$, with the required
fusion condition.

\begin{proposition} \label{prop:global-local-realization}
Let $G_0\leq\Sp(V_0)$ be a finite rational group satisfying
{\rm(SF)} over $\F_p$, where $p\in\{7,11\}$.
Suppose that $(L,W)$ is a $2$-terminal pair obtained from
$(G_0,V_0)$ by successively passing to sign eigenspaces
and taking the group induced by the stabilizer at each step.
Assume that $(L,W)$ satisfies the hypotheses of
Proposition~\ref{prop:local-component-action}.

Let $Y$ and $J_Y$ be as in that proposition, and write
$S=T/Z(T)$ and $d=\dim_{\F_p}Y$.
Then:
\begin{enumerate}
\item $V_0$ has an orthogonal decomposition
\[
 V_0=\mathop{\perp}_{\omega\in\Omega}Y_\omega,
 \qquad Y_{\omega_0}=Y,
\]
permuted transitively by $G_0$, and the stabilizer of $Y$ induces
exactly $J_Y$ on $Y$.
\item The group $Q$ induced by $G_0$ on
$\coprod_{\omega\in\Omega}(Y_\omega\setminus\{0\})/\{\pm1\}$
is a rational quotient of $G_0$ satisfying all the hypotheses of
Proposition~\ref{prop:local-simple-recognition}, with local socle $S$.
\end{enumerate}
\end{proposition}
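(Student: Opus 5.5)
Part (1) comes from iterating Lemma~\ref{lem:refine-global-blocks} along the chain of refinements. The chain is
\[
 V_0=W_0\geq W_1\geq\cdots\geq W_m=W\geq U\geq Y.
\]
At the first step, the sign eigenspaces of a normal elementary abelian $2$-subgroup give an orthogonal decomposition of $V_0$. This decomposition is nondegenerate, as shown in the proof of Lemma~\ref{lem:sign-refinement}, and is permuted transitively by $G_0$. The stabilizer of $W_1$ induces $L_1$.

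Inductively, suppose we have an orthogonal decomposition of $V_0$ containing $W_i$, permuted transitively by $G_0$, whose stabilizer induces $L_i$. The weight-space decomposition of $W_i$ is permuted transitively by $L_i$ and consists of nondegenerate summands. Lemma~\ref{lem:refine-global-blocks} then gives an orthogonal decomposition of $V_0$ containing $W_{i+1}$, and $G_0$-stabilizer of $W_{i+1}$ induces the same group as the $L_i$-stabilizer, namely $L_{i+1}$.

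The same argument applies at the two remaining steps. The component decomposition $W=U_1\perp\cdots\perp U_r$ is transitive and nondegenerate by Proposition~\ref{prop:component-decomposition}, and the stabilizer of $U$ induces $J$ by Lemma~\ref{lem:component-subspace-action}. The common eigenspaces of $C_J(\overline T)$ in $U$ are transitive and nondegenerate by the proof of Proposition~\ref{prop:local-component-action}, and the stabilizer of $Y$ induces $J_Y$. This proves (1).

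For (2), apply Lemma~\ref{lem:fusion-on-supports} to $G_0$ and the decomposition $\{Y_\omega\}$. This shows that $Q=G_0/D$ acts faithfully on $\coprod X_\omega$, and it gives the $p'$-fusion condition for every nonempty $\Sigma$ with $a=2$. The group $Q$ is a quotient of the rational group $G_0$, hence rational. Transitivity on $\Omega$ holds by (1). Each map $X_\omega\to X_{q\omega}$ is induced by an element of $G_0\leq\Sp(V_0)$, hence by a linear isometry. The dimensions agree with $d$ because the blocks are $G_0$-translates of $Y$, and $d\geq4$ with $p\geq 7$ by Lemma~\ref{lem:local-dimension}.

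It remains to identify the local group. The group induced by $Q_\omega$ on $X_{\omega_0}$ is the image of $J_Y$ acting on $(Y\setminus\{0\})/\{\pm1\}$. The kernel of this action consists of the elements acting as $\pm I_Y$, by the basis argument in Lemma~\ref{lem:fusion-on-supports}. The kernel therefore equals $J_Y\cap\{\pm I_Y\}=\langle -I_Y\rangle$, since $-I_Y\in J_Y$ by Proposition~\ref{prop:local-component-action}. So the induced group is $J_Y/\langle-I_Y\rangle$, which satisfies $S\leq J_Y/\langle -I_Y\rangle\leq\Aut(S)$ by part (3) of that proposition. It is therefore almost simple with socle $S$.

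The main obstacle I expect is bookkeeping. One point is that $Q_\omega$ means the stabilizer in the quotient $Q$. Since $D$ fixes every block, $Q_\omega$ is the image of the $G_0$-stabilizer, so it induces the same permutation group. The other point is checking at each refinement step that the hypotheses of Lemma~\ref{lem:refine-global-blocks}, namely transitivity and nondegeneracy, were actually established for the relevant decomposition. For the sign eigenspaces and the $C_J(\overline T)$-eigenspaces these facts were shown inside proofs rather than stated, so I would cite those arguments explicitly.
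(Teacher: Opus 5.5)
Your proposal is correct and follows essentially the same route as the paper. You iterate Lemma~\ref{lem:refine-global-blocks} through the sign-eigenspace, component, and $C_J(\overline T)$-eigenspace refinements, pass to $Q=G_0/D$, and identify the local group as $J_Y/\langle -I_Y\rangle$; where the paper re-derives the simultaneous $p'$-fusion by comparing components of $gv=2v$, you cite Lemma~\ref{lem:fusion-on-supports}(2), which is the same argument. Two small fixes: write $Q_{\omega_0}$ rather than $Q_\omega$ where you identify the local group, and add that transitivity of $Q$ on $\Omega$ makes every $A_\omega$ isomorphic to $A_{\omega_0}$, as Proposition~\ref{prop:local-simple-recognition} requires.
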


\begin{proof}
\noindent\textbf{Step 1: construct the decomposition of $V_0$.}

Initially, regard $V_0$ as a single block.
At each sign-eigenspace refinement, we decompose the
chosen block into nonzero nondegenerate subspaces
permuted transitively by its induced stabilizer group.
Lemma~\ref{lem:refine-global-blocks} extends this
decomposition to all blocks by applying elements of $G_0$.
After the successive refinements leading to $W$, we obtain
an orthogonal decomposition of $V_0$ whose summands are
permuted transitively by $G_0$, with $W$ as one summand.
Its stabilizer in $G_0$ induces exactly $L$ on $W$.

By Proposition~\ref{prop:component-decomposition},
$W$ decomposes orthogonally into component subspaces
permuted transitively by $L$.
We again apply Lemma~\ref{lem:refine-global-blocks}.
The stabilizer in $G_0$ of the chosen component subspace
$U$ induces exactly the group $J$ from
Lemma~\ref{lem:component-subspace-action}.

Finally, we decompose $U$ into the nonzero common
eigenspaces of $C_J(\overline T)$.
The proof of Proposition~\ref{prop:local-component-action}
shows that these eigenspaces are nondegenerate, mutually
orthogonal, and permuted transitively by $J$.
Applying Lemma~\ref{lem:refine-global-blocks} once more,
we obtain
\[
 V_0=\mathop{\perp}_{\omega\in\Omega}Y_\omega,
 \qquad Y_{\omega_0}=Y,
\]
where $G_0$ permutes the summands transitively and the
stabilizer of $Y$ in $G_0$ induces exactly $J_Y$ on $Y$.
All summands have the same dimension $d$, which is at least
$4$ by Lemma~\ref{lem:local-dimension}.

\medskip
\noindent\textbf{Step 2: form the rational group acting on vectors modulo sign.}

Write $X_\omega=(Y_\omega\setminus\{0\})/\{1,-1\}$, and let $D$
be the kernel of the action of $G_0$ on
$\coprod_{\omega\in\Omega}X_\omega$.
By Lemma~\ref{lem:fusion-on-supports},
\[
 D=\{g\in G_0:gY_\omega=Y_\omega\text{ and }
       g|_{Y_\omega}\in\{I_{Y_\omega},-I_{Y_\omega}\}
       \text{ for every }\omega\in\Omega\}.
\]
Define $Q=G_0/D$.
It is rational because it is a quotient of the rational group
$G_0$, and it acts faithfully on the disjoint union by construction.
Its action on the index set $\Omega$ is transitive.
Every map between two sets $X_\omega$ induced by $Q$ is induced by the restriction
of an element of $G_0\leq\Sp(V_0)$, and thus comes from an isometry.
If $G_0=\langle H_0,-I_{V_0}\rangle$ was obtained by adjoining
the scalar involution to an initial rational group $H_0$, then
$-I_{V_0}\in D$. Hence $H_0\to Q$ is surjective and
$Q\cong H_0/(H_0\cap D)$; the quotient used here is also a
quotient of the group before the scalar was adjoined.

\medskip
\noindent\textbf{Step 3: verify simultaneous fusion.}

Let $\varnothing\ne\Sigma\subseteq\Omega$.
For each $\omega\in\Sigma$, choose
$x_\omega=[y_\omega]\in X_\omega$, where
$0\ne y_\omega\in Y_\omega$, and put
\[
 v=\sum_{\omega\in\Sigma}y_\omega.
\]
Since the sum is direct, $v\ne0$, and its component
in $Y_\omega$ is nonzero exactly when $\omega\in\Sigma$.
By {\rm(SF)}, there is a $p'$-element $g\in G_0$
such that $gv=2v$.
As $g$ permutes the summands, we obtain
\[
 g\Sigma=\Sigma,\qquad
 gy_\omega=2y_{g\omega}\quad(\omega\in\Sigma)
\]
by comparing the components of $gv$ and $2v$.
The image $\bar g=gD$ is a $p'$-element of $Q$, and
\[
 \bar g\Sigma=\Sigma,\qquad
 \bar g x_\omega=2x_{\bar g \omega}\quad(\omega\in\Sigma).
\]
Thus the required fusion condition holds.

\medskip
\noindent\textbf{Step 4: identify the local socle.}

By Step~1, the stabilizer of $Y$ in $G_0$ induces
exactly $J_Y$ on $Y$.
Hence the group induced by $Q_{\omega_0}$ on
$X_{\omega_0}$ is the image of $J_Y$ acting on
nonzero vectors modulo sign.
The kernel of this action is exactly $\langle-I_Y\rangle$.
Indeed, a linear map fixing every pair $\{y,-y\}$
is either $I_Y$ or $-I_Y$, by the argument in
Lemma~\ref{lem:fusion-on-supports}, and
$-I_Y\in J_Y$ by Proposition~\ref{prop:local-component-action}.
Thus
\[
 A_{\omega_0}\cong J_Y/\langle-I_Y\rangle.
\]
By Proposition~\ref{prop:local-component-action}, we get
\[
 S\leq A_{\omega_0}\leq\Aut(S).
\]
Therefore $A_{\omega_0}$ is almost simple with socle $S$.
Since $Q$ is transitive on $\Omega$, all the local
permutation groups are isomorphic.
Together with Steps~1--3, this verifies all the hypotheses
of Proposition~\ref{prop:local-simple-recognition}.
\end{proof}

We now apply the recognition result to the local actions
constructed above. This gives a list of the possible
simple quotients of the components of $L$.

\begin{corollary}\label{cor:terminal-component-list}
Suppose that $(L,W)$ is obtained from a rational action as in
Proposition~\ref{prop:global-local-realization}.  Then, for every
component $T$ of $L$, the simple group $T/Z(T)$ is alternating or
belongs to $\mathcal E$.
\end{corollary}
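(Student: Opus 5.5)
The argument is a direct chaining of Proposition~\ref{prop:global-local-realization} with Proposition~\ref{prop:local-simple-recognition}. Fix a component $T$ of $L$ and set $S=T/Z(T)$.

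First we place ourselves in the setting of Proposition~\ref{prop:local-component-action}. Let $U=[T,W]$ be the component subspace from Proposition~\ref{prop:component-decomposition}, and let $J$ be the group induced on $U$ by $N_L(T)$, as in Lemma~\ref{lem:component-subspace-action}. The centralizer $C_J(\overline T)$ is a normal elementary abelian $2$-subgroup of $J$ by Lemma~\ref{lem:component-subspace-action}(5). Hence it has at least one nonzero common eigenspace $Y$ on $U$. Proposition~\ref{prop:local-component-action} applies to this $Y$ and gives
\[
S\leq J_Y/\langle -I_Y\rangle\leq \Aut(S).
\]
Note that the choice of $Y$ is immaterial, since any one eigenspace suffices.

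Next, Proposition~\ref{prop:global-local-realization} produces an orthogonal decomposition $V_0=\perp_{\omega\in\Omega} Y_\omega$ with $Y_{\omega_0}=Y$. It also gives a rational quotient $Q$ of $G_0$ acting on $\coprod_\omega (Y_\omega\setminus\{0\})/\{\pm1\}$, and this $Q$ satisfies all hypotheses of Proposition~\ref{prop:local-simple-recognition} with local socle $C\cong S$. Here $d=\dim Y\geq 4$ by Lemma~\ref{lem:local-dimension}, and $p\in\{7,11\}$, so in particular $p\geq 7$. Proposition~\ref{prop:local-simple-recognition} then shows that $S$ is alternating or isomorphic to a composition factor of $Q$. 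In the latter case, $S$ is a nonabelian composition factor of the rational group $Q$, so by the Feit--Seitz theorem \cite{FeitSeitz1989} it is alternating or lies in $\mathcal E$. This last deduction is exactly the final sentence of Proposition~\ref{prop:local-simple-recognition}.

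The only step needing care is checking that the hypotheses of Proposition~\ref{prop:global-local-realization} hold for $(L,W)$. These hypotheses are: $z=-I_W\in L$, condition $\boldsymbol{(R^-)}$, and that $(L,W)$ arises by successive sign-eigenspace refinements from the rational action $(G_0,V_0)$. All of this is built into the assumption that $(L,W)$ is obtained as in that proposition, together with the standing assumptions of Section~\ref{sec:layer}, which make $E(L)\neq1$ and give the component decomposition. Once these are in place the corollary is immediate, and I expect no genuine obstacle beyond this bookkeeping.
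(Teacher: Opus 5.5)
Your proposal is correct and follows the paper's own proof: choose a local space $Y$ for the component $T$, use Proposition~\ref{prop:global-local-realization} to obtain a rational quotient $Q$ of $G_0$ satisfying the hypotheses of Proposition~\ref{prop:local-simple-recognition} with local socle $T/Z(T)$, and then apply that proposition, including its Feit--Seitz consequence. The extra bookkeeping you include is harmless and consistent with the paper. It covers the existence of a nonzero common eigenspace $Y$ of $C_J(\overline T)$, the bound $d\geq4$, and the standing assumptions $z\in L$ and $\boldsymbol{(R^-)}$.
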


\begin{proof}
Let $T$ be a component, and choose a corresponding local
space $Y$. By Proposition~\ref{prop:global-local-realization},
there is a rational quotient $Q$ of $G_0$ whose action
satisfies the hypotheses of
Proposition~\ref{prop:local-simple-recognition},
with local socle isomorphic to $T/Z(T)$.
The conclusion now follows from that proposition.
\end{proof}

\begin{remark}
The corollary lists the possible simple quotients of a
local component. Earlier, we excluded $\mathcal E$ only
for simple factors of a normal subgroup acting faithfully
and transitively on the set of blocks.
That argument does not exclude the exceptional groups
as quotients of local components.
We examine these component actions in the next section
and rule out the remaining possibilities.
\end{remark}

\section{Further structure of the components}
\label{sec:component-structure}

Throughout this section, retain the assumptions and notation of
Proposition~\ref{prop:component-decomposition}. In particular,
$F(L)=\langle z\rangle$, where $z=-I_W$, and $(L,z)$ satisfies
$\boldsymbol{(R^-)}$.
We show that no group in the Feit--Seitz list can occur
as the simple quotient of a component of $L$. The arguments combine
the conjugacy restrictions imposed by $\boldsymbol{(R^-)}$
with restrictions on irreducible symplectic modules,
using ordinary characters and their Frobenius--Schur indicators.

We start with a result showing that the rationality of the negative characters imposes a conjugacy
condition on elements of odd order.

\begin{lemma}
\label{lem:negative-power-conjugacy}
Let $J$ be a finite group, let $z\in Z(J)$ be an
involution, and suppose that $(J,z)$ satisfies
$\boldsymbol{(R^-)}$.
Then the following statements hold.
\begin{enumerate}
\item
For $x\in J$, the elements $x$ and $zx$ are conjugate
in $J$ if and only if
$\chi(x)=0$ for every $\chi\in\Irr^-(J,z)$.

\item
If $x$ and $zx$ are not conjugate, then $x$ is
conjugate to $x^a$  in $J$ for every integer $a$
with $\gcd(a,o(x))=1$.

\item
Every element of odd order in $J$ or $J/\langle z\rangle$
is conjugate to all its coprime powers in the respective group.
\end{enumerate}
\end{lemma}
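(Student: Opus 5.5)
The plan is to split $\Irr(J)$ according to the value on $z$. Since $z$ is a central involution, every $\chi\in\Irr(J)$ satisfies $\chi(z)=\pm\chi(1)$, and $\chi(zx)=\chi(z)\chi(x)/\chi(1)$. Write $\Irr^+(J,z)$ for the characters with $z$ in their kernel; these are exactly the inflations of $\Irr(J/\langle z\rangle)$.

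\textbf{Step (1).} By column orthogonality, $x$ and $zx$ are conjugate if and only if $\chi(x)=\chi(zx)$ for every $\chi\in\Irr(J)$. For $\chi\in\Irr^+$ this equality is automatic. For $\chi\in\Irr^-$ we have $\chi(zx)=-\chi(x)$, so the condition becomes $\chi(x)=0$. Part (1) follows directly; it does not need $\boldsymbol{(R^-)}$.

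\textbf{Step (2).} Let $\sigma$ be the cyclotomic Galois automorphism with $\zeta\mapsto\zeta^a$ on $|J|$-th roots of unity, where $a$ is chosen coprime to $|J|$ and congruent modulo $o(x)$ to the given exponent. Then $\chi(x^a)=\sigma(\chi(x))$. By $\boldsymbol{(R^-)}$, this gives $\chi(x^a)=\chi(x)$ for all $\chi\in\Irr^-$.

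The characters in $\Irr^+$ may be nonrational, so $x^a$ need not be conjugate to $x$ directly. Instead, use the images in $\bar J=J/\langle z\rangle$: the Galois action permutes $\Irr(\bar J)$, and one needs $\bar x^a$ conjugate to $\bar x$. I would obtain this from the products $\chi\bar\chi$ with $\chi\in\Irr^-$. These lie in the span of $\Irr^+$, are rational-valued, and separate classes. The key observation is that the class functions $\psi\chi$ with $\psi\in\Irr(J)$ arbitrary, together with $\Irr^-$, span the space of class functions on cosets. The claim needed is that $x\mapsto\chi(x)$ for $\chi\in\Irr^-$ already determines the $J$-class of $x$ up to replacing $x$ by $zx$.

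Indeed, the functions in $\Irr^-$ span the class functions $f$ with $f(zx)=-f(x)$. If $\chi(y)=\chi(x)$ for every $\chi\in\Irr^-$ and $x$ is not conjugate to $zx$, then testing against the odd class function $1_{[x]}-1_{[zx]}$ (which is well defined and nonzero, since the classes $[x]$ and $[zx]$ are distinct) forces $y\in[x]$. This is the cleanest route: odd class functions are spanned by $\Irr^-$, and the indicator $1_{[x]}-1_{[zx]}$ is odd. Taking $y=x^a$, the class of $x^a$ is recovered from the $\Irr^-$ values, which equal those of $x$. Hence $x^a\sim x$. I expect verifying this spanning argument carefully to be the main point.

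\textbf{Step (3).} Let $x$ have odd order. If $x\sim zx$, then $zx$ has order $2\,o(x)$, while conjugation preserves order; this is a contradiction. So $x\not\sim zx$, and (2) applies.

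For $\bar x\in J/\langle z\rangle$ of odd order, lift it to an odd-order element $x$ by taking the odd part of any lift. Then $x\sim x^a$ in $J$ by (2), and the conjugacy passes to the quotient.
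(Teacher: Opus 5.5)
Your proof is correct and is essentially the paper's argument: the paper's identity $S(x,y)=\frac{|C_J(x)|}{2}\bigl(\mathbf 1_{\,y\sim_J x}-\mathbf 1_{\,y\sim_J zx}\bigr)$ is the explicit expansion of your odd test function $1_{[x]}-1_{[zx]}$ in terms of $\Irr^-(J,z)$, obtained there from column orthogonality rather than from your abstract spanning statement, and parts (1) and (3) are handled the same way. The exploratory remarks in Step (2) about the products $\chi\overline{\chi}$ separating classes are unjustified and not used, so you should drop them.
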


\begin{proof}
Since $z$ is central, Schur's lemma implies that
in any irreducible complex representation of $J$,
the element $z$ acts as $I$ or $-I$.
Thus every $\chi\in\Irr(J)$ satisfies
\[
 \chi(zx)=\varepsilon_\chi\chi(x),
 \qquad \varepsilon_\chi\in\{1,-1\},
\]
and $\varepsilon_\chi=-1$ precisely when
$\chi\in\Irr^-(J,z)$.

For $x,y\in J$, write
\[
 S(x,y)=
 \sum_{\chi\in\Irr^-(J,z)}
       \chi(x)\overline{\chi(y)}.
\]
Separating the irreducible characters according
to the sign of $\varepsilon_\chi$, we obtain
\[
 2S(x,y)
 =
 \sum_{\chi\in\Irr(J)}\chi(x)\overline{\chi(y)}
 -
 \sum_{\chi\in\Irr(J)}\chi(zx)\overline{\chi(y)}.
\]
Hence
\begin{equation}\label{eq:negative-orthogonality}
S(x,y)
 =
 \frac{|C_J(x)|}{2}
 \left(
   \mathbf 1_{\,y\sim_J x}
   -
   \mathbf 1_{\,y\sim_J zx}
 \right).
\end{equation}
Here $\mathbf 1_{\,y\sim_J x}$ is one if $y$ and $x$
are conjugate in $J$, and zero otherwise.

Taking $y=x$ in \eqref{eq:negative-orthogonality}, we obtain
\[
 \sum_{\chi\in\Irr^-(J,z)}|\chi(x)|^2
 =
 \begin{cases}
 0,&x\sim_J zx,\\[2mm]
 |C_J(x)|/2,&x\not\sim_J zx.
 \end{cases}
\]
This proves assertion~\textup{(1)}.

Now suppose that $x\not\sim_J zx$, and let
$a$ be an integer relatively prime to $o(x)$.
For every ordinary character $\chi$, the value
$\chi(x^a)$ is obtained from $\chi(x)$ by the
cyclotomic automorphism that sends each
$o(x)$th root of unity $\zeta$ to $\zeta^a$.
Condition $\boldsymbol{(R^-)}$ therefore gives
\[
 \chi(x^a)=\chi(x)
 \qquad(\chi\in\Irr^-(J,z)).
\]
Consequently,
\[
 S(x,x^a)
 =
 \sum_{\chi\in\Irr^-(J,z)}|\chi(x)|^2
 =
 \frac{|C_J(x)|}{2}.
\]
Using \eqref{eq:negative-orthogonality}, we conclude that
\[
 \mathbf 1_{\,x^a\sim_J x}
 -
 \mathbf 1_{\,x^a\sim_J zx}
 =1.
\]
Hence $x^a\sim_J x$, proving assertion~\textup{(2)}.

If $x$ has odd order, then $zx$ has order $2o(x)$.
Thus $x$ and $zx$ cannot be conjugate.
Assertion~\textup{(2)} proves the first part of~\textup{(3)}.

Finally, let $\overline x\in J/\langle z\rangle$ have odd
order $m$. We may choose a lift $x\in J$ with $o(x)=m$.
For every integer $a$ relatively prime to $m$,
the preceding argument shows that $x$ is conjugate to $x^a$
in $J$. Passing to the quotient, we conclude that
$\overline x$ is conjugate to $\overline x^{\,a}$
in $J/\langle z\rangle$.
This completes the proof.
\end{proof}

\begin{corollary}\label{cor:odd-power-obstruction}
Let $J$ be a finite group with a central involution $z$.
If either $J$ or $J/\langle z\rangle$ contains an
 element of odd order that is not conjugate to one of
its coprime powers, then $(J,z)$ does not satisfy
$\boldsymbol{(R^-)}$.
\end{corollary}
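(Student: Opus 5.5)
The corollary is the contrapositive of Lemma~\ref{lem:negative-power-conjugacy}(3), so the plan is simply to assume $\boldsymbol{(R^-)}$ and derive the opposite of the hypothesis. Suppose, for a contradiction, that $(J,z)$ satisfies $\boldsymbol{(R^-)}$. Lemma~\ref{lem:negative-power-conjugacy}(3) then says two things. Every element of odd order in $J$ is $J$-conjugate to all its coprime powers. Likewise, every element of odd order in $J/\langle z\rangle$ is conjugate there to all its coprime powers. This contradicts the hypothesis in either of its two forms, and the corollary follows.

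For completeness, I would recall why part (3) holds, since that is where the substance lies. First take $x\in J$ of odd order. Then $zx$ has even order $2o(x)$, so $x$ and $zx$ are not conjugate. Part (2) of the lemma, which rests on the negative orthogonality relation, then gives $x\sim_J x^a$ for every $a$ prime to $o(x)$.

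Now take $\bar x\in J/\langle z\rangle$ of odd order $m$. Since $|\langle z\rangle|=2$ is coprime to $m$, I would choose a lift $x$ of order exactly $m$: if an arbitrary lift has order $2m$, replace it by $zx$. Applying the previous case to this lift and passing to the quotient gives $\bar x\sim\bar x^{\,a}$.

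There is no real obstacle here. The only point needing care is the choice of an odd-order lift in the quotient case, and that is already handled inside the lemma.
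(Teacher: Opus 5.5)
Your proposal is correct and matches the paper, which likewise proves the corollary as the contrapositive of Lemma~\ref{lem:negative-power-conjugacy}(3). Your recap of part (3) is also accurate, including the choice of an odd-order lift: if $x^m=z$, replace $x$ by $zx$, which satisfies $(zx)^m=1$.
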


\begin{proof}
This is the contrapositive of
Lemma~\ref{lem:negative-power-conjugacy}(3).
\end{proof}

\subsection{Alternating components}

In this subsection we consider components whose quotient
by the center is an alternating group.
We begin with a standard consequence of James's construction
of the irreducible modules of the symmetric group
and Clifford theory.

\begin{lemma}\label{lem:alternating-orthogonal}
Let $K$ be an algebraically closed field of odd
characteristic, and let $n\geq5$.
Every self-dual irreducible $KA_n$-module admits
a nondegenerate $A_n$-invariant symmetric bilinear form.
Consequently, no irreducible $KA_n$-module admits
a nondegenerate invariant alternating bilinear form.
\end{lemma}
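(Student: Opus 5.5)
The plan is to obtain the invariant forms for $A_n$ by restricting the invariant forms for $S_n$, using James's construction of the irreducible $KS_n$-modules to get a symmetric form upstairs and Clifford theory for $A_n\lhd S_n$ to bring it down.

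First, I recall the $S_n$ picture. For a partition $\lambda$ of $n$, the permutation module $M^\lambda$ carries the standard $S_n$-invariant symmetric bilinear form in which the tabloids are orthonormal. Its restriction to the Specht module $S^\lambda$ is symmetric and $S_n$-invariant. For $p$-regular $\lambda$, James shows that
\[
 D^\lambda=S^\lambda/(S^\lambda\cap S^{\lambda\perp})
\]
is irreducible, and every irreducible $KS_n$-module arises in this way. The induced form on $D^\lambda$ is nondegenerate by construction. Hence every irreducible $KS_n$-module is self-dual and carries a nondegenerate symmetric $S_n$-invariant form.

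Second, I apply Clifford theory. Since $[S_n:A_n]=2$ and $p$ is odd, every irreducible $KA_n$-module $E$ is a constituent of $D^\lambda\!\downarrow_{A_n}$ for some $\lambda$. This restriction is either irreducible or of the form $E\oplus E^\sigma$ with $E\not\cong E^\sigma$, where $\sigma$ denotes conjugation by a transposition.

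Third, I run the case analysis, writing $b$ for the symmetric form on $D^\lambda$.
\begin{enumerate}
\item If $D^\lambda\!\downarrow_{A_n}=E$ is irreducible, then $b$ is a nonzero symmetric $A_n$-invariant form on $E$, and it is nondegenerate.
\item If $D^\lambda\!\downarrow_{A_n}=E\oplus E^\sigma$, the same argument as in Lemma~\ref{lem:layer-constituents} shows that $b(E,E)\neq 0$ forces $E\cong E^*$, while $b(E,E^\sigma)\neq0$ forces $E^\sigma\cong E^*$. Suppose $E$ is self-dual. Then $E^\sigma\cong E^*\cong E$ is impossible, so $b(E,E^\sigma)=0$. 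Nondegeneracy of $b$ then makes $b|_E$ nondegenerate, and $b|_E$ is symmetric.
\end{enumerate}
In either case, every self-dual irreducible $E$ carries a nondegenerate symmetric invariant form.

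Finally, I deduce the "consequently" statement. Over the algebraically closed field $K$, Schur's lemma makes the space of $A_n$-invariant bilinear forms on an irreducible $E$ at most one-dimensional, as in Lemma~\ref{lem:three}. If $E$ admitted a nondegenerate alternating invariant form, then $E$ would be self-dual. That alternating form would then be a scalar multiple of the symmetric form just constructed. A nonzero form that is both symmetric and alternating cannot exist in odd characteristic, since it would satisfy $b(x,y)=-b(x,y)$, which forces $b=0$.

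The main point requiring care is the $S_n$-step: that $D^\lambda$ inherits a nondegenerate symmetric form, and that these modules exhaust the irreducibles. I will cite James for this rather than reprove it. The Clifford step is elementary once one notes that $E$ and $E^\sigma$ are nonisomorphic in the split case.
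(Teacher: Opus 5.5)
Your proposal is correct and follows essentially the same route as the paper's proof. Both arguments take James's nondegenerate symmetric form on $D^\lambda$ and use Clifford theory for $A_n\lhd S_n$: in the split case $E\oplus E^\sigma$, self-duality of $E$ forces $b(E,E^\sigma)=0$, so $b|_E$ is nondegenerate, and uniqueness of invariant forms up to scalar together with odd characteristic then excludes an alternating form.
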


\begin{proof}
Let $X$ be a self-dual irreducible $KA_n$-module.
Choose an irreducible $KS_n$-module $V$ whose restriction
to $A_n$ contains $X$.
Denote $p=\operatorname{char}K$. By James's construction,
$V$ is isomorphic to
\[
 D^\lambda=S^\lambda/
       \bigl(S^\lambda\cap(S^\lambda)^\perp\bigr)
\]
for a $p$-regular partition $\lambda$; see \cite{James1978}.
Here $S^\lambda$ is a Specht submodule of the tabloid permutation
module, whose tabloid basis is orthonormal. The displayed quotient
therefore inherits a nondegenerate $S_n$-invariant symmetric
bilinear form $b$.

Since $[S_n:A_n]=2$, Clifford theory shows that
$V\mathord\downarrow_{A_n}$ is either $X$ or
$X\oplus X'$, where $X'\not\cong X$ is irreducible.
In the latter case, self-duality of $X$ gives
$\Hom_{KA_n}(X,(X')^*)=0$, so $b(X,X')=0$.
Thus, in either case, the restriction of $b$ to $X$
is nondegenerate, proving the first assertion.

An irreducible module admitting a nondegenerate invariant
alternating form is self-dual. By Schur's lemma, its
invariant bilinear forms are unique up to scalar.
In odd characteristic, a nonzero symmetric form cannot
also be alternating. This proves the final assertion.
\end{proof}

For alternating components with nontrivial center, we also need a conjugacy obstruction.

\begin{lemma}
\label{lem:alternating-odd-power}
For every $n\geq5$, there exist an odd-order element
$x\in A_n$ and an integer $a$ relatively prime to $o(x)$
such that $x$ and $x^a$ are not conjugate in $A_n$.
\end{lemma}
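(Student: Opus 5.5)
The plan is to give an explicit odd-order element in each case, using the standard criterion for when an $S_n$-class of $A_n$ splits. A permutation $x\in A_n$ has an $S_n$-conjugacy class that splits into two $A_n$-classes exactly when its cycle type consists of distinct odd lengths. In that case $C_{S_n}(x)\le A_n$, and the two $A_n$-classes are interchanged by any odd permutation. We look for such an $x$ of odd order and an integer $a$ coprime to $o(x)$ for which $x^a=\sigma x\sigma^{-1}$ with $\sigma$ odd. Then $x^a$ lies in the other $A_n$-class, so $x$ and $x^a$ are not conjugate in $A_n$.

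The basic computation is for a single $m$-cycle $c=(1\,2\,\cdots\,m)$ with $m$ odd. For $a$ coprime to $m$, we have $c^a=\sigma c\sigma^{-1}$, where $\sigma$ is the permutation $i\mapsto ai \pmod m$ of $\mathbb Z/m$. The sign of this multiplication map is the Jacobi symbol $\left(\frac{a}{m}\right)$; this is Zolotarev's lemma, extended to composite $m$. For a product of disjoint cycles of distinct odd lengths $m_1,\dots,m_k$, we take $\sigma$ to be the product of the corresponding multiplication maps, so its sign is $\prod_i\left(\frac{a}{m_i}\right)$.

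We then choose $x$ according to the residue of $n$, always filling the remaining points with fixed points. When $n-1$ or $n$ is odd, we take a single cycle of length $m\in\{n,n-1\}$ with $m\ge5$ odd, adding one fixed point if needed so that the cycle type remains distinct odd lengths (a single fixed point is a $1$-cycle). We then need $a$ with $\left(\frac am\right)=-1$. Such an $a$ exists whenever $m$ is not a perfect square, because the Jacobi symbol is then a nontrivial character.

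The perfect-square cases need separate handling. For odd $m=s^2$, such as $m=9$ or $25$, and more generally when the chosen length is a square, we use the cycle type $(m-2,1,1)$ instead; but this repeats $1$ and therefore fails the splitting criterion. The cleaner choice is a two-cycle type $(m_1,m_2)$ of distinct odd lengths with $m_1+m_2\le n$, one of them prime. For example, we take $m_1=p'$, the largest odd prime with $p'\le n-2$ and $p'\ne 1$. Since $m_1$ is prime, the character $a\mapsto\left(\frac a{m_1}\right)$ is nontrivial. By the Chinese remainder theorem we then choose $a$ with $a\equiv1 \pmod{m_2}$ and $\left(\frac a{m_1}\right)=-1$, which makes the total sign $-1$.

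In practice, the simplest uniform choice is a single $p$-cycle, where $p$ is the largest prime with $p\le n$ and $p$ odd, together with $n-p$ fixed points. The cycle type must have distinct lengths, so this requires $n-p\le1$. This is the main obstacle, since we cannot simply pad with fixed points. We resolve it by taking $x$ with cycle type $(p,q,1^{\epsilon})$ for distinct odd primes $p,q$ with $n-p-q\in\{0,1\}$, or with a single cycle when $n$ or $n-1$ is an odd prime. Such primes exist by a Goldbach-type statement for odd $n-\epsilon$, and this is awkward to cite. A safer alternative avoids primes altogether: use cycle type $(m,1)$ or $(m)$ with $m$ the odd number among $n,n-1$. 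If $m$ is not a square, pick $a$ with $\left(\frac am\right)=-1$. If $m$ is a square, so $m\ge9$, use the type $(m-4,3,1)$ or $(m-4,3)$ instead. Here $m-4$ is not a square for $m\ge9$ (since $s^2-4$ is not a square when $s\ge3$), and it is distinct from $3$ and $1$ when $m\ge9$. We then choose $a$ by the Chinese remainder theorem with $a\equiv1\pmod 3$ and $\left(\frac a{m-4}\right)=-1$, after checking that $\gcd(m-4,3)=1$ or otherwise adjusting. The small cases $n=5,6,7,8$ can be checked directly: a $5$-cycle with $a=2$ works for $n=5,6$, and a $7$-cycle with $a=3$ works for $n=7,8$.
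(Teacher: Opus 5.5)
Your mechanism is sound and differs from the paper's only in being self-contained. The paper cites the Bhunia--Kaur--Singh criterion: a class with distinct odd cycle lengths $m_1,\dots,m_k$ is nonrational exactly when $m_1\cdots m_k$ is not a square. Your Frobenius--Zolotarev computation proves the direction you need directly. The sign of your conjugating permutation is $\prod_i\left(\frac{a}{m_i}\right)=\left(\frac{a}{m_1\cdots m_k}\right)$, which is a nontrivial character precisely when $m_1\cdots m_k$ is not a square.

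\textbf{The gap.} It lies in your final choice of cycle type when $n$ is even and $m=n-1$ is a perfect square; the first instance is $n=10$. Neither replacement type is a partition of $n$: $(m-4)+3+1=n-1$ and $(m-4)+3=n-2$. So, as a permutation of $n$ points, your element has cycle type $(n-5,3,1,1)$, with the part $1$ repeated. This is exactly the padding problem you flagged earlier in the proposal. The transposition of the two fixed points is then an odd element of $C_{S_n}(x)$. Hence the $S_n$-class of $x$ is a single $A_n$-class, and $x$ is $A_n$-conjugate to every coprime power. For $n=10$, no choice of $a$ rescues the element of type $(5,3,1,1)$.

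\textbf{The repair.} Use the paper's choice $(n-3,3)$ in this case. Write $n-1=t^2$ with $t$ odd; then $n-3\equiv 7\pmod 8$, so $3(n-3)\equiv 5\pmod 8$ is not a square. Any $a$ with $\left(\frac{a}{3(n-3)}\right)=-1$ then works; for $n=10$, take type $(7,3)$ with $a=2$, since $\left(\frac{2}{21}\right)=-1$.

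\textbf{A smaller point.} In the case where $n=m$ is a square, your phrase ``after checking that $\gcd(m-4,3)=1$ or otherwise adjusting'' should be replaced by the same observation, not a CRT condition $a\equiv 1\pmod 3$. The total sign is $\left(\frac{a}{3(n-4)}\right)$, and $3(n-4)\equiv 7\pmod 8$ is never a square. This also covers cases such as $n=25$, where $3\mid n-4$.
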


\begin{proof}
By \cite[Theorem~1.5]{BhuniaKaurSingh2019}, a conjugacy class
with distinct odd cycle lengths is nonrational precisely when
the product of those lengths is not a square. Such a cycle type
can be chosen explicitly. If $n$ is odd and not a square, take
$(n)$; if $n$ is an odd square, take $(n-4,3,1)$.
In the latter case $n\geq9$ and the product $3(n-4)$ is
$7$ modulo $8$. If $n$ is even and $n-1$ is not a square, take
$(n-1,1)$; otherwise take $(n-3,3)$.
In this last case $n\geq10$ and $3(n-3)$ is $5$ modulo $8$.
All the listed parts are distinct and odd, and their products
are nonsquares. An element $x$ of the chosen cycle type has
odd order and a nonrational class, so $x$ is not conjugate
to $x^a$ for some integer $a$ relatively prime to $o(x)$.
\end{proof}

We next exclude the two Schur double covers of a symmetric group
using exceptional spin-character values.

\begin{lemma}\label{lem:symmetric-double-covers}
Let $n\geq5$, and let $\widetilde S_n$ be either Schur
double cover of $S_n$, with central kernel $\langle z\rangle$
of order $2$.
Then some character in $\Irr^-(\widetilde S_n,z)$ is not
rational-valued. In particular, $(\widetilde S_n,z)$ does
not satisfy $\boldsymbol{(R^-)}$.
\end{lemma}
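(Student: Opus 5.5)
The plan is to exhibit an explicit spin character with an irrational value, using Schur's classical description of the spin characters of $\widetilde S_n$. I will use the basic spin characters, and only the following standard facts from Schur's theory (see e.g. Hoffman--Humphreys, or Stembridge).

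First, for both double covers the faithful irreducible characters lying over $z$ are labelled by strict partitions $\lambda$ of $n$.

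Second, if $n-\ell(\lambda)$ is even, the label $\lambda$ gives one associate-invariant character $\langle\lambda\rangle$. If $n-\ell(\lambda)$ is odd, it gives a pair of associate characters $\langle\lambda\rangle_\pm$. These two characters agree off the odd-class lift of the cycle type $\lambda$ itself. On that class their values are
\[
 \pm\, i^{(n-\ell(\lambda)+1)/2}\sqrt{z_\lambda/2},
 \qquad z_\lambda=\prod_j\lambda_j .
\]

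Third, the two double covers $\widetilde S_n^{\pm}$ differ only by multiplying odd elements by a square root of $z$. Hence their spin characters differ on odd classes by a factor $i$, and the shape of the formula above is unchanged up to this factor.

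With these facts in hand, I take the basic spin label $\lambda=(n)$. Then $\ell(\lambda)=1$, so $n-\ell(\lambda)=n-1$.

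If $n$ is even, $n-1$ is odd, so we get an associate pair. Its exceptional value on a lift of an $n$-cycle has absolute value $\sqrt{n/2}$, up to a power of $i$.
\begin{itemize}
\item When $n/2$ is not a perfect square, this value is irrational in either cover, and we are done.
\item When $n/2$ is a square, the value is an integer times a power of $i$, and I look at that power. For $\widetilde S_n^{+}$ versus $\widetilde S_n^{-}$ the total power is $i^{n/2}$ or $i^{n/2+1}$. So for one of the covers the value is $\pm i\sqrt{n/2}\notin\mathbb Q$. For the other cover I switch to a different strict partition with two parts, say $\lambda=(n-1,1)$, which has $n-\ell(\lambda)=n-2$ even. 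Then I use the three-part $\lambda=(n-3,2,1)$, for which $n-\ell(\lambda)$ is odd when $n$ is even. I choose it so that $z_\lambda/2=n-3$ is a nonsquare, or so that the power of $i$ is odd.
\end{itemize}

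If $n$ is odd, I take instead two-part strict partitions $\lambda=(n-k,k)$ with $n-2$ odd. The exceptional value has modulus $\sqrt{k(n-k)/2}$. It suffices to pick $k\in\{1,2\}$ with $k(n-k)/2$ a nonsquare, or to adjust the power of $i$ by passing to the other cover as above.

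The main obstacle is this bookkeeping of which $\lambda$ works in each residue class. Small $n$ such as $n=5,6,8,9$ should be checked directly from the ATLAS tables of $2.A_n.2$. In the residual cases I will fall back on an elementary argument that an $n$-cycle lift is not conjugate to a suitable coprime power. This suffices by Corollary~\ref{cor:odd-power-obstruction} when $n$ is odd, since the $n$-cycle then has odd order and Lemma~\ref{lem:alternating-odd-power}'s classes are available.

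The final assertion is then immediate from the definition of $\boldsymbol{(R^-)}$.
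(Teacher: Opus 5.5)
Your even-$n$ argument works once the detour through $(n-1,1)$ is dropped; since $n-2$ is even, that label carries no exceptional value. What remains is the paper's argument. The labels $(n)$ and $(n-3,2,1)$ have exponents $n/2$ and $(n-2)/2$, which have opposite parity. So in either cover one of the two exceptional values has square $-n/2$ or $-(n-3)$ and is non-real. You do not need $n-3$ to be a nonsquare, because the ``odd power of $i$'' happens automatically.

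The odd case has a genuine gap.
\begin{itemize}
\item Every two-part label $(n-k,k)$ has the same exponent $(n-1)/2$. So in a fixed cover the exceptional values for $k=1,2$ are either all real or all non-real.
\item You cannot ``pass to the other cover'', because the lemma must be proved for each cover separately.
\item In the cover where these values are real, you therefore need $(n-1)/2$ or $n-2$ to be a nonsquare. This fails whenever $n=2a^2+1$ with $b^2-2a^2=-1$, and there are infinitely many such $n$.
\item Already for $n=51$, the values for $(50,1)$ and $(49,2)$ are $\pm5$ and $\pm7$ in one of the covers.
\end{itemize}

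Your fallback cannot close this gap. Since $S_n$ is rational, take any odd-order $x\in\widetilde S_n$ and any $\bar g\in S_n$ conjugating $\bar x$ to $\bar x^{\,a}$. Every lift $g$ of $\bar g$ satisfies $gxg^{-1}\in\{x^a,zx^a\}$, and only $x^a$ has odd order, so $gxg^{-1}=x^a$. Hence Corollary~\ref{cor:odd-power-obstruction} never applies to $(\widetilde S_n,z)$. The classes of Lemma~\ref{lem:alternating-odd-power} are nonrational only in $A_n$ and fuse in $S_n$. The irrational spin values lie on lifts of odd permutations, which have even order.

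The paper's fix is to change the number of parts by two rather than changing the cover. For odd $n\geq11$ it pairs $(n-1,1)$ with $(n-6,3,2,1)$. Their squared exceptional values are $(-1)^{(n-1)/2}(n-1)/2$ and $(-1)^{(n-3)/2}\,3(n-6)$, which have opposite signs. For $n=5,7,9$ it uses $(4,1)$, $(6,1)$, $(7,2)$, whose squared values are $\pm2$, $\pm3$, $\pm7$ in the two covers.
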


\begin{proof}
Recall that a partition
$\lambda=(\lambda_1,\ldots,\lambda_r)$ of $n$ is
\emph{strict} if
$\lambda_1>\lambda_2>\cdots>\lambda_r>0$.
Let $\lambda=(\lambda_1,\ldots,\lambda_r)$ be a strict
partition of $n$ with $n-r$ odd, and let $\widetilde S_n$
be the Schur double cover in which transpositions
lift to elements of order $4$.
By Schur's exceptional-value formula
\cite[Theorem~2]{HuJing2014},
there is a character
$\chi_\lambda\in\Irr^-(\widetilde S_n,z)$ such that,
at a lift $g_\lambda$ of a permutation of cycle type
$\lambda$,
\begin{equation}\label{eq:spin-exceptional-square}
 \chi_\lambda(g_\lambda)^2
 =
 (-1)^{(n-r+1)/2}
 \frac{\lambda_1\cdots\lambda_r}{2}.
\end{equation}
The square remains unchanged if we replace $g_\lambda$
by $zg_\lambda$.

For the other Schur double cover, the square of the
corresponding character value is the negative of
\eqref{eq:spin-exceptional-square}.
We explain this explicitly.
In a negative representation, the matrices $T_i$
representing the standard adjacent-transposition lifts
satisfy
\[
 T_i^2=-I,\qquad
 T_iT_{i+1}T_i=T_{i+1}T_iT_{i+1},\qquad
 T_iT_j=-T_jT_i\quad (|i-j|>1).
\]
Replacing every $T_i$ by $\mathrm{i}T_i$, where
$\mathrm{i}^2=-1$, changes the first relation to
$(\mathrm{i}T_i)^2=I$ and preserves the other two
relations. This gives a negative representation of the
other cover. Irreducibility is preserved, since a subspace
is invariant under all $T_i$ if and only if it is invariant
under all $\mathrm{i}T_i$.

A permutation of cycle type $\lambda$ has sign
$(-1)^{n-r}=-1$.
Thus a word in adjacent transpositions representing it
has odd length. The corresponding character value in the
other cover is therefore multiplied by $\pm\mathrm{i}$,
and its square changes sign.

We now choose strict partitions for which these squares
cannot all be squares of rational numbers.

Suppose first that $n\geq6$ is even. Take
$\lambda=(n)$ and $\mu=(n-3,2,1)$.
Both partitions are strict, and their lengths are $1$
and $3$, respectively. Formula
\eqref{eq:spin-exceptional-square} gives
\[
 \chi_\lambda(g_\lambda)^2
   =(-1)^{n/2}\frac n2,
 \qquad
 \chi_\mu(g_\mu)^2
   =(-1)^{(n-2)/2}(n-3).
\]
These are nonzero real numbers with opposite signs.
For the other cover, both squares have the opposite signs.
Consequently, in either cover one of these character
values has negative square and hence is not rational.

Suppose next that $n\geq11$ is odd. Take
$\lambda=(n-1,1)$ and $\mu=(n-6,3,2,1)$.
These partitions are strict and have lengths $2$ and $4$.
The corresponding squares are
\[
 \chi_\lambda(g_\lambda)^2
   =(-1)^{(n-1)/2}\frac{n-1}{2},
 \qquad
 \chi_\mu(g_\mu)^2
   =(-1)^{(n-3)/2}3(n-6).
\]
Again, they are nonzero and have opposite signs.
Thus each cover has a negative irreducible character
with a nonrational value.

It remains to consider $n=5,7,9$. We take
$\lambda=(4,1)$, $(6,1)$, $(7,2)$, respectively.
For the cover with transposition lifts of order $4$,
formula \eqref{eq:spin-exceptional-square} yields
\[
\begin{array}{c|c|c}
 n & \lambda & \chi_\lambda(g_\lambda)^2\\ \hline
 5 & (4,1) & 2\\
 7 & (6,1) & -3\\
 9 & (7,2) & 7
\end{array}
\]
For the other cover, these squares are $-2,3,-7$.
None of the numbers $\pm2,\pm3,\pm7$ is the square of
a rational number. Hence the displayed character value
is nonrational in each case.

Therefore neither Schur double cover satisfies
$\boldsymbol{(R^-)}$.
\end{proof}

\begin{lemma}[Schur \cite{Schur1911}]
\label{lem:symmetric-extension-identification}
Let $n\geq5$, and suppose that
\[
 1\longrightarrow \langle z\rangle
 \longrightarrow L
 \overset{\pi}{\longrightarrow} S_n
 \longrightarrow 1
\]
is a central extension with $o(z)=2$.
If $T=\pi^{-1}(A_n)$ is perfect, then $L$ is isomorphic to one of
the two Schur double covers of $S_n$.
\end{lemma}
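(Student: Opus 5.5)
The plan is to use Schur's classification of the double covers of $S_n$, via the cohomological description of central extensions. Since $\langle z\rangle$ is central of order $2$, the extension corresponds to a class in $H^2(S_n,C_2)$. For $n\ge4$, the universal coefficient theorem gives
\[
H^2(S_n,C_2)\cong\Hom(H_2(S_n),C_2)\oplus\operatorname{Ext}(H_1(S_n),C_2)\cong C_2\times C_2,
\]
because $H_1(S_n)=C_2$ and $H_2(S_n)=M(S_n)=C_2$. The four classes are realized by the following extensions:
\begin{enumerate}
\item the split extension $S_n\times C_2$;
\item the extension $S_n\times_{\mathrm{sgn}}C_4$, obtained by pulling back $C_4\to C_2$ along the sign map, in which $\pi^{-1}(A_n)\cong A_n\times C_2$;
\item the two Schur covers $2\cdot S_n^{\pm}$.
\end{enumerate}

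The key step is to show that perfection of $T=\pi^{-1}(A_n)$ excludes the two classes in the $\operatorname{Ext}$ summand, meaning the classes whose restriction to $A_n$ is trivial. Restriction to $A_n$ sends $H^2(S_n,C_2)\to H^2(A_n,C_2)$. The $\operatorname{Ext}$ classes are inflated from $H^2(S_n/A_n,C_2)$, so they restrict to zero on $A_n$, which gives $T\cong A_n\times C_2$. That group is not perfect, since it maps onto $C_2$. For the two Schur classes, one must check that the restriction to $A_n$ is the nontrivial class of $2\cdot A_n$. Then $T\cong 2\cdot A_n$, which is perfect for $n\ge5$: it is a nonsplit central extension of a perfect group, so $T=T'Z$ with $Z\cap T'$ nontrivial, and hence $T'=T$.

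The converse direction is really what the statement needs. If $T$ is perfect, then the restricted class is nonzero. This rules out the split extension and the $C_4$-pullback, whose classes vanish on $A_n$. What remains are exactly the two Schur covers. So I would argue directly that an extension with $T$ perfect cannot restrict to the trivial class on $A_n$, since otherwise $T\cong A_n\times C_2$. Then I would identify the classes with nonzero restriction as precisely those outside the $\operatorname{Ext}$ summand.

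The main obstacle is this last identification: showing that the kernel of restriction $H^2(S_n,C_2)\to H^2(A_n,C_2)$ is exactly the $\operatorname{Ext}$ part, that is, the inflation from $C_2$. I would prove it with the five-term inflation–restriction sequence
\[
0\to H^1(C_2,C_2)\to H^1(S_n,C_2)\to H^1(A_n,C_2)^{S_n}\to H^2(C_2,C_2)\to H^2(S_n,C_2)_1\to H^1(C_2,H^1(A_n,C_2)).
\]
Here $H^1(A_n,C_2)=0$ because $A_n$ is perfect. Since that group vanishes, the kernel of restriction equals the image of inflation from $H^2(C_2,C_2)\cong C_2$, which consists of the two $\operatorname{Ext}$ classes. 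Alternatively, the argument could cite Schur's original result directly, as the attribution in the statement suggests.
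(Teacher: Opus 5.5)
Your proof is correct, but it takes a different route from the paper's. The paper makes a one-line observation: $\langle z\rangle=\ker\pi\le T=[T,T]\le[L,L]$, so the extension is a stem extension. It then cites Bazlov--Berenstein for the fact that the stem extension classes of $S_n$ by $C_2$ are exactly the two Schur covers.

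You instead compute $H^2(S_n,C_2)\cong C_2\times C_2$ from the universal coefficient theorem and $M(S_n)\cong C_2$. You then identify $\ker(\operatorname{res}^{S_n}_{A_n})$ with the image of inflation from $H^2(C_2,C_2)$ via the Hochschild--Serre sequence, using $H^1(A_n,C_2)=0$. Finally, you use perfection of $T$ to see that the class of $L$ restricts nontrivially to $A_n$.

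The two criteria select the same two classes. The inflated classes form the $\operatorname{Ext}$ summand, i.e.\ the kernel of $H^2(S_n,C_2)\to\Hom(H_2(S_n),C_2)$. These are precisely the extensions with $\langle z\rangle\cap[L,L]=1$, which are the non-stem ones. Your route is more self-contained and shows explicitly why $S_n\times C_2$ and the $C_4$-pullback are excluded. The paper's route is shorter because it delegates the whole classification to the citation.

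For your final count to go through, you also need the two Schur covers to give two \emph{distinct} classes outside $\ker(\operatorname{res})$. You already handle the restriction, since $\pi^{-1}(A_n)\cong 2\cdot A_n$ is nonsplit. Distinctness follows because a transposition lifts to an involution in one cover and to an element of order $4$ in the other. An equivalence of extensions preserves the orders of lifts of a fixed element of $S_n$, so these two classes cannot coincide.
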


\begin{proof}
Since $T$ is perfect,
\[
 \langle z\rangle\leq T=[T,T]\leq[L,L].
\]
Thus this is a stem extension: its central kernel is contained
in $[L,L]$. Among the central extensions of $S_n$ by $C_2$,
\cite[Theorem~2.6]{BazlovBerenstein2013} gives exactly two stem
extension classes, namely the Schur double covers.
\end{proof}

For $A_6$, the larger outer automorphism group requires a separate
argument.

\begin{lemma}\label{lem:A6-extensions-excluded}
Let $L$ be a finite group with a normal subgroup
$T\cong\SL_2(9)$, and write $Z(T)=\langle z\rangle$.
Suppose that
$C_L(T)=\langle z\rangle$. Then $(L,z)$ does not satisfy $\boldsymbol{(R^-)}$.
\end{lemma}

\begin{proof}
Suppose, for a contradiction, that $(L,z)$ satisfies
$\boldsymbol{(R^-)}$.
Since $Z(T)$ is characteristic in $T$ and has order $2$,
the element $z$ is central in $L$.
The action of $L$ by conjugation on $T$ induces an action
on $T/Z(T)\cong A_6$ with kernel $\langle z\rangle$.
We therefore identify
\[
 A_6\cong T/\langle z\rangle
 \leq L/\langle z\rangle
 \leq\Aut(A_6).
\]

We use the standard description
$\Aut(A_6)\cong\PGL_2(9)\rtimes\langle\phi\rangle$,
where $\phi$ is induced by the field automorphism
$a\mapsto a^3$ of $\F_9$.
Choose a generator $\omega$ of $\F_9^\times$, and write
\[
 D=\begin{pmatrix}\omega&0\\0&1\end{pmatrix}.
\]
On $T=\SL_2(9)$, define
\[
 \delta(t)=DtD^{-1},
 \qquad
 \phi(t)=(t_{ij}^{\,3}).
\]
Their outer classes generate
$\Out(A_6)\cong C_2\times C_2$.
We refer to these as the diagonal and field classes,
respectively.

We first show that $L/T$ cannot contain the mixed
outer class $\delta\phi$.
Set
\[
 \alpha=\delta\phi,\qquad
 U=\begin{pmatrix}\omega^2&0\\0&\omega^{-2}\end{pmatrix}.
\]
Since $\omega$ has order $8$, direct calculation gives
\[
 \alpha^2=\operatorname{Int}(U),\qquad
 U^2=z,\qquad
 \alpha(U)=zU.
\]
Indeed,
\[
 D\phi(D)=\operatorname{diag}(\omega^4,1)
         =\omega^2U,
\]
so conjugation by $D\phi(D)$ equals conjugation by $U$.
Also $\phi(U)=U^{-1}=zU$, and $D$ commutes with $U$.

Suppose that the mixed outer class occurs in $L/T$.
Multiplying a representative by a suitable element
of $T$, we obtain $x\in L$ whose conjugation action
on $T$ is exactly $\alpha$.
Then $x^2$ and $U$ induce the same inner automorphism
of $T$, whence
$x^2U^{-1}\in C_L(T)=\langle z\rangle$.
Thus $x^2=U$ or $x^2=zU$.
But the conjugation by $x$ interchanges $U$ and $zU$,
whereas every element commutes with its own square.
This is a contradiction.

Consequently, $L/T$ is trivial, is generated by the
diagonal class, or is generated by the field class.
We exclude these three possibilities one by one.

If $L=T$, then $L/\langle z\rangle\cong A_6$. By
Lemma~\ref{lem:alternating-odd-power},  $L/\langle z\rangle$ has an element of odd order that is not conjugate
to one of its coprime powers.
This contradicts
Corollary~\ref{cor:odd-power-obstruction}.

Suppose that $L/T$ is generated by the diagonal class.
Then $L/\langle z\rangle\cong\PGL_2(9)$.
We shall find an element of $L/\langle z\rangle$ of order $5$
that is not conjugate to its square.

Choose $\xi\in\F_{81}^{\times}$ of order $5$.
Regard multiplication by $\xi$ as an $\F_9$-linear map
on the two-dimensional space $\F_{81}$, and denote
its matrix by $h$.
Its determinant is
\[
 N_{\F_{81}/\F_9}(\xi)=\xi^{10}=1,
\]
so $h\in\SL_2(9)$.
Over $\F_{81}$ its eigenvalues are
$\xi$ and $\xi^9=\xi^{-1}$.
The projective image $\bar h$ therefore has order $5$.

For a semisimple element of $\operatorname{PGL}_2$ with
two distinct eigenvalues, their ratio is determined up to
inversion and is preserved under conjugacy. Indeed, the
ratio does not depend on the scalar multiple chosen for
the matrix representative, and it is inverted if we
interchange the two eigenvalues.
For $\bar h$ the ratio is $\xi^2$, whereas for
$\bar h^{\,2}$ it is $\xi^4$.
Since
\[
 \xi^4\notin\{\xi^2,\xi^{-2}\},
\]
the elements $\bar h$ and $\bar h^{\,2}$ are not
conjugate in $\PGL_2(9)$.
Again Corollary~\ref{cor:odd-power-obstruction}
gives a contradiction.

Finally, suppose that $L/T$ is generated by the field
class. In this case, $\PSL_2(9)\rtimes\langle\phi\rangle\cong S_6$, and so $L/\langle z\rangle\cong S_6$.
The inverse image of $A_6$ is the perfect group $T$.
By Lemma~\ref{lem:symmetric-extension-identification},
$L$ is isomorphic to one of the two Schur double covers of $S_6$.
Lemma~\ref{lem:symmetric-double-covers} then contradicts
$\boldsymbol{(R^-)}$.

This completes the proof.
\end{proof}

We can now exclude all alternating simple quotients.

\begin{proposition}\label{prop:no-alternating-components}
No component of $L$ has an alternating simple quotient.
\end{proposition}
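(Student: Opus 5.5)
Suppose, for a contradiction, that some component $T$ of $L$ has $T/Z(T)\cong A_n$ with $n\ge5$. The proof splits according to whether $Z(T)$ is trivial.

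\emph{Case $Z(T)=1$.} Here $T\cong A_n$. By Proposition~\ref{prop:component-decomposition} and Lemma~\ref{lem:layer-constituents}, $K\otimes_k U$ with $U=[T,W]$ is an orthogonal sum of irreducible $KE$-modules. Each summand $V_a$ carries a nondegenerate alternating form, namely the restriction of $B_K$. By Lemma~\ref{lem:single-component-action}, $T$ is the only component acting nontrivially on $V_a$, so $V_a$ restricted to $T$ is irreducible. Since the other components act trivially, $B_K|_{V_a}$ is a nondegenerate $T$-invariant alternating form on an irreducible $KA_n$-module. This contradicts Lemma~\ref{lem:alternating-orthogonal}. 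One detail needs checking: the form on $V_a$ must be nondegenerate for $T$ itself, not only for $E$. This holds because $E=T\times(\text{other components})$ by Proposition~\ref{prop:component-decomposition}(4), and the other factors act trivially on $V_a$.

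\emph{Case $Z(T)\ne1$.} Lemma~\ref{lem:unique-centered-component} gives $E(L)=T$, $Z(T)=C_L(T)=\langle z\rangle$, and $A_n\le L/\langle z\rangle\le\Aut(A_n)$. Also $(L,z)$ satisfies $\boldsymbol{(R^-)}$ by the standing assumptions.
\begin{enumerate}
\item If $n=6$, then $T$ is a perfect central extension of $A_6$ by $C_2$, hence $T\cong\SL_2(9)$, the unique perfect double cover. Lemma~\ref{lem:A6-extensions-excluded} then contradicts $\boldsymbol{(R^-)}$.
\item If $n\ne6$, then $\Aut(A_n)=S_n$, so $L/\langle z\rangle$ is $A_n$ or $S_n$.
\begin{enumerate}
\item If $L/\langle z\rangle\cong A_n$, Lemma~\ref{lem:alternating-odd-power} gives an odd-order element of $L/\langle z\rangle$ not conjugate to one of its coprime powers. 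This contradicts Corollary~\ref{cor:odd-power-obstruction}.
\item If $L/\langle z\rangle\cong S_n$, the preimage of $A_n$ is $T$, which is perfect. Lemma~\ref{lem:symmetric-extension-identification} identifies $L$ with a Schur double cover of $S_n$, and Lemma~\ref{lem:symmetric-double-covers} contradicts $\boldsymbol{(R^-)}$.
\end{enumerate}
\end{enumerate}

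The main obstacle is the bookkeeping in the first case: one must confirm that the symplectic form restricted to an irreducible $T$-constituent is nondegenerate and $T$-invariant, so that Lemma~\ref{lem:alternating-orthogonal} applies. In the second case the step that most needs care is identifying the perfect double cover of $A_6$ with $\SL_2(9)$, which is standard because the $2$-part of the Schur multiplier of $A_6$ is $C_2$.
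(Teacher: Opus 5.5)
Your proof is correct and follows the paper's argument essentially step for step. The case $Z(T)=1$ uses Lemma~\ref{lem:alternating-orthogonal} applied to a symplectic $T$-constituent. The case $Z(T)\ne1$ goes through Lemma~\ref{lem:unique-centered-component} and then splits into $n=6$ (via $\SL_2(9)$ and Lemma~\ref{lem:A6-extensions-excluded}) and $n\ne6$ (odd-power obstruction when $L/\langle z\rangle\cong A_n$, Schur double covers when $L/\langle z\rangle\cong S_n$). The only differences are cosmetic: you treat the $n=6$ subcase first, and you spell out why the constituent is irreducible and symplectic for $T$ itself.
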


\begin{proof}
Suppose that a component $T$ satisfies $T/Z(T)\cong A_n$, where $n\geq5$.
Write $K=\overline{\F}_p$. By
Lemmas~\ref{lem:layer-constituents} and~\ref{lem:single-component-action},
a simple constituent associated with $T$ is a nonzero simple
symplectic $KT$-module. If $Z(T)=1$, then $T\cong A_n$,
contrary to Lemma~\ref{lem:alternating-orthogonal}.
Thus $Z(T)\ne1$, and Lemma~\ref{lem:unique-centered-component} shows that
\[
 Z(T)=\langle z\rangle,\qquad E(L)=T,\qquad
 C_L(T)=\langle z\rangle,\qquad
 A_n\leq L/\langle z\rangle\leq\Aut(A_n).
\]

Suppose first that $n\ne6$. Then $\Aut(A_n)\cong S_n$, so
$L/\langle z\rangle$ is $A_n$ or $S_n$.
The first possibility contradicts
Lemma~\ref{lem:alternating-odd-power} and
Corollary~\ref{cor:odd-power-obstruction}.
In the second case, the inverse image of $A_n$ is $T$, since
it contains $T$ and both groups have order $2|A_n|$.
Since $T$ is perfect, it follows from Lemma~\ref{lem:symmetric-extension-identification}
that $L$ is a Schur double cover of $S_n$.
By Lemma~\ref{lem:symmetric-double-covers},  $(L,z)$
does not satisfy $\boldsymbol{(R^-)}$, which is a contradiction again.

It remains to consider $n=6$.
The Schur multiplier of $A_6$ is cyclic of order $6$.
Since its perfect central extension with kernel of order $2$ is unique up to isomorphism,
we have
$T\cong 2.A_6\cong\SL_2(9)$.
Lemma~\ref{lem:A6-extensions-excluded} now applies and yields
a contradiction to $\boldsymbol{(R^-)}$.
\end{proof}

\subsection{Ordinary characters and modular form types}

For an ordinary irreducible character $\chi$ of a finite
group $G$, the Frobenius--Schur indicator of $\chi$ is
\[
 \nu(\chi)=\frac{1}{|G|}\sum_{g\in G}\chi(g^2).
\]
Its value is $0$, $1$, or $-1$, according as the
corresponding complex representation is not self-dual,
has symmetric type, or has alternating type.

Let $k$ be an algebraically closed field of odd
characteristic $p$.
If $V$ is a self-dual simple $kG$-module, write
\[
 \varepsilon(V)=
 \begin{cases}
  1,&\text{if $V$ has symmetric type},\\
 -1,&\text{if $V$ has alternating type}.
 \end{cases}
\]
By Schur's lemma, the invariant nondegenerate bilinear
form on $V$ is unique up to a nonzero scalar, so these
two possibilities are mutually exclusive.

%To exclude the remaining groups, we compare modular form types with
%ordinary character data. We use the following theorem of Thompson
%\cite[p.~227]{ThompsonForms} and Willems
%\cite[Theorem~2.8]{Willems1991}. %Its precise formulation, including
%%odd multiplicity and equality of form types, is also stated in
%%\cite[Section~5, p.~29]{HissMalle2001}.

%\begin{theorem}[Thompson--Willems]
%\label{thm:ordinary-modular-form-types}
%Let $G$ be a finite group, let $k$ be an algebraically
%closed field of odd characteristic $p$, and let $V$
%be a self-dual simple $kG$-module with Brauer character
%$\varphi$.
%Then there exists a real-valued ordinary irreducible
%character $\chi$ of $G$ such that
%\[
% d_{\chi\varphi}\equiv1\pmod2,
% \qquad
% \nu(\chi)=\varepsilon(V),
%\]
%where $d_{\chi\varphi}$ is the decomposition number of $\varphi$ in the restriction of $\chi$ to $p$-regular elements of $G$.
%\end{theorem}
%
%The theorem applies to arbitrary finite groups in odd characteristic;
%no assumption on the defect groups is required.

\begin{lemma}\label{lem:ordinary-symplectic-witness}
Let $G$ be a finite group, and let $k$ be an
algebraically closed field of odd characteristic.
Suppose that $V$ is a simple symplectic $kG$-module,
with Brauer character $\varphi$.
Then there exists $\chi\in\Irr(G)$ such that
\[
 \nu(\chi)=-1,
 \qquad
 d_{\chi\varphi}\ \text{is odd}.
\]
If, in addition, $z\in Z(G)$ is an involution acting
as $-I_V$, then $\chi$ can be chosen in
$\Irr^-(G,z)$.
\end{lemma}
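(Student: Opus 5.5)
The plan is to invoke the Thompson--Willems theorem on ordinary and modular form types: Thompson \cite{ThompsonForms} and Willems \cite[Theorem~2.8]{Willems1991}. For a self-dual simple $kG$-module $V$ with Brauer character $\varphi$ in odd characteristic, this theorem provides a real-valued $\chi\in\Irr(G)$ with $d_{\chi\varphi}$ odd and $\nu(\chi)=\varepsilon(V)$.

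First we check the hypotheses. Since $V$ is simple and carries a nondegenerate invariant alternating form, it is self-dual. By Schur's lemma the invariant form is unique up to scalar, so $\varepsilon(V)=-1$ is well defined. Applying the theorem gives $\chi$ with $\nu(\chi)=-1$ and $d_{\chi\varphi}$ odd, which is the first assertion.

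For the refinement, suppose $z\in Z(G)$ is an involution acting as $-I_V$. Since $z$ is central, it acts on the complex representation affording $\chi$ by a scalar $\pm1$. Because $z$ has order $2$ prime to $p$, it is $p$-regular, so we may restrict $\chi$ to $p$-regular elements and write
\[
\chi^0=\sum_\psi d_{\chi\psi}\psi.
\]
Each Brauer constituent $\psi$ satisfies $\psi(z)=\pm\psi(1)$, with the same sign as $\chi$, because reduction modulo $p$ preserves the scalar by which $z$ acts. Since $d_{\chi\varphi}$ is odd, and in particular nonzero, $\varphi$ occurs in $\chi^0$, so $\chi(z)/\chi(1)=\varphi(z)/\varphi(1)=-1$. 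Hence $\chi\in\Irr^-(G,z)$.

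The main obstacle is that the Thompson--Willems theorem is cited rather than proved here. The task is to state it precisely: the odd-multiplicity conclusion, equality of the indicator with the modular form type, and validity for arbitrary finite groups in odd characteristic with no defect hypothesis. If an explicit argument were wanted, the route I would try is Thompson's: take an $\mathcal O$-lattice in a lift of the projective cover of $V$ and compare symmetric and alternating invariant forms via the Frobenius--Schur count. Here I would simply cite the theorem.
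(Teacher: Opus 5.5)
Your proposal is correct and follows essentially the same route as the paper: invoke Willems' Theorem~2.8 (with Thompson) for a $\chi$ with $\nu(\chi)=-1$ and $d_{\chi\varphi}$ odd, then use that the central involution $z$ acts by the same scalar $\pm1$ on every modular constituent of $\chi$, including $V$, so oddness of $p$ forces $\chi(z)=-\chi(1)$. The only difference is cosmetic: you phrase the second step through the decomposition $\chi^0=\sum_\psi d_{\chi\psi}\psi$, whereas the paper reduces an invariant lattice modulo $p$.
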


\begin{proof}
Since $V$ is symplectic, it is self-dual and
$\varepsilon(V)=-1$. 
By  \cite[Theorem~2.8]{Willems1991},
there exists
an ordinary irreducible character $\chi$ with
$\nu(\chi)=-1$ and $d_{\chi\varphi}\equiv1\pmod2$ (see also \cite[p.~227]{ThompsonForms}).

Now suppose that $z$ is a
central involution acting on $V$ as $-I_V$.
By Schur's lemma, $z$ acts as $\eta I$, where
$\eta\in\{1,-1\}$, in an ordinary irreducible representation
affording $\chi$. It therefore acts by the same scalar
on an invariant lattice and, after reduction modulo $p$,
on every composition factor.

Since $d_{\chi\varphi}>0$, the module $V$ is one of
these composition factors.
Thus the reduction of $\eta$ equals $-1$.
The characteristic is odd, so $1$ and $-1$ remain
distinct after reduction. Consequently $\eta=-1$,
and hence
$\chi(z)=-\chi(1)$.
Therefore $\chi\in\Irr^-(G,z)$.
\end{proof}

\begin{corollary}\label{cor:no-quaternionic-no-symplectic}
Let $G$ be a finite group.
\begin{enumerate}
\item If no $\chi\in\Irr(G)$ has $\nu(\chi)=-1$, then $G$ has no
absolutely irreducible symplectic module in odd characteristic.
\item If $z\in Z(G)$ is an involution and no
$\chi\in\Irr^-(G,z)$ has $\nu(\chi)=-1$, then $G$ has no such module
on which $z$ acts as $-I$.
\end{enumerate}
\end{corollary}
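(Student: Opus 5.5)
This is the contrapositive of Lemma~\ref{lem:ordinary-symplectic-witness}, so the plan is to argue by contradiction in each part and let that lemma supply the forbidden character.

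For part~(1), suppose $G$ has an absolutely irreducible symplectic module $V_0$ over a field $F$ of odd characteristic. Put $k=\overline F$ and $V=k\otimes_F V_0$. Absolute irreducibility gives that $V$ is a simple $kG$-module. The nondegenerate $G$-invariant alternating form on $V_0$ extends $k$-bilinearly to a form on $V$ with the same three properties, so $V$ is a simple symplectic $kG$-module. Lemma~\ref{lem:ordinary-symplectic-witness} then produces $\chi\in\Irr(G)$ with $\nu(\chi)=-1$, which contradicts the hypothesis of~(1).

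For part~(2), we run the same argument under the additional assumption that $z$ acts on $V_0$ as $-I$. After scalar extension, $z$ still acts as $-I_V$ on $V$. The second assertion of Lemma~\ref{lem:ordinary-symplectic-witness} now lets us choose $\chi\in\Irr^-(G,z)$ with $\nu(\chi)=-1$, contrary to the hypothesis of~(2).

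None of these steps is a real obstacle; the only point to check is that base change keeps the invariant form nondegenerate and alternating. Nondegeneracy holds because the Gram matrix is unchanged. The alternating property follows from $B(v,v)=0$ on a basis together with $B(u,v)=-B(v,u)$, which suffices since the characteristic is odd.
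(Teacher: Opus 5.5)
Your proposal is correct and matches the paper's proof. The paper also extends scalars to an algebraic closure, notes that absolute irreducibility and the nondegenerate invariant alternating form (and $z$ acting as $-I$) survive this, and then applies Lemma~\ref{lem:ordinary-symplectic-witness}. Your Gram-matrix remark simply spells out the base-change step that the paper states in one sentence.
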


\begin{proof}
The absolute irreducibility of a module and the nondegenerate
alternating form are preserved under scalar extension to an algebraic closure.
Both assertions now follow from
Lemma~\ref{lem:ordinary-symplectic-witness}.
\end{proof}

\subsection{The exceptional simple quotients}

In this subsection, we exclude the remaining possibilities for the
simple quotients of components.
We first consider $\Sp_6(2)$, ${\rm O}_8^+(2)$,
and $\PSL_3(4)$.

\begin{lemma}
\label{lem:three-families-table-data}
Let $\mathcal S_0=
 \{\Sp_6(2), {\rm O}_8^+(2),
                       \operatorname{PSL}_3(4)\}$.
The following statements hold.
\begin{enumerate}
\item
If $T$ is quasisimple, $T/Z(T)\cong S\in\mathcal S_0$,
and $|Z(T)|\leq2$, then $T$ is isomorphic to $S$ or
to its double cover $2.S$.

\item
None of these six groups has an ordinary irreducible
character of Frobenius--Schur indicator $-1$.
%More precisely, their indicator distributions are
%\[
%\begin{array}{c|rrr}
% G&
% \#\{\chi:\nu(\chi)=1\}&
% \#\{\chi:\nu(\chi)=0\}&
% \#\{\chi:\nu(\chi)=-1\}\\ \hline
% \Sp_6(2)                         &30&0&0\\
% 2.\Sp_6(2)                      &41&2&0\\
% \operatorname{P}\Omega_8^+(2)    &53&0&0\\
% 2.\operatorname{P}\Omega_8^+(2)  &83&0&0\\
% \operatorname{PSL}_3(4)          &8&2&0\\
% 2.\operatorname{PSL}_3(4)        &14&4&0
%\end{array}
%\]
%Here $\chi$ ranges over $\Irr(G)$.
\end{enumerate}
\end{lemma}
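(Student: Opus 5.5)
Both parts are finite checks against the Schur multipliers and the ATLAS character tables; I would organize the proof around those sources.

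\textbf{Part (1).} Let $T$ be quasisimple with $T/Z(T)\cong S$ and $|Z(T)|\le 2$. Then $T$ is a perfect central extension of $S$, so it is a quotient of the universal cover $\widehat S$. The ATLAS \cite{Atlas1985} gives the multipliers: $M(\Sp_6(2))\cong C_2$, $M({\rm O}_8^+(2))\cong C_2\times C_2$, and $M(\PSL_3(4))\cong C_4\times C_{12}$. If $Z(T)=1$, then $T\cong S$. If $|Z(T)|=2$, then $T\cong\widehat S/M_0$ for a subgroup $M_0$ of index $2$ in $M(S)$.

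For $\Sp_6(2)$ this subgroup is unique, so $T$ is the double cover. For ${\rm O}_8^+(2)$ there are three index-two subgroups. They are permuted transitively by the triality automorphisms in $\Out(S)\cong S_3$, so the three double covers are isomorphic as abstract groups; call this group $2.S$. For $\PSL_3(4)$, the quotients of $C_4\times C_{12}$ of order $2$ correspond to the three index-two subgroups. The outer automorphism group acts on $M(S)$, and the $2$-part $C_4\times C_4$ has $2$-torsion quotients permuted by $\Out(S)$. I would cite the ATLAS statement that there is a single isomorphism type $2.\PSL_3(4)$. This identification, together with the reading of the indicators below, is the step I expect to be the main obstacle, since it rests on citing the ATLAS conventions correctly rather than on anything proved in the paper.

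\textbf{Part (2).} For each of the six groups $S$ and $2.S$, I would read off the Frobenius--Schur indicators from the ATLAS character tables \cite{Atlas1985,AtlasOnline}. The ATLAS marks every character with $+$, $\circ$ or $-$, and the claim is that no $-$ appears in any of the six tables.

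For the simple groups $\Sp_6(2)$ and ${\rm O}_8^+(2)$, all characters are rational-valued. For the simple groups one can also argue without the indicator column. These groups have no faithful irreducible characters of quaternionic type, because a character with $\nu(\chi)=-1$ would lie in a symplectic representation; for $\Sp_6(2)$ one can also check the Schur index, which is $1$ throughout. In any case the indicator column can be read directly.

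For the double covers, the faithful characters are exactly the characters in $\Irr^-(T,z)$. The ATLAS lists their indicators as $+$ or $\circ$. For example, in $2.\PSL_3(4)$ the faithful characters have degrees $6,10,10,20,\dots$, with indicators $+$ or $\circ$.

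As a cross-check I would verify the indicator counts $\sum_\chi \nu(\chi)\chi(1)=\#\{g:g^2=1\}$ against the involution counts given in the ATLAS. This count does not by itself distinguish a $-$ sign; it only confirms that the column has been read consistently.

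Combining (1) and (2) with Corollary~\ref{cor:no-quaternionic-no-symplectic} then excludes symplectic constituents in these cases, which is the intended use of the lemma.
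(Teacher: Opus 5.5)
Your overall route is the paper's. For (1) you realize $T$ as a quotient of the universal cover by an index-two subgroup of the multiplier and use that automorphisms of $S$ permute these subgroups transitively. For (2) you read the indicators off the ATLAS/CTblLib tables. The paper does the same, and additionally records the CTblLib identifiers, the indicator distributions, and the class-sum formula $\nu(\chi)=\sum_j\chi(c_{s(j)})/|C_G(c_j)|$ with GAP code, so that the table check is reproducible. Since (2) consists of nothing but this table check, any concrete data you quote must be correct, and two of your side remarks are not.

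First, your sample reading of $2.\PSL_3(4)$ is wrong. Its faithful characters, which are exactly the elements of $\Irr^-(2.\PSL_3(4),z)$, have degrees $10,10,28,28,36,64,70,90$. The two of degree $10$ are a complex-conjugate pair with indicator $0$, and the other six have indicator $+1$. These squares sum to $|\PSL_3(4)|=20160$, as they must.

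There is no faithful character of degree $6$: that degree occurs only for covers on which the central subgroup of order $3$ acts nontrivially, such as $6.\PSL_3(4)$. And $20$ is the degree of a character of $\PSL_3(4)$ inflated to the cover, which is not faithful.

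Second, your attempted argument ``without the indicator column'' is circular as written. Saying that a character with $\nu(\chi)=-1$ would lie in a symplectic representation is just the definition of indicator $-1$. Rationality of the character values does not exclude $\nu=-1$ either: the degree-$2$ character of $Q_8$ is rational with indicator $-1$. (For a simple group every nontrivial irreducible character is faithful, so the word ``faithful'' there adds nothing.)

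The Schur-index remark is the part that actually works. A rational-valued character of $\mathbb{Q}$-Schur index $1$ is afforded by a $\mathbb{Q}$-representation, hence has $\nu=+1$. For $\Sp_6(2)$ this follows from $W(E_7)\cong C_2\times\Sp_6(2)$ and the rationality of Weyl-group representations, but that needs its own citation and does not cover the double covers. With these remarks dropped or corrected, your proposal coincides with the paper's proof.
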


\begin{proof}
The Schur multipliers of the three simple groups are,
respectively,
\[
 C_2,\qquad C_2\times C_2,\qquad
 C_3\times C_4\times C_4.
\]
A quasisimple central extension is a quotient of the
universal central extension.
For $\Sp_6(2)$ there is only one quotient with central
kernel of order $2$.
For ${\rm O}_8^+(2)$, triality permutes
the three possible kernels giving double covers.
For $\operatorname{PSL}_3(4)$, automorphisms likewise
permute the three possible kernels giving double
covers.
Thus, in each family, the double covers are isomorphic.
This proves (1).

Assertion~(2) follows from the ordinary character tables in
\cite{Atlas1985,CTblLib}. Appendix~\ref{app:character-data} records
the table identifiers, indicator distributions, and the exact
class-sum formula used to recover these data.
\end{proof}

\begin{proposition}\label{prop:three-exceptional-families-excluded}
No component of $L$ has a simple quotient isomorphic to $\Sp_6(2)$,
${\rm O}_8^+(2)$, or $\PSL_3(4)$.
\end{proposition}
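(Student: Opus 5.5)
Suppose, for a contradiction, that a component $T$ of $L$ has $S=T/Z(T)\in\mathcal S_0=\{\Sp_6(2),{\rm O}_8^+(2),\PSL_3(4)\}$. The plan is to show that $T$ has no simple symplectic module, and to derive this from Lemma~\ref{lem:three-families-table-data}(2) together with Corollary~\ref{cor:no-quaternionic-no-symplectic}.

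First, we pin down $T$. Following the argument in Proposition~\ref{prop:local-component-action}, we have $Z(T)\leq Z(E(L))\leq F(L)=\langle z\rangle$, so $|Z(T)|\leq2$. Lemma~\ref{lem:three-families-table-data}(1) then shows that $T\cong S$ or $T\cong 2.S$.

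Next, we locate a symplectic module for $T$. Write $K=\overline{\F}_p$. By Lemmas~\ref{lem:layer-constituents} and~\ref{lem:single-component-action}, $W_K$ has an irreducible $KE(L)$-summand $V$ on which $T$ is the only component acting nontrivially. The proof of Lemma~\ref{lem:single-component-action} shows that $|S|=1$ there, so $V\cong X_i$ is an irreducible $KT$-module. By Lemma~\ref{lem:layer-constituents}, the restriction of $B_K$ to $V$ is nondegenerate and alternating. Hence $V$ is an absolutely irreducible symplectic $KT$-module in odd characteristic $p\in\{7,11\}$.

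Finally, we obtain the contradiction. Corollary~\ref{cor:no-quaternionic-no-symplectic}(1) requires some $\chi\in\Irr(T)$ with $\nu(\chi)=-1$. But Lemma~\ref{lem:three-families-table-data}(2) states that none of the six groups $S$ and $2.S$ has such a character. So no component of $L$ can have simple quotient in $\mathcal S_0$.

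The main point needing care is that $V$ really is simple for $T$ itself, and not merely for $E(L)$. This holds because the other components act trivially on $V$ and $E(L)$ is the central product of its components, so $E(L)$ and $T$ have the same image in $\GL(V)$. Once this is checked, the rest is a direct citation of the character-table data.
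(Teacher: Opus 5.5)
Your proposal is correct and follows essentially the same route as the paper: you bound $|Z(T)|\le 2$ via $Z(T)\le Z(E(L))\le F(L)=\langle z\rangle$, identify $T$ as $S$ or $2.S$, take an $E(L)$-constituent of $W_K$ on which only $T$ acts nontrivially to get a simple symplectic $KT$-module, and then contradict Lemma~\ref{lem:three-families-table-data}(2) via Corollary~\ref{cor:no-quaternionic-no-symplectic}. You leave implicit that some constituent has $T$ acting nontrivially; this follows from the faithfulness of $L$ on $W$, which the paper states explicitly.
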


\begin{proof}
Suppose that $T$ is such a component. Since distinct components
commute,
\[
 Z(T)\leq Z(E(L))\leq F(L)=\langle z\rangle.
\]
In particular, $|Z(T)|\leq2$. Hence
$T$ is isomorphic to one of the six groups listed in Lemma~\ref{lem:three-families-table-data}.
None has an ordinary
irreducible character of Frobenius--Schur indicator $-1$.
By Corollary~\ref{cor:no-quaternionic-no-symplectic}, none has
a simple symplectic module over $K=\overline{\F}_p$.

On the other hand, $T$ acts nontrivially on $W$,
since the action of $L$ is faithful.
By Lemma~\ref{lem:layer-constituents}, we may therefore
choose an irreducible $KE(L)$-submodule $V$ of
$K\otimes_{\F_p}W$ on which $T$ acts nontrivially.
The restriction of the symplectic form to $V$ is
nondegenerate. By Lemma~\ref{lem:single-component-action},
all other components act trivially on $V$.
Thus every $T$-invariant subspace of $V$ is also
$E(L)$-invariant, so $V$ is irreducible as a $KT$-module.
Therefore $V$ is a simple symplectic $KT$-module,
a contradiction.
\end{proof}

We now consider $\PSp_4(3)$ and its double cover.
The following lemma gives the dimension bound needed
to exclude this case.

\begin{lemma}\label{lem:psp43-symplectic-degrees}
Let $k$ be an algebraically closed field of
characteristic $p\in\{7,11\}$. Then:
\begin{enumerate}
\item
$\operatorname{PSp}_4(3)$ has no simple symplectic
$k$-module.
\item
Every simple symplectic $k\operatorname{Sp}_4(3)$-module
has dimension in $\{20,60,64,80\}$, and hence
has dimension at least $20$.
\end{enumerate}
\end{lemma}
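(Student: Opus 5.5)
The plan is to combine Lemma~\ref{lem:ordinary-symplectic-witness} with the ordinary character tables of $\PSp_4(3)$ and $\Sp_4(3)$ from \cite{Atlas1985,CTblLib}, together with the known $p$-modular decomposition matrices for $p\in\{7,11\}$. Since $|\Sp_4(3)|=2^7\cdot3^4\cdot5$, neither $7$ nor $11$ divides the group order. Every $p$-modular reduction is therefore semisimple and the decomposition matrix is the identity. Concretely, the simple $k$-modules are exactly the reductions of the ordinary irreducible representations, with the same Brauer characters (the ordinary characters restricted to $p$-regular elements, which here means all elements). Moreover $d_{\chi\varphi}=\delta_{\chi,\varphi}$.

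For a simple module $V$ with Brauer character $\varphi=\chi$, I will use Lemma~\ref{lem:ordinary-symplectic-witness}, which supplies an ordinary $\chi'$ with $\nu(\chi')=-1$ and $d_{\chi'\varphi}$ odd. Since the decomposition matrix is the identity, this forces $\chi'=\chi$. So $V$ is symplectic exactly when the corresponding ordinary character has Frobenius--Schur indicator $-1$. The converse direction is not needed for the statement, since we only need an upper bound on the list of possibilities.

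Assertion (1) then reduces to checking that no irreducible character of $\PSp_4(3)\cong\PSU_4(2)$ has indicator $-1$. This is the case: all its irreducible characters are real with indicator $+1$, except for pairs of nonreal characters with indicator $0$. I will verify this from the ATLAS indicator column, as in Appendix~\ref{app:character-data}, or by computing $\nu$ with the class-sum formula.

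For (2), a simple symplectic module must be faithful, since (1) excludes modules with $z$ acting trivially. The faithful irreducible characters of $\Sp_4(3)$ have degrees $4,4,20,20,20,36,36,60,60,64,64,80$ (roughly). From these I will select those with $\nu=-1$ and check against the ATLAS that they have degrees $20,60,64,80$, while the degree-$4$ and degree-$36$ faithful characters are nonreal and so have $\nu=0$.

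The main obstacle is this table lookup: reliably reading off the Frobenius--Schur indicators of the faithful characters of $2.\PSp_4(3)$. In particular, I must confirm that the $4$-dimensional Weil-type characters are not real-valued, since they take values involving $\sqrt{-3}$. If the ATLAS indicator is ambiguous, I would compute $\nu$ via $\frac{1}{|G|}\sum_g\chi(g^2)$ using the power maps recorded in CTblLib.
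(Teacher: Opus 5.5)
Your proposal is correct and is essentially the paper's proof. Part (1) follows from the absence of indicator-$(-1)$ characters of $\PSp_4(3)$ via Lemma~\ref{lem:ordinary-symplectic-witness}, and part (2) follows from that lemma together with the identity decomposition matrix (since $p\nmid|\Sp_4(3)|$), with the degrees $20,60,64,80$ read off the indicator column.

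Your provisional degree list is off, but this is harmless because you take the final set from the table rather than from that list. There are $14$ faithful characters, of degrees $4,4,20,20,20,20,20,36,36,60,60,60,64,80$, so $64$ occurs only once. Your extra reduction to faithful modules is also valid, though not needed.
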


\begin{proof}
The ordinary character tables of
$\operatorname{PSp}_4(3)$ and $\operatorname{Sp}_4(3)$
are available in the GAP Character Table Library
under the identifiers \texttt{U4(2)} and
\texttt{2.U4(2)}, respectively; see
\cite{Atlas1985,CTblLib}.
The Frobenius--Schur indicators in these tables show that
\[
 \{\chi(1):\chi\in\Irr(\operatorname{PSp}_4(3)),
                         \ \nu(\chi)=-1\}
 =\varnothing
\]
and
\[
 \{\chi(1):\chi\in\Irr(\operatorname{Sp}_4(3)),
                         \ \nu(\chi)=-1\}
 =\{20,60,64,80\}.
\]
Assertion~(1) therefore follows from
Corollary~\ref{cor:no-quaternionic-no-symplectic}.

For~(2), let $V$ be a simple symplectic
$k\operatorname{Sp}_4(3)$-module with Brauer character
$\varphi$. By Lemma~\ref{lem:ordinary-symplectic-witness},
there is an ordinary irreducible character $\chi$
such that $\nu(\chi)=-1$ and $d_{\chi\varphi}>0$.
Since
\[
 |\operatorname{Sp}_4(3)|=51840=2^7\cdot3^4\cdot5,
\]
the prime $p$ does not divide the group order.
Thus the decomposition matrix is the identity matrix
after a suitable ordering of its rows and columns.
It follows that $\varphi(1)=\chi(1)$, and hence
\[
 \dim_k V\in\{20,60,64,80\}.
\]
This proves~(2).
\end{proof}

\begin{proposition}\label{prop:psp43-component-excluded}
No component of $L$ has a simple quotient isomorphic to
$\operatorname{PSp}_4(3)$.
\end{proposition}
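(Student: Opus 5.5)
Suppose, for a contradiction, that $T$ is a component of $L$ with $T/Z(T)\cong\PSp_4(3)$. As in the proof of Proposition~\ref{prop:three-exceptional-families-excluded}, we have $Z(T)\le Z(E(L))\le F(L)=\langle z\rangle$. Choose an irreducible $KE(L)$-summand $V$ of $K\otimes_{\F_p}W$ on which $T$ acts nontrivially. By Lemmas~\ref{lem:layer-constituents} and~\ref{lem:single-component-action}, $V$ is a simple symplectic $KT$-module.

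If $Z(T)=1$, this contradicts Lemma~\ref{lem:psp43-symplectic-degrees}(1). Hence $Z(T)=\langle z\rangle$ and $T\cong\Sp_4(3)$, since the Schur multiplier of $\PSp_4(3)$ has order $2$. Lemma~\ref{lem:unique-centered-component} then gives
\[
E(L)=T,\qquad C_L(T)=\langle z\rangle,\qquad \PSp_4(3)\le L/\langle z\rangle\le\Aut(\PSp_4(3)).
\]
Moreover $[T,W]=W$, so every irreducible $KT$-summand of $W_K$ is symplectic, and by Lemma~\ref{lem:psp43-symplectic-degrees}(2) each has dimension at least $20$.

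\textbf{Route via the counting bound.} By Proposition~\ref{prop:local-component-action}, the local space $Y$ is an $\F_p$-form of a sum of some of these summands, with $J_Y/\langle -I_Y\rangle\le\Aut(\PSp_4(3))$. So $\dim_{\F_p}Y\ge 20$. I expect the descent to be harmless: $p\nmid|\Sp_4(3)|$, so Brauer characters are ordinary characters, and those of degree $20,60,64,80$ with indicator $-1$ should be rational or occur in Galois-stable families. Alternatively, the dimension bound can be applied directly to $\dim_K$ of $Y_K$. Corollary~\ref{cor:local-dimension-bound} would then require
\[
p^{10}+1\le|J_Y/\langle-I_Y\rangle|\le|\Aut(\PSp_4(3))|=51\,840.
\]
This fails already for $p=7$, since $7^{10}\approx2.8\times10^8$.

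The main step to check is that $\dim_{\F_p}Y\ge20$ genuinely follows, that is, that $Y$ is nonzero and $T_Y\cong T$ acts on $Y$ with $z$ acting as $-I_Y$. This is recorded in Proposition~\ref{prop:local-component-action}(2),(3). Since $z$ acts as $-1$, every $KT$-constituent of $Y_K$ is faithful on $Z(T)$. Each such constituent is a summand of $W_K\!\downarrow_T$, which by Lemma~\ref{lem:layer-constituents} is a sum of symplectic irreducibles. Hence every constituent has dimension at least $20$ by Lemma~\ref{lem:psp43-symplectic-degrees}(2), so $\dim_{\F_p}Y=\dim_KY_K\ge20$.

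\textbf{Fallback.} If instead one prefers to avoid the local construction, $W$ itself can be used. Here $J=L$ and $U=W$, and $W$ decomposes into common eigenspaces of $C_L(T)=\langle z\rangle$, so $W$ is itself a local space $Y$. Lemma~\ref{lem:projective-count}, applied to $L$ on $W$, gives
\[
p^{\dim W/2}+1\le|L/\langle z\rangle|\le51\,840,
\]
again contradicting $\dim W\ge20$. In either route the argument reduces to this numerical incompatibility, and the only real work is in the dimension bookkeeping above.
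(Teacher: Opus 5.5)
Your proposal is correct and is essentially the paper's argument. The paper likewise reduces to $T\cong\Sp_4(3)$, $E(L)=T$ and $\PSp_4(3)\le L/\langle z\rangle\le\Aut(\PSp_4(3))$, and then applies Lemma~\ref{lem:projective-count} directly to $L$ on $W$ with $\dim W\geq 20$, which is exactly your fallback; your main route through $Y$ is the same computation, since here $C_J(\overline T)=\langle z\rangle$ acts as $-I_W$ and so $Y=W$.
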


\begin{proof}
Suppose that a component $T$ of $L$ satisfies
$T/Z(T)\cong S:=\operatorname{PSp}_4(3)$.
Write $K=\overline{\F}_p$.

Since $T$ acts nontrivially on $W$, we may choose an
irreducible $KE(L)$-submodule $V$ of $K\otimes_{\F_p}W$
on which $T$ acts nontrivially.
By Lemma~\ref{lem:layer-constituents}, $V$ is symplectic.
By Lemma~\ref{lem:single-component-action}, all other
components act trivially on $V$. Thus every
$T$-invariant subspace of $V$ is also $E(L)$-invariant,
so $V$ is an irreducible symplectic $KT$-module.

If $Z(T)=1$, then $T\cong S$, contrary to
Lemma~\ref{lem:psp43-symplectic-degrees}.
Therefore $Z(T)\ne1$. By Lemma~\ref{lem:unique-centered-component}, we have
\[
 Z(T)=\langle z\rangle,\qquad
 E(L)=T,\qquad C_L(T)=\langle z\rangle,
\]
and
$S\leq L/\langle z\rangle\leq\Aut(S)$.
The Schur multiplier of $S$ has order $2$, so $T\cong\operatorname{Sp}_4(3)$.

Write $d=\dim_{\F_p}W$.
Choose a simple symplectic $KT$-constituent $V$ of
$K\otimes_{\F_p}W$.
Lemma~\ref{lem:psp43-symplectic-degrees} yields
\[
 d\geq\dim_K V\geq20.
\]
On the other hand, the projective counting bound
from Lemma~\ref{lem:projective-count} gives
\[
 p^{d/2}+1
 \leq |L/\langle z\rangle|
 \leq|\Aut(S)|
 =51840.
\]
Since $p\geq7$ and $d\geq20$, this implies
$7^{10}+1\leq51840$,
which is impossible.
Thus no such component exists.
\end{proof}

Finally, we exclude the case of $\PSU_4(3)$.

\begin{lemma}\label{lem:psu43-selfdual-degree}
Let $T$ be a quasisimple group such that
\[
 T/Z(T)\cong\operatorname{PSU}_4(3),
 \qquad |Z(T)|\leq2.
\]
Let $k$ be an algebraically closed field of
characteristic $p\notin\{2,3\}$.
Then every faithful finite-dimensional self-dual
$kT$-module has dimension at least $18$.
\end{lemma}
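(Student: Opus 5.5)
Since $p\notin\{2,3\}$ and $|Z(T)|\leq 2$, the possibilities are $T\cong\PSU_4(3)$ or $T\cong 2.\PSU_4(3)$. The quotient of the Schur multiplier $C_3\times C_3\times C_4$ of order $2$ is unique, so the double cover is unique up to isomorphism. The plan is to bound the dimension of every faithful self-dual simple module and then pass to arbitrary modules. If $V$ is a faithful self-dual $kT$-module of dimension less than $18$, some composition factor $X$ must be nontrivial. When $Z(T)=\langle z\rangle$, we choose $X$ so that $z$ acts as $-1$ on it; such a factor exists because $z$ acts semisimply with eigenvalues $\pm1$ and faithfully on $V$. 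Since $T$ is perfect, every nontrivial simple module has dimension at least $2$. The reduction to simple modules is as follows. If $X$ is not self-dual, then $X^*$ also occurs as a composition factor of $V\cong V^*$, since the dual of a composition series is a composition series. So it suffices to show that every nontrivial simple $kT$-module $X$, with $z$ acting as $-1$ in the double-cover case, satisfies $\dim X\geq 18$ when $X$ is self-dual and $\dim X\geq 9$ when it is not. In fact we aim for a uniform lower bound of $\dim X\geq 20$ for self-dual $X$ and $\dim X\geq 9$ otherwise.

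The key input is the list of minimal degrees of nontrivial irreducible representations of $\PSU_4(3)$ and its double cover in cross characteristic. For $\PSU_4(3)$ the smallest nontrivial ordinary degrees are $21,35,35',\ldots$. For $2.\PSU_4(3)$ the faithful degrees start at $56$, coming from the Weil-type modules of $\SU_4(3)$; these are faithful only on the $4$-fold and $12$-fold covers, so they do not lift to $2.\PSU_4(3)$. The $6$-dimensional module belongs to $6_1.\PSU_4(3)$ and is likewise excluded by the assumption $|Z(T)|\leq 2$. For $p\nmid|T|$, that is, for $p\notin\{2,3,5,7\}$, the decomposition matrix is the identity, exactly as in Lemma~\ref{lem:psp43-symplectic-degrees}, and these ordinary degrees are already the Brauer degrees. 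For $p\in\{5,7\}$ we will use the $p$-modular Brauer character tables in \cite{CTblLib} (the tables \texttt{U4(3)mod5}, \texttt{U4(3)mod7}, \texttt{2.U4(3)mod5}, \texttt{2.U4(3)mod7}). Alternatively, we can invoke the Hiss--Malle list of low-dimensional cross-characteristic representations, which shows that the smallest nontrivial Brauer degree of $\PSU_4(3)$ is at least $20$ in characteristic $5$ or $7$, and the smallest faithful degree of $2.\PSU_4(3)$ is at least $56$.

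The main obstacle is the modular reduction at $p=5$ and $p=7$, since a large ordinary character could in principle reduce with a small constituent. As a check independent of the tables, we will use the Landazuri--Seitz--Zalesskii lower bound for projective cross-characteristic representations of $\PSU_4(3)$, namely $(3^4-1)/(3+1)=20$ or a comparable value. This bound holds for all central extensions, with the exceptional $6$-dimensional representation arising only from covers whose center has order divisible by $3$. This gives $\dim X\geq 20>18$ for all nontrivial simple modules except those of the $3$-part covers, which are excluded here. Combining this with the reduction in the first paragraph, every faithful self-dual $V$ satisfies $\dim V\geq\dim X\geq 18$, which proves the lemma.
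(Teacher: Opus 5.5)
Your reduction to a single composition factor $X$ is fine. Choosing $X$ with $z$ acting as $-1$, and using that $X^*$ is also a factor when $X\not\cong X^*$, works. The problem is the representation-theoretic input you then feed in.

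Three of your claims are false:
\begin{enumerate}
\item that the faithful degrees of $2.\PSU_4(3)$ start at $56$;
\item that the Weil-type modules of $\operatorname{SU}_4(3)$ do not factor through $2.\PSU_4(3)$;
\item that Hiss--Malle give a bound $\geq 56$.
\end{enumerate}
In fact $2.\PSU_4(3)\cong\operatorname{SU}_4(3)/\langle -I\rangle$. The $81$-dimensional Weil module of $\operatorname{SU}_4(3)$ splits over the centre $Z(\operatorname{SU}_4(3))\cong C_4$ as $21+20+20+20$, and one degree-$20$ piece has central character of order $2$. That piece is trivial on $-I$, so it is a faithful module of $2.\PSU_4(3)$. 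A more elementary check: a generator of the centre of $6_1.\PSU_4(3)$ acts on the $6$-dimensional module by a primitive sixth root of unity $\zeta$. It therefore acts on $\Lambda^3$ of that module by $\zeta^3=-1$, which gives a faithful $20$-dimensional module for $2.\PSU_4(3)$.

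Landazuri--Seitz--Zalesskii does not rescue the step as you state it either. That theorem lists $\PSU_4(3)$ as an exception with minimal projective degree $6$, so on its own it yields only $\dim X\geq 6$. The refinement you actually use is that every nontrivial irreducible module, in characteristic $p\geq 5$, of a cover with $|Z|\leq 2$ has degree at least $20$. The $3$-fold covers also carry small modules, such as $\Lambda^2$ of the $6$, of degree $15$. You assert this refinement rather than prove it, and it is essentially the whole content of the lemma.

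So, as written, the decisive step rests on misquoted or unverified external data. The conclusion happens to survive, because the true minimal faithful degree is $20>18$. Your route could be completed only by actually reading off the $5$- and $7$-modular tables or the Hiss--Malle list correctly, and that is a table lookup rather than an argument.

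The paper avoids all of this with a $3$-local argument valid uniformly for $p\neq 3$:
\begin{enumerate}
\item The extraspecial subgroup $3^{1+4}_+$ of the maximal subgroup $3^{1+4}_+.2S_4$ lifts isomorphically into $T$, because $|Z(T)|\leq 2$.
\item Faithfulness of $V$ forces $Z(R)$ to act nontrivially, so one of the two $9$-dimensional faithful $R$-modules occurs in $V\mathord\downarrow_R$.
\item These two modules are dual to each other, so self-duality of $V$ forces both to occur, and $\dim V\geq 18$.
\end{enumerate}
No decomposition numbers or minimal-degree tables are needed.
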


\begin{proof}
The group $\operatorname{PSU}_4(3)$ contains an
extraspecial subgroup $R_0$ of order $3^5$;
see the maximal subgroup of structure $3_+^{1+4}.2S_4$
listed in \cite{Atlas1985,CTblLib}.
Since $|Z(T)|\leq2$, a Sylow $3$-subgroup $R$ of the
inverse image of $R_0$ in $T$ maps isomorphically onto
$R_0$.

Let $V$ be a faithful finite-dimensional self-dual
$kT$-module. Since $p\ne3$, its restriction to $R$
is semisimple. The extraspecial group $R$ has exactly
two nonlinear irreducible $k$-modules. Both have dimension $9$, and they are dual to one another
because their central characters are the two nontrivial
characters of $Z(R)$.

Since $V$ is faithful, $Z(R)$ acts nontrivially on $V$.
Thus at least one of these nonlinear modules occurs
in $V\mathord\downarrow_R$. Since the two modules are dual to one another and
$V$ is self-dual, both occur in $V\mathord\downarrow_R$.
 Hence
\[
 \dim_k V\geq9+9=18.
\]
\end{proof}

\begin{proposition}\label{prop:psu43-component-excluded}
No component of $L$ has a simple quotient isomorphic to $\PSU_4(3)$.
\end{proposition}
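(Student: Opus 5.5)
The argument follows the pattern of Proposition~\ref{prop:psp43-component-excluded}: a lower bound on $\dim W$ from the representation theory of $T$, played off against the projective counting bound of Lemma~\ref{lem:projective-count}. Suppose that a component $T$ of $L$ satisfies $T/Z(T)\cong S:=\PSU_4(3)$, and write $K=\overline{\F}_p$. As in the earlier exclusions,
\[
 Z(T)\leq Z(E(L))\leq F(L)=\langle z\rangle ,
\]
so $|Z(T)|\leq2$. By Lemmas~\ref{lem:layer-constituents} and~\ref{lem:single-component-action} there is an irreducible $KE(L)$-constituent $V$ of $K\otimes_{\F_p}W$ on which $T$ acts nontrivially and all other components act trivially. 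Hence $V$ is a simple symplectic, in particular self-dual, $KT$-module.

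Next we make $V$ faithful for $T$. Its kernel is a proper normal subgroup of the quasisimple group $T$, so it lies in $Z(T)$, which has order at most $2$. I would split into two cases.
\begin{enumerate}
\item If $Z(T)=\langle z\rangle$, then $z=-I_W$ acts as $-I$ on $V$. So $V$ is faithful, and Lemma~\ref{lem:psu43-selfdual-degree} gives $\dim_K V\geq18$.
\item If $Z(T)=1$, then $T\cong S$ and $V$ is automatically faithful, so Lemma~\ref{lem:psu43-selfdual-degree} applies with $|Z(T)|=1$.
\end{enumerate}
In both cases $d:=\dim_{\F_p}W\geq\dim_K V\geq18$.

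For the upper bound, I would not work on all of $W$ (when $Z(T)=1$ there may be several components), but pass to the local space $Y$ of Proposition~\ref{prop:local-component-action} attached to $T$. First I need $\dim Y\geq18$. Extending scalars, $Y_K$ is a nonzero $\overline T$-submodule of $U_K$, and $U_K$ is a sum of irreducible symplectic $KT$-modules of the kind just described. So $Y_K$ contains a nontrivial faithful self-dual constituent, and therefore $\dim Y\geq18$. The point to check here is that the constituents of $U_K$ lying in $Y_K$ are still faithful for $T$. This holds because $T$ acts faithfully on $Y$ by Proposition~\ref{prop:local-component-action}(2), and any kernel is at most $Z(T)\leq\langle z\rangle$, with $z$ acting as $-1$. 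Alternatively, in the centered case $E(L)=T$ and $Y=W$ directly by Lemma~\ref{lem:unique-centered-component}.

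Corollary~\ref{cor:local-dimension-bound} then gives
\[
 p^{\dim Y/2}+1\leq|\Aut(S)| = 26\,127\,360 ,
\]
using the ATLAS table from Corollary~\ref{cor:exceptional-normal-subgroup-excluded}. Since $p\geq7$ and $\dim Y\geq18$, the left side is at least $7^9+1=40\,353\,608$, which exceeds $26\,127\,360$. This is a contradiction, so no such component exists.

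The main obstacle is the step bounding $\dim Y$ from below rather than $\dim W$, since only the local group $J_Y$ embeds in $\Aut(S)$ modulo $\langle-I_Y\rangle$. The arithmetic is tight but suffices: $7^9$ already exceeds $|\Aut(S)|$, so even a constituent of dimension exactly $18$ gives the contradiction.
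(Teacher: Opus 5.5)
Your proof is correct and follows the paper's argument: pass to the local space $Y$, obtain $\dim Y\geq 18$ from Lemma~\ref{lem:psu43-selfdual-degree}, and contradict Corollary~\ref{cor:local-dimension-bound} via $7^9+1>|\Aut(\PSU_4(3))|=26\,127\,360$. The only difference is that the paper applies the lemma directly to $K\otimes_{\F_p}Y$, which is faithful by Proposition~\ref{prop:local-component-action}(2) and self-dual via the restricted form; your preliminary bound on $\dim W$ and your passage through individual constituents are therefore unnecessary, though harmless.
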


\begin{proof}
Suppose that $T/Z(T)\cong S:=\PSU_4(3)$.
As above, $Z(T)\leq F(L)=\langle z\rangle$, so $|Z(T)|\leq2$.
Let $Y$ be the local space constructed in
Proposition~\ref{prop:local-component-action}, and write
$d=\dim_{\F_p}Y$.
The group $T$ acts faithfully on $Y$, and the restriction
of the symplectic form is nondegenerate and $T$-invariant.
Thus $K\otimes_{\F_p}Y$, where $K=\overline{\F}_p$, is a
faithful self-dual $KT$-module. By Lemma~\ref{lem:psu43-selfdual-degree}, we see that $d\geq18$.

Notice that $|S|=3\,265\,920$ and $|\Out(S)|=8$ according to \cite{Atlas1985}.
By
Corollary~\ref{cor:local-dimension-bound}, we obtain
\[
 p^{d/2}+1\leq|\Aut(S)|=26\,127\,360.
\]
But $p\geq7$ and $d\geq18$, so
\[
 p^{d/2}+1\geq7^9+1=40\,353\,608>26\,127\,360,
\]
a contradiction.
\end{proof}

\section{Proofs of the main theorems}
\label{sec:terminal-contradiction}

We now combine the preceding reductions and exclusions to prove
the main theorems.

\begin{theorem}\label{thm:no-rational-origin-admissible-pair}
Let $p\in\{7,11\}$. Then there is no admissible pair $(L_0,W_0)$ over $\F_p$
such that $L_0$ is rational.
\end{theorem}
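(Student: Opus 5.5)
We argue by contradiction. Suppose $(L_0,W_0)$ is an admissible pair over $\F_p$, $p\in\{7,11\}$, with $L_0$ rational. The first step is to put ourselves in a position where $\boldsymbol{(R^-)}$ holds for the initial pair, so that Proposition~\ref{prop:2-terminal-refinement} transports it down to a $2$-terminal pair. Put $G_0=\langle L_0,-I_{W_0}\rangle$. This is either $L_0$ or $L_0\times C_2$, hence rational, and it still preserves the symplectic form. Conditions (SF) and (I) are inherited, since the required elements already lie in $L_0$ and every $G_0$-invariant subspace of $W_0\otimes\overline{\F}_p$ is $L_0$-invariant. Condition (R) holds because $G_0$ is rational and Lemma~\ref{lem:rational-brauer} applies. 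Since every ordinary character of $G_0$ is rational-valued, $(G_0,-I_{W_0})$ satisfies $\boldsymbol{(R^-)}$ trivially. Thus $(G_0,W_0)$ is admissible, rational, contains $z_0=-I_{W_0}$, and satisfies $\boldsymbol{(R^-)}$.

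Next we apply Proposition~\ref{prop:2-terminal-refinement}. It gives a chain of sign-eigenspace refinements ending in a $2$-terminal pair $(L,W)$ that satisfies $\boldsymbol{(R^-)}$ with respect to $z=-I_W$. We need $z\in L$ at every stage. This holds because the image of $-I_{W_i}$ under restriction to a weight space is $-I_{W_{i+1}}$, and $-I$ stabilizes every subspace. Proposition~\ref{prop:extraspecial-excluded} now applies: $O_2(L)$ is not extraspecial, and moreover $F(L)=\langle z\rangle$ and $E(L)\ne1$. So we are in the setting of Sections~\ref{sec:layer}--\ref{sec:component-structure}. Choose a component $T$ of $L$. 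Because $(L,W)$ arises from the rational group $G_0$ by successive sign-eigenspace refinements, Proposition~\ref{prop:global-local-realization} applies. Corollary~\ref{cor:terminal-component-list} then shows that $T/Z(T)$ is alternating or lies in $\mathcal E$.

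Finally, we exclude each possibility for $T/Z(T)$. The alternating case is ruled out by Proposition~\ref{prop:no-alternating-components}. The cases $\Sp_6(2)$, ${\rm O}_8^+(2)$ and $\PSL_3(4)$ are ruled out by Proposition~\ref{prop:three-exceptional-families-excluded}. The case $\PSp_4(3)$ is ruled out by Proposition~\ref{prop:psp43-component-excluded}, and $\PSU_4(3)$ by Proposition~\ref{prop:psu43-component-excluded}. No possibility remains, so $T$ cannot exist, contradicting $E(L)\ne1$.

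The main obstacle is bookkeeping rather than new mathematics. We must check that the hypotheses of Proposition~\ref{prop:global-local-realization} are literally met. In particular, the chain has to start from a rational group $G_0\le\Sp(V_0)$ satisfying (SF). Here $G_0$ is the group with the scalar adjoined, and Step~2 of that proposition already allows for this. We must also confirm that the hypotheses of Proposition~\ref{prop:local-component-action} hold for the terminal pair, namely $z\in L$ together with $\boldsymbol{(R^-)}$. This is secured by the argument in the second paragraph.
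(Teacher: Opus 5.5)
Your proposal is correct and follows the paper's proof essentially step for step. You adjoin $-I_{W_0}$ to obtain a rational admissible group satisfying $\boldsymbol{(R^-)}$, refine to a $2$-terminal pair while keeping $z\in L$, use the extraspecial exclusion to get $E(L)\ne1$, restrict $T/Z(T)$ via the global--local realization, and eliminate each candidate. The only difference is cosmetic: you obtain (R) for the enlarged group from rationality through the argument of Lemma~\ref{lem:rational-brauer}, whereas the paper checks it directly from $\BrTr(\varepsilon g)=\varepsilon\BrTr(g)$.
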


\begin{proof}
Suppose that such a pair exists.
Then $W_0\ne0$, and $L_0$ acts faithfully and
absolutely irreducibly on $W_0$, preserving a
nondegenerate alternating form.

We may first arrange that $-I_{W_0}\in L_0$.
Indeed, replace $L_0$ by
\[
 \widehat L_0=\langle L_0,-I_{W_0}\rangle.
\]
If $-I_{W_0}\notin L_0$, then
$\widehat L_0\cong L_0\times C_2$.
Consequently $\widehat L_0$ is rational in either
case.

The enlarged action remains absolutely irreducible
and symplectic, and it still satisfies {\rm(SF)}.
It also satisfies {\rm(R)}.
To see this directly, every element of $\widehat L_0$
is of the form $\varepsilon g$, with
$\varepsilon\in\{I,-I\}$ and $g\in L_0$.
If $\varepsilon g$ is a $p'$-element, then so is $g$,
and
\[
 \BrTr(\varepsilon g)=\varepsilon\BrTr(g)\in\mathbb Z,
\]
where the scalar $\varepsilon$ is identified with
$1$ or $-1$.
Thus the enlarged pair is admissible.
Replacing $L_0$ by $\widehat L_0$, we may assume
$z_0=-I_{W_0}\in L_0$.
Rationality of $L_0$ implies that $(L_0,z_0)$
satisfies $\boldsymbol{(R^-)}$.

Applying Proposition~\ref{prop:2-terminal-refinement}, we obtain a $2$-terminal admissible pair $(L,W)$
such that
\[
 z=-I_W\in L
 \quad\text{and}\quad
 (L,z)\text{ satisfies }\boldsymbol{(R^-)}.
\]
By Proposition~\ref{prop:extraspecial-excluded}, we have
$F(L)=O_2(L)=\langle z\rangle$ and $E(L)\ne1$.

Since this terminal pair was obtained by refinement
from the rational group $L_0$,
Proposition~\ref{prop:global-local-realization} and
Corollary~\ref{cor:terminal-component-list} apply.
Hence, for every component $T$ of $L$, its simple
quotient $T/Z(T)$ is either an alternating group
$A_n$, with $n\geq5$, or one of the groups $\operatorname{PSp}_4(3),
 \Sp_6(2),  {\rm O}_8^+(2),
 \operatorname{PSL}_3(4)$ or $\operatorname{PSU}_4(3)$.
However, Proposition~\ref{prop:no-alternating-components}
excludes the alternating quotients,
Proposition~\ref{prop:three-exceptional-families-excluded}
excludes
$\Sp_6(2),~{\rm O}_8^+(2),~\operatorname{PSL}_3(4)$, and
Propositions~\ref{prop:psp43-component-excluded}
and~\ref{prop:psu43-component-excluded}
exclude the remaining two possibilities.
Thus $L$ has no components, so $E(L)=1$.
This contradicts $E(L)\ne1$.
\end{proof}

\begin{proof}[Proof of Theorem~\ref{thm:linear}]
Let a finite rational group $G$ act on a nonzero finite-dimensional
$\F_p$-space $V$ with the eigenvector property.
Suppose that $p=7$ or $11$. Choose a nonzero irreducible
$G$-submodule $U$ of $V$, and let $H$ be the faithful image
of $G$ on $U$. The group $H$ is rational, being a quotient
of $G$, and the action on $U$ has the eigenvector property.
Taking the $p'$-part of each scalar-fusing element, as in
Lemma~\ref{lem:scalar-fusion}, gives {\rm(SF)}.

Rationality of $H$ gives integer-valued irreducible Brauer
characters, by the proof of Lemma~\ref{lem:rational-brauer}.
The descent and self-duality arguments in
Lemma~\ref{lem:symplectic} therefore show that $U$ is absolutely
irreducible and has a nondegenerate invariant bilinear form.
That proof excludes a symmetric form using $3\ne0$ in $\F_p$,
so the form is alternating for $p=7$ or 11.
Thus $(H,U)$ is an admissible pair with $H$ rational,
contrary to Theorem~\ref{thm:no-rational-origin-admissible-pair}.
Hence $p\in\{2,3,5\}$.
\end{proof}

\begin{proof}[Proof of Theorem~\ref{thm:main}]
If the assertion fails, the least-counterexample reduction in
the introduction gives a rational group $H$ acting on a nonzero
$\F_p$-space $V$, where $p\in\{7,11\}$. By Lemma~\ref{lem:scalar-fusion},
the action of $H$ on $V$ has the eigenvector property,
contrary to Theorem~\ref{thm:linear}.
The examples $C_2$, $S_3$, and $\F_5^2\rtimes Q_8$ given in
the introduction show that each of the three primes occurs.
\end{proof}

\section*{Acknowledgements}
The first author gratefully acknowledges financial support by the National Natural Science Foundation of China
(11761034) and the Natural Science Foundation of Jiangxi Province (20232BAB202012),
and the second author was supported by the NSFC (12671027).

The authors thank AI models for valuable assistance
in exploring proof strategies and carrying out calculations.
Nevertheless, the authors take full responsibility for all arguments, computations, and the final exposition of the paper.

\appendix
\section{Ordinary character-table data}\label{app:character-data}

We record the ordinary character data used in
Lemmas~\ref{lem:three-families-table-data} and
\ref{lem:psp43-symplectic-degrees}. The identifiers refer to
the GAP Character Table Library, version~1.3.11 \cite{CTblLib}.
Write $n_\epsilon(G)=|\{\chi\in\Irr(G):\nu(\chi)=\epsilon\}|$.
Then
\[
\begin{array}{l|l|rrr}
G&\text{identifier}&n_{1}(G)&n_{0}(G)&n_{-1}(G)\\ \hline
\Sp_6(2)&\texttt{S6(2)}&30&0&0\\
2.\Sp_6(2)&\texttt{2.S6(2)}&41&2&0\\
{\rm O}_8^+(2)&\texttt{O8+(2)}&53&0&0\\
2.{\rm O}_8^+(2)&\texttt{2.O8+(2)}&83&0&0\\
\PSL_3(4)&\texttt{L3(4)}&8&2&0\\
2.\PSL_3(4)&\texttt{2.L3(4)}&14&4&0\\
\PSp_4(3)&\texttt{U4(2)}&10&10&0\\
\Sp_4(3)&\texttt{2.U4(2)}&10&20&4
\end{array}
\]
For the last group, the four characters with indicator $-1$ have
degrees $20$, $60$, $64$, and $80$.

For class representatives $c_j$, let $s(j)$ be the class index of
$c_j^2$. The indicators can be computed entirely in the cyclotomic
field containing the table entries by
\[
 \nu(\chi)=\sum_j\frac{\chi(c_{s(j)})}{|C_G(c_j)|}.
\]
Thus only the ordinary table, class centralizer orders, and square
power map are required. This formula also avoids reducing ordinary
indicators modulo $p$. The following GAP commands reproduce the
displayed distributions and the negative-indicator degree set.

\begingroup\small
\begin{verbatim}
LoadPackage("ctbllib");;
ids := ["S6(2)", "2.S6(2)", "O8+(2)", "2.O8+(2)",
        "L3(4)", "2.L3(4)", "U4(2)", "2.U4(2)"];;
for id in ids do
  t := CharacterTable(id);;
  ir := Irr(t);;
  sq := PowerMap(t, 2);;
  cen := SizesCentralizers(t);;
  fs := List(ir, x -> Sum([1..Length(cen)],
                j -> x[sq[j]] / cen[j]));;
  Print(id, " ", List([1,0,-1], a -> Number(fs, b -> b=a)),
        " ", Set(List(Filtered([1..Length(ir)],
                    j -> fs[j]=-1), j -> ir[j][1])), "\n");
od;
\end{verbatim}
\endgroup

\end{document}